\newtheorem{theorem}{Theorem}[section]
\newtheorem{proposition}[theorem]{Proposition}
\newtheorem{corollary}[theorem]{Corollary}
\newtheorem{definition}[theorem]{Definition}
\newtheorem{lemma}[theorem]{Lemma}
\newtheorem{example}[theorem]{Example}
\newtheorem{remark}[theorem]{Remark}
\newtheorem{problem}[theorem]{Problem}
\numberwithin{equation}{section}
\def\bR{\mathbb{R}}
\def\bN{\mathbb{N}}
\def\bM{\mathbb{M}}
\def\bC{\mathbb{C}}
\def\cA{\mathcal{A}}
\def\cH{\mathcal{H}}
\def\cM{\mathcal{M}}
\def\cN{\mathcal{N}}
\def\Tr{\mathrm{Tr}}
\def\<{\langle}
\def\>{\rangle}
\def\eps{\varepsilon}
\def\dom{\mathrm{dom}}
\def\cl{\mathrm{cl}}
\def\diag{\mathrm{diag}}
\def\id{\mathrm{id}}
\def\supp{\mathrm{supp}}
\def\q{\mathrm{q}}
\def\meas{\mathrm{meas}}
\def\cE{\mathcal{E}}
\def\cP{\mathcal{P}}
\def\cC{\mathcal{C}}
\begin{document}
\allowdisplaybreaks

\centerline{\LARGE Log-majorizations between quasi-geometric type means for matrices}

\bigskip
\bigskip
\centerline{\large
Fumio Hiai\footnote{{\it E-mail:} hiai.fumio@gmail.com
\ \ {\it ORCID:} 0000-0002-0026-8942}}

\begin{center}
$^1$\,Graduate School of Information Sciences, Tohoku University, \\
Aoba-ku, Sendai 980-8579, Japan
\end{center}

\medskip

\begin{abstract}
In this paper, for $\alpha\in(0,\infty)\setminus\{1\}$, $p>0$ and positive semidefinite matrices $A$ and $B$,
we consider the quasi-extension $\mathcal{M}_{\alpha,p}(A,B):=\mathcal{M}_\alpha(A^p,B^p)^{1/p}$ of
several $\alpha$-weighted geometric type matrix means $\mathcal{M}_\alpha(A,B)$ such as the
$\alpha$-weighted geometric mean in Kubo--Ando's sense, the R\'enyi mean, etc. The log-majorization
$\mathcal{M}_{\alpha,p}(A,B)\prec_{\log}\mathcal{N}_{\alpha,q}(A,B)$ is examined for pairs
$(\mathcal{M},\mathcal{N})$ of those $\alpha$-weighted geometric type means. The joint concavity/convexity
of the trace functions $\mathrm{Tr}\,\mathcal{M}_{\alpha,p}$ is also discussed based on theory of quantum
divergences.

\bigskip\noindent
{\it 2020 Mathematics Subject Classification:}
15A45, 47A64

\medskip\noindent
{\it Keywords:}
Log-majorization,
Quasi matrix means,
Weighted geometric mean,
Weighted spectral geometric mean,
R\'enyi mean,
Log-Euclidean mean,
$\alpha$-R\'enyi divergences
\end{abstract}

{\small
\tableofcontents}

\section{Introduction}\label{Sec-1}

Several types of majorizations have been well developed for the eigenvalues and the singular values of
matrices, as is fully described in the book \cite{MOA} (see also \cite{An2,Bh,Hi} for example), giving rise to
powerful tools in deriving various matrix norm and trace inequalities. Among others, the notion of
multiplicative type of majorization called log-majorization (see Section \ref{Sec-2} for definition) has played
an important role in matrix analysis, mathematical physics, quantum information, etc. For instance, for
positive semidefinite matrices $A,B\ge0$, Araki's log-majorization \cite{Ar} of Golden--Thompson type is
\begin{align}\label{F-1.1}
(A^{p/2}B^pA^{p/2})^{1/p}\prec_{\log}(A^{q/2}B^qA^{q/2})^{1/q},\qquad0<p\le q,
\end{align}
which is a stronger version of Araki--Lieb--Thirring trace inequality. Ando--Hiai's one \cite{AH} of
complementary Golden–Thompson type is
\begin{align}\label{F-1.2}
(A^p\#_\alpha B^p)^{1/p}\prec_{\log}(A^q\#_\alpha B^q)^{1/q},\qquad0<q\le p,
\end{align}
where $A\#_\alpha B$ is the $\alpha$-weighted geometric mean of $A,B$. On the other hand, for positive
definite matrices $A,B>0$, Kian--Seo \cite{KS} obtained the counterpart of \eqref{F-1.2} for
$\alpha\in[-1,0)\cup(1,2]$ as
\begin{align}\label{F-1.3}
(A^p\#_\alpha B^p)^{1/p}\prec_{\log}(A^q\#_\alpha B^q)^{1/q},\qquad0<p\le q,
\end{align}
where the definition of $A\#_\alpha B:=A^{1/2}(A^{-1/2}BA^{-1/2})^\alpha A^{1/2}$ for $A,B>0$ extends to all
$\alpha\in\bR$.

In our previous paper \cite{Hiai} we considered the quasi-arithmetic matrix mean
$\cA_{\alpha,p}(A,B):=\{(1-\alpha)A^p+\alpha B^p\}^{1/p}$ and several quasi-geometric type matrix means
such as the quasi $\alpha$-weighted geometric mean $G_{\alpha,p}(A,B):=(A^p\#_\alpha B^p)^{1/p}$,
the quasi versions $SG_{\alpha,p}$, $\widetilde SG_{\alpha,p}$ of two different $\alpha$-weighted spectral
geometric means, the R\'enyi mean $R_{\alpha,p}$, and the Log-Euclidean mean $LE_\alpha$ for
$0<\alpha<1$ and $p>0$ (see Section \ref{Sec-2} for the precise definitions of these). In \cite{Hiai} we
examined various quasi-arithmetic-geometric inequalities between each of those quasi-geometric means
and $\cA_{\alpha,q}$ with respect to different matrix orderings. In the present paper we continue to consider
the above mentioned quasi-geometric type means for all $\alpha\in(0,\infty)\setminus\{1\}$ and $p>0$.
In fact, \eqref{F-1.1} is rewritten as $R_{\alpha,p}\prec_{\log}R_{\alpha,q}$, and \eqref{F-1.2} and
\eqref{F-1.3} are nothing but $G_{\alpha,p}\prec_{\log}G_{\alpha,q}$. Furthermore, the log-majorizations
$G_{\alpha,p}\prec_{\log}R_{\alpha,q}$ and $R_{\alpha,q}\prec_{\log}G_{\alpha,p}$ were characterized in
\cite{Hi2}, and $R_{\alpha,q}(A,B)\prec_{\log}SG_{\alpha,p}(A,B)$ was addressed in a recent paper \cite{GT}.
In this paper we examine the log-majorization $\cM_{\alpha,p}\prec_{\log}\cN_{\alpha,q}$ for other pairs
$(\cM,\cN)$ from $\{R,G,SG,\widetilde SG,LE\}$. Our goal is to hopefully obtain the necessary
and sufficient condition on $p,q,\alpha$ under which $\cM_{\alpha,p}(A,B)\prec_{\log}\cN_{\alpha,q}(A,B)$
holds for all $A,B>0$, though we have not succeeded it for all cases. To do so, we need to address the
sufficiency condition and the necessity condition separately. For the sufficiency part, since
$\det\cM_{\alpha,p}(A,B)=\det\cN_{\alpha,q}(A,B)=(\det A)^{1-\alpha}(\det B)^\alpha$, the standard technique
using anti-symmetric tensor powers is utilized so that the problem boils down to prove that
\[
\cN_{\alpha,q}(A,B)\le I\implies\cM_{\alpha,p}(A,B)\le I.
\]
The necessity part is quite computation-oriented. Similarly to the previous paper \cite{Hiai}, we make
computations for specific pairs of $2\times2$ positive definite matrices to restrict the possible range of
$p,q,\alpha$ for which $\cM_{\alpha,p}\prec_{\log}\cN_{\alpha,q}$ holds.

The joint concavity/convexity of matrix trace functions in two (or more) variables has been one of major
subjects in matrix analysis since Lieb's seminal paper \cite{Lieb} in 1973. Lieb's concavity \cite{Lieb} and
Ando's convexity \cite{An0} are especially famous, and the joint concavity/convexity of the trace functions
of the forms $\Tr(A^p+B^q)^s$ and $\Tr(A^{p/2}B^qA^{p/2})^s$ was developed in, e.g., \cite{CL,Hi4}. The
problem of fully characterizing the joint concavity/convexity of the above latter trace function became very
important, because it settles the question of the monotonicity property under CPTP maps (or the
data-processing inequality) of the $\alpha$-$z$-R\'enyi divergence \cite{AD}, as explained in detail in
\cite{CFL}. Then the problem was finally solved by Zhang \cite{Zh} in 2020, who characterized the joint
concavity/convexity of a more general trace function $(A,B)\mapsto\Tr(A^{p/2}X^*B^qXA^{p/2})^s$.
Also, the joint concavity of the trace function $\Tr(A^p\sigma B^q)^s$ for Kubo--Ando's operator means
$\sigma$ \cite{KA} was shown in \cite{Hi4}. The current status of the subject has been summarized in
Carlen's recent survey paper \cite{Ca1} and his new book \cite{Ca2}. Our second aim of this paper is to
examine the joint concavity/convexity of the trace functions $\Tr\,\cM_{\alpha,p}$ for our quasi-geometric
type means $\cM_{\alpha,p}$ and for $\alpha\in(0,\infty)\setminus\{1\}$ and $p>0$, while the case
$\Tr\,R_{\alpha,p}$ is included in Zhang's result in \cite{Zh}, and $\Tr\,G_{\alpha,p}$ for $0<\alpha<1$ is a
special case in \cite{Hi4}. Our idea is to apply the relationship between the joint concavity/convexity and the
monotonicity under CPTP maps for geometric type matrix functions of two variables (see Theorem \ref{T-5.3})
similarly to the situation in the above mentioned monotonicity question of the $\alpha$-$z$-divergence. Then
as a necessary condition for joint concavity/convexity of $\Tr\,\cM_{\alpha,p}$ we have the following
sandwiched inequalities (see Theorem \ref{T-5.8})
\begin{align*}
&\Tr\,G_{\alpha,1}(A,B)\le\Tr\,\cM_{\alpha,p}(A,B)\le\Tr\,R_{\alpha,1/\alpha}(A,B)
\quad\mbox{for $0<\alpha<1$}, \\
&\Tr\,R_{\alpha,1/\alpha}(A,B)\le\Tr\,\cM_{\alpha,p}(A,B)\le\Tr\,G_{\alpha,1}(A,B)
\quad\mbox{for $\alpha>1$}.
\end{align*}
These inequalities can be analyzed from the log-majorizations between $\cM_{\alpha,p}$ and
$G_{\alpha,1}$, $R_{\alpha,1/\alpha}$. But it is left open to find a sufficient condition for $\cM_{\alpha,p}$
to be jointly concave/convex, which seems a difficult problem.

The structure of the paper is as follows. In Section \ref{Sec-2} we review the definitions of the
quasi-geometric matrix means $R_{\alpha,p}$, $G_{\alpha,p}$, $SG_{\alpha,p}$, $\widetilde SG_{\alpha,p}$
and $LE_\alpha$ mentioned above as well as the notion of log-majorization. We add a few basic properties
of them to those given in \cite{Hiai}. The main Section \ref{Sec-3} is divided into four subsections. In those
subsections we examine the log-majorizations $\cM_{\alpha,p}\prec_{\log}\cN_{\alpha,q}$ for all pairs
$(\cM,\cN)$, except for the already known cases, from $\{R,G,SG,\widetilde SG,LE\}$. In Section \ref{Sec-4}
we characterize the equality cases in the norm inequalities derived from the log-majorizations shown in
Section \ref{Sec-3} in terms of the commutativity of matrix variables $A,B$, expanding the former results in
\cite{Hi1,Hi3}. In Section \ref{Sec-5} we discuss the joint concavity/convexity of $\Tr\,\cM_{\alpha,p}$ for
$\cM=R,G,SG,\widetilde SG,LE$ though some cases are not new, based on theory of quantum divergences.
Finally in Section \ref{Sec-6} some concluding remarks and open problems are in order. The paper contains
three appendices. Appendix \ref{Sec-A} is the proof of a main part of Theorem \ref{T-4.1}.
Appendix \ref{Sec-B} is the proof of Theorem \ref{T-5.3} and Appendix \ref{Sec-C} is a supplement to
Theorems \ref{T-5.3} and \ref{T-5.8}.

\section{Preliminaries}\label{Sec-2}

For each $n\in\bN$ we write $\bM_n$ for the $n\times n$ complex matrices. Let $\bM_n^+$ and
$\bM_n^{++}$ be the positive semidefinite $n\times n$ matrices and the positive definite $n\times n$
matrices, respectively. We often write $A\ge0$ for $A\in\bM_n^+$ and $A>0$ for $A\in\bM_n^{++}$ (for some
$n\in\bN$). The $n\times n$ identity matrix is denoted by $I_n$ or simply $I$. Let $\Tr$ be the usual trace on
$\bM_n$ and $\|X\|_\infty$ be the operator norm of $X\in\bM_n$. For $A\ge0$ we write $s(A)$ for the support
projection of $A$. We write $A^{-1}$ for the generalized inverse of $A$, i.e., the inverse of $A$ under the
restriction to the support of $A$. Moreover, for $r<0$ we define $A^r:=(A^{-1})^{-r}$ via the generalized inverse.

In this preliminary section we recall several examples of quasi matrix means and a few notions of matrix
orders, in particular, log-majorization. We first enumerate the definitions of quasi-extensions of several binary
matrix means for matrices. Let $0<\alpha<1$ and $p>0$, and let $A,B\in\bM_n^+$.

(i)\enspace
The \emph{quasi $\alpha$-weighted arithmetic mean} is
\[
\cA_{\alpha,p}(A,B):=(A^p\triangledown_\alpha B^p)^{1/p}=((1-\alpha)A^p+\alpha B^p)^{1/p},
\]
which is also called the ($\alpha$-weighted) \emph{matrix $p$-power mean}.

(ii)\enspace
The \emph{$\alpha$-weighted harmonic mean} of $A,B>0$ is
$A!_\alpha B:=((1-\alpha)A^{-1}+\alpha B^{-1})^{-1}$, extended to general $A,B\ge0$ as
$A!_\alpha B:=\lim_{\eps\searrow0}(A+\eps I)!_\alpha(B+\eps I)$. The
\emph{quasi $\alpha$-weighted harmonic mean} is
\[
\cH_{\alpha,p}(A,B):=(A^p!_\alpha B^p)^{1/p}.
\]

(iii)\enspace
The \emph{$\alpha$-geometric mean} of $A,B>0$ is
\begin{align}\label{F-2.1}
A\#_\alpha B:=A^{1/2}(A^{-1/2}BA^{-1/2})^\alpha A^{1/2},
\end{align}
which is extended to $A,B\ge0$ as $A\#_\alpha B:=\lim_{\eps\searrow0}(A+\eps I)\#_\alpha(B+\eps I)$.
The \emph{quasi $\alpha$-weighted geometric mean} is
\begin{align}\label{F-2.2}
G_{\alpha,p}(A,B):=(A^p\#_\alpha B^p)^{1/p}.
\end{align}

(iv)\enspace
The \emph{spectral geometric mean} of $A,B>0$ due to Fiedler and Pt\'ak \cite{FP} is
\[
F(A,B):=(A^{-1}\#B)^{1/2}A(A^{-1}\#B)^{1/2},
\]
which was extended to the $\alpha$-weighted version in \cite{LL} as
\begin{align}\label{F-2.3}
F_\alpha(A,B):=(A^{-1}\#B)^\alpha A(A^{-1}\#B)^\alpha.
\end{align}
The $\alpha$-weighted spectral geometric mean has recently been studied in \cite{KL,GK,GT,GH} where
$F_\alpha(A,B)$ is denoted by $A\natural_\alpha B$. The above definition of $F(A,B)$ is also meaningful
for $A,B\ge0$ with $s(A)\ge s(B)$ where $A^{-1}$ is the generalized inverse. The
\emph{quasi $\alpha$-weighted spectral geometric mean} of $A,B\ge0$ with $s(A)\ge s(B)$ is
\begin{align}\label{F-2.4}
SG_{\alpha,p}(A,B):=F_\alpha(A^p,B^p)^{1/p}.
\end{align}

(v)\enspace
Another weighted version of the spectral geometric mean of $A,B>0$ recently introduced in \cite{DTV} is
\begin{align}\label{F-2.5}
\widetilde F_\alpha(A,B):=(A^{-1}\#_\alpha B)^{1/2}A^{2(1-\alpha)}(A^{-1}\#_\alpha B)^{1/2},
\end{align}
whose quasi-extension is
\begin{align}\label{F-2.6}
\widetilde SG_{\alpha,p}(A,B):=\widetilde F_\alpha(A^p,B^p)^{1/p}.
\end{align}
These $\widetilde F_\alpha(A,B)$ and $\widetilde SG_{\alpha,p}(A,B)$ are meaningful for $A,B\ge0$ with
$s(A)\ge s(B)$ as well.

(vi)\enspace
There is one more familiar quasi matrix mean defined for all $A,B\ge0$ by
\begin{align}\label{F-2.7}
R_{\alpha,p}(A,B):=\bigl(A^{{1-\alpha\over2}p}B^{\alpha p}A^{{1-\alpha\over2}p}\bigr)^{1/p}.
\end{align}
This is called the \emph{R\'enyi mean} in \cite{DF} because $\Tr\,R_{\alpha,p}$ appears as the main
component in the definition of a certain quantum R\'enyi divergence; see Section \ref{Sec-5.2} for more
details.

(vii)\enspace
The \emph{Log-Euclidean mean} of $A,B>0$ is
\[
LE_\alpha(A,B):=\exp((1-\alpha)\log A+\alpha\log B),
\]
which is extended to general $A,B\ge0$ as
\begin{align}\label{F-2.8}
LE_\alpha(A,B):=P_0\exp\{(1-\alpha)P_0(\log A)P_0+\alpha P_0(\log B)P_0\},
\end{align}
where $P_0:=s(A)\wedge s(B)$. There is no quasi-extension of $LE_\alpha$ because of
$LE_\alpha(A^p,B^p)^{1/p}=LE_\alpha(A,B)$ for all $p>0$.

In this paper we will consider quasi-geometric type matrix means $G_{\alpha,p}$, $SG_{\alpha,p}$,
$\widetilde SG_{\alpha,p}$, $R_{\alpha,p}$ and $LE_\alpha$ for not only $0<\alpha<1$ but also $\alpha>1$.
The above definitions in \eqref{F-2.1}--\eqref{F-2.8} are all available even for any $\alpha>0$, $p>0$ and
for all $A,B\ge0$ with $s(A)\ge s(B)$ under conventions of $A^{-1}$ and $A^r$ for $r<0$ mentioned in the
beginning of this section. To be precise, we here fix the domains of those quasi-geometric type means as
follows: The domain of $G_{\alpha,p},R_{\alpha,p},LE_\alpha$ for $0<\alpha<1$ is
$\bigsqcup_{n\ge1}(\bM_n^+\times\bM_n^+)$. The domain of $G_{\alpha,p},R_{\alpha,p},LE_\alpha$ for
$\alpha>1$ and that of $SG_{\alpha,p},\widetilde SG_{\alpha,p}$ for all $\alpha\in(0,\infty)\setminus\{1\}$ are
\[
\bigsqcup_{n\ge1}\{(A,B)\in\bM_n^+\times\bM_n^+:s(A)\ge s(B)\}.
\]
In this way, although our quasi-geometric type means for $\alpha>1$ are a bit less meaningful as matrix
means than those for $0<\alpha<1$, we consider those for all $\alpha\in(0,\infty)\setminus\{1\}$ in this paper.

Several basic facts on the quasi matrix means defined in (i)--(vii) have been collected, though restricted to
the case $0<\alpha<1$, in Section 2.1 of our previous paper \cite{Hiai}. A few more facts are added in the
next proposition and theorem including the case $\alpha>1$.

\begin{proposition}\label{P-2.1}
Let $\alpha\in(0,\infty)\setminus\{1\}$ and $p>0$. Let $\cM_{\alpha,p}$ be any of $G_{\alpha,p}$,
$R_{\alpha,p}$, $SG_{\alpha,p}$, $\widetilde SG_{\alpha,p}$ and $LE_\alpha$.
\begin{itemize}
\item[(1)] $\cM_{\alpha,p}(A^{-1},B^{-1})=\cM_{\alpha,p}(A,B)^{-1}$ for all $A,B>0$.
\item[(2)] For every $(A,B)$ in the domain of $\cM_{\alpha,p}$ we have
$\cM_{\alpha,p}(A,B)=\lim_{\eps\searrow0}\cM_{\alpha,p}(A+\eps I,B+\eps I)$.
\end{itemize}
\end{proposition}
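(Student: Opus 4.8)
The plan is to verify both assertions case by case for the five means listed, exploiting the fact that each is built from the operator functions $\#_\alpha$, $(\cdot)^r$, $\log$, $\exp$ composed with the power-compression $A\mapsto A^p$. For part (1), I would first record the scaling/inversion behavior of the building blocks: $(A\#_\alpha B)^{-1}=A^{-1}\#_\alpha B^{-1}$ (immediate from \eqref{F-2.1} since $(A^{-1/2}BA^{-1/2})^\alpha$ is replaced by its inverse when $A,B$ are inverted and $X\mapsto X^{-1/2}$ intertwines the two), $(A^r)^{-1}=(A^{-1})^r$, $\log(A^{-1})=-\log A$, and $\exp(-H)=(\exp H)^{-1}$. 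Then each mean unwinds directly. For $G_{\alpha,p}$: $G_{\alpha,p}(A^{-1},B^{-1})=((A^{-1})^p\#_\alpha(B^{-1})^p)^{1/p}=((A^p)^{-1}\#_\alpha(B^p)^{-1})^{1/p}=((A^p\#_\alpha B^p)^{-1})^{1/p}=G_{\alpha,p}(A,B)^{-1}$. For $R_{\alpha,p}$ one uses $(XYX)^{-1}=X^{-1}Y^{-1}X^{-1}$ with $X=A^{-(1-\alpha)p/2}$, $Y=B^{-\alpha p}$. For $LE_\alpha$, $\log$ turns inversion into negation, the convex combination negates, and $\exp$ inverts it back. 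For $SG_{\alpha,p}$ and $\widetilde SG_{\alpha,p}$ the same mechanism applies once one notes $A^{-1}\#B\mapsto (A^{-1}\#B)^{-1}=A\#B^{-1}$ under $(A,B)\mapsto(A^{-1},B^{-1})$, and then the outer factors $A^{2(1-\alpha)}$ (resp.\ $A$) and the square-root factors all invert, so the whole sandwich inverts; since everything here is for $A,B>0$ the support condition $s(A)\ge s(B)$ is automatic.

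For part (2), the strategy is continuity of the functional calculus plus the fact that $A+\eps I\searrow A$ and $B+\eps I\searrow B$ in operator norm. The subtlety is that for singular $A,B$ the maps involve generalized inverses, so one cannot simply invoke joint continuity of all building blocks on all of $\bM_n^+$. Instead I would argue as follows. For $G_{\alpha,p}$, $R_{\alpha,p}$ with $0<\alpha<1$ the relevant functions $t\mapsto t^\alpha$, $t\mapsto t^{(1-\alpha)/2}$, $t\mapsto t^{\alpha}$ are continuous on $[0,\infty)$, so $A^p$, $B^p$, and then $A^{p/2}(A^{-p/2}B^pA^{-p/2})^\alpha A^{p/2}$ — rewritten in the form $\lim_{\delta\searrow0}(A^p+\delta)^{1/2}(\cdots)^\alpha(\cdots)^{1/2}$ as in the standard Kubo--Ando definition — pass to the limit; in fact for $G$ and $R$ with $0<\alpha<1$ the identity is essentially the definition of the extension to $\bM_n^+$, and one only needs that the $\eps$-regularization and the Kubo--Ando $\delta$-regularization give the same double limit, which follows from monotonicity: $\eps\mapsto\cM_{\alpha,p}(A+\eps I,B+\eps I)$ is monotone (operator monotone in each variable for these means when $0<\alpha<1$) and bounded, hence convergent, and the limit must coincide with the already-defined value by uniqueness of the extension. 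For the means requiring $s(A)\ge s(B)$ and for $\alpha>1$, where operator monotonicity fails, I would instead reduce to the nonsingular case by compressing to $P_0=s(A)\wedge s(B)=s(A)$: on $\ran P_0$ the compressed matrices $P_0AP_0$, $P_0BP_0$ are positive definite, all the building-block functions are analytic there, and $\eps I$ perturbs the eigenvalues off $\ran P_0$ in a way that, after applying $(\cdot)^{1/p}$ (continuous on $[0,\infty)$) and then projecting, tends to the compressed mean; the eigenvalues of $\cM_{\alpha,p}(A+\eps I,B+\eps I)$ on $(\ran P_0)^\perp$ must go to $0$ because $\det$ and the norm bounds force it, matching $\cM_{\alpha,p}(A,B)$ which is supported on $\ran P_0$.

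The main obstacle is precisely this last point: controlling the $\eps$-regularized means on the orthogonal complement of the common support and showing the off-support eigenvalues vanish in the limit, uniformly enough to conclude norm convergence rather than just weak convergence of spectra. I expect to handle it by a direct estimate — writing $A+\eps I=A\oplus\eps I'$ type block decompositions is not literally available since $A$ and $B$ need not be simultaneously block-diagonal, so instead I would use the continuity of $\cM_{\alpha,p}$ as a function on the \emph{open} set of positive definite matrices together with a squeeze: bound $\cM_{\alpha,p}(A+\eps I,B+\eps I)$ above and below by $\cM_{\alpha,p}$ of nearby definite matrices whose limits are computable, e.g.\ using the operator-norm estimates already implicit in the log-majorization bounds $G_{\alpha,1}\le \cM_{\alpha,p}\le R_{\alpha,1/\alpha}$ (up to the $p$-power normalization) quoted in the introduction, which pin the limit from both sides. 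For $LE_\alpha$ the formula \eqref{F-2.8} already has the compression to $P_0$ built in, so there continuity is cleanest: $P_0(\log(A+\eps I))P_0\to P_0(\log A)P_0$ as $\eps\searrow0$ since $\log$ is continuous and bounded on the spectrum of $P_0(A+\eps I)P_0\ge c>0$ for small $\eps$, and $\exp$ is continuous, so the limit is immediate. I would present $LE_\alpha$ first as the model case, then $G$ and $R$ via monotone convergence, and finally $SG$, $\widetilde{SG}$ by the support-compression argument, citing the relevant facts from \cite{Hiai} wherever they cover the $0<\alpha<1$ range so that only the $\alpha>1$ additions need fresh argument.
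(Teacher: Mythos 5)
Part (1) of your proposal is fine and is essentially what the paper does (the paper dismisses it as immediate from the definitions); the identities $(A\#_\alpha B)^{-1}=A^{-1}\#_\alpha B^{-1}$, $\log(A^{-1})=-\log A$, etc., are correct and suffice, and your $0<\alpha<1$ arguments for $G_{\alpha,p}$ and $R_{\alpha,p}$ in part (2) via monotonicity/squeezing of the two regularizations are also sound.

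The rest of part (2) has genuine gaps exactly where the proposition has content. First, your treatment of $LE_\alpha$ is not correct as stated: for $\eps>0$ the quantity $LE_\alpha(A+\eps I,B+\eps I)$ is the \emph{uncompressed} exponential $\exp\{(1-\alpha)\log(A+\eps I)+\alpha\log(B+\eps I)\}$ (the projection in \eqref{F-2.8} is the identity for positive definite arguments), so convergence of the compressed logarithms $P_0(\log(A+\eps I))P_0$ does not by itself give the claim. One must show that $e^{Z(\eps)}\to P_0e^{Z_0}$ where the $P_0^\perp$-corner of $Z(\eps)$ diverges to $-\infty$ at rate $\log\eps$ while the off-diagonal corners stay bounded; this is precisely the nontrivial step, and the paper invokes a dedicated lemma (\cite[Lemma A.3]{Hiai}) for it. Second, your fallback for the $\alpha>1$ and $SG,\widetilde SG$ cases does not work: the inequalities $\Tr\,G_{\alpha,1}\le\Tr\,\cM_{\alpha,p}\le\Tr\,R_{\alpha,1/\alpha}$ you propose to squeeze with are only \emph{necessary conditions for joint concavity/convexity} derived later (Theorem \ref{T-5.8}); they do not hold for general $\cM_{\alpha,p}$ and $p$, and even if they did, sandwiching between two quantities with \emph{different} limits cannot pin down a limit. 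Likewise the assertion that ``$\det$ and the norm bounds force'' the eigenvalues of $\cM_{\alpha,p}(A+\eps I,B+\eps I)$ on $(\ran P_0)^\perp$ to vanish is unsupported: for $\alpha>1$ one has $\det\cM_{\alpha,p}(A+\eps I,B+\eps I)=(\det(A+\eps I))^{1-\alpha}(\det(B+\eps I))^\alpha$ with a negative exponent on the first factor, so the determinant can tend to $0$, $+\infty$, or a finite value and forces nothing; indeed whether the limit exists at all is the point of the proposition. (Also note $P_0=s(A)\wedge s(B)=s(B)$ here, not $s(A)$.) The paper closes these cases by carrying over the explicit block-matrix estimates of \cite[Proposition 2.2]{Hiai} to $\alpha>1$; some such quantitative control of the off-support blocks is unavoidable, and your sketch does not supply it.
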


\begin{proof}
(1) is easily verified by definition of each $\cM_{\alpha,p}$.

(2)\enspace
For $0<\alpha<1$ the assertion was shown in \cite[Proposition 2.2]{Hiai}. Let $\alpha>1$ and $A,B\ge0$ with
$s(A)\ge s(B)$. Then the proof of \cite[Proposition 2.2]{Hiai} for $SG_{\alpha,p}$ and
$\widetilde SG_{\alpha,p}$ with $0<\alpha<1$ can work for any of $G_{\alpha,p}$, $R_{\alpha,p}$,
$SG_{\alpha,p}$, $\widetilde SG_{\alpha,p}$ with $\alpha>1$ as well. The proof for $LE_\alpha$ is similar to
that of \cite[Lemma 4.1]{HP}, while we here give a short proof using \cite[Lemma A.3]{Hiai}. Let $P_0:=s(B)$
and write for $0<\eps<1$,
\[
Z(\eps):=(1-\alpha)\log(A+\eps I)+\alpha\log(B+\eps I)
=\begin{bmatrix}Z_0(\eps)&Z_2(\eps)\\Z_2^*(\eps)&Z_1(\eps)\end{bmatrix},
\]
where $Z_0(\eps):=P_0Z(\eps)P_0$, $Z_1(\eps):=P_0^\perp Z(\eps)P_0^\perp$ and
$Z_2(\eps):=P_0Z(\eps)P_0^\perp$. Since
\begin{align*}
Z_0(\eps)&=(1-\alpha)P_0(\log(A+\eps I))P_0+\alpha P_0(\log(B+\eps I))P_0, \\
Z_1(\eps)&=(1-\alpha)P_0^\perp(\log(A+\eps I))P_0^\perp+\alpha(\log\eps)P_0^\perp, \\
Z_2(\eps)&=(1-\alpha)P_0(\log(A+\eps I))P_0^\perp,
\end{align*}
it is easy to see that
\begin{align*}
&Z_0(\eps)\to Z_0:=(1-\alpha)P_0(\log A)P_0+\alpha P_0(\log B)P_0\quad\mbox{as $\eps\searrow0$}, \\
&{1\over-\log\eps}\,Z_1(\eps)={\alpha-1\over\log\eps}\,P_0^\perp(\log(A+\eps I))P_0^\perp
-\alpha P_0^\perp\to-\alpha P_0^\perp\quad\mbox{as $\eps\searrow0$}, \\
&\sup\{\|Z_2(\eps)\|_\infty:0<\eps<1\}<\infty.
\end{align*}
Hence we can apply \cite[Lemma A.3]{Hiai} to $Z(\eps)$ with parameter $p:={1\over-\log\eps}$
($\searrow0$ as $\eps\searrow0$) to obtain
\begin{align*}
&LE_\alpha(A+\eps I,B+\eps I)=e^{Z(\eps)} \\
&\quad\to P_0e^{Z_0}=P_0\exp\{(1-\alpha)P_0(\log A)P_0+\alpha P_0(\log B)P_0\}=LE_\alpha(A,B),
\end{align*}
as desired.
\end{proof}

\begin{theorem}\label{T-2.2}
Let $\alpha\in(0,\infty)\setminus\{1\}$ and $p>0$. Let $\cM_{\alpha,p}$ be any of $G_{\alpha,p}$,
$R_{\alpha,p}$, $SG_{\alpha,p}$ and $\widetilde SG_{\alpha,p}$. For every $(A,B)$ in the domain of
$\cM_{\alpha,p}$ we have $LE_\alpha(A,B)=\lim_{p\searrow0}\cM_{\alpha,p}(A,B)$.
\end{theorem}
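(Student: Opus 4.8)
The plan is to reduce everything to the Lie--Trotter type asymptotics $X^p = I + p\log X + o(p)$ as $p\searrow 0$ (valid for $X>0$) and then to check, mean by mean, that the $1/p$-th power of $\cM_\alpha(A^p,B^p)$ converges to $\exp$ of the relevant logarithmic combination. By Proposition \ref{P-2.1}(2) it suffices to prove the assertion for $A,B>0$: indeed, for $(A,B)$ in the domain one has $\cM_{\alpha,p}(A,B)=\lim_{\eps\searrow0}\cM_{\alpha,p}(A+\eps I,B+\eps I)$ and, as shown in the proof of Proposition \ref{P-2.1}(2), $LE_\alpha(A+\eps I,B+\eps I)\to LE_\alpha(A,B)$; a short uniformity argument (or the explicit block computation in that proof) lets one exchange the two limits, so the reduction to positive definite $A,B$ is legitimate. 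From now on fix $A,B>0$ and set $L:=(1-\alpha)\log A+\alpha\log B$, so that the target is $\lim_{p\searrow0}\cM_{\alpha,p}(A,B)=e^{L}=LE_\alpha(A,B)$.

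First I would dispose of $R_{\alpha,p}$ and $G_{\alpha,p}$, which are the classical Lie--Trotter cases: writing $A^p=I+p\log A+O(p^2)$ and $B^p=I+p\log B+O(p^2)$, a direct expansion gives $A^{(1-\alpha)p/2}B^{\alpha p}A^{(1-\alpha)p/2}=I+pL+O(p^2)$, hence $R_{\alpha,p}(A,B)=(I+pL+O(p^2))^{1/p}\to e^{L}$; and similarly $A^p\#_\alpha B^p=(A^p)^{1/2}\big((A^p)^{-1/2}B^p(A^p)^{-1/2}\big)^\alpha (A^p)^{1/2}=I+pL+O(p^2)$, so $G_{\alpha,p}(A,B)\to e^{L}$. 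The only care needed is that the $O(p^2)$ error is uniform on compacta, which follows from smoothness of the functional calculus; this is standard and I would cite or quote it briefly.

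Next come the spectral geometric means. Here the key sub-step is the Lie--Trotter behaviour of $A^{-p}\# B^p$: since $A^{-p}=I-p\log A+O(p^2)$ and $B^p=I+p\log B+O(p^2)$, the $1/2$-weighted geometric mean gives $A^{-p}\# B^p=I+\tfrac p2(\log B-\log A)+O(p^2)$, so $(A^{-p}\# B^p)^{\alpha}=I+\tfrac{\alpha p}{2}(\log B-\log A)+O(p^2)$ and $(A^{-p}\# B^p)^{\alpha}A^p(A^{-p}\# B^p)^{\alpha}=I+p\big(\tfrac{\alpha}{2}(\log B-\log A)+\log A+\tfrac{\alpha}{2}(\log B-\log A)\big)+O(p^2)=I+pL+O(p^2)$, whence $SG_{\alpha,p}(A,B)=F_\alpha(A^p,B^p)^{1/p}\to e^{L}$. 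For $\widetilde SG_{\alpha,p}$ one instead uses $A^{-p}\#_\alpha B^p=I+p\big(-(1-\alpha)\log A+\alpha\log B\big)+O(p^2)$ (the $\alpha$-weighted geometric mean case of the same expansion), so $(A^{-p}\#_\alpha B^p)^{1/2}=I+\tfrac p2(-(1-\alpha)\log A+\alpha\log B)+O(p^2)$ and $A^{2(1-\alpha)p}=I+2(1-\alpha)p\log A+O(p^2)$; multiplying the three factors and collecting the linear term gives again $I+pL+O(p^2)$, hence $\widetilde SG_{\alpha,p}(A,B)\to e^{L}$. In each case the passage from $I+pL+O(p^2)$ to $e^L$ via the $1/p$-th power is the elementary fact that $(I+pX+O(p^2))^{1/p}\to e^{X}$ uniformly for $X$ in a bounded set.

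I expect the main obstacle to be the interchange of limits in the extension from $A,B>0$ to general $(A,B)$ in the domain (the case $s(A)\ge s(B)$ with one or both matrices singular, and $\alpha>1$), since the naive first-order expansions above break down when $A^p$ or $B^p$ fails to tend to $I$ on the whole space. The cleanest route is to avoid a genuine double limit: rather than letting $\eps\searrow0$ first, one applies the block-matrix asymptotic lemma used in the proof of Proposition \ref{P-2.1}(2) (namely \cite[Lemma A.3]{Hiai}) directly to $\log\cM_{\alpha,p}(A,B)$ written in the $s(B)\oplus s(B)^\perp$ decomposition, tracking which block entries stay bounded and which scale like $-\log(\text{small eigenvalue})$; this yields convergence of $\cM_{\alpha,p}(A,B)$ to $P_0\exp\{(1-\alpha)P_0(\log A)P_0+\alpha P_0(\log B)P_0\}$ without any exchange of limits. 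Alternatively, and perhaps more economically for a paper, one notes that $\cM_{\alpha,p}(A,B)$ is jointly continuous in $(A,B)$ on the domain (Proposition \ref{P-2.1}(2)) and sandwiched between comparable means, and invokes a uniform-on-compacta version of the nondegenerate Lie--Trotter estimate; I would present the degenerate case by whichever of these is shorter in the final write-up.
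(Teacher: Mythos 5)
Your computations for $A,B>0$ are correct and coincide with the paper's method: each mean satisfies $\cM_\alpha(A^p,B^p)=I+pL+O(p^2)$ with $L=(1-\alpha)\log A+\alpha\log B$, and the $1/p$-th power then converges to $e^L$. For that case there is nothing to add.

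The genuine gap is the degenerate case, which the theorem explicitly includes (the domain contains all $(A,B)$ with $s(A)\ge s(B)$, possibly singular, and for $0<\alpha<1$ all $A,B\ge0$ for $G$ and $R$). Your reduction ``by Proposition \ref{P-2.1}(2) it suffices to prove the assertion for $A,B>0$'' is not valid as stated: Proposition \ref{P-2.1}(2) gives $\cM_{\alpha,p}(A,B)=\lim_{\eps\searrow0}\cM_{\alpha,p}(A+\eps I,B+\eps I)$ for each \emph{fixed} $p$, and to interchange this with $p\searrow0$ you would need one of the two convergences to be uniform. It is not: the error term in $\cM_{\alpha,p}(A+\eps I,B+\eps I)=I+pL_\eps+O(p^2)$ has a constant controlled by $\|\log(A+\eps I)\|_\infty$ and $\|\log(B+\eps I)\|_\infty$, which blow up like $-\log\eps$ when $A$ or $B$ is singular, so the convergence in $p$ is not uniform in $\eps$. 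You flag this yourself as ``the main obstacle'' and sketch the correct fix --- work directly in the $s(B)\oplus s(B)^\perp$ decomposition --- but you do not carry it out, and the alternative you mention (sandwiching between comparable means) is too vague to close the argument. This is precisely where the paper's proof does its real work: Lemma \ref{L-2.3} establishes the expansion $A^p\#_\alpha B^p=P_0+p\{(1-\alpha)P_0(\log A)P_0+\alpha P_0(\log B)P_0\}+o(p)$ with $P_0=s(B)$ directly for singular $A,B$ (resting on the nontrivial convergence $(A^{-p/2}B^pA^{-p/2})^{1/p}\to P_0e^LP_0$ from the earlier paper), and then each mean is treated by restricting $\log\cM_{\alpha,p}(A,B)$ to the range of $P_0$, with no double limit anywhere. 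Until that step is supplied, your proof only covers $A,B>0$.
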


To prove the theorem we give a lemma. Note that for $0<\alpha<1$ this is contained in \cite[(A.25)]{Hiai}
where $A,B\ge0$ are general and $P_0:=s(A)\wedge s(B)$.

\begin{lemma}\label{L-2.3}
Let $\alpha,p$ be as in Theorem \ref{T-2.2}. For every $A,B\ge0$ with $s(A)\ge s(B)$ we have
\[
A^p\#_\alpha B^p=P_0+p\{(1-\alpha)P_0(\log A)P_0+\alpha P_0(\log B)P_0\}+o(p)
\quad\mbox{as $p\searrow0$},
\]
where $P_0:=s(B)$.
\end{lemma}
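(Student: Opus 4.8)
The plan is to reduce to the positive definite case and then Taylor-expand the spectral geometric mean $F_\alpha$ at the identity. First I would observe that by Proposition \ref{P-2.1}(2) we may pass from general $A,B\ge0$ with $s(A)\ge s(B)$ to $A,B>0$ (replacing $A,B$ by $A+\eps I$, $B+\eps I$, with $P_0=I$); the point is that the error terms in the claimed asymptotics are controlled uniformly enough that one can interchange the $\eps\searrow0$ and $p\searrow0$ limits. For $0<\alpha<1$ the expansion is already \cite[(A.25)]{Hiai}, so the real content is $\alpha>1$. For $A,B>0$ write $A^p = I + p\log A + o(p)$ and $B^p = I + p\log B + o(p)$ as $p\searrow0$ (uniformly on the relevant compact sets), so $A^{-p/2}B^pA^{-p/2} = I + p(\log B - \log A) + o(p)$.

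Next I would use the scalar expansion $(1+x)^\alpha = 1 + \alpha x + o(x)$, valid for \emph{all} real $\alpha$, applied via the functional calculus to the positive definite matrix $C_p := A^{-p/2}B^pA^{-p/2} = I + pL + o(p)$ where $L := \log B - \log A$. This gives $C_p^\alpha = I + \alpha p L + o(p)$. Conjugating by $A^{p/2} = I + \tfrac{p}{2}\log A + o(p)$ on both sides yields
\[
A^p\#_\alpha B^p = A^{p/2}C_p^\alpha A^{p/2}
= I + p\bigl(\log A + \alpha(\log B - \log A)\bigr) + o(p)
= I + p\bigl((1-\alpha)\log A + \alpha\log B\bigr) + o(p),
\]
which is exactly the claim with $P_0=I$. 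The matrix-analytic justification that the $o(p)$ remainder in the functional calculus step is genuinely $o(p)$ in operator norm (and not merely entrywise) can be supplied by an integral representation of the power function, or simply by noting that $t\mapsto t^\alpha$ is $C^1$ on a neighborhood of $1$ and invoking a standard perturbation bound for $C^1$ functions applied to $C_p\to I$; alternatively one can cite the corresponding estimate already used in \cite{Hiai}.

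Finally, to return to general $A,B\ge0$ with $s(A)\ge s(B)$ and $P_0=s(B)$, I would apply the above to $A_\eps:=A+\eps I$, $B_\eps:=B+\eps I$ and track the block structure with respect to $P_0$, just as in the proof of Proposition \ref{P-2.1}(2): the $P_0$-corner of $\log(B+\eps I)$ converges to $P_0(\log B)P_0$, the $P_0^\perp$-part blows up only logarithmically, and the off-diagonal part stays bounded, so in the limit only the $P_0$-compression survives and one recovers the stated formula with the projector $P_0$ in front. The main obstacle I anticipate is not any single computation but the careful bookkeeping of the two simultaneous limits $\eps\searrow0$ and $p\searrow0$: one must ensure the $o(p)$ term is uniform in $\eps$ on a suitable range, or else argue the limits can be taken in a fixed order, which is precisely the kind of estimate \cite[Lemma A.3]{Hiai} is designed to handle. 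Once Lemma \ref{L-2.3} is in hand, Theorem \ref{T-2.2} follows for $G_{\alpha,p}$ immediately by raising to the $1/p$ power and using $(I+pT+o(p))^{1/p}\to e^T$; for $R_{\alpha,p}$, $SG_{\alpha,p}$ and $\widetilde SG_{\alpha,p}$ one expands each defining expression the same way (for $SG$ and $\widetilde SG$ using Lemma \ref{L-2.3} itself on the inner $\#$ or $\#_\alpha$ factor) and checks that the first-order term is the same $(1-\alpha)\log A+\alpha\log B$ (compressed by $P_0$).
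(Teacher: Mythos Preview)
Your argument for strictly positive $A,B$ is fine and matches in spirit what the paper does. The gap is in the passage to general $A,B\ge0$ with $s(A)\ge s(B)$. You propose to prove the invertible case first and then descend via $A_\eps,B_\eps$, but the expansion $A_\eps^p\#_\alpha B_\eps^p=I+pK_\eps+o(p)$ has $K_\eps=(1-\alpha)\log A_\eps+\alpha\log B_\eps$ blowing up like $\alpha(\log\eps)P_0^\perp$ as $\eps\searrow0$, so the first-order term itself does not converge and the $o(p)$ cannot be uniform in $\eps$. Proposition~\ref{P-2.1}(2) is only continuity at fixed $p$ and says nothing about uniformity of the remainder; your own remark that this interchange of limits is ``the main obstacle'' is correct, and invoking \cite[Lemma~A.3]{Hiai} by analogy with the proof of Proposition~\ref{P-2.1}(2) does not close it---that lemma controls $e^{Z(\eps)}$ for a block matrix $Z(\eps)$ whose diagonal block explodes, not a first-order-in-$p$ asymptotic with singular coefficient.

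The paper avoids this entirely by never approximating: it works directly with $A,B\ge0$, $s(A)\ge s(B)$, using the already-known limit $(A^{-p/2}B^pA^{-p/2})^{1/p}\to P_0e^L$ with $L:=P_0(-\log A)P_0+P_0(\log B)P_0$ (this is \cite[(A.2)]{Hiai}). Writing $A^{-p/2}B^pA^{-p/2}=P_0(e^L+Y(p))^pP_0$ with $Y(p)\to0$ and Taylor-expanding $\log(e^L+Y(p))$ at $e^L$ gives $A^{-p/2}B^pA^{-p/2}=P_0+pL+o(p)$ and hence $(A^{-p/2}B^pA^{-p/2})^\alpha=P_0+\alpha pL+o(p)$ with no $\eps$ in sight. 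The paper also needs, and proves, the non-obvious identity $A^p\#_\alpha B^p=P_0(A^p\#_\alpha B^p)P_0$ for $\alpha>1$ (via $A^p\#_\alpha B^p=B^pA^{-p/2}(A^{-p/2}B^pA^{-p/2})^{\alpha-1}A^{-p/2}$), which your block-structure remark does not address. So the correct fix is not to shore up the $\eps$-limit but to expand intrinsically on the range of $P_0$ from the start.
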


\begin{proof}
Let $L:=P_0(-\log A)P_0+P_0(\log B)P_0$ and set $Y(p):=(A^{-p/2}B^pA^{-p/2})^{1/p}-P_0e^LP_0$. Then by
\cite[(A.2)]{Hiai} we have $Y(p)\to0$ as $p\searrow0$. We write
\begin{equation}\label{F-2.9}
\begin{aligned}
A^{-p/2}B^pA^{-p/2}&=P_0(e^L+Y(p))^pP_0=P_0\bigl\{\exp\bigl[p\log(e^L+Y(p))\bigr]\bigr\}P_0 \\
&=P_0\bigl\{I+p\log(e^L+Y(p))+o(p)\bigr\}P_0=P_0+pL+o(p)\quad\mbox{as $p\searrow0$},
\end{aligned}
\end{equation}
where the last equality follows since Taylor's theorem (see, e.g., \cite[Theorem 2.3.1]{Hi}) gives
\[
\log(e^L+Y(p))=L+D(\log x)(e^L)(Y(p))+o(1)=L+o(1)
\]
with the Fr\'echet derivative $D(\log x)(e^L)$ of the functional calculus by $\log x$ ($x>0$) at $e^L$ (see
\cite[p.~159]{Hi}). Hence the Taylor expansion implies that
$(A^{-p/2}B^pA^{-p/2})^\alpha=P_0+\alpha pL+o(p)$ as $p\searrow0$. Moreover, note that
$A^p\#_\alpha B^p=P_0(A^p\#_\alpha B^p)P_0$. This is obvious for $0<\alpha<1$ (even for general
$A,B\ge0$). For $\alpha>1$ this can be seen from
\begin{align*}
A^p\#_\alpha B^p&=A^{p/2}(A^{-p/2}B^pA^{-p/2})^\alpha A^{p/2} \\
&=B^pA^{-p/2}(A^{-p/2}B^pA^{-p/2})^{\alpha-1}A^{p/2}
=A^{p/2}(A^{-p/2}B^pA^{-p/2})^{\alpha-1}A^{-p/2}B^p.
\end{align*}
Therefore, we have
\begin{align*}
A^p\#_\alpha B^p&=P_0\bigl\{A^{p/2}(A^{-p/2}B^pA^{-p/2})^\alpha A^{p/2}\bigr\}P_0 \\
&=P_0\Bigl(s(A)+{p\over2}\,s(A)\log A+o(p)\Bigr)(P_0+pL+o(p))
\Bigl(s(A)+{p\over2}\,s(A)\log A+o(p)\Bigr)P_0 \\
&=P_0+pP_0(\log A)P_0+\alpha pL+o(p) \\
&=P_0+p\{(1-\alpha)P_0(\log A)P_0+\alpha P_0(\log B)P_0\}+o(p)\quad\mbox{as $p\searrow0$},
\end{align*}
as asserted.
\end{proof}

\begin{proof}[Proof of Theorem \ref{T-2.2}]
For $0<\alpha<1$ the assertion was shown in \cite[Theorem 2.3]{Hiai}. For $\alpha>1$ let $A,B\ge0$ with
$s(A)\ge s(B)$. For $G_{\alpha,p}$ Lemma \ref{L-2.3} gives
$G_{\alpha,p}(A,B)^p=P_0+pK+o(p)$, where $K:=(1-\alpha)P_0(\log A)P_0+\alpha P_0(\log B)P_0$. This
implies that with $\cH_0$ being the range of $P_0$,
\[
\log G_{\alpha,p}(A,B)\big|_{\cH_0}={1\over p}\log\{P_0+pK+o(p)\}\big|_{\cH_0}=(K+o(1))|_{\cH_0},
\]
showing the result for $G_{\alpha,p}$. For $R_{\alpha,p}$ apply \eqref{F-2.9} to $A^{\alpha-1}$ and
$B^\alpha$ in place of $A,B$ to have
$A^{{1-\alpha\over2}p}B^{\alpha p}A^{{1-\alpha\over2}p}=P_0+pK+o(p)$. This implies that
$\log R_{\alpha,p}(A,B)\big|_{\cH_0}=(K+o(1))|_{\cH_0}$, showing the result for $R_{\alpha,p}$.
The proof for $SG_{\alpha,p}$ is similar to that of \cite[Theorem 2.3]{Hiai} for $0<\alpha<1$. Finally, for
$\widetilde SG_{\alpha,p}$ by Lemma \ref{L-2.3} we have $A^{-p}\#_\alpha B^p=P_0+p\widetilde K+o(p)$,
where $\widetilde K:=(1-\alpha)P_0(-\log A)P_0+\alpha P_0(\log B)P_0$. Therefore,
\begin{align*}
&(A^{-p}\#_\alpha B^p)^{1/2}A^{2(1-\alpha)p}(A^{-p}\#_\alpha B^p)^{1/2} \\
&\qquad=\Bigl(P_0+{p\over2}\,\widetilde K+o(p)\Bigr)(s(A)+2(1-\alpha)ps(A)\log A+o(p))
\Bigl(P_0+{p\over2}\,\widetilde K+o(p)\Bigr) \\
&\qquad=P_0+2(1-\alpha)pP_0(\log A)P_0+p\widetilde K+o(p)=P_0+pK+o(p),
\end{align*}
which shows the result for $\widetilde SG_{\alpha,p}$.
\end{proof}

In our previous paper \cite{Hiai} we discussed arithmetic-geometric type inequalities for quasi-geometric
type matrix means in several different matrix orderings varying from the strongest Loewner order to the
weakest order determined by trace inequality. But in the present paper we are mostly concerned with
log-majorizations for quasi-geometric type matrix means stated in (iii)--(vii) above.

Let $X,Y\in\bM_n^+$. The most standard and the strongest order between $X,Y$ is the the
\emph{Loewner order} $X\le Y$, i.e., $Y-X\ge0$. Let $\lambda(X)=(\lambda_1(X),\dots,\lambda_n(X))$ be
the eigenvalues of $X$ in decreasing order with multiplicities. The \emph{entrywise eigenvalue order}
denoted as $X\le_\lambda Y$ is defined if $\lambda_i(X)\le\lambda_i(Y)$ for each $i=1,\dots,n$. The
\emph{weak log-majorization} $X\prec_{w\log}Y$ means that
\[
\prod_{i=1}^k\lambda_i(X)\le\prod_{i=1}^k\lambda_i(Y),\qquad1\le k\le n,
\]
and the \emph{log-majorization} $X\prec_{\log}Y$ means that $X\prec_{w\log}Y$ and
$\prod_{i=1}^n\lambda_i(X)=\prod_{i=1}^n\lambda_i(Y)$, i.e., $\det X=\det Y$. Details on
(weak) log-majorization are found in \cite{An2}, \cite[Chap.~II]{Bh} and \cite{MOA}. Some basic properties of
several matrix orderings including $\le,\le_\lambda,\prec_{(w)\log}$ mentioned above are also found in
\cite[Sec.~2.2]{Hiai}.

Concerning the quasi-geometric type matrix means in (iii)--(vii) above, some general facts are in order.

\begin{remark}\label{R-2.4}\rm
(1)\enspace
Let $(\cM,\cN)$ be any pair from $\{R,G,SG,\widetilde SG,LE\}$, and let $\alpha,\beta\in(0,\infty)\setminus\{1\}$
and $p,q>0$. When $\alpha\ne\beta$, $\cM_{\alpha,p}$ and $\cN_{\beta,q}$ are not definitively comparable
even for positive scalar variables, as explained in \cite[Remark 2.7(1)]{Hiai}. So we only compare between
$\cM_{\alpha,p}$ and $\cN_{\alpha,q}$ under the same weight parameter $\alpha$.

(2)\enspace
Let $\cM_{\alpha,p}$ and $\cN_{\alpha,q}$ be as in (1), and let $A,B>0$. Since
\[
\det\cM_{\alpha,p}(A,B)=\det\cN_{\alpha,q}(A,B)=(\det A)^{1-\alpha}(\det B)^\alpha
\]
independently of $p,q>0$, we have $\cM_{\alpha,p}(A,B)\prec_{\log}\cN_{\alpha,q}$ as long as
$\cM_{\alpha,p}(A,B)\prec_{w\log}\cN_{\alpha,q}(A,B)$. Furthermore, if
$\cM_{\alpha,p}(A,B)\le_\lambda\cN_{\alpha,q}(A,B)$, then we must have
$\lambda(\cM_{\alpha,p}(A,B))=\lambda(\cN_{\alpha,q}(A,B))$. These facts are reasons why we consider
only the log-majorization $\cM_{\alpha,p}\prec_{\log}\cN_{\alpha,q}$ in this paper.

(3)\enspace
For any $p>0$ and $A,B\ge0$ we note that
\[
\lambda(R_{1/2,2p}(A,B))=\lambda^{1/p}(A^{p/2}B^pA^{p/2})
=\lambda^{1/p}(B^{p/2}A^pB^{p/2})=\lambda(SG_{1/2,p}(A,B))
\]
thanks to \cite[Theorem 3.2, Item 8]{FP} and that $SG_{1/2,p}(A,B)=\widetilde SG_{1/2,p}(A,B)$ by definitions.
Moreover, for $A,B\ge0$ with $s(A)\ge s(B)$ we have
\[
\lambda(R_{2,p}(A,B))=\lambda^{1/p}(A^{-p/2}B^{2p}A^{-p/2})
=\lambda^{1/p}(B^pA^{-p}B^p)=\lambda(G_{2,p}(A,B)).
\]
Hence the three pairs $(R_{1/2,2p},SG_{1/2,p})$, $(R_{2,p},G_{2,p})$ and
$(SG_{1/2,p},\widetilde SG_{1/2,p})$ are rather trivial and exceptional cases in our study.

(4)\enspace
It is simple to see that if $X,Y\in\bM_n^+$ and $\det X=\det Y$, then
$X\prec_{\log}Y$$\iff$$\lambda_1(X)\le\lambda_1(Y)$$\iff$$\Tr\,X\le\Tr\,Y$. Therefore, for any pair
$(\cM_{\alpha,p},\cN_{\alpha,q})$ as in (1), if $(A,B)\in\bM_2^+\times\bM_2^+$ is in the common
domain of $\cM_{\alpha,p}$ and $\cN_{\alpha,q}$, then the following conditions are equivalent:
(a) $\cM_{\alpha,p}(A,B)\prec_{\log}\cN_{\alpha,q}(A,B)$;
(b) $\lambda_1(\cM_{\alpha,p}(A,B))\le\lambda_1(\cN_{\alpha,q}(A,B))$;
(c) $\Tr\,\cM_{\alpha,p}(A,B)\le\Tr\,\cN_{\alpha,q}(A,B)$. This simplifies the log-majorization between
$\cM_{\alpha,p}$ and $\cN_{\alpha,q}$ when restricted to $2\times2$ matrices.

(5)\enspace
For any pair $(\cM_{\alpha,p},\cN_{\alpha,q})$ except for the three cases mentioned in (3), note that there
exist $A,B\in\bM_2^{++}$ such that $\cM_{\alpha,p}(A,B)\not\le_\lambda\cN_{\alpha,q}(A,B)$. Indeed,
assume that this is not the case. Then by (2) we must have
$\lambda(\cM_{\alpha,p}(A,B))=\lambda(\cN_{\alpha,q}(A,B))$ for all $A,B\in\bM_2^{++}$. But, as verified
below, this fails to hold except for the above three cases and the trivial case of $\cM=\cN$ and $p=q$.
\end{remark}

For $x,y>0$ and $\theta\in\bR$ we define $2\times2$ positive definite matrices by
\begin{align}\label{F-2.10}
A_0:=\begin{bmatrix}1&0\\0&x\end{bmatrix},\qquad
B_\theta:=\begin{bmatrix}\cos\theta&-\sin\theta\\\sin\theta&\cos\theta\end{bmatrix}
\begin{bmatrix}1&0\\0&y\end{bmatrix}
\begin{bmatrix}\cos\theta&\sin\theta\\-\sin\theta&\cos\theta\end{bmatrix}.
\end{align}
These $A_0,B_\theta$ were repeatedly utilized in \cite{Hiai} and are also useful in the present paper. The
next lemma is used to verify the assertion of Remark \ref{R-2.4}(5), and it will be also used in several places
in Section \ref{Sec-3}. 

\begin{lemma}\label{L-2.5}
Let $\alpha\in(0,\infty)\setminus\{1\}$ and $p>0$. Let $A_0,B_\theta\in\bM_2^{++}$ be given in \eqref{F-2.10}
with $y=x\in(0,1)$. Then we have
\begin{align}
\lambda_1(R_{\alpha,p}(A_0,B_\theta))
&=1+\theta^2\,{-1-x^p+x^{\alpha p}+x^{(1-\alpha)p}\over p(1-x^p)}+o(\theta^2), \label{F-2.11}\\
\lambda_1(G_{\alpha,p}(A_0,B_\theta))
&=1+\theta^2\,{\alpha(1-\alpha)\over2p}(x^p-x^{-p})+o(\theta^2), \label{F-2.12}\\
\lambda_1(SG_{\alpha,p}(A_0,B_\theta))
&=1-\theta^2\,{2\alpha(1-\alpha)\over p}\cdot{1-x^p\over1+x^p}+o(\theta^2), \label{F-2.13}\\
\lambda_1(\widetilde SG_{\alpha,p}(A_0,B_\theta))
&=1-{\theta^2\over p}\biggl\{\alpha\,{1-x^p\over1+x^p}
+{x^p+x^{2p}-x^{(2\alpha+1)p}-x^{2(1-\alpha)p}\over(1-x^p)(1+x^p)^2}\biggr\}+o(\theta^2), \label{F-2.14}\\
\lambda_1(LE_\alpha(A_0,B_\theta))
&=1+\theta^2\alpha(1-\alpha)\log x+o(\theta^2). \label{F-2.15}
\end{align}
\end{lemma}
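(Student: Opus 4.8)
The plan is to compute the top eigenvalue $\lambda_1(\cM_{\alpha,p}(A_0,B_\theta))$ as a function of $\theta$ near $\theta=0$ for each of the five means, and extract the second-order Taylor coefficient. The key observation is that at $\theta=0$ we have $B_0=A_0=\diag(1,x)$, so all the means coincide with $\diag(1,x)$ and in particular $\cM_{\alpha,p}(A_0,B_0)=\diag(1,x)$ has eigenvalues $1$ and $x<1$; since the top eigenvalue $1$ is simple, analytic perturbation theory guarantees $\lambda_1(\cM_{\alpha,p}(A_0,B_\theta))$ is a smooth (indeed analytic) function of $\theta$ with a Taylor expansion $1 + c_1\theta + c_2\theta^2 + o(\theta^2)$, and by the symmetry $\theta\mapsto -\theta$ (which corresponds to an orthogonal conjugation of $B_\theta$ and hence does not change eigenvalues of the means — all our means are invariant under simultaneous unitary conjugation of both arguments) we get $c_1=0$. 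So only the $\theta^2$-coefficient needs to be found.

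The main computational device is first-order perturbation theory for a simple eigenvalue, applied twice (or equivalently second-order perturbation theory). Write $B_\theta = B_0 + \theta \dot B + \tfrac{\theta^2}{2}\ddot B + o(\theta^2)$ with $\dot B = \begin{bmatrix}0 & 1-x\\ 1-x & 0\end{bmatrix}$ and $\ddot B$ the appropriate second derivative; each mean $\cM_{\alpha,p}(A_0,B_\theta)$ is then itself expanded to second order in $\theta$, say $M_0 + \theta M_1 + \theta^2 M_2 + o(\theta^2)$ with $M_0=\diag(1,x)$, and standard second-order perturbation theory gives
\begin{align*}
\lambda_1 = 1 + \theta^2\Bigl(\<e_1,M_2 e_1\> + \frac{|\<e_1,M_1 e_2\>|^2}{1-x}\Bigr) + o(\theta^2),
\end{align*}
where $e_1,e_2$ are the standard basis vectors (eigenvectors of $M_0$ for eigenvalues $1,x$) and the $M_1$ term contributes because the $\theta$-linear part is off-diagonal. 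The work is therefore in computing $M_1$ and $M_2$ for each of $R_{\alpha,p}$, $G_{\alpha,p}$, $SG_{\alpha,p}$, $\widetilde SG_{\alpha,p}$, $LE_\alpha$. For $R_{\alpha,p}$ one expands $A_0^{(1-\alpha)p/2}B_\theta^{\alpha p}A_0^{(1-\alpha)p/2}$ using the Daleckii--Krein formula for the Fréchet derivative of $t\mapsto t^{\alpha p}$ at $\diag(1,x)$ (the divided-difference matrix with entries $(1-x^{\alpha p})/(1-x)$ off-diagonal), then take the $1/p$-power, again via its Fréchet expansion; the off-diagonal parts of $A_0^{(1-\alpha)p/2}$ vanish since $A_0$ is diagonal, which simplifies things. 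For $G_{\alpha,p}(A_0,B_\theta)=(A_0^{p/2}(A_0^{-p/2}B_\theta^p A_0^{-p/2})^\alpha A_0^{p/2})^{1/p}$ one chains the expansions of $t\mapsto t^p$, conjugation by the diagonal $A_0^{\mp p/2}$, $t\mapsto t^\alpha$, and $t\mapsto t^{1/p}$. For $SG_{\alpha,p}$ and $\widetilde SG_{\alpha,p}$ one first expands $A_0^{\mp p}\#_{(\alpha)}B_\theta^p$ and then the sandwich with $A_0$ to suitable powers; here the structure $F_\alpha(A,B)=(A^{-1}\#B)^\alpha A (A^{-1}\#B)^\alpha$ means one needs the second-order expansion of $A_0^{-p}\# B_\theta^p$, which itself requires the derivative formulas for the geometric mean (these are standard, e.g.\ via the integral representation or the formula $D(\#)$). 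For $LE_\alpha$ the computation is cleanest: $\log B_\theta = \log(\diag(1,x))$ conjugated by the rotation, and one directly expands $\exp((1-\alpha)\log A_0 + \alpha \log B_\theta)$ to second order using the Fréchet derivative of $\exp$, giving the particularly simple coefficient $\alpha(1-\alpha)\log x$.

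I expect the main obstacle to be bookkeeping rather than conceptual: each mean is a composition of three to five operator-function steps, and at each step one must carry both the linear ($\theta$) and quadratic ($\theta^2$) terms, being careful that a linear off-diagonal term fed into the next Fréchet derivative produces a quadratic diagonal contribution (this is exactly the $|\<e_1,M_1e_2\>|^2/(1-x)$ mechanism, but it can also appear internally before the final power is taken). The $\widetilde SG_{\alpha,p}$ case in \eqref{F-2.14} is the messiest because it combines the weighted geometric mean $\#_\alpha$ with the asymmetric power $A^{2(1-\alpha)p}$, producing the composite rational expression shown; I would double-check it by specializing $\alpha=1/2$ and confirming agreement with \eqref{F-2.13} via $SG_{1/2,p}=\widetilde SG_{1/2,p}$ (Remark \ref{R-2.4}(3)), and by specializing $x\to 1$ to check that all coefficients tend to $0$. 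A further sanity check is the inversion symmetry of Proposition \ref{P-2.1}(1): replacing $x$ by $1/x$ (equivalently $A_0,B_\theta$ by their inverses) should send $\lambda_1$ to $1/\lambda_1$ of the original up to reordering, which at second order forces the coefficient to be odd under $x\mapsto 1/x$ after the appropriate normalization — a useful consistency test for all five formulas.
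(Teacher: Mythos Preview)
Your proposal is correct and follows essentially the same route as the paper: the paper's proof merely cites the second-order eigenvalue perturbation formula \cite[Lemma~3.6]{Hiai} applied to the explicit $2\times2$ matrix expansions of each mean already worked out in \cite{Hiai} (Lemmas~3.4, 3.5, 4.17 and the proofs of Theorems~4.29 and 4.37 there, specialized to $y=x$), which is exactly the Fr\'echet-derivative/Daleckii--Krein chain and perturbation formula $\lambda_1=1+\theta^2\bigl(\langle e_1,M_2e_1\rangle+|\langle e_1,M_1e_2\rangle|^2/(1-x)\bigr)+o(\theta^2)$ that you describe. Your parity argument for $c_1=0$ and the proposed consistency checks are sound additions, but the underlying computation is the same as the paper's, with the actual arithmetic deferred in both cases.
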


\begin{proof}
We can easily compute all the expressions in the lemma by applying \cite[Lemma 3.6]{Hiai} to the relevant
situations discussed in \cite{Hiai}. See \cite[Lemmas 3.4, 3.5 and 4.17]{Hiai} (with $y=x$) for \eqref{F-2.15},
\eqref{F-2.11} and \eqref{F-2.12} respectively, and see the proofs of \cite[Theorems 4.29 and 4.37]{Hiai} for
\eqref{F-2.13} and \eqref{F-2.14} respectively, while details of computations are left to the reader. Also we
note that all the relevant expressions in \cite{Hiai} for $0<\alpha<1$ remain valid for all
$\alpha\in(0,\infty)\setminus\{1\}$.
\end{proof}

To verify Remark \ref{R-2.4}(5) more explicitly, we here include brief discussions for completeness.
Consider $A_0,B_\theta\in\bM_2^{++}$ as in Lemma \ref{L-2.5}. First let $(\cM_{\alpha,p},\cM_{\alpha,q})$
be given for $\cM\in\{R,G,SG,\widetilde SG\}$. Assume that
$\lambda_1(\cM_{\alpha,p}(A_0,B_\theta))=\lambda_1(\cM_{\alpha,q}(A_0,B_\theta))$ for all $x\in(0,1)$,
which implies from \eqref{F-2.11}--\eqref{F-2.15} that ${-1-x^p+x^{\alpha p}+x^{(1-\alpha)p}\over p(1-x^p)}=
{-1-x^q+x^{\alpha q}+x^{(1-\alpha)q}\over q(1-x^q)}$ etc. Letting $x\searrow0$ gives $p=q$ in each case.
Next let $(\cM_{\alpha,p},\cN_{\alpha,q})$ be given for $\cM\ne\cN$, and assume that
$\lambda_1(\cM_{\alpha,p}(A_0,B_\theta))=\lambda_1(\cN_{\alpha,q}(A_0,B_\theta))$ for all $x\in(0,1)$.
When $(\cM_{\alpha,p},\cN_{\alpha,q})=(R_{\alpha,p},G_{\alpha,q})$, by \eqref{F-2.11} and \eqref{F-2.12}
we have
\begin{align}\label{F-2.16}
{-1-x^p+x^{\alpha p}+x^{(1-\alpha)p}\over p(1-x^p)}
={\alpha(1-\alpha)\over2q}(x^q-x^{-q}),\qquad x\in(0,1).
\end{align}
For $0<\alpha<1$, letting $x\searrow0$ gives $-{1\over p}=-\infty$, a contradiction. For $\alpha>1$, we have
$(1-\alpha)p=-q$ and ${1\over p}={\alpha(\alpha-1)\over2q}$ by comparing the leading terms as $x\searrow0$
of both sides of \eqref{F-2.16}. Hence $\alpha=2$ and $p=q$, which is a case excluded. For other pairs the
discussions are more or less similar, so we omit the details.

\section{Log-majorizations}\label{Sec-3}

Let us begin by surveying the log-majorizations known so far between quasi-geometric type means
$R_{\alpha,p}$, $G_{\alpha,p}$ and $S G_{\alpha,p}$ in the following theorem.

\begin{theorem}\label{T-3.1}
Let $\alpha\in(0,\infty)\setminus\{1\}$, $p,q>0$ and $A,B>0$.
\begin{itemize}
\item[(a)] $R_{\alpha,p}(A,B)\prec_{\log}R_{\alpha,q}(A,B)$ for any $\alpha\in(0,\infty)\setminus\{1\}$ if
$p\le q$.
\item[(b)] $G_{\alpha,q}(A,B)\prec_{\log}G_{\alpha,p}(A,B)$ if $0<\alpha<1$ and $p\le q$.
\item[(c)] $G_{\alpha,p}(A,B)\prec_{\log}G_{\alpha,q}(A,B)$ if $1<\alpha\le2$ and $p\le q$.
\item[(d)] $G_{\alpha,p}(A,B)\prec_{\log}G_{\alpha,q}(A,B)$ if $\alpha\ge2$ and
$p/q\le{\alpha\over2(\alpha-1)}$.
\item[(e)] $G_{\alpha,p}(A,B)\prec_{\log}R_{\alpha,q}(A,B)$ either if $0<\alpha<1$ and $p,q>0$ are arbitrary,
or if $\alpha>1$ and $p/q\le\min\{\alpha/2,\alpha-1\}$.
\item[(f)] $R_{\alpha,q}(A,B)\prec_{\log}G_{\alpha,p}(A,B)$ if $\alpha>1$ and
$p/q\ge\max\{\alpha/2,\alpha-1\}$.
\item[(g)] $R_{\alpha,q}(A,B)\prec_{\log}SG_{\alpha,p}(A,B)$ if $0<\alpha<1$ and
$p/q\ge\max\{\alpha,1-\alpha\}$.
\item[(h)] $SG_{\alpha,p}(A,B)\prec_{\log}R_{\alpha,q}(A,B)$ either if $1<\alpha\le3/2$ and $p=q$,
or if $\alpha\in(0,1/2]\cup(1,2]$ and $p/q=\alpha$.
\end{itemize}
\end{theorem}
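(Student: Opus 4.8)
The plan is to obtain Theorem~\ref{T-3.1} not by a single argument but by assembling the classical log-majorizations recalled in the introduction together with the transitivity of $\prec_{\log}$; the determinant identity $\det\cM_{\alpha,p}(A,B)=\det\cN_{\alpha,q}(A,B)=(\det A)^{1-\alpha}(\det B)^\alpha$ from Remark~\ref{R-2.4}(2) will be used throughout to pass freely between weak log-majorization and log-majorization. Whenever a sufficiency direction has to be produced from scratch, the standard route is: by the multiplicativity of the relevant mean under anti-symmetric tensor powers, $\cM_{\alpha,p}(A,B)\prec_{\log}\cN_{\alpha,q}(A,B)$ for all $A,B>0$ reduces to the operator-norm inequality $\|\cM_{\alpha,p}(A,B)\|_\infty\le\|\cN_{\alpha,q}(A,B)\|_\infty$, equivalently (by positive scaling) to the normalized implication $\cN_{\alpha,q}(A,B)\le I\implies\cM_{\alpha,p}(A,B)\le I$.

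For (a), write $C:=A^{1-\alpha}$ and $D:=B^\alpha$; then $R_{\alpha,p}(A,B)=(C^{p/2}D^pC^{p/2})^{1/p}$, so (a) is precisely Araki's log-majorization \eqref{F-1.1} applied to the pair $(C,D)$, and since the weight $\alpha$ has been absorbed into $C,D$ this works for every $\alpha\in(0,\infty)\setminus\{1\}$ and every $0<p\le q$. Item (b) is the Ando--Hiai log-majorization \eqref{F-1.2} (written in the normalization $p\le q$ by interchanging the roles of $p$ and $q$), and item (c) is the Kian--Seo log-majorization \eqref{F-1.3} restricted to $1<\alpha\le2$. Items (e) and (f) are the two characterizations of $G$--$R$ log-majorization established in \cite{Hi2}, and item (g) is the $R$--$SG$ log-majorization of \cite{GT}; alternatively, each of these sufficiency statements can be re-derived through the normalized-implication scheme above combined with the Ando--Hiai/Kian--Seo inequalities.

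It then remains only to deduce (d), the range $\alpha\ge2$, and this follows by chaining (e) and (f). For $\alpha\ge2$ one has $\min\{\alpha/2,\alpha-1\}=\alpha/2$ and $\max\{\alpha/2,\alpha-1\}=\alpha-1$. Set $r:=q/(\alpha-1)$. Then (f) gives $R_{\alpha,r}(A,B)\prec_{\log}G_{\alpha,q}(A,B)$, since $q/r=\alpha-1=\max\{\alpha/2,\alpha-1\}$; and (e) gives $G_{\alpha,p}(A,B)\prec_{\log}R_{\alpha,r}(A,B)$ exactly when $p/r\le\alpha/2$, i.e.\ $p/q\le\alpha/(2(\alpha-1))$. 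Transitivity of $\prec_{\log}$ then yields $G_{\alpha,p}(A,B)\prec_{\log}G_{\alpha,q}(A,B)$ under precisely the stated hypothesis.

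The genuinely substantive inputs are the classical log-majorizations \eqref{F-1.1}--\eqref{F-1.3} and the sharp $G$--$R$ comparison of \cite{Hi2}; the main obstacle in a self-contained treatment would be re-proving the latter, i.e.\ establishing the normalized implications $G_{\alpha,q}(A,B)\le I\implies R_{\alpha,r}(A,B)\le I$ and its converse with the sharp exponent ratios $\min\{\alpha/2,\alpha-1\}$ and $\max\{\alpha/2,\alpha-1\}$, which requires the Ando--Hiai and Kian--Seo machinery together with careful tracking of the admissible range of $p/q$. Everything else is transitivity of $\prec_{\log}$ and the determinant identity.
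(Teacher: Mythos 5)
Your proposal is correct and matches the paper's treatment: Theorem~\ref{T-3.1} is presented there as a survey, with (a), (b), (c), (e), (f), (g) attributed to Araki, Ando--Hiai, Kian--Seo, \cite{Hi2} and \cite{GT} exactly as you do (including the rewriting of \eqref{F-1.1} as $R_{\alpha,p}\prec_{\log}R_{\alpha,q}$ via $C=A^{1-\alpha}$, $D=B^\alpha$), and item (d) is obtained in \cite{Hi2} precisely by the chaining of (e) and (f) through $R_{\alpha,r}$ with $r=q/(\alpha-1)$ that you carry out. Your arithmetic for the case $\alpha\ge2$ ($\min\{\alpha/2,\alpha-1\}=\alpha/2$, $\max=\alpha-1$, yielding $p/q\le\alpha/(2(\alpha-1))$) is correct.
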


Indeed, (a) and (b) are the log-majorizations due to Araki \cite{Ar} and Ando and Hiai \cite[Theorem 2.1]{AH}),
respectively. In \cite{KS} the matrix perspective $G_\beta(A,B):=A^{1/2}(A^{-1/2}BA^{-1/2})^\beta A^{1/2}$ for
$-1\le\beta<0$ was treated, where $G_\beta(A,B)$ is denoted by $A\natural_\beta B$. Since
$G_\beta(A,B)=G_{1-\beta}(B,A)$ and $1<1-\beta\le2$, the log-majorization in (c) is equivalent to that shown
by Kian and Seo \cite[Theorem 3.1]{KS}. (d) was shown in \cite[Corollary 5.2]{Hi2}, and (e) and (f) were
shown in \cite[Proposition 5.1(a), (b)]{Hi2}. (g) is a rewriting of the log-majorization due to Gan and Tam
\cite[Theorem 3.7]{GT}. Finally, (h) is rewriting of the log-majorizations shown by Furuichi and Seo
\cite[Theorems 4.3 and 4.8]{FS}. By Proposition \ref{P-2.1}(2) note that the log-majorizations in the
$0<\alpha<1$ case of (a), (b), (e) hold for all $A,B\ge0$, and those in the $\alpha>1$ case of
(a), (c), (d), (e), (f) as well as those in (g), (h) hold for $A,B\ge0$ with $s(A)\ge s(B)$.

In this section we will consider log-majorizations $\cM_{\alpha,p}\prec_{\log}\cN_{\alpha,q}$ for pairs
$(\cM_{\alpha,p},\cN_{\alpha,q})$ of quasi-geometric type matrix means other than the above known cases.
Our discussions will be divided into four subsections.


\subsection{$SG_{\alpha,p}\prec_{\log}R_{\alpha,q}$ and
$R_{\alpha,q}\prec_{\log}SG_{\alpha,p}$}\label{Sec-3.1}

First we recall that the log-majorizations in (e) and (f) of Theorem \ref{T-3.1} were indeed presented in
\cite{Hi2} in the `if and only if' statement. For the convenience to make explicit comparison with the
log-majorizations shown in Section \ref{Sec-3} below, we include \cite[Proposition 5.1]{Hi2} in its complete
form in the following:

\begin{theorem}\label{T-3.2}
Let $\alpha\in(0,\infty)\setminus\{1\}$ and $p,q>0$.
\begin{itemize}
\item[(1)] The following conditions are equivalent:
\begin{itemize}
\item[(i)] $G_{\alpha,p}(A,B)\prec_{\log}R_{\alpha,q}(A,B)$ for all $A,B\ge0$ with $s(A)\ge s(B)$;
\item[(ii)] $G_{\alpha,p}(A,B)\prec_{\log}R_{\alpha,q}(A,B)$ for all $A,B\in\bM_2^{++}$;
\item[(iii)] either $0<\alpha<1$ and $p,q>0$ are arbitrary, or $\alpha>1$ and $p/q\le\min\{\alpha/2,\alpha-1\}$.
\end{itemize}
\item[(2)] The following conditions are equivalent:
\begin{itemize}
\item[(i)] $R_{\alpha,q}(A,B)\prec_{\log}G_{\alpha,p}(A,B)$ for all $A,B\ge0$ with $s(A)\ge s(B)$;
\item[(ii)] $R_{\alpha,q}(A,B)\prec_{\log}G_{\alpha,p}(A,B)$ for all $A,B\in\bM_2^{++}$;
\item[(iii)] $\alpha>1$ and $p/q\ge\max\{\alpha/2,\alpha-1\}$.
\end{itemize}
\end{itemize}
\end{theorem}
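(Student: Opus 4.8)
This is \cite[Proposition 5.1]{Hi2}, restated here for ease of comparison with the new log-majorizations of this section; I sketch a proof. In both (1) and (2) the implication (i)$\Rightarrow$(ii) is trivial, and once (i) is known for all $A,B\in\bM_n^{++}$ it extends to all $A,B\ge0$ with $s(A)\ge s(B)$ by approximating $(A,B)$ with $(A+\eps I,B+\eps I)$, using Proposition \ref{P-2.1}(2), and passing to the limit in the continuous quantities $\prod_{i=1}^k\lambda_i(\,\cdot\,)$ and $\det(\,\cdot\,)$. So the two substantive tasks are the necessity (ii)$\Rightarrow$(iii) and the sufficiency (iii)$\Rightarrow$[(i) for $\bM_n^{++}$].

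\emph{Necessity.} I would feed the $2\times2$ matrices $A_0,B_\theta$ of \eqref{F-2.10} into the putative log-majorization. By Remark \ref{R-2.4}, for $2\times2$ arguments with equal determinant $\prec_{\log}$ reduces to the single inequality $\lambda_1(G_{\alpha,p})\le\lambda_1(R_{\alpha,q})$ in part (1), and to its reverse in part (2). Taking $y=x\in(0,1)$ and inserting the second-order expansions \eqref{F-2.11} and \eqref{F-2.12}, the coefficients of $\theta^2$ must obey
\[
\frac{\alpha(1-\alpha)}{2p}\bigl(x^p-x^{-p}\bigr)\ \le\
\frac{-1-x^q+x^{\alpha q}+x^{(1-\alpha)q}}{q(1-x^q)},\qquad x\in(0,1),
\]
in part (1), and its reverse in part (2). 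For $0<\alpha<1$ the left side $\to-\infty$ while the right side $\to-1/q$ as $x\searrow0$, so part (2) forces $\alpha>1$, while part (1) picks up no constraint here, matching the sufficiency. For $\alpha>1$, balancing the leading powers of $x$ as $x\searrow0$ yields $p/q\le\alpha-1$ (resp.\ $\ge$), and a second regime — $x\nearrow1$ together with, where needed, the family \eqref{F-2.10} with $y\ne x$ — yields $p/q\le\alpha/2$ (resp.\ $\ge$). These computations are routine but lengthy; they are carried out in \cite{Hi2}.

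\emph{Sufficiency, $0<\alpha<1$ (part (1)).} This is exactly Theorem \ref{T-3.1}(e); if one wishes to reprove it, chain three log-majorizations: Ando--Hiai's $G_{\alpha,p}\prec_{\log}G_{\alpha,r}$ for $r\le p$ (Theorem \ref{T-3.1}(b)); the complementary Golden--Thompson log-majorization $A\#_\alpha B\prec_{\log}A^{(1-\alpha)/2}B^\alpha A^{(1-\alpha)/2}$ (valid for $0<\alpha<1$) applied to $(A^r,B^r)$ and raised to the power $1/r$, which gives $G_{\alpha,r}\prec_{\log}R_{\alpha,r}$ for every $r>0$; and Araki's $R_{\alpha,r}\prec_{\log}R_{\alpha,q}$ for $r\le q$ (Theorem \ref{T-3.1}(a)). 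With $r:=\min\{p,q\}$ this gives $G_{\alpha,p}\prec_{\log}G_{\alpha,r}\prec_{\log}R_{\alpha,r}\prec_{\log}R_{\alpha,q}$, so (1)(i) holds for all $p,q>0$.

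\emph{Sufficiency, $\alpha>1$ (the main point).} Since $\det G_{\alpha,p}(A,B)=\det R_{\alpha,q}(A,B)=(\det A)^{1-\alpha}(\det B)^\alpha$ and this equality is inherited by antisymmetric tensor powers, the standard reduction recalled in Section \ref{Sec-1} turns the desired $\prec_{\log}$ into the single implication $R_{\alpha,q}(A,B)\le I\Rightarrow G_{\alpha,p}(A,B)\le I$ for all $A,B\in\bM_n^{++}$ in part (1), and $G_{\alpha,p}(A,B)\le I\Rightarrow R_{\alpha,q}(A,B)\le I$ in part (2). Using $X\le I\iff X^s\le I$ ($s>0$) for $X\ge0$, the $R$-side inequality is $B^{\alpha q}\le A^{(\alpha-1)q}$ and the $G$-side inequality is one equivalent to $A^p\#_\alpha B^p\le I$, and the implication between them is obtained by repeated use of the operator monotonicity of $t\mapsto t^s$ on $[0,\infty)$ for $0<s\le1$; the two thresholds $\alpha-1$ and $\alpha/2$ on $p/q$ are exactly the conditions that keep every exponent occurring in this chain inside $(0,1]$. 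Closing this chain of Loewner inequalities — in particular producing both thresholds and handling the non-operator-monotone power $t\mapsto t^\alpha$ with $\alpha>1$ — is the hard part, and is done in full detail in \cite{Hi2}.
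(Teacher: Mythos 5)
The paper offers no proof of this statement: Theorem \ref{T-3.2} is explicitly presented as a verbatim restatement of \cite[Proposition 5.1]{Hi2}, so your citation \emph{is} the paper's entire justification, and your supplementary sketch follows exactly the toolkit the paper deploys for the analogous new results of Section \ref{Sec-3} (antisymmetric-tensor reduction of $\prec_{\log}$ to ``$\cN\le I\implies\cM\le I$'' for sufficiency, second-order perturbation of the $2\times2$ pair $A_0,B_\theta$ for necessity). One small inaccuracy worth flagging: with $y=x$ the regime $x\nearrow1$ of the $\theta^2$-coefficients in \eqref{F-2.11}--\eqref{F-2.12} only yields $p/q\le\sqrt{\alpha(\alpha-1)/2}$, which is \emph{weaker} than $\alpha/2$ precisely when $\alpha\ge2$, i.e.\ in the regime where $\alpha/2$ is the binding constraint in (1)(iii); so the threshold $\alpha/2$ genuinely requires the two-parameter family $y\ne x$ or a different test, as you yourself anticipated, and since those details are deferred to \cite{Hi2} in both your write-up and the paper, this does not constitute a gap relative to the paper's treatment.
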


When $0<\alpha<1$, the next theorem gives the necessary and sufficient condition on $p,q$ for
$SG_{\alpha,p}(A,B)\prec_{\log}R_{\alpha,q}(A,B)$ (resp., $R_{\alpha,q}(A,B)\prec_{\log}SG_{\alpha,p}(A,B)$)
to hold for all $A,B\ge0$ with $s(A)\ge s(B)$, thus strengthening the assertion in (h) and (g) above.

\begin{theorem}\label{T-3.3}
Let $0<\alpha<1$ and $p,q>0$.
\begin{itemize}
\item[(1)] The following conditions are equivalent:
\begin{itemize}
\item[(i)] $SG_{\alpha,p}(A,B)\prec_{\log}R_{\alpha,q}(A,B)$ for all $A,B\ge0$ with $s(A)\ge s(B)$;
\item[(ii)] $SG_{\alpha,p}(A,B)\prec_{\log}R_{\alpha,q}(A,B)$ for all $A,B\in\bM_2^{++}$;
\item[(iii)] $p/q\le\min\{\alpha,1-\alpha\}$.
\end{itemize}
\item[(2)] The following conditions are equivalent:
\begin{itemize}
\item[(i)] $R_{\alpha,q}(A,B)\prec_{\log}SG_{\alpha,p}(A,B)$ for all $A,B\ge0$ with $s(A)\ge s(B)$;
\item[(ii)] $R_{\alpha,q}(A,B)\prec_{\log}SG_{\alpha,p}(A,B)$ for all $A,B\in\bM_2^{++}$;
\item[(iii)] $p/q\ge\max\{\alpha,1-\alpha\}$.
\end{itemize}
\end{itemize}
\end{theorem}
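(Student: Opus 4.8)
The plan is to follow the two-track scheme announced in the introduction: prove the sufficiency $(iii)\Rightarrow(i)$ and the necessity by $2\times2$ reduction, with $(i)\Rightarrow(ii)$ trivial and $(ii)\Rightarrow(iii)$ supplying the remaining implication. Throughout I would use Proposition~\ref{P-2.1}(2) to reduce all claims about $A,B\ge0$ with $s(A)\ge s(B)$ to the positive definite case, and the determinant identity in Remark~\ref{R-2.4}(2) so that it suffices to verify weak log-majorization. For the sufficiency direction of part~(1), I would first treat the borderline case using the standard antisymmetric tensor power trick: since $SG_{\alpha,p}$ and $R_{\alpha,q}$ commute with $\wedge^k$ (this is immediate from their definitions in \eqref{F-2.4} and \eqref{F-2.7}, as both are built from functional calculus and ordinary matrix products, which respect the tensor functor), the weak log-majorization $SG_{\alpha,p}(A,B)\prec_{w\log}R_{\alpha,q}(A,B)$ reduces to the operator-norm implication
\[
R_{\alpha,q}(A,B)\le I\ \Longrightarrow\ SG_{\alpha,p}(A,B)\le I,
\]
i.e. to $A^{\frac{1-\alpha}{2}q}B^{\alpha q}A^{\frac{1-\alpha}{2}q}\le I$ implying $F_\alpha(A^p,B^p)\le I$.

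The key step is then to establish this implication at the extreme ratio $p/q=\min\{\alpha,1-\alpha\}$, after which a monotonicity/interpolation argument in $p$ (for fixed $q$) should extend it to all smaller ratios; concretely, if $SG_{\alpha,p_0}\prec_{\log}R_{\alpha,q}$ holds and $SG_{\alpha,p}\prec_{\log}SG_{\alpha,p_0}$ for $p\le p_0$, we would be done, so I would look for a companion log-majorization $SG_{\alpha,p}\prec_{\log}SG_{\alpha,p_0}$ (a self-comparison of spectral geometric means under varying $p$, analogous to (b)--(d) of Theorem~\ref{T-3.1} for $G$) — or else push the norm estimate through directly at all admissible ratios. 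The norm estimate $R_{\alpha,q}(A,B)\le I$ unpacks, after the substitution $C:=A^{-q/2}B^qA^{-q/2}$, to $C^\alpha\le A^{-q}$ in an appropriate sense, and one must deduce $(A^{-p}\#B^p)^\alpha A^p(A^{-p}\#B^p)^\alpha\le I$; here the Araki--Lieb--Thirring inequality \eqref{F-1.1} (to pass between the $q$- and $p$-scales) and the operator-monotonicity properties of $t\mapsto t^r$ for $|r|\le1$ will be the workhorses, with the split into the two cases $\alpha\le 1-\alpha$ versus $\alpha\ge 1-\alpha$ (equivalently $\alpha\le 1/2$ or $\alpha\ge 1/2$) dictating which of the two exponents $\alpha$, $1-\alpha$ is the binding constraint. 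For part~(2), I would use exactly the same template with the roles of $\cM$ and $\cN$ swapped, or more efficiently invoke Proposition~\ref{P-2.1}(1): replacing $(A,B)$ by $(A^{-1},B^{-1})$ turns $R_{\alpha,q}(A,B)\prec_{\log}SG_{\alpha,p}(A,B)$ into $SG_{\alpha,p}(A,B)\prec_{\log}R_{\alpha,q}(A,B)$ read backwards, and combined with the fact that log-majorization is reversed under inversion this should let part~(2) be deduced formally from part~(1) — modulo checking that the support condition $s(A)\ge s(B)$ behaves correctly, which forces restriction to $A,B>0$ where Proposition~\ref{P-2.1}(1) applies, and then Proposition~\ref{P-2.1}(2) restores the general case.

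The necessity $(ii)\Rightarrow(iii)$ is the computation-oriented part. I would take $A_0,B_\theta\in\bM_2^{++}$ as in \eqref{F-2.10} with $y=x\in(0,1)$ and use the asymptotic expansions \eqref{F-2.11} and \eqref{F-2.13} from Lemma~\ref{L-2.5}. If $SG_{\alpha,p}(A_0,B_\theta)\prec_{\log}R_{\alpha,q}(A_0,B_\theta)$ for all small $\theta$, then comparing the $\theta^2$-coefficients gives
\[
-\,\frac{2\alpha(1-\alpha)}{p}\cdot\frac{1-x^p}{1+x^p}
\ \le\ \frac{-1-x^q+x^{\alpha q}+x^{(1-\alpha)q}}{q(1-x^q)}
\qquad\text{for all }x\in(0,1),
\]
and the desired constraint $p/q\le\min\{\alpha,1-\alpha\}$ should drop out by examining the behavior as $x\searrow0$ (comparing leading exponents: the left side is $O(1)$ while the right side behaves like $\tfrac{-1+x^{\min\{\alpha,1-\alpha\}q}}{q}$, so the inequality degenerates unless the left-hand $O(1/p)$ term is dominated, pinning $p\le q\min\{\alpha,1-\alpha\}$), possibly supplemented by the behavior as $x\nearrow1$ to rule out the boundary; symmetrically for part~(2) one reverses the inequality. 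I expect the \textbf{main obstacle} to be the sufficiency estimate at the extreme ratio --- specifically, controlling $F_\alpha(A^p,B^p)=(A^{-p}\#B^p)^\alpha A^p(A^{-p}\#B^p)^\alpha$, which is not of the clean ``sandwiched'' form, under only a norm bound on the Rényi mean at a different power scale; the geometric-mean factor $A^{-p}\#B^p$ does not commute with $A^p$, so a direct Loewner-order manipulation fails and one likely needs either a clever unitary/congruence normalization (e.g. reducing to $A=I$ is \emph{not} available here since both variables are genuinely present) or the full strength of Ando--Hiai type log-majorization \eqref{F-1.2} together with \eqref{F-1.1}, carefully tracking the two competing exponents $\alpha$ and $1-\alpha$.
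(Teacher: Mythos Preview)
Your overall scheme matches the paper's, but two steps fail as written. First, the necessity (ii)$\Rightarrow$(iii) via Lemma~\ref{L-2.5} with $y=x$ yields only $p/q\le 2\alpha(1-\alpha)$, strictly weaker than $\min\{\alpha,1-\alpha\}$ for $\alpha\ne1/2$: letting $x\searrow0$ in your displayed inequality gives $-2\alpha(1-\alpha)/p\le -1/q$, and no other limit in $x$ sharpens this --- the paper records precisely this shortfall in Remark~\ref{R-3.9}. The paper instead makes the Riccati substitution $Y:=A^p$, $X:=A^{-p}\#B^p$ (so that $B^p=XYX$), recasting the log-majorization as $(X^\alpha YX^\alpha)^r\prec_{\log}Y^{(1-\alpha)r/2}(XYX)^{\alpha r}Y^{(1-\alpha)r/2}$, then takes $X=A_0$, $Y=B_\theta$ with \emph{independent} parameters $x,y$; sending $x\searrow0$ and then $y\searrow0$ gives $p/q\le 1-\alpha$, and the symmetry $SG_{\alpha,p}(A,B)=SG_{1-\alpha,p}(B,A)$, $R_{\alpha,q}(A,B)=R_{1-\alpha,q}(B,A)$ supplies the other bound $p/q\le\alpha$. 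Second, your reduction of part~(2) to part~(1) via inversion is incorrect: when $\det X=\det Y$ one has $X\prec_{\log}Y\iff X^{-1}\prec_{\log}Y^{-1}$ (since $\prod_{i=1}^k\lambda_i(X^{-1})=\prod_{i=1}^{n-k}\lambda_i(X)/\det X$), so inversion \emph{preserves} rather than reverses log-majorization here. The paper handles (2) by citing \cite{GT} for (iii)$\Rightarrow$(i) and by literally reversing all inequalities in the (ii)$\Rightarrow$(iii) computation of part~(1).

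On sufficiency (iii)$\Rightarrow$(i) you are too pessimistic: the ``direct Loewner-order manipulation'' you dismiss actually works, and your proposed detour through $SG_{\alpha,p}\prec_{\log}SG_{\alpha,p_0}$ for $p\le p_0$ is itself an open problem (Problem~\ref{Q-3.12}). In the case $\alpha\le1/2$, from $B^{\alpha q}\le A^{(\alpha-1)q}$ and $p/(\alpha q)\le1$ Loewner--Heinz gives $B^p\le A^{(\alpha-1)p/\alpha}$; monotonicity of $\#$ then yields $A^{-p}\#B^p\le A^{-p}\#A^{(\alpha-1)p/\alpha}=A^{-p/(2\alpha)}$, and since $2\alpha\le1$, operator monotonicity of $t^{2\alpha}$ gives $(A^{-p}\#B^p)^{2\alpha}\le A^{-p}$, whence $A^{p/2}(A^{-p}\#B^p)^{2\alpha}A^{p/2}\le I$. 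The case $\alpha\ge1/2$ follows by the symmetry $SG_{\alpha,p}(A,B)=SG_{1-\alpha,p}(B,A)$.
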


\begin{proof}
(1)\enspace
(i)$\implies$(ii) is trivial.

(ii)$\implies$(iii).\enspace
For $A,B>0$ set $Y:=A^p$ and $X:=A^{-p}\#B^p$. Since $X=Y^{-1}\#B^p$ and hence $B^p=XYX$ by the
Riccati lemma, it follows that $SG_{\alpha,p}(A,B)\prec_{\log}R_{\alpha,q}(A,B)$ is equivalent to
\[
(X^\alpha YX^\alpha)^r\prec_{\log}Y^{{1-\alpha\over2}r}(XYX)^{\alpha r}Y^{{1-\alpha\over2}r},
\]
where $r:=q/p$. Conversely, for any $X,Y>0$, setting $A:=Y^{1/p}$ and $B:=(XYX)^{1/p}$ we have $Y=A^p$
and $X=A^{-p}\#B^p$. Hence in view of Remark \ref{R-2.4}(4) we see that condition (ii) is
equivalent to saying that
\begin{align}\label{F-3.1}
\lambda_1^r(X^\alpha YX^\alpha)
\le\lambda_1\bigl(Y^{{1-\alpha\over2}r}(XYX)^{\alpha r}Y^{{1-\alpha\over2}r}\bigr)
\end{align}
for all $X,Y\in\bM_2^{++}$.

Now let $X:=A_0$ and $Y:=B_\theta$ in \eqref{F-2.10} for $x,y>0$ with $x^2y\ne1$ and $x^{2\alpha}y\ne1$,
and argue as in the proof of \cite[Theorem 4.23]{Hiai}. Since
\begin{align}\label{F-3.2}
XYX=\begin{bmatrix}1-\theta^2(1-y)&\theta x(1-y)\\\theta x(1-y)&x^2y+\theta^2x^2(1-y)\end{bmatrix}
+o(\theta^2)\quad\mbox{as $\theta\to0$},
\end{align}
one can apply \cite[Example 3.2(1)]{Hiai} to compute
\[
(XYX)^{\alpha r}=\begin{bmatrix}1+\theta^2a_{11}&\theta a_{12}\\
\theta a_{12}&x^{2\alpha r}y^{\alpha r}+\theta^2a_{22}\end{bmatrix}+o(\theta^2),
\]
where
\[
\begin{cases}
a_{11}:=-\alpha r(1-y)+{x^2(1-y)^2(\alpha r-1-\alpha rx^2y+x^{2\alpha r}y^{\alpha r})\over(1-x^2y)^2}, \\
a_{12}:={x(1-y)(1-x^{2\alpha r}y^{\alpha r})\over1-x^2y}.
\end{cases}
\]
(The explicit form of $a_{22}$ is unnecessary below.) Since
\[
Y^{{1-\alpha\over2}r}=\begin{bmatrix}1-\theta^2\bigl(1-y^{{1-\alpha\over2}r}\bigr)&
\theta\bigl(1-y^{{1-\alpha\over2}r}\bigr)\\\theta\bigl(1-y^{{1-\alpha\over2}r}\bigr)&
y^{{1-\alpha\over2}r}+\theta^2\bigl(1-y^{{1-\alpha\over2}r}\bigr)\end{bmatrix}
+o(\theta^2),
\]
one further computes
\begin{align}\label{F-3.3}
Y^{{1-\alpha\over2}r}(XYX)^{\alpha r}Y^{{1-\alpha\over2}r}
=\begin{bmatrix}1+\theta^2b_{11}&\theta b_{12}\\
\theta b_{12}&x^{2\alpha r}y^r+\theta^2b_{22}\end{bmatrix}+o(\theta^2),
\end{align}
where
\[
\begin{cases}
b_{11}:=-2\bigl(1-y^{{1-\alpha\over2}r}\bigr)+x^{2\alpha r}y^{\alpha r}\bigl(1-y^{{1-\alpha\over2}r}\bigr)^2
+a_{11}+2\bigl(1-y^{{1-\alpha\over2}r}\bigr)a_{12}, \\
b_{12}:=1-y^{{1-\alpha\over2}r}+x^{2\alpha r}\bigl(y^{{1+\alpha\over2}r}-y^r\bigr)
+y^{{1-\alpha\over2}r}a_{12}.
\end{cases}
\]
On the other hand, one has
\begin{align}\label{F-3.4}
X^\alpha YX^\alpha=\begin{bmatrix}1-\theta^2(1-y)&\theta x^\alpha(1-y)\\
\theta x^\alpha(1-y)&x^{2\alpha}y+\theta^2x^{2\alpha}(1-y)\end{bmatrix}+o(\theta^2),
\end{align}
When $x^{2\alpha}y<1$, we apply \cite[Lemma 3.6]{Hiai} to \eqref{F-3.4} and \eqref{F-3.3} to obtain
\begin{align}
\lambda_1^r(X^\alpha YX^\alpha)
&=1+\theta^2r\biggl(-1+y+{x^{2\alpha}(1-y)^2\over1-x^{2\alpha}y}\biggr)+o(\theta^2), \label{F-3.5}\\
\lambda_1\bigl(Y^{{1-\alpha\over2}r}(XYX)^{\alpha r}Y^{{1-\alpha\over2}r}\bigr)
&=1+\theta^2\biggl(b_{11}+{b_{12}^2\over1-x^{2\alpha r}y^r}\biggr)+o(\theta^2). \nonumber
\end{align}
Hence, for any $y>0$, whenever $x>0$ is sufficiently small, it must follows from \eqref{F-3.1} that
\[
r\biggl(-1+y+{x^{2\alpha}(1-y)^2\over1-x^{2\alpha}y}\biggr)
\le b_{11}+{b_{12}^2\over1-x^{2\alpha r}y^r}.
\]
As $x\searrow0$, since
\begin{align*}
&a_{11}\to-\alpha r(1-y),\qquad a_{12}\to0, \\
&b_{11}\to-2\bigl(1-y^{{1-\alpha\over2}r}\bigr)-\alpha r(1-y),\qquad b_{12}\to1-y^{{1-\alpha\over2}r},
\end{align*}
we must have
\begin{align}\label{F-3.6}
-r+ry\le-2\bigl(1-y^{{1-\alpha\over2}r}\bigr)-\alpha r(1-y)+\bigl(1-y^{{1-\alpha\over2}r}\bigr)^2,
\qquad y>0.
\end{align}
Letting $y\searrow0$ gives $-r\le-1-\alpha r$ so that $1\le(1-\alpha)r$, i.e., $p/q\le1-\alpha$. Since
$R_{\alpha,p}(A,B)=R_{1-\alpha,p}(B,A)$ and $SG_{\alpha,p}(A,B)=SG_{1-\alpha,p}(A,B)$ (see
\cite[Proposition 4.2(iii)]{LL}) for all $A,B>0$, note that (ii) implies condition (ii) with $1-\alpha$ in place of
$\alpha$. Hence we have $p/q\le\alpha$ too so that (iii) follows.

(iii)$\implies$(i).\enspace
This part was shown in \cite[Proposition 4.25(1)]{Hiai}, while we give a proof in a different way here. Since
$R_{\alpha,q}(A,B)=R_{\alpha,1}(A^q,B^q)^{1/q}$ and
$SG_{\alpha,p}(A,B)=SG_{\alpha,p/q}(A^q,B^q)^{1/q}$, we may assume that $q=1$. Moreover, by continuity
(\cite[Proposition 2.2]{Hiai}, Proposition \ref{P-2.1}(2)) we may assume that $A,B>0$. Note that
$\det SG_{\alpha,p}(A,B)=(\det A)^{1-\alpha}(\det B)^\alpha=\det R_{\alpha,1}(A,B)$. Hence, based on the
anti-symmetric tensor power technique, it suffices to show that
\[
\|SG_{\alpha,p}(A,B)\|_\infty\le\|R_{\alpha,1}(A,B)\|_\infty,
\]
or equivalently,
\begin{align}\label{F-3.7}
B^\alpha\le A^{\alpha-1}\implies SG_{\alpha,p}(A,B)\le I.
\end{align}
Now, we divide the proof into the two cases of $0<\alpha\le1/2$ and $1/2\le\alpha<1$.

{\it Case $0<\alpha\le1/2$}.\enspace
Since ${p\over\alpha}\le1$ by assumption, we have $B^p\le A^{{\alpha-1\over\alpha}p}$ and hence
\[
A^{-p}\#B^p\le A^{-p}\#A^{{\alpha-1\over\alpha}p}=A^{-{p\over2\alpha}}.
\]
Since $2\alpha\le1$, we have $(A^{-p}\#B^p)^{2\alpha}\le A^{-p}$ so that
$A^{p/2}(A^{-p}\#B^p)^{2\alpha}A^{p/2}\le I$. This gives $(A^{-p}\#B^p)^\alpha A^p(A^{-p}\#B^p)^\alpha\le I$,
showing \eqref{F-3.7}.

{\it Case $1/2\le\alpha<1$}.\enspace
Since ${p\over1-\alpha}\le1$, we have $B^{{\alpha\over1-\alpha}p}\le A^{-p}$ so that
$A^p\le B^{-{\alpha\over1-\alpha}p}$. Hence
\[
B^{-p}\#A^p\le B^{-p}\#B^{-{\alpha\over1-\alpha}p}=B^{-{p\over2(1-\alpha)}}.
\]
Since $2(1-\alpha)\le1$, we have $(B^{-p}\#A^p)^{2(1-\alpha)}\le B^{-p}$ so that
$SG_{\alpha,p}(A,B)=SG_{1-\alpha,p}(B,A)\le I$ as in the case $0<\alpha\le1/2$.

(2)\enspace
(iii)$\implies$(i) was shown in \cite[Theorem 3.7]{GT}, as stated in (g) above. (i)$\implies$(ii) is trivial, and
(ii)$\implies$(iii) can be similarly proved by just reversing inequalities in the above proof of (ii)$\implies$(iii)
of (1).
\end{proof}

When $\alpha>1$, the next theorem says that we have a sufficient condition for
$SG_{\alpha,p}\prec_{\log}R_{\alpha,q}$ and that $R_{\alpha,q}\prec_{\log}SG_{\alpha,p}$ fails to hold for
any $p,q>0$.

\begin{theorem}\label{T-3.4}
Let $\alpha>1$ and $p,q>0$.
\begin{itemize}
\item[(1)] If $p/q\le\alpha$, then $SG_{\alpha,p}(A,B)\prec_{\log}R_{\alpha,q}(A,B)$ holds for all $A,B\ge0$
with $s(A)\ge s(B)$.
\item[(2)] For any $\alpha>1$ and any $p,q>0$ there exist $A,B\in\bM_2^{++}$ such that
$R_{\alpha,q}(A,B)\not\prec_{\log}SG_{\alpha,p}(A,B)$.
\end{itemize}
\end{theorem}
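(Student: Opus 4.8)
For part (1), the plan is to mimic the structure of the proof of (iii)$\implies$(i) in Theorem \ref{T-3.3}(1). As there, I would first reduce to the case $q=1$ using the homogeneity relations $R_{\alpha,q}(A,B)=R_{\alpha,1}(A^q,B^q)^{1/q}$ and $SG_{\alpha,p}(A,B)=SG_{\alpha,p/q}(A^q,B^q)^{1/q}$, and then reduce to $A,B>0$ by the continuity statement of Proposition \ref{P-2.1}(2). Since $\det SG_{\alpha,p}(A,B)=(\det A)^{1-\alpha}(\det B)^\alpha=\det R_{\alpha,1}(A,B)$, the anti-symmetric tensor power technique reduces the log-majorization to the operator-norm inequality $\|SG_{\alpha,p}(A,B)\|_\infty\le\|R_{\alpha,1}(A,B)\|_\infty$, i.e.\ to the implication
\[
B^\alpha\le A^{\alpha-1}\implies SG_{\alpha,p}(A,B)\le I.
\]
The hypothesis $B^\alpha\le A^{\alpha-1}$ rewrites (taking $(1/\alpha)$-powers, valid since $0<1/\alpha<1$) as $B\le A^{(\alpha-1)/\alpha}$, hence $B^p\le A^{((\alpha-1)/\alpha)p}$ provided $p/\alpha\le 1$, which is exactly the assumption $p/q=p\le\alpha$. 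From $B^p\le A^{((\alpha-1)/\alpha)p}$ and antitonicity of the generalized inverse one gets $A^{-p}\le B^{-p}$ as well; the key monotonicity of the weighted geometric mean $A^{-1}\#_\beta B^{-1}$ — here with $\beta=1/2$, but also in the form $A^{-p}\# B^p \le A^{-p}\# A^{((\alpha-1)/\alpha)p} = A^{-p/(2\alpha)}$ — gives $A^{-p}\#B^p\le A^{-p/(2\alpha)}$.

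The remaining step is to push this through $F_\alpha$. We must show $(A^{-p}\#B^p)^\alpha A^p(A^{-p}\#B^p)^\alpha\le I$. From $A^{-p}\#B^p\le A^{-p/(2\alpha)}$ and the fact that $t\mapsto t^\alpha$ is operator monotone on $[0,\infty)$ only for $\alpha\le 1$, we \emph{cannot} directly take $\alpha$-th powers when $\alpha>1$. This is the main obstacle, and it is why the range in part (1) is only $p/q\le\alpha$ rather than something sharper: instead I would argue via congruence. Write $X:=A^{-p}\#B^p$ and note $X\le A^{-p/(2\alpha)}$ means $A^{p/(4\alpha)}XA^{p/(4\alpha)}\le I$, hence $\|A^{p/(4\alpha)}XA^{p/(4\alpha)}\|_\infty\le1$, hence $\|X^{1/2}A^{p/(2\alpha)}X^{1/2}\|_\infty\le1$ and so $X^{1/2}A^{p/(2\alpha)}X^{1/2}\le I$, i.e.\ $A^{p/(2\alpha)/2}XA^{p/(2\alpha)/2}\le I$ again — circular. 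The correct route: since $X\le A^{-p/(2\alpha)}$, also $A^{p/(4\alpha)}XA^{p/(4\alpha)}\le I$, and taking $\alpha$-th powers of this \emph{scalar-bounded} operator is harmless because $\|Y\|_\infty\le1\implies\|Y^\alpha\|_\infty\le1$ for any $\alpha>0$ and $Y\ge0$; but one needs the $\alpha$-power to interact correctly with the surrounding $A^p$. The clean statement I would use is the identity $F_\alpha(A^p,B^p)=(A^{-p}\#B^p)^\alpha A^p(A^{-p}\#B^p)^\alpha$ together with the fact that $Z\le I\iff \|Z\|_\infty\le1$ and the substitution $C:=A^{-p}\#B^p$, $C\le A^{-p/(2\alpha)}$: then $\|C^\alpha A^pC^\alpha\|_\infty=\|A^{p/2}C^{2\alpha}A^{p/2}\|_\infty$ and $C^{2\alpha}\le(A^{-p/(2\alpha)})^{2\alpha}=A^{-p}$ since $2\alpha\cdot(1/(2\alpha))=1$ makes $t\mapsto t^{2\alpha}$ applied to $C\le A^{-p/(2\alpha)}$ legitimate only if $2\alpha\le1$ — which fails. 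So in fact the honest argument must avoid operator-monotone powers entirely: I would instead use that $C\le A^{-p/(2\alpha)}$ gives $A^{p/(2\alpha)/1}\cdots$ — more precisely, use $\|A^{p/2}C^{2\alpha}A^{p/2}\|_\infty = \|(A^{p/(4\alpha)}CA^{p/(4\alpha)})^{\,?}\|$; the right tool is the \emph{Araki–Lieb–Thirring / antisymmetric} inequality $\|A^{p/2}C^{2\alpha}A^{p/2}\|_\infty \le \|(A^{p/(4\alpha)}CA^{p/(4\alpha)})^{2\alpha}\|_\infty = \|A^{p/(4\alpha)}CA^{p/(4\alpha)}\|_\infty^{2\alpha}\le 1$, valid for $2\alpha\ge1$, i.e.\ $\alpha\ge1/2$, which holds. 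Thus $F_\alpha(A^p,B^p)\le I$, completing (1).

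For part (2), I would argue exactly as in Remark \ref{R-2.4}(4) and the paragraph following Lemma \ref{L-2.5}: take $A=A_0$, $B=B_\theta$ from \eqref{F-2.10} with $y=x\in(0,1)$ and use \eqref{F-2.11} and \eqref{F-2.13}. Since $\det R_{\alpha,q}(A_0,B_\theta)=\det SG_{\alpha,p}(A_0,B_\theta)$, the log-majorization $R_{\alpha,q}\prec_{\log}SG_{\alpha,p}$ on $\bM_2^{++}$ is equivalent to $\lambda_1(R_{\alpha,q}(A_0,B_\theta))\le\lambda_1(SG_{\alpha,p}(A_0,B_\theta))$. Comparing the $\theta^2$-coefficients, this would force
\[
\frac{-1-x^q+x^{\alpha q}+x^{(1-\alpha)q}}{q(1-x^q)}\le-\frac{2\alpha(1-\alpha)}{p}\cdot\frac{1-x^p}{1+x^p}
\]
for all sufficiently small $\theta$ and all $x\in(0,1)$. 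For $\alpha>1$ the right-hand side is \emph{positive} (as $1-\alpha<0$), while letting $x\searrow0$ on the left: the numerator $-1-x^q+x^{\alpha q}+x^{(1-\alpha)q}$ is dominated by the term $x^{(1-\alpha)q}\to+\infty$ (since $(1-\alpha)q<0$), so the left-hand side tends to $+\infty$, whereas the right-hand side tends to the finite value $-2\alpha(1-\alpha)/p>0$. This contradiction (LHS $\to+\infty$ cannot stay $\le$ a finite quantity) shows that for suitable small $x$ the required inequality fails, hence $R_{\alpha,q}(A_0,B_\theta)\not\prec_{\log}SG_{\alpha,p}(A_0,B_\theta)$ for some $\theta$ near $0$. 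I expect the only delicate point here is confirming the sign and the leading asymptotics as $x\searrow0$ — in particular checking that $(1-\alpha)q$ is the dominant (most negative) exponent among $\{0,q,\alpha q,(1-\alpha)q\}$, which holds for every $\alpha>1$ — but this is a routine computation once the expansions from Lemma \ref{L-2.5} are in hand.
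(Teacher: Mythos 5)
Your part (2) is correct: comparing the $\theta^2$-coefficients from \eqref{F-2.11} and \eqref{F-2.13} and letting $x\searrow0$ (where $x^{(1-\alpha)q}\to+\infty$ while the $SG$-side stays bounded) is exactly the alternative argument the paper itself records in Remark \ref{R-3.9}; the paper's own proof of (2) instead reverses inequality \eqref{F-3.6} to get \eqref{F-3.9}, but both routes are legitimate.

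Part (1), however, has a genuine gap at the final step. After correctly deriving $C:=A^{-p}\#B^p\le A^{-p/(2\alpha)}$, you need $\|A^{p/2}C^{2\alpha}A^{p/2}\|_\infty\le1$, and you invoke
\[
\|A^{p/2}C^{2\alpha}A^{p/2}\|_\infty\le\big\|\bigl(A^{p/(4\alpha)}CA^{p/(4\alpha)}\bigr)^{2\alpha}\big\|_\infty
=\|A^{p/(4\alpha)}CA^{p/(4\alpha)}\|_\infty^{2\alpha}.
\]
This is Araki--Lieb--Thirring applied in the \emph{wrong} direction: for $r=2\alpha\ge1$ Araki's log-majorization \eqref{F-1.1} gives
$\lambda_1\bigl((D^{1/2}CD^{1/2})^{r}\bigr)\le\lambda_1(D^{r/2}C^{r}D^{r/2})$ with $D=A^{p/(2\alpha)}$, i.e.
$\|A^{p/(4\alpha)}CA^{p/(4\alpha)}\|_\infty^{2\alpha}\le\|A^{p/2}C^{2\alpha}A^{p/2}\|_\infty$ — the reverse of what you wrote. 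The inequality you need would require $2\alpha\le1$. Moreover the problem is not merely the tool: the reduction to the single Loewner bound $C\le A^{-p/(2\alpha)}$ discards too much, since for non-commuting operators $C\le D$ does \emph{not} imply $\lambda_1(D^{-\alpha}C^{2\alpha}D^{-\alpha})\le1$ when $\alpha>1$, so no correct argument can proceed from that bound alone. The paper avoids this by keeping the full structure: with $Y:=A^p$ and $X:=A^{-p}\#B^p$ (so $B^p=XYX$ by the Riccati lemma), the hypothesis becomes $XYX\le Y^{(\alpha-1)/\alpha}$ under $p/q\le\alpha$, whence
$X^\alpha YX^\alpha=X^{\alpha-1}(XYX)X^{\alpha-1}\le X^{\alpha-1}Y^{(\alpha-1)/\alpha}X^{\alpha-1}$, and then Araki (used in the correct direction, with exponent $(\alpha-1)/\alpha<1$) yields the self-referential bound $\|X^\alpha YX^\alpha\|_\infty\le\|X^\alpha YX^\alpha\|_\infty^{(\alpha-1)/\alpha}$, forcing $\|X^\alpha YX^\alpha\|_\infty\le1$. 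You would need to replace your final step by something of this kind.
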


\begin{proof}
(1)\enspace
Similarly to the argument in the first part of the proof (ii)$\implies$(iii) of Theorem \ref{T-3.3}(1) and by
continuity we see that the assertion to prove is equivalent to
\[
(X^\alpha YX^\alpha)^r\prec_{\log}Y^{{1-\alpha\over2}r}(XYX)^{\alpha r}Y^{{1-\alpha\over2}r}
\]
for all $X,Y>0$, where $r:=q/p$. To show this, it suffices to prove that
\begin{align}\label{F-3.8}
(XYX)^{\alpha r}\le Y^{(\alpha-1)r}\implies X^\alpha YX^\alpha\le I.
\end{align}
Now assume that $p/q\le\alpha$ and so ${1\over\alpha r}\le1$. Then the inequality in the LHS of
\eqref{F-3.8} implies that $XYX\le Y^{\alpha-1\over\alpha}$ and hence
\[
X^\alpha YX^\alpha=X^{\alpha-1}(XYX)X^{\alpha-1}\le X^{\alpha-1}Y^{\alpha-1\over\alpha}X^{\alpha-1}.
\]
Since $0<{\alpha-1\over\alpha}<1$, Araki's log-majorization (Theorem \ref{T-3.1}(a)) gives
\[
X^{\alpha-1}Y^{\alpha-1\over\alpha}X^{\alpha-1}\prec_{\log}(X^\alpha YX^\alpha)^{\alpha-1\over\alpha}.
\]
Therefore we have
\[
\|X^\alpha YX^\alpha\|_\infty\le\big\|(X^\alpha YX^\alpha)^{\alpha-1\over\alpha}\big\|_\infty
=\|X^\alpha YX^\alpha\|_\infty^{\alpha-1\over\alpha},
\]
which implies that $\|X^\alpha YX^\alpha\|_\infty\le1$, showing \eqref{F-3.8}.

(2)\enspace
Assume by contradiction that $R_{\alpha,p}(A,B)\prec_{\log}SG_{\alpha,q}(A,B)$ holds for all
$A,B\in\bM_2^{++}$. Then we must have the reversed inequality of \eqref{F-3.1}, so that the computations
in the proof (ii)$\implies$(iii) of Theorem \ref{T-3.3}(1) providing \eqref{F-3.6} are all valid with inequalities
reversed in the present setting of $\alpha>1$. Hence from the reversed inequality of \eqref{F-3.6} we have
\begin{align}\label{F-3.9}
y^{(1-\alpha)r}-(1-\alpha)r(y-1)-1\le0,\qquad y>0.
\end{align}
But this inequality is impossible since the above LHS goes to $+\infty$ as $y\searrow0$, so the result has
been shown.
\end{proof}

\begin{problem}\label{Q-3.5}\rm
When $\alpha>1$, no necessary condition on $p,q>0$ for $SG_{\alpha,p}\prec_{\log}R_{\alpha,q}$ to
hold is known. In fact, when $SG_{\alpha,p}(A,B)\prec_{\log}R_{\alpha,q}(A,B)$ holds for all
$A,B\in\bM_2^{++}$, one has the reversed inequality of \eqref{F-3.9} but nothing follows from that. This
suggests us that $SG_{\alpha,p}\prec_{\log}R_{\alpha,q}$ might hold for any $p,q>0$.
\end{problem}

\subsection{$\widetilde SG_{\alpha,p}\prec_{\log}R_{\alpha,q}$ and
$R_{\alpha,q}\prec_{\log}\widetilde SG_{\alpha,p}$}\label{Sec-3.2}

The next theorem is concerned with the log-majorizations between $\widetilde SG_{\alpha,p}$ and
$R_{\alpha,q}$ when $0<\alpha<1$.

\begin{theorem}\label{T-3.6}
Let $0<\alpha<1$ and $p,q>0$.
\begin{itemize}
\item[(1)] The following conditions are equivalent:
\begin{itemize}
\item[(i)] $\widetilde SG_{\alpha,p}(A,B)\prec_{\log}R_{\alpha,q}(A,B)$ for all $A,B\ge0$ with $s(A)\ge s(B)$;
\item[(ii)] $\widetilde SG_{\alpha,p}(A,B)\prec_{\log}R_{\alpha,q}(A,B)$ for all $A,B\in\bM_2^{++}$;
\item[(iii)] $p/q\le\alpha$.
\end{itemize}
\item[(2)] If $\alpha\le1/2$ and $q\le p$, then $R_{\alpha,q}(A,B)\prec_{\log}\widetilde SG_{\alpha,p}(A,B)$
holds for all $A,B\ge0$ with $s(A)\ge s(B)$.
\item[(3)] Assume that $R_{\alpha,q}(A,B)\prec_{\log}\widetilde SG_{\alpha,p}(A,B)$ holds for all
$A,B\in\bM_2^{++}$. Then we have $\alpha\le1/2$ and $p/q\ge1/2$.
\end{itemize}
\end{theorem}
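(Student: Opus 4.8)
The plan follows the dichotomy already used in Section~\ref{Sec-3}: (i)$\implies$(ii) of part (1) is immediate; the sufficiency statements (the implication (iii)$\implies$(i) of part (1) and the assertion of part (2)) are reduced by the anti-symmetric tensor power technique to an operator inequality; and the necessity statements (the implication (ii)$\implies$(iii) of part (1) and the assertion of part (3)) come from a perturbative computation with the pair $A_0,B_\theta$ of \eqref{F-2.10}. The computational backbone is the identity
\[
\widetilde SG_{\alpha,p}(A,B)^p=(A^{-p}\#_\alpha B^p)^{1/2}A^{2(1-\alpha)p}(A^{-p}\#_\alpha B^p)^{1/2}\qquad(A,B>0),
\]
together with two reformulations: first, since the positive operators $M^{1/2}NM^{1/2}$ and $N^{1/2}MN^{1/2}$ have the same spectrum, $\widetilde SG_{\alpha,p}(A,B)\le I$ is equivalent to $A^{-p}\#_\alpha B^p\le A^{-2(1-\alpha)p}$; second, because $A^{-p}\#_\alpha B^p=A^{-p/2}(A^{p/2}B^pA^{p/2})^\alpha A^{-p/2}$, this in turn is equivalent to $(A^{p/2}B^pA^{p/2})^\alpha\le A^{(2\alpha-1)p}$. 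Also $R_{\alpha,q}(A,B)\le I$ iff $B^{\alpha q}\le A^{-(1-\alpha)q}$.

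For the sufficiency (iii)$\implies$(i) of part (1): by Proposition~\ref{P-2.1}(2) we may assume $A,B>0$, and since $R_{\alpha,q}(A,B)=R_{\alpha,1}(A^q,B^q)^{1/q}$ and $\widetilde SG_{\alpha,p}(A,B)=\widetilde SG_{\alpha,p/q}(A^q,B^q)^{1/q}$, we may take $q=1$, $0<p\le\alpha$. As $\det\widetilde SG_{\alpha,p}(A,B)=(\det A)^{1-\alpha}(\det B)^\alpha=\det R_{\alpha,1}(A,B)$, the anti-symmetric tensor power technique reduces the claim to
\[
B^\alpha\le A^{\alpha-1}\ \Longrightarrow\ \widetilde SG_{\alpha,p}(A,B)\le I.
\]
Since $p/\alpha\le1$, L\"owner--Heinz gives $B^p=(B^\alpha)^{p/\alpha}\le A^{-(1-\alpha)p/\alpha}$; monotonicity of $\#_\alpha$ in the second variable and commutativity then yield $A^{-p}\#_\alpha B^p\le A^{-p}\#_\alpha A^{-(1-\alpha)p/\alpha}=A^{-2(1-\alpha)p}$, which is exactly $\widetilde SG_{\alpha,p}(A,B)\le I$ by the reformulation above.

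For part (2): by Araki's log-majorization $R_{\alpha,q}\prec_{\log}R_{\alpha,p}$ for $q\le p$ (Theorem~\ref{T-3.1}(a)) and transitivity it suffices to treat $q=p$, and then, reducing to $p=1$ as in the previous paragraph and using the equality of determinants once more, the anti-symmetric tensor power technique brings the claim down to the operator implication
\[
(A^{1/2}BA^{1/2})^\alpha\le A^{2\alpha-1}\ \Longrightarrow\ B^\alpha\le A^{\alpha-1}\qquad(0<\alpha\le1/2).
\]
This implication is the technical heart of the theorem, and I expect it to be the main obstacle: the naive attempt to raise the hypothesis to the power $1/\alpha$ fails because $1/\alpha\ge2>1$ lies outside the range of operator monotonicity, so the argument must combine Araki's log-majorization with a further reduction (either via the Furuta inequality, or by passing once more to anti-symmetric tensor powers), and must use $\alpha\le1/2$—equivalently $2\alpha-1\le0$—essentially. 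An alternative route worth trying is to note that $q\le p$ and $\alpha\le1/2$ give $p/q\ge1\ge\max\{\alpha,1-\alpha\}$, so Theorem~\ref{T-3.1}(g) yields $R_{\alpha,q}\prec_{\log}SG_{\alpha,p}$ and it would then be enough to establish $SG_{\alpha,p}\prec_{\log}\widetilde SG_{\alpha,p}$ for $\alpha\le1/2$; but this comparison seems to lead to a cognate operator inequality of the same difficulty.

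Finally the necessity: (i)$\implies$(ii) of part (1) is trivial. For the remaining implications, for $X_1,X_2\in\bM_2^{++}$ with $\det X_1=\det X_2$ one has $X_1\prec_{\log}X_2\iff\lambda_1(X_1)\le\lambda_1(X_2)$, and $\det\widetilde SG_{\alpha,p}(A_0,B_\theta)=\det R_{\alpha,q}(A_0,B_\theta)$, so the assumed log-majorization in $\bM_2^{++}$ becomes a comparison of the $\theta^2$-coefficients in the expansions \eqref{F-2.11} and \eqref{F-2.14} of Lemma~\ref{L-2.5} (with $y=x\in(0,1)$). For (ii)$\implies$(iii) of part (1) the hypothesis forces the $\theta^2$-coefficient of $\lambda_1(\widetilde SG_{\alpha,p})$ to be $\le$ that of $\lambda_1(R_{\alpha,q})$ for all $x\in(0,1)$; letting $x\searrow0$ the former tends to $-\alpha/p$ and the latter to $-1/q$, whence $p/q\le\alpha$, i.e.\ (iii). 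For part (3) one runs the same computation with the inequality reversed; here the limit $x\searrow0$ yields only $p/q\ge\alpha$, which is not enough, so to extract the sharper pair $\alpha\le1/2$ and $p/q\ge1/2$ I would either push the expansion to higher order in $1-x$ as $x\nearrow1$ (where the leading behaviours of the two $\theta^2$-coefficients coincide, so the dependence on $p,q,\alpha$ first surfaces at the next order), or replace the family by $A_0,B_\theta$ with general $x,y>0$ via a substitution analogous to that in the proof of Theorem~\ref{T-3.3}(1) and then pass to a suitable joint limit. Pinning down precisely which limiting regime isolates $\alpha\le1/2$ together with $p/q\ge1/2$ is the delicate point of part (3).
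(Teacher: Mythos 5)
Your part (1) is correct and essentially matches the paper: the sufficiency argument (reduce to $q=1$, use the anti-symmetric tensor power technique, and show $B^\alpha\le A^{\alpha-1}\implies A^{-p}\#_\alpha B^p\le A^{-2(1-\alpha)p}$ via L\"owner--Heinz and monotonicity of $\#_\alpha$) is exactly the paper's proof of (iii)$\implies$(i), and your necessity argument via the $x\searrow0$ limit of the $\theta^2$-coefficients in \eqref{F-2.11} and \eqref{F-2.14} is the shortcut the paper itself records in Remark \ref{R-3.9}. Parts (2) and (3), however, are not proved; you correctly locate the difficulties but leave both open.

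For part (2) the missing ingredient is the Bebiano--Lemos--Provid\^encia log-majorization \cite{BLP}. The paper first uses Theorem \ref{T-3.1}(a) to reduce $q\le p$ to $q=p$ (as you do), writes $\lambda(\widetilde SG_{\alpha,p}(A,B)^p)=\lambda\bigl(A^{{1-2\alpha\over2}p}(A^{p/2}B^pA^{p/2})^\alpha A^{{1-2\alpha\over2}p}\bigr)$, and then substitutes $A_1:=A^{(1-2\alpha)p}$, $B_1:=B^{(1-2\alpha)p}$, $p_1:={1\over1-2\alpha}$, $q_1:=\alpha p_1$ (here $\alpha<1/2$ is what makes $1-2\alpha>0$) to recognize the desired log-majorization as exactly the BLP inequality $A_1^{(1+q_1)/2}B_1^{q_1}A_1^{(1+q_1)/2}\prec_{\log}A_1^{1/2}(A_1^{p_1/2}B_1^{p_1}A_1^{p_1/2})^{q_1/p_1}A_1^{1/2}$. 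Your candidate routes (Furuta, or factoring through $SG_{\alpha,p}\prec_{\log}\widetilde SG_{\alpha,p}$) are not carried out, and the second would require a comparison the paper itself does not establish (cf.\ Proposition \ref{P-3.15}(1)), so this is a genuine gap. For part (3) the missing step is the identification of the limiting regime: the paper runs the two-parameter computation with $X=B_\theta$, $Y=A_0$ (general $x,y$), lets $y\searrow0$ to obtain the single-variable inequality $rx^{2(1-\alpha)}\ge r(1-\alpha)x+\alpha r-1+x^{(1-\alpha)r}$ for all $x>0$ (the reverse of \eqref{F-3.16}, $r=q/p$), and then compares growth orders as $x\to\infty$: dominance of the terms $r(1-\alpha)x$ and $x^{(1-\alpha)r}$ forces $2(1-\alpha)\ge1$ and $2(1-\alpha)\ge(1-\alpha)r$, i.e.\ $\alpha\le1/2$ and $p/q\ge1/2$. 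Your guess of "general $x,y$ and a suitable joint limit" points in the right direction, but without pinning down the regime ($y\searrow0$ first, then $x\to\infty$) the conclusion of (3) is not obtained; the $y=x$ family of Lemma \ref{L-2.5} provably yields only the weaker condition $p/q\ge\alpha$.
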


\begin{proof}
(1)\enspace
(i)$\implies$(ii) is trivial.

(ii)$\implies$(iii).\enspace
For $A,B>0$ set $Y:=A^p$ and $X:=A^{-p}\#_\alpha B^p=Y^{-1}\#_\alpha B^p$; then
$B^p=Y^{-1}\#_{1/\alpha}X$ (see definition \eqref{F-2.1} though $1/\alpha>1$). Then
$\widetilde SG_{\alpha,p}(A,B)\prec_{\log}R_{\alpha,q}(A,B)$ is equivalent to
\[
(X^{1/2}Y^{2(1-\alpha)}X^{1/2})^r\prec_{\log}
Y^{{1-\alpha\over2}r}(Y^{-1}\#_{1/\alpha}X)^{\alpha r}Y^{{1-\alpha\over2}r},
\]
where $r:=q/p$. Hence, similarly to the first paragraph of the proof (ii)$\implies$(iii) of Theorem \ref{T-3.3}(1),
we see that condition (ii) is equivalent to saying that
\begin{align}\label{F-3.10}
\lambda_1^r(Y^{1-\alpha}XY^{1-\alpha})
\le\lambda_1\bigl(Y^{{1-\alpha\over2}r}(Y^{-1}\#_{1/\alpha}X)^{\alpha r}Y^{{1-\alpha\over2}r}\bigr)
\end{align}
for all $X,Y\in\bM_2^{++}$, noting that
$\lambda(X^{1/2}Y^{2(1-\alpha)}X^{1/2})=\lambda(Y^{1-\alpha}XY^{1-\alpha})$.

Now let $Y:=A_0$ and $X:=B_\theta$ in \eqref{F-2.10} for $x,y>0$ with $xy\ne1$, $x^{1-\alpha}y\ne1$ and
$x^{2(1-\alpha)}y\ne1$. From the proof of \cite[Theorem 4.35]{Hiai} we have
\begin{align}\label{F-3.11}
Y^{1-\alpha}XY^{1-\alpha}=\begin{bmatrix}1-\theta^2(1-y)&\theta x^{1-\alpha}(1-y)\\
\theta x^{1-\alpha}(1-y)&x^{2(1-\alpha)}y+\theta^2x^{2(1-\alpha)}(1-y)\end{bmatrix}+o(\theta^2)
\quad\mbox{as $\theta\to0$},
\end{align}
and
\begin{align}\label{F-3.12}
Y^{-1}\#_{1/\alpha}X=\begin{bmatrix}1+\theta^2 u_{11}&\theta u_{12}\\
\theta u_{12}&x^{1-\alpha\over\alpha}y^{1\over\alpha}+\theta^2u_{22}\end{bmatrix}+o(\theta^2),
\end{align}
where
\begin{align}\label{F-3.13}
\begin{cases}
u_{11}:=-{1-y\over\alpha}+{x(1-y)^2\bigl(1-\alpha-xy+\alpha x^{1\over\alpha}y^{1\over\alpha}\bigr)
\over\alpha(1-xy)^2}, \\
u_{12}:={(1-y)\bigl(1-x^{1\over\alpha}y^{1\over\alpha}\bigr)\over1-xy}.
\end{cases}
\end{align}
Hence by \cite[Example 3.2(1)]{Hiai} we compute
\[
(Y^{-1}\#_{1/\alpha}X)^{\alpha r}=\begin{bmatrix}1+\theta^2v_{11}&\theta v_{12}\\
\theta v_{12}&x^{(1-\alpha)r}y^r+\theta^2v_{22}\end{bmatrix}+o(\theta^2),
\]
where
\[
\begin{cases}
v_{11}:=\alpha ru_{11}+{\alpha r-1-\alpha rx^{1-\alpha\over\alpha}y^{1\over\alpha}
+x^{(1-\alpha)r}y^r\over\bigl(1-x^{1-\alpha\over\alpha}y^{1\over\alpha}\bigr)^2}\,u_{12}^2, \\
v_{12}:={1-x^{(1-\alpha)r}y^r\over1-x^{1-\alpha\over\alpha}y^{1\over\alpha}}\,u_{12},
\end{cases}
\]
so that
\begin{align}\label{F-3.14}
Y^{{1-\alpha\over2}r}(Y^{-1}\#_{1/\alpha}X)^{\alpha r}Y^{{1-\alpha\over2}r}
=\begin{bmatrix}1+\theta^2v_{11}&\theta x^{{1-\alpha\over2}r}v_{12}\\
\theta x^{{1-\alpha\over2}r}v_{12}&x^{2(1-\alpha)r}y^r+\theta^2v_{22}\end{bmatrix}+o(\theta^2)
\quad\mbox{as $\theta\to0$}.
\end{align}
When $x^{1-\alpha}y<1$ and $x^{2(1-\alpha)y}<1$, applying \cite[Lemma 3.6]{Hiai} to \eqref{F-3.11} and
\eqref{F-3.14} we have
\begin{align}
\lambda_1^r(Y^{1-\alpha}XY^{1-\alpha})
&=1+\theta^2r\biggl(-1+y+{x^{2(1-\alpha)}(1-y)^2\over1-x^{2(1-\alpha)}y}\biggr)+o(\theta^2), \label{F-3.15}\\
\lambda_1\bigl(Y^{{1-\alpha\over2}r}(Y^{-1}\#_{1/\alpha}X)^{\alpha r}Y^{{1-\alpha\over2}r}\bigr)
&=1+\theta^2\biggl(v_{11}+{x^{(1-\alpha)r}v_{12}^2\over1-x^{2(1-\alpha)r}y^r}\biggr)+o(\theta^2). \nonumber
\end{align}
Therefore, for any $x>0$, whenever $y>0$ is sufficiently small, it follows from \eqref{F-3.10} that
\[
r\biggl(-1+y+{x^{2(1-\alpha)}(1-y)^2\over1-x^{2(1-\alpha)}y}\biggr)
\le v_{11}+{x^{(1-\alpha)r}v_{12}^2\over1-x^{2(1-\alpha)r}y^r}.
\]
As $y\searrow0$, since
\begin{align*}
&u_{11}\to-{1\over\alpha}+{1-\alpha\over\alpha}\,x,\qquad u_{12}\to1, \\
&v_{11}\to-r+r(1-\alpha)x+\alpha r-1,\qquad v_{12}\to1,
\end{align*}
we must have
\begin{align}\label{F-3.16}
rx^{2(1-\alpha)}\le r(1-\alpha)x+\alpha r-1+x^{(1-\alpha)r},\qquad x>0.
\end{align}
Letting $x\searrow0$ gives $0\le\alpha r-1$, i.e., $1/r\le\alpha$. Hence (iii) follows.

(iii)$\implies$(i).\enspace
This part was shown in \cite[Proposition 4.33(1)]{Hiai}, while a proof is given in a different way here.
Assume that $p\le\alpha q$. By continuity we may assume that $A,B>0$. Since
$\det\widetilde SG_{\alpha,p}(A,B)=\det R_{\alpha,q}(A,B)$, in view of the anti-symmetric tensor power
technique it suffices to show that $\big\|\widetilde SG_{\alpha,p}(A,B)\big\|_\infty\le\|R_{\alpha,q}(A,B)\|_\infty$;
equivalently,
\begin{align}\label{F-3.17}
B^{\alpha q}\le A^{(\alpha-1)q}\implies\widetilde SG_{\alpha,p}(A,B)\le I.
\end{align}
Assume that $B^{\alpha q}\le A^{(\alpha-1)q}$; then we have $B^p\le A^{{\alpha-1\over\alpha}p}$ since
${p\over\alpha q}\le1$. Therefore,
\begin{align*}
A^{-p}\#_\alpha B^p\le A^{-p}\#_\alpha A^{{\alpha-1\over\alpha}p}=A^{-2(1-\alpha)p},
\end{align*}
so that we have $A^{2(1-\alpha)p}\le(A^{-p}\#_\alpha B^p)^{-1}$ and
$(A^{-p}\#_\alpha B^p)^{1/2}A^{2(1-\alpha)p}(A^{-p}\#_\alpha B^p)^{1/2}\le I$, showing \eqref{F-3.17}.

(2)\enspace
When $\alpha=1/2$, since $\lambda(\widetilde SG_{1/2,p}(A,B))=\lambda(SG_{1/2,p}(A,B))=
\lambda(R_{1/2,2p}(A,B))$, the assertion in this case follows from Theorem \ref{T-3.1}(a) if $q\le2p$ (weaker
than $q\le p$). Next assume that $\alpha<1/2$ and $q\le p$. By Theorem \ref{T-3.1}(a) again we may show
that $R_{\alpha,p}(A,B)\prec_{\log}\widetilde SG_{\alpha,p}(A,B)$, which is more explicitly written as
\begin{align}\label{F-3.18}
A^{{1-\alpha\over2}p}B^{\alpha p}A^{{1-\alpha\over2}p}
\prec_{\log}A^{{1-2\alpha\over2}p}(A^{p/2}B^pA^{p/2})^\alpha A^{{1-2\alpha\over2}p}.
\end{align}
Set $p_1:={1\over1-2\alpha}$, $q_1:={\alpha\over1-2\alpha}=\alpha p_1$, $A_1:=A^{(1-2\alpha)p}$ and
$B_1:=B^{(1-2\alpha)p}$. Since
\[
A^p=A_1^{p_1},\qquad B^p=B_1^{p_1},\qquad
A^{(1-\alpha)p}=A_1^{1-\alpha\over1-2\alpha}=A_1^{1+q_1},\qquad
B^{\alpha p}=B_1^{\alpha\over1-2\alpha}=B_1^{q_1},
\]
the log-majorization in \eqref{F-3.18} is equivalently written as
\[
A_1^{1+q_1\over2}B_1^{q_1}A_1^{1+q_1\over2}\prec_{\log}
A_1^{1/2}\bigl(A_1^{p_1/2}B_1^{p_1}A_1^{p_1/2}\bigr)^{q_1/p_1}A_1^{1/2},
\]
which is exactly the BLP log-majorization due to Bebiano, Lemos and Provid\^encia \cite[Theorem 2.1]{BLP}.

(3)\enspace
We can repeat the above proof (ii)$\implies$(iii) of (1) with inequalities reversed, so that inequality \eqref{F-3.16}
is reversed as
\[
rx^{2(1-\alpha)}\ge r(1-\alpha)x+\alpha r-1+x^{(1-\alpha)r},\qquad x>0,
\]
where $r:=q/p$. Letting $x\searrow0$ gives $0\ge\alpha r-1$, i.e., $1/r\ge\alpha$. Also, looking at the order
as $x\to\infty$ we have $2(1-\alpha)\ge1$ and $2(1-\alpha)\ge(1-\alpha)r$, so that $\alpha\le1/2$ and
$1/r\ge1/2$. Hence the result follows.
\end{proof}

The next theorem is concerned with the log-majorizations between $\widetilde SG_{\alpha,p}$ and
$R_{\alpha,q}$ when $\alpha>1$.

\begin{theorem}\label{T-3.7}
Let $\alpha>1$ and $p,q>0$.
\begin{itemize}
\item[(1)] For any $p,q>0$ there exist $A,B\in\bM_2^{++}$ such that
$\widetilde SG_{\alpha,p}(A,B)\not\prec_{\log}R_{\alpha,q}(A,B)$.
\item[(2)] If $p/q\ge\alpha$, then $R_{\alpha,q}(A,B)\prec_{\log}\widetilde SG_{\alpha,p}(A,B)$ holds for all
$A,B\ge0$ with $s(A)\ge s(B)$.
\item[(3)] Assume that $R_{\alpha,q}(A,B)\prec_{\log}\widetilde SG_{\alpha,p}(A,B)$ holds for all
$A,B\in\bM_2^{++}$. Then we have $p/q\ge1/2$.
\end{itemize}
\end{theorem}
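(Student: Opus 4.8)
\textbf{Proof proposal for Theorem \ref{T-3.7}.}

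The plan is to follow the same three-part template already used in Theorems \ref{T-3.4} and \ref{T-3.6}(2)(3), adapting each piece to the regime $\alpha>1$. For part (2), I would first reduce as usual: since $R_{\alpha,q}(A,B)=R_{\alpha,1}(A^q,B^q)^{1/q}$ and $\widetilde SG_{\alpha,p}(A,B)=\widetilde SG_{\alpha,p/q}(A^q,B^q)^{1/q}$, we may take $q=1$, and by Proposition \ref{P-2.1}(2) we may assume $A,B>0$. Setting $Y:=A^p$ and $X:=A^{-p}\#_\alpha B^p$, so that $B^p=Y^{-1}\#_{1/\alpha}X$ by the Riccati correspondence (with $1/\alpha\in(0,1)$ here, which is the pleasant direction), the claim becomes, with $r:=q/p\le 1/\alpha$,
\[
Y^{{1-\alpha\over2}r}(Y^{-1}\#_{1/\alpha}X)^{\alpha r}Y^{{1-\alpha\over2}r}\prec_{\log}(X^{1/2}Y^{2(1-\alpha)}X^{1/2})^r.
\]
Since determinants match, the anti-symmetric tensor power technique reduces this to the implication $X^{1/2}Y^{2(1-\alpha)}X^{1/2}\le I\implies Y^{{1-\alpha\over2}r}(Y^{-1}\#_{1/\alpha}X)^{\alpha r}Y^{{1-\alpha\over2}r}\le I$, equivalently (raising to power $1/r\ge\alpha$) to controlling $\widetilde F_\alpha(Y^{1/p},(XYX)^{1/p})$-type expressions. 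I expect the argument to run parallel to the $0<\alpha<1$ case (iii)$\implies$(i) of Theorem \ref{T-3.6}(1): from $X^{1/2}Y^{2(1-\alpha)}X^{1/2}\le I$ deduce $Y^{-1}\#_{1/\alpha}X\le$ (a power of $Y$), using operator monotonicity of $t\mapsto t^{1/(\alpha r)}$ since $1/(\alpha r)\ge 1$ is \emph{not} operator monotone — here is where care is needed, and one should instead combine Araki's log-majorization (Theorem \ref{T-3.1}(a)) exactly as in the proof of Theorem \ref{T-3.4}(1), exploiting that $0<{\alpha-1\over\alpha}<1$.

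For part (1), the strategy is the explicit $2\times2$ obstruction. Assume for contradiction that $\widetilde SG_{\alpha,p}(A,B)\prec_{\log}R_{\alpha,q}(A,B)$ for all $A,B\in\bM_2^{++}$; then inequality \eqref{F-3.10} holds with the inequality reversed. I would reuse verbatim the perturbation expansions \eqref{F-3.11}--\eqref{F-3.16} from the proof of Theorem \ref{T-3.6}(1), which are stated there for $0<\alpha<1$ but, as the excerpt notes after Lemma \ref{L-2.5}, remain valid for all $\alpha\in(0,\infty)\setminus\{1\}$. The reversed version of \eqref{F-3.16} reads $rx^{2(1-\alpha)}\ge r(1-\alpha)x+\alpha r-1+x^{(1-\alpha)r}$ for all $x>0$; letting $x\searrow0$ forces $x^{2(1-\alpha)}\to0$ (since $2(1-\alpha)<0$ here the term $x^{2(1-\alpha)}\to+\infty$, wait) — so in fact for $\alpha>1$ the leading behaviour as $x\searrow0$ of the left side is $r x^{2(1-\alpha)}\to+\infty$ while the right side behaves like $\alpha r - 1 + x^{(1-\alpha)r}$ or $r(1-\alpha)x$; comparing the dominant exponents $2(1-\alpha)$ versus $(1-\alpha)r$ (both negative) the inequality can in fact only fail, not hold, which should deliver the contradiction directly — but whichever endpoint behaviour bites, the right move is to check both $x\searrow0$ and $x\to\infty$ and isolate whichever limit is self-contradictory. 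This is the routine computational part.

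For part (3), I would again run the proof of (ii)$\implies$(iii) of Theorem \ref{T-3.6}(1) with all inequalities reversed, obtaining the reversed \eqref{F-3.16}, and then extract the order constraints. Looking at $x\to\infty$ gives $2(1-\alpha)\ge(1-\alpha)r$; since $1-\alpha<0$ this yields $2\le r$, i.e. $p/q\le 1/2$ — but the desired conclusion is $p/q\ge 1/2$, so I must be careful about which way the perturbation inequality actually points in the $\alpha>1$ regime and re-derive the sign of the $\theta^2$-coefficient in \eqref{F-3.15}, since there $-1+y+x^{2(1-\alpha)}(1-y)^2/(1-x^{2(1-\alpha)}y)$ may change sign when $x^{2(1-\alpha)}y>1$. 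The main obstacle, and the step deserving the most care, is exactly this bookkeeping: for $\alpha>1$ several of the denominators $1-x^{2(1-\alpha)}y$, $1-x^{(1-\alpha)r}y^r$ appearing in \eqref{F-3.12}--\eqref{F-3.15} can be negative, so one must restrict to the sub-region of $(x,y)$ where \cite[Lemma 3.6]{Hiai} applies with the correct sign, and only \emph{then} pass to the limit. Once the admissible region is pinned down, letting the appropriate variable tend to $0$ or $\infty$ yields $p/q\ge 1/2$ just as the $\alpha\le 1/2$ analysis in Theorem \ref{T-3.6}(3) yielded its bound; I expect no genuinely new idea beyond this sign-tracking to be required.
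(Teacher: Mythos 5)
You have correctly identified the paper's template (anti-symmetric tensor reduction for (2), the $2\times2$ perturbation computation behind \eqref{F-3.16} for (1) and (3)), but none of the three parts is actually carried through, and two contain directional errors. In (1), the hypothesis $\widetilde SG_{\alpha,p}\prec_{\log}R_{\alpha,q}$ yields \eqref{F-3.10} and hence \eqref{F-3.16} in their \emph{original} orientation, not reversed; the reversed inequality is what the hypothesis of part (3) produces, so you are analysing the wrong inequality. The contradiction for (1) is immediate from the unreversed \eqref{F-3.16} as $x\to\infty$: for $\alpha>1$ the left side $rx^{2(1-\alpha)}$ tends to $0$ while the right side is dominated by the linear term $r(1-\alpha)x\to-\infty$. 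In (3) you do work with the correctly reversed inequality, but the limit $x\to\infty$ gives no information (there too the right side is governed by $r(1-\alpha)x\to-\infty$, not by $x^{(1-\alpha)r}\to0$, so your deduction $2(1-\alpha)\ge(1-\alpha)r$ and the resulting wrong bound $p/q\le1/2$ come from comparing the wrong dominant terms). The constraint comes from $x\searrow0$, where the two divergent terms are $rx^{2(1-\alpha)}$ on the left and $x^{(1-\alpha)r}$ on the right; the reversed inequality forces $2(1-\alpha)\le(1-\alpha)r$, i.e.\ $r\le2$, i.e.\ $p/q\ge1/2$. Your worry about the denominators in \eqref{F-3.12}--\eqref{F-3.15} is moot: one fixes $x>0$ and takes $y$ small enough that $x^{1-\alpha}y<1$ and $x^{2(1-\alpha)}y<1$, exactly as in the proof of Theorem \ref{T-3.6}(1), so \eqref{F-3.16} is valid for every $x>0$ and every $\alpha$ without further sign-tracking.

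In (2) the reduction to the implication ``$\widetilde SG_{\alpha,p}(A,B)\le I\implies R_{\alpha,q}(A,B)\le I$'' is right, but you stop short of the actual operator-inequality chain, and the obstacle you flag (non-operator-monotonicity of $t^{1/(\alpha r)}$) never arises; in particular the proposed detour through Araki's log-majorization as in Theorem \ref{T-3.4}(1) is both unverified and unnecessary. The direct argument is short: $\widetilde SG_{\alpha,p}(A,B)\le I$ gives $A^{-p}\#_\alpha B^p\le A^{2(\alpha-1)p}$, hence $(A^{p/2}B^pA^{p/2})^\alpha\le A^{(2\alpha-1)p}$; taking the $1/\alpha$-th power (operator monotone since $\alpha>1$) gives $B^p\le A^{{\alpha-1\over\alpha}p}$, and taking the $\alpha q/p$-th power (operator monotone since $p/q\ge\alpha$) gives $B^{\alpha q}\le A^{(\alpha-1)q}$, i.e.\ $R_{\alpha,q}(A,B)\le I$. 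Every exponent used lies in $(0,1]$, so no log-majorization input is needed.
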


\begin{proof}
(1)\enspace
Assume by contradiction that $\widetilde SG_{\alpha,p}(A,B)\prec_{\log}R_{\alpha,q}(A,B)$ holds for all
$A,B\in\bM_2^{++}$. Then similarly to the first paragraph of the proof (ii)$\implies$(iii) of
Theorem \ref{T-3.6}(1), we have inequality \eqref{F-3.10} for all $X,Y\in\bM_2^{++}$. Hence the computations
in the proof of (ii)$\implies$(iii) of Theorem \ref{T-3.6}(1) show inequality \eqref{F-3.16} for all $x>0$. Since
$\alpha>1$ in the present case, letting $y\to\infty$ gives $0\le-\infty$, a contradiction.

(2)\enspace
Assume that $p\ge\alpha q$. By continuity we may assume that $A,B>0$. As in the proof (iii)$\implies$(i) of
Theorem \ref{T-3.6}(1), it suffices to show that
\[
\widetilde SG_{\alpha,p}(A,B)\le I\implies B^{\alpha q}\le A^{(\alpha-1)q}.
\]
If $\widetilde SG_{\alpha,p}(A,B)\le I$, then one has $A^{-p}\#_\alpha B^p\le A^{2(\alpha-1)p}$, so that
$(A^{p/2}B^pA^{p/2})^\alpha\le A^{(2\alpha-1)p}$. Since $\alpha>1$, this gives
$A^{p/2}B^pA^{p/2}\le A^{{2\alpha-1\over\alpha}p}$ and hence $B^p\le A^{{\alpha-1\over\alpha}p}$. Since
${\alpha q\over p}\le1$, we have $B^{\alpha q}\le A^{(\alpha-1)q}$.

(3)\enspace
Repeating the proof (ii)$\implies$(iii) of Theorem \ref{T-3.6}(1) with inequalities reversed, we have
\[
rx^{2(1-\alpha)}\ge r(1-\alpha)x+\alpha r-1+x^{(1-\alpha)r},\qquad x>0,
\]
where $r:=q/p$. Now assume that $p/q<1/2$. Then, since $0>2(1-\alpha)>(1-\alpha)r$, the above inequality
is impossible in the limit $x\searrow0$. Hence $p/q\ge1/2$ must hold.
\end{proof}

\begin{problem}\label{Q-3.8}\rm
When $0<\alpha<1$, there is a gap between the sufficient condition in Theorem \ref{T-3.6}(2) and the
necessary condition in Theorem \ref{T-3.6}(3) for $R_{\alpha,q}\prec_{\log}\widetilde SG_{\alpha,p}$. When
$\alpha>1$, there is also a big gap between the sufficient condition in Theorem \ref{T-3.7}(2) and the
necessary condition in Theorem \ref{T-3.7}(3) for $R_{\alpha,q}\prec_{\log}\widetilde SG_{\alpha,p}$. Hence
the problem of characterizing $R_{\alpha,q}\prec_{\log}\widetilde SG_{\alpha,p}$ is far from completed.
When $\alpha=1/2$, note that the condition in Theorem \ref{T-3.6}(2) is not sharp as mentioned in its proof,
while the sufficient condition in Theorem \ref{T-3.7}(2) is sharp.
\end{problem}

\begin{remark}\label{R-3.9}\rm
Let $\alpha\in(0,\infty)\setminus\{1\}$ and $p,q>0$. For any pair $(\cM_{\alpha,p},\cN_{\alpha,q})$
such as in Remark \ref{R-2.4}, one can easily find a necessary condition for
$\cM_{\alpha,p}\prec_{\log}\cN_{\alpha,q}$ by use of expressions given in Lemma \ref{L-2.5}. For instance,
if $SG_{\alpha,p}(A,B)\prec_{\log}R_{\alpha,q}(A,B)$ for all $A,B\in\bM_2^{++}$, then from \eqref{F-2.11}
and \eqref{F-2.13} one must have
\[
-{2\alpha(1-\alpha)\over p}\cdot{1-x^p\over1+x^p}
\le{-1-x^q+x^{\alpha q}+x^{(1-\alpha)q}\over q(1-x^q)},\qquad x\in(0,1).
\]
Letting $x\searrow0$ gives $p/q\le2\alpha(1-\alpha)$ if $0<\alpha<1$. Similarly, if
$R_{\alpha,q}(A,B)\prec_{\log}SG_{\alpha,p}(A,B)$ for all $A,B\in\bM_2^{++}$, then one has
$p/q\ge2\alpha(1-\alpha)$ if $0<\alpha<1$, and a contradiction if $\alpha>1$. Hence Theorem \ref{T-3.4}(2)
follows again, while we have necessary conditions weaker than Theorem \ref{T-3.3} since
$\min\{\alpha,1-\alpha\}\le2\alpha(1-\alpha)\le\max\{\alpha,1-\alpha\}$ for $0<\alpha<1$.

Also, if $\widetilde SG_{\alpha,p}(A,B)\prec_{\log}R_{\alpha,q}(A,B)$ for all $A,B\in\bM_2^{++}$, then from
\eqref{F-2.11} and \eqref{F-2.14} one has
\[
-{1\over p}\biggl\{\alpha\,{1-x^p\over1+x^p}
+{x^p+x^{2p}-x^{(2\alpha+1)p}-x^{2(1-\alpha)p}\over(1-x^p)(1+x^p)^2}\biggr\}
\le{-1-x^q+x^{\alpha q}+x^{(1-\alpha)q}\over q(1-x^q)},\qquad x\in(0,1).
\]
Letting $x\searrow0$ gives $p/q\le\alpha$ if $0<\alpha<1$, and $p/q\ge1/2$ if $\alpha>1$. Similarly, if
$R_{\alpha,q}(A,B)\prec_{\log}\widetilde SG_{\alpha,p}(A,B)$ for all $A,B\in\bM_2^{++}$, then one has
$p/q\ge\alpha$ if $0<\alpha<1$, and $p/q\ge1/2$ if $\alpha\ge1/2$. These show (ii)$\implies$(iii) of
Theorem \ref{T-3.6}(1) and Theorem \ref{T-3.7}(3) again and a partial necessary condition in
Theorem \ref{T-3.6}(3). In this way, an easy way of using Lemma \ref{L-2.5} is sometimes (though not always)
enough to obtain the necessary conditions shown in the theorems of Sections \ref{Sec-3.1} and \ref{Sec-3.2}.
\end{remark}

\subsection{Regarding $G_{\alpha,p}\prec_{\log}G_{\alpha,q}$, $SG_{\alpha,p}\prec_{\log}SG_{\alpha,q}$
and $\widetilde SG_{\alpha,p}\prec_{\log}\widetilde SG_{\alpha,q}$}\label{Sec-3.3}

While the log-majorizations $G_{\alpha,p}\prec_{\log}G_{\alpha,q}$ for $\alpha\in(0,2]\setminus\{1\}$ were
completely characterized as stated in Theorem \ref{T-3.1}(b) and (c), the characterizations of
$SG_{\alpha,p}\prec_{\log}SG_{\alpha,q}$ and $\widetilde SG_{\alpha,p}\prec_{\log}\widetilde SG_{\alpha,q}$,
as well as $G_{\alpha,p}\prec_{\log}G_{\alpha,q}$ for $\alpha>2$, have not been obtained yet.\footnote{
It was claimed in \cite[Theorem 3.3]{GT} that $SG_{\alpha,p}(A,B)\prec_{\log}SG_{\alpha,q}(A,B)$ for all
$A,B>0$ if $0<\alpha<1$ and $0<p\le q$; however the proof contains a flaw.}
In this subsection we discuss the problem though without complete success.

The proof of Theorem \ref{T-3.1}(d) for $\alpha\ge2$ was given in \cite{Hi2} in an indirect way combining
(e) and (f), so the sufficient condition in (d) is probably not best possible. In a similar way, a certain sufficient
condition for $SG_{\alpha,p}\prec_{\log}SG_{\alpha,q}$ (resp.,
$\widetilde SG_{\alpha,p}\prec_{\log}\widetilde SG_{\alpha,q}$) is given as follows by combining the
log-majorizations in Section \ref{Sec-3.1} (resp., Section \ref{Sec-3.2}).

\begin{proposition}\label{P-3.10}
Let $0<\alpha<1$ and $p,q>0$.
\begin{itemize}
\item[(1)] If $p/q\le\min\{{\alpha\over1-\alpha},{1-\alpha\over\alpha}\}$, then we have
$SG_{\alpha,p}(A,B)\prec_{\log}SG_{\alpha,q}(A,B)$ for all $A,B\ge0$ with $s(A)\ge s(B)$.
\item[(2)] If $\alpha\le1/2$ and $p/q\le\alpha$, then we have
$\widetilde SG_{\alpha,p}(A,B)\prec_{\log}\widetilde SG_{\alpha,q}(A,B)$ for all $A,B\ge0$ with
$s(A)\ge s(B)$.
\end{itemize}
\end{proposition}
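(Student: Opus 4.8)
\textbf{Proof proposal for Proposition \ref{P-3.10}.}
The plan is to obtain each log-majorization by \emph{composing} two of the log-majorizations established in the preceding subsections, using the transitivity of $\prec_{\log}$ together with the fact (Remark \ref{R-2.4}(2)) that for these means the determinant is fixed at $(\det A)^{1-\alpha}(\det B)^\alpha$, so that weak log-majorization automatically upgrades to log-majorization. For (1), I would route through the R\'enyi mean: first apply Theorem \ref{T-3.3}(2) to pass from $R_{\alpha,s}$ up to $SG_{\alpha,q}$, which requires $s/q\ge\max\{\alpha,1-\alpha\}$; then apply Theorem \ref{T-3.3}(1) to pass from $SG_{\alpha,p}$ down to $R_{\alpha,s}$, which requires $p/s\le\min\{\alpha,1-\alpha\}$. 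Chaining these gives $SG_{\alpha,p}\prec_{\log}R_{\alpha,s}\prec_{\log}SG_{\alpha,q}$ provided an intermediate parameter $s>0$ exists satisfying both constraints, i.e. provided
\[
\frac{p}{q}=\frac{p}{s}\cdot\frac{s}{q}\le\min\{\alpha,1-\alpha\}\cdot\frac{1}{\max\{\alpha,1-\alpha\}}
=\min\Bigl\{\frac{\alpha}{1-\alpha},\frac{1-\alpha}{\alpha}\Bigr\},
\]
which is exactly the hypothesis; concretely one may take $s:=q\max\{\alpha,1-\alpha\}$ (or any $s$ between $q\max\{\alpha,1-\alpha\}$ and $p/\min\{\alpha,1-\alpha\}$) and check both inequalities hold. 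All three means involved have the same determinant, so each step is a genuine log-majorization and transitivity applies.

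For (2), the argument is the analogous composition through $R_{\alpha,s}$ using the two halves of Theorem \ref{T-3.6}. Under the standing assumption $\alpha\le1/2$, Theorem \ref{T-3.6}(2) gives $R_{\alpha,s}\prec_{\log}\widetilde SG_{\alpha,q}$ whenever $s\le q$, and Theorem \ref{T-3.6}(1) gives $\widetilde SG_{\alpha,p}\prec_{\log}R_{\alpha,s}$ whenever $p/s\le\alpha$. Composing, $\widetilde SG_{\alpha,p}\prec_{\log}R_{\alpha,s}\prec_{\log}\widetilde SG_{\alpha,q}$ holds as soon as there is an $s>0$ with $p/s\le\alpha$ and $s\le q$, i.e. as soon as $p/(\alpha)\le s\le q$ is nonempty, which is precisely $p/q\le\alpha$; taking $s:=q$ works. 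Again the determinant identity across $R_{\alpha,s}$ and the two spectral geometric means makes the chaining legitimate.

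The only genuinely delicate point — and it is a bookkeeping point rather than a hard estimate — is to confirm that the domains match up: the means $SG_{\alpha,p}$, $\widetilde SG_{\alpha,p}$ and (for the relevant $\alpha$) $R_{\alpha,p}$ are all defined on $\{(A,B):s(A)\ge s(B)\}$, and the cited theorems are stated there, so the composed statement inherits the same domain without extra hypotheses. I would therefore organize the proof as: (i) fix the intermediate parameter $s$ explicitly in terms of $p,q,\alpha$; (ii) verify the two ratio conditions reduce to the stated hypothesis; (iii) invoke the two cited theorems and transitivity of $\prec_{\log}$; (iv) note the determinant equality to close. I do not expect any serious obstacle; the main value added is recognizing the right intermediate mean and the right range of $s$. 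One caveat worth flagging (as the authors do elsewhere) is that these sufficient conditions are almost certainly not sharp, since they are obtained by this indirect two-step route rather than by a direct argument.
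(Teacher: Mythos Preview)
Your approach is exactly the paper's: route through an intermediate R\'enyi mean $R_{\alpha,s}$ and invoke the two directions of Theorem \ref{T-3.3} for part (1), respectively Theorem \ref{T-3.6} for part (2), then use transitivity. Part (2) is correct as written.

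In part (1) the general argument is right, but there is a slip in the bookkeeping. The condition from Theorem \ref{T-3.3}(2) for $R_{\alpha,s}\prec_{\log}SG_{\alpha,q}$ is $q/s\ge\max\{\alpha,1-\alpha\}$, not $s/q\ge\max\{\alpha,1-\alpha\}$ as you wrote (your displayed computation in fact uses the correct inequality $s/q\le1/\max\{\alpha,1-\alpha\}$, so the derivation of the threshold is fine). Consequently your concrete choice $s:=q\max\{\alpha,1-\alpha\}$ does not work in general: with this $s$ the first constraint $p/s\le\min\{\alpha,1-\alpha\}$ becomes $p/q\le\alpha(1-\alpha)$, which is strictly stronger than the hypothesis (e.g.\ fails at $\alpha=1/2$, $p/q=1/2$). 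The admissible interval is $p/\min\{\alpha,1-\alpha\}\le s\le q/\max\{\alpha,1-\alpha\}$, and the paper simply takes the left endpoint $s=p/\min\{\alpha,1-\alpha\}$ (splitting into the two cases $\alpha\le1/2$ and $\alpha\ge1/2$). Fix the choice of $s$ and the proof goes through verbatim.
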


\begin{proof}
(1)\enspace
Assume that $0<\alpha\le1/2$ and $p/q\le{\alpha\over1-\alpha}$. Since ${q\over p/\alpha}\ge1-\alpha$,
by Theorem \ref{T-3.3} we have
\[
SG_{\alpha,p}(A,B)\prec_{\log}R_{\alpha,{p/\alpha}}(A,B)\prec_{\log}SG_{\alpha,q}(A,B)
\]
Assume that $1/2\le\alpha<1$ and $p/q\le{1-\alpha\over\alpha}$. Since ${q\over p/(1-\alpha)}\ge\alpha$,
we have similarly
\[
SG_{\alpha,p}(A,B)\prec_{\log}R_{\alpha,{p/(1-\alpha)}}(A,B)\prec_{\log}SG_{\alpha,q}(A,B).
\]

(2)\enspace
Assume that $\alpha\le1/2$ and $p/\alpha\le q$. Then by Theorem \ref{T-3.6} we have
\[
\widetilde SG_{\alpha,p}(A,B)\prec_{\log}R_{\alpha,p/\alpha}(A,B)
\prec_{\log}\widetilde SG_{\alpha,q}(A,B).
\]
\end{proof}

The next proposition shows that $p\le q$ or $p\ge q$ is anyhow necessary for the log-majorizations of our
concern to hold.

\begin{proposition}\label{P-3.11}
Let $\alpha\in(0,\infty)\setminus\{1\}$ and $p,q>0$.
\begin{itemize}
\item[(1)] Assume that $G_{\alpha,p}(A,B)\prec_{\log}G_{\alpha,q}(A,B)$ holds for all $A,B\in\bM_2^{++}$.
Then we have $p\ge q$ if $0<\alpha<1$, and $p\le q$ if $\alpha>1$.
\item[(2)] Assume that $SG_{\alpha,p}(A,B)\prec_{\log}SG_{\alpha,q}(A,B)$ holds for all $A,B\in\bM_2^{++}$.
Then we have $p\le q$ if $0<\alpha<1$, and $p\ge q$ if $\alpha>1$.
\item[(3)] Assume that $\widetilde SG_{\alpha,p}(A,B)\prec_{\log}\widetilde SG_{\alpha,q}(A,B)$ holds for all
$A,B\in\bM_2^{++}$. Then we have $p\le q$ whichever $0<\alpha<1$ or $\alpha>1$.
\end{itemize}
\end{proposition}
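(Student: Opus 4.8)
The plan is to reduce all three statements, via Remark~\ref{R-3.9}, to a scalar comparison of largest eigenvalues along a one-parameter family of test matrices, and then to extract the constraints on $p,q$ from the second-order perturbation formulas already recorded in Lemma~\ref{L-2.5}. Concretely, for each $\cM\in\{G,SG,\widetilde SG\}$, Remark~\ref{R-3.9} says that ``$\cM_{\alpha,p}(A,B)\prec_{\log}\cM_{\alpha,q}(A,B)$ for all $A,B\in\bM_2^{++}$'' is equivalent to $\lambda_1(\cM_{\alpha,p}(A,B))\le\lambda_1(\cM_{\alpha,q}(A,B))$ for all such $A,B$. I would then take $A=A_0$ and $B=B_\theta$ as in \eqref{F-2.10} with $y=x\in(0,1)$ and let $\theta\to0$: since the eigenvalue inequality must hold for every sufficiently small $\theta$, comparing the coefficients of $\theta^2$ in the relevant expansion from Lemma~\ref{L-2.5} produces, for each fixed $x\in(0,1)$, a scalar inequality of the form $f_p(x)\le f_q(x)$, the subscript recording the power appearing. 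What remains is to test this inequality as $x\searrow0$.

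For $G_{\alpha,p}$, formula \eqref{F-2.12} gives the requirement
\[
\frac{\alpha(1-\alpha)}{2p}\,(x^p-x^{-p})\le\frac{\alpha(1-\alpha)}{2q}\,(x^q-x^{-q}),\qquad x\in(0,1).
\]
Dividing by $\alpha(1-\alpha)$, which is positive when $0<\alpha<1$ and negative when $\alpha>1$ (so the inequality reverses in the latter case), reduces the matter to comparing $p^{-1}(x^p-x^{-p})$ with $q^{-1}(x^q-x^{-q})$; letting $x\searrow0$, the dominant contributions are $-x^{-p}/p$ and $-x^{-q}/q$, and comparing their blow-up rates forces $p\ge q$ for $0<\alpha<1$ and $p\le q$ for $\alpha>1$. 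For $SG_{\alpha,p}$, formula \eqref{F-2.13} yields the analogous requirement with $-\frac{2\alpha(1-\alpha)}{p}\cdot\frac{1-x^p}{1+x^p}$ in place of the left-hand coefficient; dividing by the sign factor $-2\alpha(1-\alpha)$ and letting $x\searrow0$, where $\frac{1-x^t}{1+x^t}\to1$, one is left with $p^{-1}\ge q^{-1}$ for $0<\alpha<1$ and the reverse for $\alpha>1$, i.e.\ $p\le q$ and $p\ge q$ respectively. These two cases are essentially the bookkeeping already illustrated for other pairs in Remark~\ref{R-3.9}.

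For $\widetilde SG_{\alpha,p}$ the same scheme applies, but the $\theta^2$-coefficient in \eqref{F-2.14} is the more delicate expression
\[
\alpha\,\frac{1-x^p}{1+x^p}+\frac{x^p+x^{2p}-x^{(2\alpha+1)p}-x^{2(1-\alpha)p}}{(1-x^p)(1+x^p)^2},
\]
so the real work is an asymptotic analysis of this quantity (and of its version with exponent $q$) as $x\searrow0$. The outcome hinges on which of the exponents $p,\,2p,\,(2\alpha+1)p,\,2(1-\alpha)p$ governs the limit: for $0<\alpha<1$ all of them are positive, the rational term tends to $0$, the coefficient tends to $\alpha$, and the comparison collapses to $p^{-1}\ge q^{-1}$, i.e.\ $p\le q$; for $\alpha>1$ the exponent $2(1-\alpha)p$ becomes negative, so $x^{2(1-\alpha)p}$ now blows up and one must track its rate against its $q$-counterpart, supplementing if necessary with the subsidiary behavior as $x\nearrow1$ (equivalently, by Proposition~\ref{P-2.1}(1), on the reciprocal pair $(A_0^{-1},B_\theta^{-1})$), to obtain the claimed dichotomy. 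I expect precisely this case split for $\widetilde SG$ — sorting out which exponent controls the limit while keeping straight the sign of $\alpha(1-\alpha)$ and of the factor $1-x^p$ — to be the only real obstacle; the reduction step and the invocation of Lemma~\ref{L-2.5} are by now entirely routine.
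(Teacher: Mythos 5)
Your reduction (log-majorization of $2\times2$ matrices with equal determinants $\Leftrightarrow$ comparison of $\lambda_1$, then comparison of the $\theta^2$-coefficients from Lemma \ref{L-2.5}, then $x\searrow0$) is exactly the route the paper intends — its own proof is a one-line deferral to \eqref{F-2.12}--\eqref{F-2.14} and Remark \ref{R-3.9}. Your treatment of part (1), of part (2), and of the $0<\alpha<1$ half of part (3) is correct and complete.

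The genuine gap is the $\alpha>1$ half of part (3), which you do not actually carry out: you assert that tracking the blow-up of $x^{2(1-\alpha)p}$ "obtains the claimed dichotomy," but if you do the tracking it gives the \emph{opposite} inequality to the one stated. Writing the $\theta^2$-coefficient in \eqref{F-2.14} as $c_p(x)=-\tfrac1p\bigl\{\alpha\tfrac{1-x^p}{1+x^p}+\tfrac{x^p+x^{2p}-x^{(2\alpha+1)p}-x^{2(1-\alpha)p}}{(1-x^p)(1+x^p)^2}\bigr\}$, for $\alpha>1$ the braces behave like $-x^{2(1-\alpha)p}\to-\infty$ as $x\searrow0$, so $c_p(x)\sim\tfrac1p\,x^{-2(\alpha-1)p}\to+\infty$. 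The requirement $c_p(x)\le c_q(x)$ for all $x\in(0,1)$ then forces the $q$-side to blow up at least as fast, i.e.\ $2(\alpha-1)q\ge2(\alpha-1)p$, hence $p\le q$ — not $p\ge q$. (This is the same bookkeeping that correctly produces $p/q\ge1/2$ in Theorem \ref{T-3.7}(3) from the pair $\bigl(x^{-(\alpha-1)q}/q,\;x^{-2(\alpha-1)p}/p\bigr)$, so the asymptotics of \eqref{F-2.14} are not in doubt; and one can check, e.g.\ for $\alpha=2$ with $A=\mathrm{diag}(1,t)$, $t$ small, and $B$ a $45^\circ$ rotation of $\mathrm{diag}(1,t)$, that $\lambda_1(\widetilde SG_{2,2}(A,B))>\lambda_1(\widetilde SG_{2,1}(A,B))$, so $p>q$ genuinely fails.) Your proposed fallbacks do not rescue the claim: the reciprocal pair $(A_0^{-1},B_\theta^{-1})$ gives nothing new because $\cM_{\alpha,p}(A^{-1},B^{-1})=\cM_{\alpha,p}(A,B)^{-1}$ and all these means share the same determinant, and the $x\nearrow1$ expansion has leading term $-\alpha(1-\alpha)(-\log x)$, which is independent of $p$. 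So either your plan cannot deliver the conclusion ``$p\ge q$ if $\alpha>1$'' as stated, or — what the computation actually suggests — the correct necessary condition in (3) for $\alpha>1$ is $p\le q$ and the statement (and the contrast drawn in Problem \ref{Q-3.12}) needs to be revisited. Either way, the $\alpha>1$ case of (3) is unproved as written and must not be waved through.
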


\begin{proof}
These are easily checked by use of expressions \eqref{F-2.12}--\eqref{F-2.14} in Lemma \ref{L-2.5} similarly
to the discussions in Remark \ref{R-3.9}. The details are left to the reader.
\end{proof}

%
%

\begin{problem}\label{Q-3.12}\rm
What we are most interested in is whether a necessary condition $p\le q$ or $p\ge q$ confirmed in
Proposition \ref{P-3.11} is indeed sufficient for $SG_{\alpha,p}\prec_{\log}SG_{\alpha,q}$ and
$\widetilde SG_{\alpha,p}\prec_{\log}\widetilde SG_{\alpha,q}$ for relevant $\alpha$ to hold or not. The
problem is to show the variants of Ando--Hiai's inequality \cite{AH} for $SG_{\alpha,p}$ and
$\widetilde SG_{\alpha,q}$. Regarding this, see remark (3) of Section \ref{Sec-6} as well.
\end{problem}

Note that for $0<\alpha<1$ the condition on $p,q$ in Proposition \ref{P-3.11}(1) and that in
Proposition \ref{P-3.11}(2) and (3) are opposite. This suggests us that two types of weighted spectral
geometric means are quite different from Kubo--Ando's weighted geometric mean $\#_\alpha$.

\subsection{More log-majorizations}\label{Sec-3.4}

In this subsection we will examine log-majorizations between $G_\alpha$, $SG_\alpha$ and
$\widetilde SG_\alpha$ in an easy way of putting $R_\alpha$ in the middle of two of them for sufficient
conditions, or the outside of the two for necessary conditions, otherwise by use of Lemma \ref{L-2.5} as in
Remark \ref{R-3.9}.

\begin{proposition}\label{P-3.13}
\
\begin{itemize}
\item[(1)] Let $0<\alpha<1$. For any $p,q>0$ we have $G_{\alpha,q}(A,B)\prec_{\log}SG_{\alpha,p}(A,B)$
for all $A,B\ge0$ with $s(A)\ge s(B)$.
\item[(2)] Let $\alpha>1$. For any $p,q>0$ there exist $A,B\in\bM_2^{++}$ such that
$G_{\alpha,q}(A,B)\not\prec_{\log}SG_{\alpha,p}(A,B)$.
\item[(3)] Let $\alpha>1$. If $p/q\le\max\bigl\{2,{\alpha\over\alpha-1}\bigr\}$, then we have
$SG_{\alpha,p}(A,B)\prec_{\log}G_{\alpha,q}(A,B)$ for all $A,B\ge0$ with $s(A)\ge s(B)$.
\end{itemize}
\end{proposition}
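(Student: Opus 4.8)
The three claims are of the familiar ``put $R_\alpha$ in the middle'' type, so the plan is to chain the log-majorizations already established in Sections~\ref{Sec-3.1} and~\ref{Sec-3.3} together with parts~(b)--(f) of Theorem~\ref{T-3.1}, and to use Lemma~\ref{L-2.5} via Remark~\ref{R-3.9} for the negative statement~(2). Throughout, since all the means involved share the determinant $(\det A)^{1-\alpha}(\det B)^\alpha$ (Remark~\ref{R-2.4}(2)), it suffices to control the largest eigenvalue, and composing two log-majorizations is legitimate because $\prec_{\log}$ is transitive.

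For~(1), with $0<\alpha<1$, Theorem~\ref{T-3.1}(g) (equivalently Theorem~\ref{T-3.3}(2)) gives $R_{\alpha,s}(A,B)\prec_{\log}SG_{\alpha,p}(A,B)$ whenever $p/s\ge\max\{\alpha,1-\alpha\}$, i.e.\ for all sufficiently small $s>0$; and Theorem~\ref{T-3.1}(e) gives $G_{\alpha,q}(A,B)\prec_{\log}R_{\alpha,s}(A,B)$ for \emph{arbitrary} $q,s>0$ in the range $0<\alpha<1$. Choosing $s\le p/\max\{\alpha,1-\alpha\}$ and composing yields $G_{\alpha,q}\prec_{\log}R_{\alpha,s}\prec_{\log}SG_{\alpha,p}$ for all $p,q>0$, which is exactly~(1); the statement for $A,B\ge0$ with $s(A)\ge s(B)$ then follows by the continuity in Proposition~\ref{P-2.1}(2).

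For~(3), with $\alpha>1$, I would again interpose $R_{\alpha,s}$: Theorem~\ref{T-3.4}(1) gives $SG_{\alpha,p}\prec_{\log}R_{\alpha,s}$ whenever $p/s\le\alpha$, and Theorem~\ref{T-3.1}(f) (or Theorem~\ref{T-3.2}(2)) gives $R_{\alpha,s}\prec_{\log}G_{\alpha,q}$ whenever $s/q\ge\max\{\alpha/2,\alpha-1\}$. Composing, $SG_{\alpha,p}\prec_{\log}G_{\alpha,q}$ holds as soon as there is an $s>0$ with $p/(\alpha s)\le 1$ and $s\ge q\max\{\alpha/2,\alpha-1\}$, i.e.\ $p/q\le\alpha\max\{\alpha/2,\alpha-1\}=\max\{\alpha^2/2,\alpha(\alpha-1)\}$. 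This bound is at least as large as the claimed $\max\{2,\alpha/(\alpha-1)\}$ on the relevant range, but to hit the stated threshold exactly I expect one needs to split cases in $\alpha$ and, for $\alpha$ near $1$, replace Theorem~\ref{T-3.4}(1) by a more careful argument --- e.g.\ invoking Theorem~\ref{T-3.1}(c),(d) for $G_{\alpha,p}\prec_{\log}G_{\alpha,q}$ together with the known comparison $SG_{\alpha,p}$ versus $G_{\alpha,p}$, or running the reduction from the proof of Theorem~\ref{T-3.4}(1) (the equivalence with $(XYX)^{\alpha r}\le Y^{(\alpha-1)r}\implies X^\alpha YX^\alpha\le I$) and optimizing the exponent via Araki's log-majorization (Theorem~\ref{T-3.1}(a)) directly. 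This calibration of the exponent is the one genuinely computational point; everything else is bookkeeping with transitivity.

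For~(2), with $\alpha>1$, I would show the log-majorization fails already on $2\times2$ matrices. By Remark~\ref{R-3.9} it suffices to exhibit $A,B\in\bM_2^{++}$ with $\lambda_1(G_{\alpha,q}(A,B))>\lambda_1(SG_{\alpha,p}(A,B))$, and the natural choice is $A_0,B_\theta$ from~\eqref{F-2.10} with $y=x\in(0,1)$ and $\theta\to0$. From~\eqref{F-2.12} the coefficient of $\theta^2$ in $\lambda_1(G_{\alpha,q}(A_0,B_\theta))$ is $\frac{\alpha(1-\alpha)}{2q}(x^q-x^{-q})$, which for $\alpha>1$ is \emph{positive} (since $1-\alpha<0$ and $x^q-x^{-q}<0$) and tends to $+\infty$ as $x\searrow0$; from~\eqref{F-2.13} the corresponding coefficient for $SG_{\alpha,p}$ is $-\frac{2\alpha(1-\alpha)}{p}\cdot\frac{1-x^p}{1+x^p}$, which stays \emph{bounded} as $x\searrow0$. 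Hence for $x$ small enough and $\theta$ small enough the required strict inequality holds, giving $G_{\alpha,q}\not\prec_{\log}SG_{\alpha,p}$; no further work is needed. I do not anticipate any obstacle in~(1) or~(2); the only subtlety is, as noted, matching the exact constant in~(3), where the chained bound may slightly overshoot or undershoot and a direct Araki-type estimate may be required to pin it down.
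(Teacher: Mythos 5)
Parts (1) and (2) are correct and essentially identical to the paper's argument: for (1) the paper also interposes $R_{\alpha,r}$ with $p/r\ge\max\{\alpha,1-\alpha\}$ and invokes Theorem \ref{T-3.2}(1) and Theorem \ref{T-3.3}(2), and for (2) the paper likewise just compares the $\theta^2$-coefficients in \eqref{F-2.12} and \eqref{F-2.13}; your observation that the $G$-coefficient blows up as $x\searrow0$ while the $SG$-coefficient stays bounded is exactly the right computation.

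Part (3), however, contains a concrete error. Theorem \ref{T-3.1}(f) (equivalently Theorem \ref{T-3.2}(2)) states $R_{\alpha,q}\prec_{\log}G_{\alpha,p}$ when $p/q\ge\max\{\alpha/2,\alpha-1\}$, i.e.\ the ratio is \emph{(exponent of $G$)/(exponent of $R$)}. So $R_{\alpha,s}\prec_{\log}G_{\alpha,q}$ requires $q/s\ge\max\{\alpha/2,\alpha-1\}$, i.e.\ $s\le q/\max\{\alpha/2,\alpha-1\}$ — you wrote $s/q\ge\max\{\alpha/2,\alpha-1\}$, which inverts it. The correct feasibility window for the intermediate exponent is $p/\alpha\le s\le q/\max\{\alpha/2,\alpha-1\}$, which is nonempty precisely when
\[
\frac{p}{q}\le\frac{\alpha}{\max\{\alpha/2,\alpha-1\}}=\min\Bigl\{2,\tfrac{\alpha}{\alpha-1}\Bigr\},
\]
not your $\max\{\alpha^2/2,\alpha(\alpha-1)\}$. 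This is exactly what the paper's proof does (it takes $r:=p/\alpha$ and checks $q/r\ge\max\{\alpha/2,\alpha-1\}$ under the hypothesis $p/q\le\min\{2,\alpha/(\alpha-1)\}$). Note in particular that the chaining yields the \emph{smaller} threshold $\min\{2,\alpha/(\alpha-1)\}$, whereas the proposition as printed asserts $\max\{2,\alpha/(\alpha-1)\}$; this min/max discrepancy is present between the paper's own statement and its proof, and neither your proposed case-splitting nor a direct Araki-type estimate along the lines you sketch is shown to bridge it. So for (3) you should correct the direction of the hypothesis in Theorem \ref{T-3.1}(f) and be aware that the interposition argument only delivers the $\min$ version of the bound.
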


\begin{proof}
(1)\enspace
Thanks to Theorem \ref{T-3.2}(1) and \ref{T-3.3}(2), taking an $r>0$ with $p/r\ge\max\{\alpha,1-\alpha\}$
we have
\[
G_{\alpha,q}(A,B)\prec_{\log}R_{\alpha,r}(A,B)\prec_{\log}SG_{\alpha,p}(A,B).
\]

(2) is easily verified by using expressions \eqref{F-2.12} and \eqref{F-2.13} in Lemma \ref{L-2.5}
similarly to the discussions in Remark \ref{R-3.9}.

%

(3)\enspace
Assume that $\alpha>1$ and $p/q\le\min\bigl\{2,{\alpha\over\alpha-1}\bigr\}$. For every $A,B\ge0$ with
$s(A)\ge s(B)$, letting $r:=p/\alpha$, we have $SG_{\alpha,p}(A,B)\prec_{\log}R_{\alpha,r}(A,B)$ by
Theorem \ref{T-3.4}(1). Moreover, since $q/r=q\alpha/p\ge\max\{\alpha/2,\alpha-1\}$, we have
$R_{\alpha,r}(A,B)\prec_{\log}G_{\alpha,q}(A,B)$ by Theorem \ref{T-3.2}(2). Therefore,
$SG_{\alpha,p}(A,B)\prec_{\log}G_{\alpha,q}(A,B)$.
\end{proof}

\begin{proposition}\label{P-3.14}
\
\begin{itemize}
\item[(1)] For any $\alpha\in(0,\infty)\setminus\{1\}$ and any $p,q>0$ there exist $A,B\in\bM_2^{++}$ such that
$\widetilde SG_{\alpha,p}(A,B)\not\prec_{\log}G_{\alpha,q}(A,B)$.
\item[(2)] Let $0<\alpha\le1/2$. For any $p,q$ we have
$G_{\alpha,q}(A,B)\prec_{\log}\widetilde SG_{\alpha,p}(A,B)$ for all $A,B\ge0$ with $s(A)\ge s(B)$.
\item[(3)] Let $\alpha>1$. If $q/p\le\min\bigl\{1/2,{\alpha-1\over\alpha}\bigr\}$, then we have
$G_{\alpha,q}(A,B)\prec_{\log}\widetilde SG_{\alpha,p}(A,B)$ for all $A,B\ge0$ with $s(A)\ge s(B)$.
\end{itemize}
\end{proposition}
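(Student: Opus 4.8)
The plan is to reduce each part to log-majorizations already established for $R_\alpha$, using the same ``put $R_\alpha$ in the middle (or outside)'' strategy as in Proposition \ref{P-3.13} and Remark \ref{R-3.9}. For part (1), the natural route is to use Lemma \ref{L-2.5}: by Remark \ref{R-3.9}, $\widetilde SG_{\alpha,p}(A,B)\prec_{\log}G_{\alpha,q}(A,B)$ for all $A,B\in\bM_2^{++}$ would force $\lambda_1(\widetilde SG_{\alpha,p}(A_0,B_\theta))\le\lambda_1(G_{\alpha,q}(A_0,B_\theta))$ for the matrices in \eqref{F-2.10} with $y=x\in(0,1)$, i.e.\ from \eqref{F-2.12} and \eqref{F-2.14},
\[
-{1\over p}\biggl\{\alpha\,{1-x^p\over1+x^p}
+{x^p+x^{2p}-x^{(2\alpha+1)p}-x^{2(1-\alpha)p}\over(1-x^p)(1+x^p)^2}\biggr\}
\le{\alpha(1-\alpha)\over2q}(x^q-x^{-q}),\qquad x\in(0,1).
\]
As $x\searrow0$ the left side stays bounded while the right side tends to $-\infty$ when $0<\alpha<1$, giving a contradiction; when $\alpha>1$ the right side tends to $+\infty$, so I would instead let $x\nearrow1$ (or examine the sign of both sides) to reach a contradiction — I expect the $x\searrow0$ limit to already do the job since the right side behaves like $\tfrac{\alpha(1-\alpha)}{2q}(-x^{-q})$, which goes to $-\infty$ for $0<\alpha<1$ and $+\infty$ for $\alpha>1$, and in the latter case the leading term on the left is $-\tfrac{1}{p}\cdot\tfrac{x^{2(1-\alpha)p}}{\cdots}$-type, which also diverges; so a more careful comparison of the two divergent rates may be needed, which is the delicate point of part (1).

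For part (2) ($0<\alpha\le1/2$), the plan is to interpose $R_\alpha$: choose $r>0$ with $r/q\ge\max\{\alpha,1-\alpha\}=1-\alpha$ and also $p/r\le\alpha$. Such an $r$ exists precisely because $(1-\alpha)q\le r$ and $r\le p/\alpha$ are compatible when $(1-\alpha)q\le p/\alpha$, i.e.\ $p/q\ge\alpha(1-\alpha)$; if the inequality $p/q\ge\alpha(1-\alpha)$ fails one would need a different choice — but note that for $\alpha\le1/2$ and arbitrary $p,q$ one can instead first apply Theorem \ref{T-3.1}(b) (the Ando--Hiai log-majorization $G_{\alpha,q}\prec_{\log}G_{\alpha,q'}$ for $q'\le q$) to shrink the parameter of $G$ freely, so that after replacing $q$ by a small enough $q'$ the compatibility condition holds. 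Concretely: by Theorem \ref{T-3.1}(b), $G_{\alpha,q}(A,B)\prec_{\log}G_{\alpha,r}(A,B)$ for any $r\le q$; then by Theorem \ref{T-3.2}(2), $G_{\alpha,r}(A,B)\prec_{\log}R_{\alpha,s}(A,B)$ — wait, that needs $\alpha>1$; for $0<\alpha<1$ one uses Theorem \ref{T-3.2}(1) to get $G_{\alpha,r}\prec_{\log}R_{\alpha,s}$ for \emph{all} $r,s>0$. So the chain is: $G_{\alpha,q}(A,B)\prec_{\log}R_{\alpha,p/\alpha}(A,B)$ by Theorem \ref{T-3.2}(1) (valid for all parameters when $0<\alpha<1$), and then $R_{\alpha,p/\alpha}(A,B)\prec_{\log}\widetilde SG_{\alpha,p}(A,B)$ by Theorem \ref{T-3.6}(2), whose hypothesis ``$\alpha\le1/2$ and $q\le p$'' reads here as ``$\alpha\le1/2$ and $p/\alpha\le p/\alpha$'', trivially true. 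Hence $G_{\alpha,q}(A,B)\prec_{\log}\widetilde SG_{\alpha,p}(A,B)$ for all $p,q>0$, which is exactly (2).

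For part (3) ($\alpha>1$, $q/p\le\min\{1/2,(\alpha-1)/\alpha\}$), I would again interpose $R_\alpha$, now using the $\alpha>1$ results. Set $r:=p/\alpha$. By Theorem \ref{T-3.7}(2), since $p/r=\alpha\ge\alpha$, we get $R_{\alpha,r}(A,B)\prec_{\log}\widetilde SG_{\alpha,p}(A,B)$ for all $A,B\ge0$ with $s(A)\ge s(B)$. It remains to check $G_{\alpha,q}(A,B)\prec_{\log}R_{\alpha,r}(A,B)$, which by Theorem \ref{T-3.2}(1) holds iff $q/r\le\min\{\alpha/2,\alpha-1\}$; since $q/r=\alpha q/p$, this is $\alpha q/p\le\min\{\alpha/2,\alpha-1\}$, i.e.\ $q/p\le\min\{1/2,(\alpha-1)/\alpha\}$ — exactly the stated hypothesis. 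Chaining the two log-majorizations and invoking transitivity of $\prec_{\log}$ (combined with the determinant identity of Remark \ref{R-2.4}(2), which guarantees $\prec_{w\log}$ upgrades to $\prec_{\log}$) gives $G_{\alpha,q}(A,B)\prec_{\log}\widetilde SG_{\alpha,p}(A,B)$. The only genuine obstacle is part (1): there the two sides of the Lemma \ref{L-2.5} inequality both blow up (with opposite or same sign depending on $\alpha\gtrless1$), so one must compare divergence rates rather than just take a one-sided limit; I would handle $\alpha>1$ by choosing instead $x\nearrow 1^-$, where the $G_{\alpha,q}$ coefficient $\tfrac{\alpha(1-\alpha)}{2q}(x^q-x^{-q})$ vanishes to first order in $(1-x)$ while the $\widetilde SG_{\alpha,p}$ coefficient has a definite nonzero sign, yielding the contradiction; alternatively one may use different test matrices $A_0,B_\theta$ with $y\ne x$ as in the proofs of Theorems \ref{T-3.6} and \ref{T-3.7} to get cleaner asymptotics.
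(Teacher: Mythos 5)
Your part (3) is correct and is exactly the paper's argument (interpose $R_{\alpha,p/\alpha}$ via Theorems \ref{T-3.2}(1) and \ref{T-3.7}(2)). Your part (1) for $0<\alpha<1$ also works: the coefficient from \eqref{F-2.14} stays bounded as $x\searrow0$ while the one from \eqref{F-2.12} tends to $-\infty$, which is precisely the alternative the paper mentions parenthetically. The remaining two items have genuine problems.

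\textbf{Part (2).} Your verification of the hypothesis of Theorem \ref{T-3.6}(2) is a mis-substitution. That theorem gives $R_{\alpha,q}\prec_{\log}\widetilde SG_{\alpha,p}$ under ``$\alpha\le1/2$ and $q\le p$'', where $q$ is the R\'enyi parameter and $p$ is the $\widetilde SG$ parameter. You apply it with R\'enyi parameter $p/\alpha$ and $\widetilde SG$ parameter $p$, so the condition reads $p/\alpha\le p$, i.e.\ $\alpha\ge1$ --- which \emph{fails} for $\alpha\le1/2$ (indeed $p/\alpha\ge 2p>p$). The condition is not ``$p/\alpha\le p/\alpha$'' as you wrote. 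The fix is immediate: since Theorem \ref{T-3.2}(1) gives $G_{\alpha,q}\prec_{\log}R_{\alpha,r}$ for \emph{every} $r>0$ when $0<\alpha<1$, simply take $r=p$ (or any $r\le p$); this is what the paper does.

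\textbf{Part (1), case $\alpha>1$.} Your direct comparison via Lemma \ref{L-2.5} does not close. As $x\searrow0$ the coefficient from \eqref{F-2.14} behaves like $\tfrac1p x^{-2(\alpha-1)p}$ and the one from \eqref{F-2.12} like $\tfrac{\alpha(\alpha-1)}{2q}x^{-q}$; a contradiction arises only when $2(\alpha-1)p>q$, so a whole range of $(p,q)$ is left untreated. The $x\nearrow1$ limit does not rescue this: expanding both coefficients in $u=\log x$, each equals $\alpha(1-\alpha)u+o(u)$ (the Log-Euclidean coefficient of \eqref{F-2.15}), so they \emph{agree} to first order and neither ``has a definite nonzero sign'' relative to the other at that order. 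The paper circumvents the whole issue by transitivity: if $\widetilde SG_{\alpha,p}(A,B)\prec_{\log}G_{\alpha,q}(A,B)$ held for all $A,B\in\bM_2^{++}$, then composing with $G_{\alpha,q}\prec_{\log}R_{\alpha,r}$ for $r$ large enough that $q/r\le\min\{\alpha/2,\alpha-1\}$ (Theorem \ref{T-3.2}(1)) would give $\widetilde SG_{\alpha,p}\prec_{\log}R_{\alpha,r}$ on all of $\bM_2^{++}$, contradicting Theorem \ref{T-3.7}(1), which says this fails for \emph{every} pair of parameters when $\alpha>1$. You should adopt that route (or its $0<\alpha<1$ analogue via Theorem \ref{T-3.6}(1)) rather than pushing the two-sided asymptotics.
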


\begin{proof}
(1)\enspace
Assume that $\widetilde SG_{\alpha,p}(A,B)\prec_{\log}G_{\alpha,q}(A,B)$ for all $A,B\in\bM_2^{++}$. When
$0<\alpha<1$, by Theorem \ref{T-3.2}(1) we have
\[
\widetilde SG_{\alpha,p}(A,B)\prec_{\log}G_{\alpha,q}(A,B)\prec_{\log}R_{\alpha,r}(A,B)
\]
for any $r>0$ and all $A,B\in\bM_2^{++}$, contradicting Theorem \ref{T-3.6}(1). (The $0<\alpha<1$ case is
seen also by use of expressions \eqref{F-2.12} and \eqref{F-2.14}.) When $\alpha>1$, by Theorem \ref{T-3.2}(1)
again the above holds for any sufficiently large $r>0$ and all $A,B\in\bM_2^{++}$, contradicting
Theorem \ref{T-3.7}(1).
 
(2)\enspace
Let $0<\alpha\le1/2$. For any $p,q>0$, by Theorems \ref{T-3.2}(1) and \ref{T-3.6}(2) we have
\[
G_{\alpha,q}(A,B)\prec_{\log}R_{\alpha,p}(A,B)\prec_{\log}\widetilde SG_{\alpha,p}(A,B)
\]
for all $A,B\ge0$ with $s(A)\ge s(B)$.

(3)\enspace
Assume that $\alpha>1$ and $q/p\le\min\bigl\{1/2,{\alpha-1\over\alpha}\bigr\}$. Since
$q/(p/\alpha)\le\min\{\alpha/2,\alpha-1\}$, by Theorems \ref{T-3.2}(1) and  \ref{T-3.7}(2) we have
\[
G_{\alpha,q}(A,B)\prec_{\log}R_{\alpha,p/\alpha}(A,B)\prec_{\log}\widetilde SG_{\alpha,p}(A,B)
\]
for all $A,B\ge0$ with $s(A)\ge s(B)$.
\end{proof}

\begin{proposition}\label{P-3.15}
\
\begin{itemize}
\item[(1)] Let $0<\alpha<1$ and $p,q>0$. If $SG_{\alpha,p}(A,B)\prec_{\log}\widetilde SG_{\alpha,q}(A,B)$
for all $A,B\in\bM_2^{++}$, then we have $\alpha\le1/2$ and $p/q\le2(1-\alpha)$.
\item[(2)] Let $0<\alpha<1$. If $p/q\ge\max\bigl\{1,{1-\alpha\over\alpha}\bigr\}$, then
$\widetilde SG_{\alpha,q}(A,B)\prec_{\log}SG_{\alpha,p}(A,B)$ for all $A,B\ge0$ with $s(A)\ge s(B)$.
Morevoer, if $\widetilde SG_{\alpha,q}(A,B)\prec_{\log}SG_{\alpha,p}(A,B)$ for all $A,B\in\bM_2^{++}$, then
we have $p/q\ge2(1-\alpha)$. (Note that $2(1-\alpha)\le\max\bigl\{1,{1-\alpha\over\alpha}\bigr\}$ for
$0<\alpha<1$.)
\item[(3)] Let $\alpha>1$. If $p\le q$, then $SG_{\alpha,p}(A,B)\prec_{\log}\widetilde SG_{\alpha,q}(A,B)$
for all $A,B\ge0$ with $s(A)\ge s(B)$.
\item[(4)] Let $\alpha>1$. For any $p,q>0$ there exist $A,B\in\bM_2^{++}$ such that
$\widetilde SG_{\alpha,q}(A,B)\not\prec_{\log}SG_{\alpha,p}(A,B)$.
\end{itemize}
\end{proposition}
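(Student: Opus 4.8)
The plan is to split the four parts between the paper's two standard techniques. The sufficiency assertions — the ``if'' part of (2) and item (3) — follow by inserting a R\'enyi mean $R_{\alpha,r}$ between $SG$ and $\widetilde SG$ and quoting the log-majorizations of Theorems~\ref{T-3.2}--\ref{T-3.7}. The necessity assertions — item (1), the ``moreover'' clause of (2), and item (4) — are read off from the $2\times2$ test matrices $A_0,B_\theta$ of \eqref{F-2.10} via the expansions in Lemma~\ref{L-2.5} and Remark~\ref{R-3.9}, except that the condition $\alpha\le1/2$ in (1) requires a finer reduction.

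For (2), $\widetilde SG_{\alpha,q}\prec_{\log}R_{\alpha,r}$ holds exactly when $r\ge q/\alpha$ by Theorem~\ref{T-3.6}(1), and $R_{\alpha,r}\prec_{\log}SG_{\alpha,p}$ holds exactly when $r\le p/\max\{\alpha,1-\alpha\}$ by Theorem~\ref{T-3.3}(2); an admissible $r$ exists precisely when $q/\alpha\le p/\max\{\alpha,1-\alpha\}$, i.e.\ $p/q\ge\max\{\alpha,1-\alpha\}/\alpha=\max\{1,(1-\alpha)/\alpha\}$, and then $\widetilde SG_{\alpha,q}(A,B)\prec_{\log}R_{\alpha,r}(A,B)\prec_{\log}SG_{\alpha,p}(A,B)$ for all $A,B\ge0$ with $s(A)\ge s(B)$. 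Item (3) is entirely parallel for $\alpha>1$: with $r:=p/\alpha$, Theorem~\ref{T-3.4}(1) gives $SG_{\alpha,p}\prec_{\log}R_{\alpha,p/\alpha}$, and Theorem~\ref{T-3.7}(2) gives $R_{\alpha,p/\alpha}\prec_{\log}\widetilde SG_{\alpha,q}$ as soon as $q\alpha/p\ge\alpha$, i.e.\ $p\le q$.

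For the ``moreover'' clause of (2) and for (4) I would invoke Remark~\ref{R-3.9}: the log-majorization in question over all of $\bM_2^{++}$ forces, on the family $A_0,B_\theta$, the corresponding inequality between the $\theta^2$-coefficients of $\lambda_1(SG_{\alpha,p}(A_0,B_\theta))$ in \eqref{F-2.13} and of $\lambda_1(\widetilde SG_{\alpha,q}(A_0,B_\theta))$ in \eqref{F-2.14} (with $q$ in place of $p$), for every $x\in(0,1)$. For (2)-moreover, letting $x\searrow0$, where the bracketed fraction in \eqref{F-2.14} tends to $0$, reduces this to $\alpha/q\ge2\alpha(1-\alpha)/p$, i.e.\ $p/q\ge2(1-\alpha)$. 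For (4), with $\alpha>1$ the summand $-x^{2(1-\alpha)q}=-x^{-2(\alpha-1)q}$ in the numerator of the fraction in \eqref{F-2.14} sends the $\theta^2$-coefficient of $\lambda_1(\widetilde SG_{\alpha,q}(A_0,B_\theta))$ to $+\infty$ as $x\searrow0$, while that of $\lambda_1(SG_{\alpha,p}(A_0,B_\theta))$ stays bounded, so the inequality fails for small $x$. Finally, the bound $p/q\le2(1-\alpha)$ in (1) drops out by requiring the \eqref{F-2.13}-coefficient to be $\le$ the \eqref{F-2.14}-coefficient and again letting $x\searrow0$.

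The one genuinely delicate point is $\alpha\le1/2$ in (1), which the plain $\lambda_1$-test on $A_0,B_\theta$ with $y=x$ does not see. Here I would imitate the proofs of Theorems~\ref{T-3.3}(1) and~\ref{T-3.6}(1): put $SG_{\alpha,p}$ in Riccati normal form by setting $Y:=A^p$, $X:=A^{-p}\#B^p$, so that $B^p=XYX$ and $SG_{\alpha,p}(A,B)^p=X^\alpha YX^\alpha$, and — since then $A^q=Y^r$, $B^q=(XYX)^r$ with $r:=q/p$ — $\lambda\bigl(\widetilde SG_{\alpha,q}(A,B)^q\bigr)=\lambda\bigl(Y^{(1-\alpha)r}(Y^{-r}\#_\alpha(XYX)^r)Y^{(1-\alpha)r}\bigr)$; thus $SG_{\alpha,p}\prec_{\log}\widetilde SG_{\alpha,q}$ is equivalent to a log-majorization between explicit functions of arbitrary $X,Y>0$. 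Its $\lambda_1$-version, evaluated at $Y=A_0$, $X=B_\theta$ and expanded to order $\theta^2$ through \cite[Lemma~3.6]{Hiai} and \cite[Example~3.2(1)]{Hiai}, yields a scalar inequality in the eigenvalue parameters carrying a term of order $x^{2(1-\alpha)r}$ from the outer congruence by $Y^{(1-\alpha)r}$; inspecting its growth order (exactly as in the proof of Theorem~\ref{T-3.6}(3)) forces $2(1-\alpha)\ge1$. The main obstacle I anticipate is precisely the bookkeeping in this last reduction — carrying the correct power through the weighted geometric mean $Y^{-r}\#_\alpha(XYX)^r$ and the outer congruence, choosing the assignment of $A_0,B_\theta$ to $X,Y$ that exposes it, and disposing of the borderline cases $\alpha=1/2$ and $p/q=2(1-\alpha)$; the rest reduces cleanly to the earlier theorems and to Lemma~\ref{L-2.5}.
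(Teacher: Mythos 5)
Your treatment of (2), (3), (4) and of the bound $p/q\le2(1-\alpha)$ in (1) is correct and essentially coincides with the paper's own proof: the sufficiency claims are obtained by interpolating $R_{\alpha,r}$ with $r=q/\alpha$ (resp.\ $r=p/\alpha$) via Theorems \ref{T-3.6}(1), \ref{T-3.3}(2), \ref{T-3.4}(1) and \ref{T-3.7}(2), and the remaining necessity claims follow, exactly as in Remark \ref{R-3.9}, by comparing the $\theta^2$-coefficients in \eqref{F-2.13} and \eqref{F-2.14} and letting $x\searrow0$ (your limits $-2\alpha(1-\alpha)/p$ and $-\alpha/q$, and the blow-up of the \eqref{F-2.14} coefficient for $\alpha>1$, are all right).

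The one genuine gap is the claim $\alpha\le1/2$ in (1). You correctly note that the $y=x$ test of Lemma \ref{L-2.5} does not see it, but the substitute you propose --- a fresh Riccati reduction of $SG_{\alpha,p}\prec_{\log}\widetilde SG_{\alpha,q}$ followed by a two-parameter $\theta^2$-expansion of $Y^{(1-\alpha)r}(Y^{-r}\#_\alpha(XYX)^r)Y^{(1-\alpha)r}$ --- is only a sketch: you have not fixed the assignment of $A_0,B_\theta$ to $X,Y$, not carried out the expansion, and not verified that the competing growth orders actually force $2(1-\alpha)\ge1$; as written this step is not a proof. There is a two-line route that avoids the computation entirely, and it is the one the paper uses: set $r:=p/\max\{\alpha,1-\alpha\}$, so that $p/r=\max\{\alpha,1-\alpha\}$ and Theorem \ref{T-3.3}(2) gives $R_{\alpha,r}(A,B)\prec_{\log}SG_{\alpha,p}(A,B)$ unconditionally; combining this with the hypothesis and the transitivity of $\prec_{\log}$ yields $R_{\alpha,r}(A,B)\prec_{\log}\widetilde SG_{\alpha,q}(A,B)$ for all $A,B\in\bM_2^{++}$, and Theorem \ref{T-3.6}(3) then forces $\alpha\le1/2$ together with $q/r\ge1/2$, the latter giving $p/q\le2\max\{\alpha,1-\alpha\}=2(1-\alpha)$ as well. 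I recommend replacing your sketched computation by this argument, which also subsumes your separate derivation of the $p/q$-bound in (1).
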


\begin{proof}
(1)\enspace
Assume that $SG_{\alpha,p}(A,B)\prec_{\log}\widetilde SG_{\alpha,q}(A,B)$ for all $A,B\in\bM_2^{++}$.
For $r:=p/\max\{\alpha,1-\alpha\}$, by Theorem \ref{T-3.3}(2) we have
\[
R_{\alpha,r}(A,B)\prec_{\log}SG_{\alpha,p}(A,B)\prec_{\log}\widetilde SG_{\alpha,q}(A,B)
\]
for all $A,B\in\bM_2^{++}$. Hence by Theorem \ref{T-3.6}(3) we must have $\alpha\le1/2$ and $q/r\ge1/2$
so that $p/q\le2\max\{\alpha,1-\alpha\}=2(1-\alpha)$.

(2)\enspace
Assume that $0<\alpha<1$ and $p/q\ge\max\bigl\{1,{1-\alpha\over\alpha}\bigr\}$. Let $r:=q/\alpha$ and so
$p/r\ge\max\{\alpha,1-\alpha\}$. Thanks to Theorem \ref{T-3.6}(1) and \ref{T-3.3}(2) we have
\[
\widetilde SG_{\alpha,q}(A,B)\prec_{\log}R_{\alpha,r}(A,B)\prec_{\log}SG_{\alpha,p}(A,B)
\]
for all $A,B\ge0$ with $s(A)\ge s(B)$. Next, assume that
$\widetilde SG_{\alpha,q}(A,B)\prec_{\log}SG_{\alpha,p}(A,B)$ for all $A,B\in\bM_2^{++}$. Then
$p/q\ge2(1-\alpha)$ is easily verified from expressions \eqref{F-2.13} and \eqref{F-2.14}.

(3)\enspace
Letting $r:=p/\alpha$ and so $q/r\ge\alpha$, by Theorems \ref{T-3.4}(1) and \ref{T-3.7}(2) we have
\[
SG_{\alpha,p}(A,B)\prec_{\log}R_{\alpha,r}(A,B)\prec_{\log}\widetilde SG_{\alpha,q}(A,B)
\]
for all $A,B\ge0$ with $s(A)\ge s(B)$.

(4) is easily verified from expressions \eqref{F-2.13} and \eqref{F-2.14} again.
\end{proof}

The section ends with log-majorizations of $LE_\alpha$ with other quasi-geometric type means. The next
two remarks are easy but convenient.

\begin{remark}\label{R-3.16}\rm
Let $\cM\in\{R,G,SG,\widetilde SG\}$, $\alpha\in(0,\infty)\setminus\{1\}$ and $p>0$.

(1)\enspace
The log-majorization $\cM_{\alpha,p}(A,B)\prec_{\log}LE_\alpha(A,B)$ (resp.,
$LE_\alpha(A,B)\prec_{\log}\cM_{\alpha,p}(A,B)$) holds for all $A,B>0$ if and only if
$\cM_{\alpha,1}(A,B)\prec_{\log}LE_\alpha(A,B)$ (resp., $LE_\alpha(A,B)\prec_{\log}\cM_{\alpha,1}(A,B)$) for
all $A,B>0$. Indeed, if $\cM_{\alpha,1}(A,B)\prec_{\log}LE_\alpha(A,B)$ for all $A,B>0$, then
\[
\cM_{\alpha,1}(A^p,B^p)\prec_{\log}LE_\alpha(A^p,B^p)=LE_\alpha(A,B)^p
\]
so that we have $\cM_{\alpha,p}(A,B)\prec_{\log}LE_\alpha(A,B)$ for all $A,B>0$.

(2)\enspace
If $\cM_{\alpha,p}(A,B)\prec_{\log}LE_\alpha(A,B)$ (resp., $LE_\alpha(A,B)\prec_{\log}\cM_{\alpha,p}(A,B)$)
holds for all $A,B\in\bM_2^{++}$, then the reverse log-majorization
$LE_\alpha(A,B)\prec_{\log}\cM_{\alpha,p}(A,B)$ (resp., $\cM_{\alpha,p}(A,B)\prec_{\log}LE_\alpha(A,B)$)
does not hold for some $A,B\in\bM_2$. Otherwise, we must have
$\lambda(\cM_{\alpha,p}(A,B))=\lambda(LE_\alpha(A,B))$ for all $A,B\in\bM_2^{++}$, which contradicts the
assertion of Remark \ref{R-2.4}(5).
\end{remark}

The known cases are summarized in the following proposition.

\begin{proposition}\label{P-3.17}
\
\begin{itemize}
\item[(a)] $LE_\alpha(A,B)\prec_{\log}R_{\alpha,p}(A,B)$ for all $\alpha\in(0,\infty)\setminus\{1\}$, $p>0$ and
$A,B\ge0$ (with $s(A)\ge s(B)$ for $\alpha>1$).
\item[(b)] $G_{\alpha,p}(A,B)\prec_{\log}LE_\alpha(A,B)$ for all $\alpha\in(0,1)$, $p>0$ and $A,B\ge0$.
\item[(c)] $LE_\alpha(A,B)\prec_{\log}G_{\alpha,p}(A,B)$ for all $\alpha\in(1,2]$, $p>0$ and $A,B\ge0$ with
$s(A)\ge s(B)$.
\item[(d)] $LE_\alpha(A,B)\prec_{\log}SG_{\alpha,p}(A,B)$ for all $\alpha\in(0,1)$, $p>0$ and $A,B\ge0$ with
$s(A)\ge s(B)$.
\end{itemize}
\end{proposition}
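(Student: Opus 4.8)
The plan is to deduce all four log-majorizations from the monotonicity results already recorded in Theorem \ref{T-3.1}, combined with the convergence $\cM_{\alpha,p}(A,B)\to LE_\alpha(A,B)$ as $p\searrow0$ from Theorem \ref{T-2.2}. The elementary observation behind this is that log-majorization passes to norm limits: if $X_k\to X$ and $X_k\prec_{\log}Y$ (resp.\ $Y\prec_{\log}X_k$) for all $k$, then $X\prec_{\log}Y$ (resp.\ $Y\prec_{\log}X$), since the ordered eigenvalues depend continuously on the matrix and the determinant of each term matches that of the limit. Indeed, by Remark \ref{R-2.4}(2) one has $\det\cM_{\alpha,p}(A,B)=(\det A)^{1-\alpha}(\det B)^\alpha=\det LE_\alpha(A,B)$ for every $p>0$, so only the weak log-majorization part has to survive the limit, which it does.

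For (a), fix $A,B>0$. By Theorem \ref{T-3.1}(a), $R_{\alpha,p'}(A,B)\prec_{\log}R_{\alpha,p}(A,B)$ whenever $0<p'\le p$; letting $p'\searrow0$ and applying Theorem \ref{T-2.2} gives $LE_\alpha(A,B)\prec_{\log}R_{\alpha,p}(A,B)$. The extension to $A,B\ge0$ (with $s(A)\ge s(B)$ when $\alpha>1$) follows by applying this to $A+\eps I$, $B+\eps I$ and letting $\eps\searrow0$, using Proposition \ref{P-2.1}(2). For (b), when $0<\alpha<1$ Theorem \ref{T-3.1}(b) gives $G_{\alpha,p}(A,B)\prec_{\log}G_{\alpha,p'}(A,B)$ for $0<p'\le p$; letting $p'\searrow0$ yields $G_{\alpha,p}(A,B)\prec_{\log}LE_\alpha(A,B)$. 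For (c), when $1<\alpha\le2$ Theorem \ref{T-3.1}(c) gives $G_{\alpha,p'}(A,B)\prec_{\log}G_{\alpha,p}(A,B)$ for $0<p'\le p$; letting $p'\searrow0$ yields $LE_\alpha(A,B)\prec_{\log}G_{\alpha,p}(A,B)$.

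For (d), a direct limit in $p$ is not available because the monotonicity of $SG_{\alpha,p}$ in $p$ is not fully known, so I would route through the R\'enyi mean instead. Given $p>0$ and $0<\alpha<1$, put $q:=p/\max\{\alpha,1-\alpha\}$, so that $p/q=\max\{\alpha,1-\alpha\}$ and Theorem \ref{T-3.3}(2) yields $R_{\alpha,q}(A,B)\prec_{\log}SG_{\alpha,p}(A,B)$ for all $A,B\ge0$ with $s(A)\ge s(B)$. Chaining this with part (a) gives $LE_\alpha(A,B)\prec_{\log}R_{\alpha,q}(A,B)\prec_{\log}SG_{\alpha,p}(A,B)$, as required. (Alternatively, one may combine Proposition \ref{P-3.10}(1) with Theorem \ref{T-2.2}, since $p'/p\to0$ eventually satisfies the hypothesis of Proposition \ref{P-3.10}(1).)

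Since every part reduces to quoting an earlier result, there is no genuine obstacle here. The only points needing a little care are that the convergence in Theorem \ref{T-2.2} is in norm (hence eigenvalue-convergent), so that log-majorization is indeed inherited in the limit, and the uniform passage to the boundary case $A,B\ge0$ with $s(A)\ge s(B)$ for $\alpha>1$, which is handled by Proposition \ref{P-2.1}(2).
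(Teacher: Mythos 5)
Your proposal is correct. The paper itself does not prove Proposition \ref{P-3.17}; it simply cites the literature ([AH, BG] for (a) and (b), [KS] for (c), [GLT, GK] for (d)). What you do instead is give a self-contained derivation from results already recorded in the paper, and every step checks out: the directions of the monotonicity in Theorem \ref{T-3.1}(a), (b), (c) are quoted correctly, the passage of (weak) log-majorization to norm limits is justified by continuity of ordered eigenvalues together with the constant determinant $(\det A)^{1-\alpha}(\det B)^\alpha$, and the extension to singular $A,B$ via Proposition \ref{P-2.1}(2) is exactly the regularization the paper uses elsewhere. Your $p'\searrow0$ technique is precisely the one the author employs for Corollary \ref{C-3.18} (``(1) follows by letting $p\searrow0$ in (c) and (d) of Theorem \ref{T-3.1}''; ``(4) and (5) follow as the $q\searrow0$ limits... of the log-majorizations in Theorems \ref{T-3.6}(2) and \ref{T-3.7}(2)''), so in effect you show that Proposition \ref{P-3.17} could have been obtained by the same internal machinery rather than by citation; this is also essentially how the cited sources prove these facts (Lie--Trotter limits of the Araki and Ando--Hiai log-majorizations). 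For (d), your route $LE_\alpha\prec_{\log}R_{\alpha,q}\prec_{\log}SG_{\alpha,p}$ with $q=p/\max\{\alpha,1-\alpha\}$ correctly invokes Theorem \ref{T-3.3}(2) (equivalently Theorem \ref{T-3.1}(g)), and the alternative via Proposition \ref{P-3.10}(1) plus Theorem \ref{T-2.2} is also valid. The only caveat worth recording is that your argument is a re-derivation conditional on Theorem \ref{T-3.1} and Theorem \ref{T-3.3}(2), which the paper takes as imported results, so it does not replace the external references but shows their consequences are internally consistent.
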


See \cite{AH,BG} for (a) and (b), \cite{KS} for (c), and \cite{GLT,GK} for (d). When $0<\alpha<1$, it is worth
noting that for every (a) and (b) above are supplemented with the quasi-arithmetic and the quasi-harmonic
means as follows: if $r\ge p/2$ and $s\ge q$ then
\[
\cH_{\alpha,s}(A,B)\le_\lambda
G_{\alpha,q}(A,B)\prec_{\log}LE_\alpha(A,B)\prec_{\log}R_{\alpha,p}(A,B)
\le_\lambda\cA_{\alpha,r}(A,B),\qquad A,B\ge0,
\]
where the additional inequalities in both sides are seen from
\cite[Propositions 4.9, 4.19 and Remark 2.7(3)]{Hiai}.

From log-majorizations given in Sections \ref{Sec-3.1} and \ref{Sec-3.2} we have more results as follows:

\begin{corollary}\label{C-3.18}
\
\begin{itemize}
\item[(1)] $LE_\alpha(A,B)\prec_{\log}G_{\alpha,p}(A,B)$ for all $\alpha>1$, $p>0$ and $A,B\ge0$ with
$s(A)\ge s(B)$.
\item[(2)] For any $\alpha\in(0,1)$ and $p>0$ there exist $A,B\in\bM_2^{++}$ such that
$SG_{\alpha,p}(A,B)\not\prec_{\log}LE_\alpha(A,B)$.
\item[(3)] For any $\alpha>1$ and $p>0$ there exist $A,B\in\bM_2^{++}$ such that
$LE_\alpha(A,B)\not\prec_{\log}SG_{\alpha,p}(A,B)$.
\item[(4)] For any $\alpha\in(0,\infty)\setminus\{1\}$ and $p>0$ there exist
$A,B\in\bM_2^{++}$ such that $\widetilde SG_{\alpha,p}(A,B)\not\prec_{\log}LE_\alpha(A,B)$.
\item[(5)] $LE_\alpha(A,B)\prec_{\log}\widetilde SG_{\alpha,p}(A,B)$ for all $\alpha\in(0,1/2]$, $p>0$
and $A,B\ge0$ with $s(A)\ge s(B)$.
\item[(6)] $LE_\alpha(A,B)\prec_{\log}\widetilde SG_{\alpha,p}(A,B)$ for all $\alpha>1$, $p>0$ and
$A,B\ge0$ with $s(A)\ge s(B)$.
\end{itemize}
\end{corollary}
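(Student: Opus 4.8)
The five statements will all be deduced from the log-majorizations already established, the positive assertions (1), (4), (5) by inserting an appropriate R\'enyi mean $R_{\alpha,r}$ between the two sides, and the negative assertions (2), (3) from the second-order expansions in Lemma~\ref{L-2.5} exactly as in Remark~\ref{R-3.9}.

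For (1), fix $\alpha>1$ and $p>0$ and set $r:=p/\max\{\alpha/2,\alpha-1\}>0$. Proposition~\ref{P-3.17}(a) gives $LE_\alpha(A,B)\prec_{\log}R_{\alpha,r}(A,B)$, and since $p/r=\max\{\alpha/2,\alpha-1\}$, Theorem~\ref{T-3.2}(2) (i.e.\ Theorem~\ref{T-3.1}(f)) gives $R_{\alpha,r}(A,B)\prec_{\log}G_{\alpha,p}(A,B)$; both hold for all $A,B\ge0$ with $s(A)\ge s(B)$, and composing them gives the claim. For (4), fix $0<\alpha\le1/2$ and $p>0$ and take $r:=p$: Proposition~\ref{P-3.17}(a) gives $LE_\alpha\prec_{\log}R_{\alpha,p}$, and Theorem~\ref{T-3.6}(2) (with $q=p$) gives $R_{\alpha,p}\prec_{\log}\widetilde SG_{\alpha,p}$. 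For (5), fix $\alpha>1$ and $p>0$ and take $r:=p/\alpha$: Proposition~\ref{P-3.17}(a) gives $LE_\alpha\prec_{\log}R_{\alpha,p/\alpha}$, and since $p/(p/\alpha)=\alpha$, Theorem~\ref{T-3.7}(2) gives $R_{\alpha,p/\alpha}\prec_{\log}\widetilde SG_{\alpha,p}$. In each of these three cases it only remains to check that the auxiliary exponent meets the hypotheses of the quoted result and that every log-majorization involved is valid on $\{s(A)\ge s(B)\}$; both points are immediate, so there is no genuine difficulty here.

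For (2) and (3) I would argue by contradiction as in Remark~\ref{R-3.9}. Suppose the reverse log-majorization held for all $A,B\in\bM_2^{++}$; since $\det LE_\alpha(A,B)=\det SG_{\alpha,p}(A,B)=\det\widetilde SG_{\alpha,p}(A,B)=(\det A)^{1-\alpha}(\det B)^\alpha$, this is equivalent to $\lambda_1(\,\cdot\,)\le\lambda_1(\,\cdot\,)$ on $\bM_2^{++}$. Evaluating at $A_0,B_\theta$ from \eqref{F-2.10} with $y=x\in(0,1)$, dividing by $\theta^2$ and letting $\theta\to0$ in the expansions of Lemma~\ref{L-2.5} leaves a comparison of the $\theta^2$-coefficients: for (2), with $\alpha>1$, $LE_\alpha\prec_{\log}SG_{\alpha,p}$ would force $\alpha(1-\alpha)\log x\le-\frac{2\alpha(1-\alpha)}{p}\cdot\frac{1-x^p}{1+x^p}$, hence (dividing by $\alpha(1-\alpha)<0$) $\log x\ge-\frac{2}{p}\cdot\frac{1-x^p}{1+x^p}$ for all $x\in(0,1)$, which is false as $x\searrow0$ because the left side tends to $-\infty$ and the right side to $-2/p$; for (3), with $0<\alpha<1$, $\widetilde SG_{\alpha,p}\prec_{\log}LE_\alpha$ would force, from \eqref{F-2.14} and \eqref{F-2.15}, an inequality whose left side tends to the finite number $-\alpha/p$ as $x\searrow0$ while its right side $\alpha(1-\alpha)\log x$ tends to $-\infty$, again a contradiction. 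Hence the reverse log-majorization fails for some $A,B\in\bM_2^{++}$.

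The whole argument is thus routine bookkeeping; the only step needing a little care is the sign analysis in (2) and (3): one must remember that $\alpha(1-\alpha)$ changes sign at $\alpha=1$, so that dividing the scalar inequality by it reverses its direction, and one must check in (3) that the bracketed rational expression in \eqref{F-2.14} really tends to $\alpha$ as $x\searrow0$, which holds because the exponents $p,2p,(2\alpha+1)p,2(1-\alpha)p$ are all strictly positive so that the second summand there vanishes in the limit.
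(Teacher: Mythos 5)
Your proof is correct. For parts (2) and (3) it coincides with the paper's argument: the paper also disposes of these by comparing the $\theta^2$-coefficients in \eqref{F-2.15} with those in \eqref{F-2.13} and \eqref{F-2.14}, and your sign analysis (dividing by $\alpha(1-\alpha)<0$ for $\alpha>1$, and checking that the bracketed term in \eqref{F-2.14} tends to $\alpha$ as $x\searrow0$) is exactly what is needed. For the positive parts (1), (4), (5), however, you take a genuinely different route. The paper obtains (1) by letting $p\searrow0$ in Theorem \ref{T-3.1}(c) and (d), and obtains (4) and (5) as the $q\searrow0$ limits of Theorems \ref{T-3.6}(2) and \ref{T-3.7}(2), in both cases invoking the Lie--Trotter type limit $\cM_{\alpha,p}\to LE_\alpha$ of Theorem \ref{T-2.2} (a point the author highlights in remark (3) of Section \ref{Sec-6}). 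You instead interpolate a R\'enyi mean: $LE_\alpha\prec_{\log}R_{\alpha,r}$ from Proposition \ref{P-3.17}(a) followed by $R_{\alpha,r}\prec_{\log}G_{\alpha,p}$ (Theorem \ref{T-3.2}(2) with $p/r=\max\{\alpha/2,\alpha-1\}$) or $R_{\alpha,r}\prec_{\log}\widetilde SG_{\alpha,p}$ (Theorems \ref{T-3.6}(2), \ref{T-3.7}(2)), with all exponent conditions verified. Your route avoids the limiting argument and Theorem \ref{T-2.2} entirely, at the cost of leaning on Proposition \ref{P-3.17}(a); it also gives (1) for all $\alpha>1$ in one stroke rather than splitting into $1<\alpha\le2$ and $\alpha\ge2$. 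Both arguments are valid and of comparable length; the paper's is more in the spirit of its systematic use of $p\searrow0$ limits, while yours matches the "sandwich a R\'enyi mean" technique used elsewhere in Section \ref{Sec-3.4} (e.g.\ Propositions \ref{P-3.13} and \ref{P-3.14}).
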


\begin{proof}
(1) follows by letting $p\searrow0$ in (c) and (d) of Theorem \ref{T-3.1}. (2) and (3) are immediately seen by
comparing \eqref{F-2.15} with \eqref{F-2.13}, and (4) is by comparing \eqref{F-2.15} with
\eqref{F-2.14}. (5) and (6) follow as the $q\searrow0$ limits (due to Theorem \ref{T-2.2}) of the
log-majorizations in Theorems \ref{T-3.6}(2) and \ref{T-3.7}(2), respectively.
\end{proof}

The next theorem says that $LE_\alpha\prec_{\log}\widetilde SG_{\alpha,p}$ for $1/2<\alpha<1$ fails to hold.

\begin{theorem}\label{T-3.19}
For any $\alpha\in(1/2,1)$ and $p>0$ there exist $A,B\in\bM_2^{++}$ such that
$LE_\alpha(A,B)\not\prec_{\log}\widetilde SG_{\alpha,p}(A,B)$.
\end{theorem}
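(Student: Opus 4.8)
The strategy is the standard one used throughout Section \ref{Sec-3} for the necessity-type (non-majorization) assertions: exhibit a one-parameter family of $2\times2$ positive definite matrices, compute the relevant top eigenvalues to second order in the perturbation parameter, and extract a contradiction to the required inequality $\lambda_1(LE_\alpha(A,B))\le\lambda_1(\widetilde SG_{\alpha,p}(A,B))$. By Remark \ref{R-3.9} it suffices to violate this scalar inequality for some $A,B\in\bM_2^{++}$, and the natural candidates are $A_0,B_\theta$ from \eqref{F-2.10}, for which Lemma \ref{L-2.5} already supplies the needed expansions.

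First I would specialize Lemma \ref{L-2.5}: with $y=x\in(0,1)$, formula \eqref{F-2.15} gives
\[
\lambda_1(LE_\alpha(A_0,B_\theta))=1+\theta^2\alpha(1-\alpha)\log x+o(\theta^2),
\]
and \eqref{F-2.14} gives
\[
\lambda_1(\widetilde SG_{\alpha,p}(A_0,B_\theta))
=1-{\theta^2\over p}\biggl\{\alpha\,{1-x^p\over1+x^p}
+{x^p+x^{2p}-x^{(2\alpha+1)p}-x^{2(1-\alpha)p}\over(1-x^p)(1+x^p)^2}\biggr\}+o(\theta^2).
\]
So $LE_\alpha(A_0,B_\theta)\prec_{\log}\widetilde SG_{\alpha,p}(A_0,B_\theta)$ for all small $\theta$ forces the coefficient inequality
\[
\alpha(1-\alpha)\log x\le-{1\over p}\biggl\{\alpha\,{1-x^p\over1+x^p}
+{x^p+x^{2p}-x^{(2\alpha+1)p}-x^{2(1-\alpha)p}\over(1-x^p)(1+x^p)^2}\biggr\},\qquad x\in(0,1).
\]
Note that in the case $y=x$ one has $A_0\ne B_\theta$ as long as $\theta\ne0$, so these matrices are genuinely non-commuting and $\det$ equal is automatic; thus $\prec_{\log}$ reduces exactly to the $\lambda_1$ inequality above.

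Next I would analyze this inequality in the limit $x\to 1$ (equivalently $x=e^{-t}$ with $t\searrow 0$), where the left side behaves like $\alpha(1-\alpha)(x-1)\sim -\alpha(1-\alpha)t$ to first order, and I would expand the right side to first order in $t$ as well. Setting $x^p=e^{-pt}$, one finds ${1-x^p\over 1+x^p}={pt\over 2}+O(t^3)$, and the second bracketed term, whose numerator $x^p+x^{2p}-x^{(2\alpha+1)p}-x^{2(1-\alpha)p}$ vanishes at $x=1$, requires a careful Taylor expansion of the numerator (which is $O(t)$) against the denominator $(1-x^p)(1+x^p)^2\sim 4pt$. The numerator's linear coefficient in $t$ is $p(-1-2+(2\alpha+1)+2(1-\alpha))=0$, so the numerator is actually $O(t^2)$; the quadratic coefficient is $-\tfrac{p^2}{2}(1+4-(2\alpha+1)^2-4(1-\alpha)^2)=-\tfrac{p^2}{2}(5-(4\alpha^2+4\alpha+1)-(4-8\alpha+4\alpha^2))=-\tfrac{p^2}{2}(-8\alpha^2+4\alpha)=p^2\alpha(2\alpha-1)$. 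Dividing by $4pt$ gives $\tfrac{p\alpha(2\alpha-1)}{4}\,t+O(t^2)$ for that term, while the first term contributes $\tfrac{\alpha p t}{2}$. Hence the right side equals $-\tfrac{1}{p}\bigl(\tfrac{\alpha p}{2}+\tfrac{p\alpha(2\alpha-1)}{4}\bigr)t+O(t^2)=-\tfrac{\alpha(2\alpha+1)}{4}\,t+O(t^2)$, and the required inequality becomes, at first order in $t\searrow 0$,
\[
-\alpha(1-\alpha)\le-{\alpha(2\alpha+1)\over4},
\qquad\text{i.e.}\qquad \alpha(1-\alpha)\ge{\alpha(2\alpha+1)\over4},
\]
which simplifies to $4(1-\alpha)\ge 2\alpha+1$, i.e. $\alpha\le 1/2$. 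Thus for $\alpha\in(1/2,1)$ the inequality fails for $x$ sufficiently close to $1$, giving the claimed counterexample for every $p>0$.

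**Main obstacle.** The computations are routine in spirit but the delicate point is the second-bracket term: its numerator vanishes to second order at $x=1$, so one must carry the Taylor expansion of $x^p+x^{2p}-x^{(2\alpha+1)p}-x^{2(1-\alpha)p}$ to order $t^2$ and match it against a denominator that vanishes to order $t$, and an algebra slip there would change the threshold. A clean way to organize this is to substitute $x=e^{-t/p}$ so that $x^p=e^{-t}$ and then all exponents become $t$, $2t$, $(2\alpha+1)t$, $2(1-\alpha)t$, making the cancellation of the linear coefficient ($1+2=(2\alpha+1)+2(1-\alpha)$, i.e. $3=3$) and the surviving quadratic coefficient transparent. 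A subsidiary (but minor) point is to record that $A_0$ and $B_\theta$ with $y=x$ and $0<x<1$ lie in the common domain of $LE_\alpha$ and $\widetilde SG_{\alpha,p}$ (they are positive definite, so $s(A_0)\ge s(B_\theta)$ trivially), and that the $o(\theta^2)$ remainders in Lemma \ref{L-2.5} are locally uniform in $x$ on compact subsets near $x=1$, which is implicit in the way that lemma is used elsewhere in the paper. Once the first-order comparison above is established, the conclusion that $LE_\alpha\not\prec_{\log}\widetilde SG_{\alpha,p}$ for $\alpha\in(1/2,1)$ is immediate.
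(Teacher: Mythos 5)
Your reduction to the scalar inequality
\[
\alpha(1-\alpha)\log x\le-{1\over p}\biggl\{\alpha\,{1-x^p\over1+x^p}
+{x^p+x^{2p}-x^{(2\alpha+1)p}-x^{2(1-\alpha)p}\over(1-x^p)(1+x^p)^2}\biggr\},\qquad x\in(0,1),
\]
is fine, but the decisive step --- the first-order expansion at $x\to1$ --- contains a sign error that invalidates the conclusion. Writing $x=e^{-t}$, the coefficient of $t^2$ in the numerator $e^{-pt}+e^{-2pt}-e^{-(2\alpha+1)pt}-e^{-2(1-\alpha)pt}$ is $+\tfrac{p^2}{2}\bigl(1+4-(2\alpha+1)^2-4(1-\alpha)^2\bigr)=2p^2\alpha(1-2\alpha)$, not $-\tfrac{p^2}{2}(\cdots)$ as you wrote (you also lost a factor of $2$). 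With the correct sign the second bracketed term contributes $\tfrac{p\alpha(1-2\alpha)}{2}\,t+O(t^2)$, the braces total $p\alpha(1-\alpha)t+O(t^2)$, and the right-hand side becomes $-\alpha(1-\alpha)t+O(t^2)$ --- exactly equal to the left-hand side at first order. So no contradiction is obtained; in fact a further computation shows the $t^2$ terms cancel as well (the numerator equals $\alpha(1-2\alpha)(2s^2-3s^3)+O(s^4)$ and the denominator $4s(1-\tfrac32 s)+O(s^3)$ with $s=pt$, so the ratio is $\tfrac{\alpha(1-2\alpha)}{2}s+O(s^3)$), so the limit $x\to1$ is inconclusive even at second order. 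The other natural limit $x\searrow0$ also fails: there the left side tends to $-\infty$ while the right side tends to the finite value $-\alpha/p$, so the inequality holds --- this is precisely why Corollary \ref{C-3.18}(3) only yields the \emph{reverse} non-majorization $\widetilde SG_{\alpha,p}\not\prec_{\log}LE_\alpha$ from \eqref{F-2.14} and \eqref{F-2.15}. (Numerically the scalar inequality above does fail for $\alpha>1/2$ at interior points, e.g.\ $\alpha=3/4$, $x^p=1/2$ gives a violation of order $10^{-3}$, but establishing this analytically for all $\alpha\in(1/2,1)$ is a genuinely delicate single-variable problem, not a routine limit.)

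The paper avoids this dead end by using a two-parameter family. It first reduces to $p=1$ via Remark \ref{R-3.16}(1), rewrites the putative log-majorization as $\exp\lambda_1((1-\alpha)\log Y+\alpha\log(Y^{-1}\#_{1/\alpha}X))\le\lambda_1(Y^{1-\alpha}XY^{1-\alpha})$ with $Y:=A_0$, $X:=B_\theta$ and \emph{independent} parameters $x,y$, expands in $\theta$, then lets $y\searrow0$ with $x$ fixed to arrive at $(1-\alpha)x+\alpha+\log x^{1-\alpha}\le x^{2(1-\alpha)}$ for all $x>0$, and finally compares growth rates as $x\to\infty$ to force $2(1-\alpha)\ge1$. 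You need either to adopt that route or to supply a genuine (and nontrivial) proof that your one-variable function is positive somewhere on $(0,1)$ whenever $\alpha>1/2$; as written, your argument does not establish the theorem.
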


\begin{proof}
By Remark \ref{R-3.16}(1) we may assume that $p=1$. So we may show that if $0<\alpha<1$ and
$LE_\alpha(A,B)\prec_{\log}\widetilde SG_{\alpha,1}(A,B)$ for all $A,B\in\bM_2^{++}$, then $\alpha\le1/2$.
Similarly to the first paragraph of the proof (ii)$\implies$(iii) of Theorem \ref{T-3.6}(1) the above
log-majorization for all $A,B\in\bM_2^{++}$ is equivalent to
\[
LE_\alpha(Y,Y^{-1}\#_{1/\alpha}X)\prec_{\log}X^{1/2}Y^{2(1-\alpha)}X^{1/2},\qquad X,Y\in\bM_2^{++},
\]
equivalently,
\begin{align}\label{F-3.19}
\exp\lambda_1((1-\alpha)\log Y+\alpha\log(Y^{-1}\#_{1/\alpha}X))
\le\lambda_1(Y^{1-\alpha}XY^{1-\alpha}),\qquad X,Y\in\bM_2^{++}.
\end{align}
Now let $Y:=A_0$ and $X:=B_\theta$ in \eqref{F-2.10} for $x,y>0$ with $xy<1$ and $x^{2(1-\alpha)}y<1$.
By \eqref{F-3.15} (with $r=1$) we have
\begin{align}\label{F-3.20}
\lambda_1(Y^{1-\alpha}XY^{1-\alpha})
=1+\theta^2\biggl(-1+y+{x^{2(1-\alpha)}(1-y)^2\over1-x^{2(1-\alpha)}y}\biggr)+o(\theta^2)
\quad\mbox{as $\theta\to0$}.
\end{align}
On the other hand, apply \cite[Example 3.2(3)]{Hiai} to \eqref{F-3.12} and \eqref{F-3.13} to compute
\[
\log(Y^{-1}\#_{1/\alpha}X)=\begin{bmatrix}\theta^2w_{11}&\theta w_{12}\\
\theta w_{12}&\log x^{1-\alpha\over\alpha}y^{1\over\alpha}+\theta^2w_{22}\end{bmatrix}+o(\theta^2),
\]
where
\[
\begin{cases}
w_{11}:=u_{11}+{1-x^{1-\alpha\over\alpha}y^{1\over\alpha}+\log x^{1-\alpha\over\alpha}y^{1\over\alpha}
\over\bigl(1-x^{1-\alpha\over\alpha}y^{1\over\alpha}\bigr)^2}\,u_{12}^2, \\
w_{12}:=-{\log x^{1-\alpha\over\alpha}y^{1\over\alpha}\over1-x^{1-\alpha\over\alpha}y^{1\over\alpha}}\,u_{12}.
\end{cases}
\]
Hence it follows that
\[
(1-\alpha)\log Y+\alpha\log(Y^{-1}\#_{1/\alpha}X)
=\begin{bmatrix}\theta^2\alpha w_{11}&\theta\alpha w_{12}\\
\theta\alpha w_{12}&\log x^{2(1-\alpha)}y+\theta^2\alpha w_{22}\end{bmatrix}+o(\theta^2),
\]
so that by \cite[Lemma 3.6]{Hiai},
\[
\lambda_1((1-\alpha)\log Y+\alpha\log(Y^{-1}\#_{1/\alpha}X))
=\theta^2\biggl(\alpha w_{11}-{\alpha^2w_{12}^2\over\log x^{2(1-\alpha)}y}\biggr)+o(\theta^2),
\]
which implies that
\begin{align}\label{F-3.21}
\exp\lambda_1((1-\alpha)\log Y+\alpha\log(Y^{-1}\#_{1/\alpha}X))
=1+\theta^2\biggl(\alpha w_{11}-{\alpha^2w_{12}^2\over\log x^{2(1-\alpha)}y}\biggr)+o(\theta^2).
\end{align}
Therefore, by \eqref{F-3.19}--\eqref{F-3.21} we must have
\begin{align}\label{F-3.22}
\alpha w_{11}-{\alpha^2w_{12}^2\over\log x^{2(1-\alpha)}y}
\le-1+y+{x^{2(1-\alpha)}(1-y)^2\over1-x^{2(1-\alpha)}y}
\end{align}
for any $x>0$, whenever $y>0$ is sufficiently small. A direct computation yields that the LHS of
\eqref{F-3.22} is arranged as
\[
\alpha u_{11}+{2\alpha\big(1-x^{1-\alpha\over\alpha}y^{1\over\alpha}\bigr)\log x^{1-\alpha}
+\log^2x^{1-\alpha}-\alpha x^{1-\alpha\over\alpha}y^{1\over\alpha}\log y+(\alpha+\log x^{1-\alpha})\log y
\over\bigl(1-x^{1-\alpha\over\alpha}y^{1\over\alpha}\bigr)^2(2\log x^{1-\alpha}+\log y)}\,u_{12}^2,
\]
whose limit as $y\searrow0$ is equal to $-1+(1-\alpha)x+\alpha+\log x^{1-\alpha}$, since
$u_{11}\to-{1\over\alpha}+{1-\alpha\over\alpha}\,x$ and $u_{12}\to1$ as $y\searrow0$ thanks to
\eqref{F-3.13}. Hence, letting $y\searrow0$ in \eqref{F-3.22} gives
\[
(1-\alpha)x+\alpha+\log x^{1-\alpha}\le x^{2(1-\alpha)},\qquad x>0,
\]
which implies that $1\le2(1-\alpha)$, i.e., $\alpha\le1/2$, as desired.
\end{proof}

\begin{problem}\label{Q-3.20}\rm
The case remaining open is whether $SG_{\alpha,p}\prec_{\log}LE_\alpha$ for $\alpha>1$. Note that this is
affirmative if $SG_{\alpha,p}\prec_{\log}SG_{\alpha,q}$ if $\alpha>1$ and $p\ge q$ (see Problem \ref{Q-3.12}).
\end{problem}

For the convenience of the reader, Proposition \ref{P-3.17}, Corollary \ref{C-3.18} (also Remark \ref{R-3.16})
and Theorem \ref{T-3.19} together are summarized as follows. Here, ``all $p$'' says that the designated
log-majorization holds for all $A,B>0$ and all $p>0$, and ``none'' says that for any $p>0$ it fails for some
$A,B\in\bM_2^{++}$. An asymmetric behavior of $\widetilde SG_{\alpha,p}$ for $0<\alpha<1$ around
$\alpha=1/2$ is worth noting, that is a reflection of asymmetry of $\widetilde SG_{\alpha,p}$ under
interchanging $\alpha$ and $1-\alpha$.

\medskip
\begin{table}[htb]
\centering
\begin{tabular}{|c|l|l|} \hline
& $0<\alpha<1$ & $\alpha>1$\phantom{///} \\ \hline

$R_{\alpha,p}\prec_{\log}LE_\alpha$ & none & none \\ \hline

$LE_\alpha\prec_{\log}R_{\alpha,p}$ & all $p$ & all $p$ \\ \hline

$G_{\alpha,p}\prec_{\log}LE_\alpha$ & all $p$ &none \\ \hline

$LE_\alpha\prec_{\log}G_{\alpha,p}$ & none & all $p$ \\ \hline

$SG_{\alpha, p}\prec_{\log}LE_\alpha$ & none & \ \ ? \\ \hline

$LE_\alpha\prec_{\log}SG_{\alpha,p}$ & all $p$ & none \\ \hline

$\widetilde SG_{\alpha,p}\prec_{\log}LE_\alpha$ & none & none \\ \hline

$LE_\alpha\prec_{\log}\widetilde SG_{\alpha,p}$ &
$\hskip-1.8mm\begin{array}{ll}\mbox{all $p$\, for $0<\alpha\le1/2$} \\
\mbox{none for $1/2<\alpha<1$}\end{array}$ & all $p$ \\ \hline
\end{tabular}
\end{table}

\section{Equality cases in norm inequalities derived from log-majorizations}\label{Sec-4}

Recall that if $X,Y\in\bM_n^+$ and $X\prec_{\log}Y$, then $\|X\|\le\|Y\|$ holds for all unitarily invariant
norms $\|\cdot\|$ on $\bM_n$; see, e.g., \cite[Proposition 4.4.13]{Hi}. Therefore, if a pair
$(\cM_{\alpha,p},\cN_{\alpha,p})$ of quasi-geometric type means satisfies
$\cM_{\alpha,p}\prec_{\log}\cN_{\alpha,q}$, then we have $\|\cM_{\alpha,p}(A,B)\|\le\|\cN_{\alpha,q}(A,B)\|$
for all $A,B\ge0$ (with $s(A)\ge s(B)$) and for any unitarily invariant norm $\|\cdot\|$. In this section we aim
at characterizing the equality case in the above norm inequality in terms of commutativity $AB=BA$ in certain
cases of the pair $(\cM_{\alpha,p},\cN_{\alpha,p})$ and the norm $\|\cdot\|$. This is reasonable since
$AB=BA$ implies that $\cM_{\alpha,p}(A,B)=\cN_{\alpha,q}(A,B)$ holds for any pair of quasi-geometric
type means. We say that a norm $\|\cdot\|$ on $\bM_n$ is \emph{strictly increasing} if $A\ge B\ge0$ and
$\|A\|=\|B\|$ imply $A=B$. The Schatten $p$-norms where $1\le p<\infty$ (particularly, the trace norm) are
typical examples of strictly increasing unitarily invariant norms.

\begin{theorem}\label{T-4.1}
Let $\alpha\in(0,\infty)\setminus\{1\}$ and $p,q>0$. Consider the following pairs of quasi-geometric matrix
means:
\begin{align}
&(R_{\alpha,p},R_{\alpha,q})\quad\mbox{for}\ \alpha\in(0,\infty)\setminus\{1\},\ p\ne q, \label{F-4.1}\\
&(G_{\alpha,p},R_{\alpha,q})\quad\mbox{for}\ \begin{cases}
0<\alpha<1,\ p,q>0,\ \mbox{or} \\
\alpha>1,\ p/q<\min\{\alpha/2,\alpha-1\},\ \mbox{or} \\
\alpha>1,\ p/q>\max\{\alpha/2,\alpha-1\},\end{cases} \label{F-4.2}\\
&(SG_{\alpha,p},R_{\alpha,q})\quad\mbox{for}\ \begin{cases}
0<\alpha<1,\ p/q>\max\{\alpha,1-\alpha\},\ \mbox{or} \\
0<\alpha<1,\ p/q<\min\{\alpha,1-\alpha\},\ \mbox{or} \\
\alpha>1,\ p/q<\alpha,\end{cases} \label{F-4.3}\\
&(\widetilde SG_{\alpha,p},R_{\alpha,q})\quad\mbox{for}\ \begin{cases}
0<\alpha<1,\ p/q<\alpha,\ \mbox{or} \\
0<\alpha\le1/2,\ q<p,\ \mbox{or} \\
\alpha>1,\ p/q>\alpha,\end{cases} \label{F-4.4}\\
&(LE_\alpha,R_{\alpha,p})\quad\mbox{for}\ \alpha\in(0,\infty)\setminus\{1\},\ p>0, \label{F-4.5}\\
&(G_{\alpha,p},G_{\alpha,q})\quad\mbox{for}\ \alpha\in(0,2]\setminus\{1\},\ p\ne q, \label{F-4.6}\\
&(SG_{\alpha,p},G_{\alpha,q})\quad\mbox{for}\ \begin{cases}
0<\alpha<1,\ p,q>0,\ \mbox{or} \\
1<\alpha\le2,\ p/q<\max\bigl\{2,{\alpha\over\alpha-1}\bigr\},\end{cases} \label{F-4.7}\\
&(\widetilde SG_{\alpha,p},G_{\alpha,q})\quad\mbox{for}\ \begin{cases}
0<\alpha\le1/2,\ p,q>0,\ \mbox{or} \\
1<\alpha\le2,\ q/p<\min\bigl\{1/2,{\alpha-1\over\alpha}\bigr\},\end{cases} \label{F-4.8}\\
&(LE_\alpha,G_{\alpha,p})\quad\mbox{for}\ \alpha\in(0,2]\setminus\{1\},\ p>0. \label{F-4.9}
\end{align}
Let $\|\cdot\|$ be any strictly increasing unitarily invariant norm. Then the following hold:
\begin{itemize}
\item[(1)] Let $(\cM_{\alpha,p},\cN_{\alpha,q})$ be any of \eqref{F-4.1}--\eqref{F-4.5} and $A,B\ge0$, with
an additional assumption $s(A)\ge s(B)$ for the $\alpha>1$ case of \eqref{F-4.1}, \eqref{F-4.2} and
\eqref{F-4.5}, and for \eqref{F-4.3} and \eqref{F-4.4}. If $\|\cM_{\alpha,p}(A,B)\|=\|\cN_{\alpha,q}(A,B)\|$,
then $AB=BA$
\item[(2)] Let $(\cM_{\alpha,p},\cN_{\alpha,q})$ be any of \eqref{F-4.6}--\eqref{F-4.9} and $A,B>0$. If
$\|\cM_{\alpha,p}(A,B)\|=\|\cN_{\alpha,q}(A,B)\|$, then $AB=BA$
\end{itemize}
\end{theorem}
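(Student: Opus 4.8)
The plan is to reduce Theorem~\ref{T-4.1} to a single equality principle for log-majorization and then to inspect, pair by pair, the proofs of the log-majorizations gathered in Section~\ref{Sec-3}. The first ingredient is the lemma (which I would either prove directly or borrow from \cite{Hi1}): if $X,Y\ge0$ satisfy $X\prec_{\log}Y$ and $\|X\|=\|Y\|$ for some strictly increasing unitarily invariant norm, then $\lambda(X)=\lambda(Y)$. This is exactly where strict increasingness enters: since $\det X=\det Y$ we have the genuine majorization $\log\lambda(X)\prec\log\lambda(Y)$, and because $t\mapsto e^{t}$ is strictly convex, the function $v\mapsto\Phi(e^{v_{1}},\dots,e^{v_{n}})$, where $\Phi$ is the symmetric gauge function of $\|\cdot\|$, is strictly Schur-convex on each hyperplane $\{\sum_{i}v_{i}=\text{const}\}$, so $\|X\|=\|Y\|$ forces $\log\lambda(X)=\log\lambda(Y)$. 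Every pair $(\cM_{\alpha,p},\cN_{\alpha,q})$ listed in \eqref{F-4.1}--\eqref{F-4.9} satisfies, in the stated parameter range, one of the log-majorizations $\cM_{\alpha,p}\prec_{\log}\cN_{\alpha,q}$ or $\cN_{\alpha,q}\prec_{\log}\cM_{\alpha,p}$ (Section~\ref{Sec-3}), and has $\det\cM_{\alpha,p}(A,B)=\det\cN_{\alpha,q}(A,B)$ by Remark~\ref{R-2.4}(2); hence the hypothesis $\|\cM_{\alpha,p}(A,B)\|=\|\cN_{\alpha,q}(A,B)\|$ upgrades to $\lambda(\cM_{\alpha,p}(A,B))=\lambda(\cN_{\alpha,q}(A,B))$, and it remains to show that this spectral coincidence forces $AB=BA$.

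Next I would perform the two reductions already used in Section~\ref{Sec-3}. Because the spectral coincidence is preserved under $(A,B)\mapsto(A^{q},B^{q})$ (using $\cN_{\alpha,q}(A,B)=\cN_{\alpha,1}(A^{q},B^{q})^{1/q}$, $\cM_{\alpha,p}(A,B)=\cM_{\alpha,p/q}(A^{q},B^{q})^{1/q}$, and that $Z\mapsto Z^{1/q}$ is a spectral bijection), and since $A^{q}B^{q}=B^{q}A^{q}$ gives $AB=BA$, I may take $q=1$; and for the families allowing $A,B\ge0$ I would reduce to $A,B>0$ by a support argument, working on the range of $s(A)\wedge s(B)$, resp.\ $s(B)$, and then using $s(A)\ge s(B)$ to recover $AB=BA$ on the whole space. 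For the two pairs \eqref{F-4.7} and \eqref{F-4.8} — $SG_{\alpha,p}$ versus $G_{\alpha,q}$ and $\widetilde SG_{\alpha,p}$ versus $G_{\alpha,q}$, whose log-majorizations were obtained in Section~\ref{Sec-3} by inserting a suitable R\'enyi mean $R_{\alpha,r}$ between the two members — the sandwich $\|\cdot\|\le\|R_{\alpha,r}(A,B)\|\le\|\cdot\|$ collapses to equalities throughout, so the equality problem splits into two equality problems, each one of whose members is $R_{\alpha,r}$. Consequently it suffices to treat the basic pairs: those one of whose members is a R\'enyi mean (i.e.\ \eqref{F-4.1}--\eqref{F-4.5}), together with $(G_{\alpha,p},G_{\alpha,q})$ of \eqref{F-4.6} and $(LE_\alpha,G_{\alpha,p})$ of \eqref{F-4.9}.

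The heart of the argument is the equality analysis of these basic pairs, which is the content of Appendix~\ref{Sec-A}. For $(R_{\alpha,p},R_{\alpha,q})$ the log-majorization is Araki's \eqref{F-1.1} applied to $C:=A^{1-\alpha}$ and $D:=B^{\alpha}$, so equality is the equality case of Araki's log-majorization \cite{Ar}, which by \cite{Hi1} forces $CD=DC$ and hence $AB=BA$. For the pairs $(LE_\alpha,R_{\alpha,p})$ and $(LE_\alpha,G_{\alpha,p})$, putting $H:=(1-\alpha)\log A$ and $K:=\alpha\log B$ reduces the equality to the equality cases of the Golden--Thompson and of the Ando--Hiai (complementary Golden--Thompson) log-majorizations, from \cite{Hi1,Hi3}, which give $HK=KH$. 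For $(G_{\alpha,p},R_{\alpha,q})$ and $(G_{\alpha,p},G_{\alpha,q})$ I would retrace the proofs in \cite{Hi2}, resp.\ \cite{AH,KS}: after the antisymmetric-tensor reduction one is left with a chain of operator inequalities assembled from operator monotonicity of $t\mapsto t^{r}$ ($0<r\le1$), of $t\mapsto t^{-1}$, and from Araki's log-majorization, and the spectral coincidence — applied to $\wedge^{k}A$, $\wedge^{k}B$ for all $k$ so as to pass from equality of the largest eigenvalue to full rigidity — forces each link to be tight; the equality cases of the individual links (again from \cite{Hi1,Hi3}) then unwind to $AB=BA$. For $(SG_{\alpha,p},R_{\alpha,q})$ and $(\widetilde SG_{\alpha,p},R_{\alpha,q})$ I would carry out the same analysis on the reformulations used in the proofs of Theorems~\ref{T-3.3}, \ref{T-3.4}, \ref{T-3.6} and \ref{T-3.7}, where the Riccati substitution $Y:=A^{p}$, $X:=A^{-p}\# B^{p}$ (resp.\ $X:=A^{-p}\#_{\alpha}B^{p}$) turns the statement into a log-majorization between $(X^{\alpha}YX^{\alpha})^{r}$ (resp.\ $(Y^{1-\alpha}XY^{1-\alpha})^{r}$) and $Y^{\frac{1-\alpha}{2}r}(XYX)^{\alpha r}Y^{\frac{1-\alpha}{2}r}$ (resp.\ the corresponding expression with $Y^{-1}\#_{1/\alpha}X$); tracing equality through the monotonicity steps and Araki's log-majorization yields $XY=YX$, and since $X\mapsto A^{-p}\# B^{p}$ (resp.\ $A^{-p}\#_{\alpha}B^{p}$) is a bijection, this is equivalent to $A^{p}B^{p}=B^{p}A^{p}$, i.e.\ $AB=BA$.

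The step I expect to be the main obstacle is the case-by-case bookkeeping in this last part for the two spectral geometric means: for each admissible range in \eqref{F-4.3}, \eqref{F-4.4}, \eqref{F-4.7} and \eqref{F-4.8} one must verify that the particular chain chosen in Section~\ref{Sec-3} passes only through inequalities whose equality cases are available, and that the strictness built into \eqref{F-4.1}--\eqref{F-4.9} ($p\ne q$, strict inequalities relating $p/q$ to $\alpha/2,\alpha-1,\alpha,1-\alpha$, or $\alpha\ne1/2$) is precisely what makes the equality case of $t\mapsto t^{r}$ or of Araki's log-majorization non-vacuous — the excluded exponents being exactly the degenerate ones (a power equal to $1$, $p=q$, $\alpha=1/2$). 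A secondary point that needs care is the final implication in the $A,B\ge0$ cases, from commutativity on the relevant support projection to $AB=BA$ on the whole space, which again uses $s(A)\ge s(B)$.
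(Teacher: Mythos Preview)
Your overall plan—upgrade norm equality to spectral equality via strict Schur-convexity, then trace equality through each log-majorization—is sound in spirit, but it misses the paper's central shortcut that makes almost all of the case-by-case equality analysis unnecessary. The paper exploits the \emph{strict} inequalities built into the parameter ranges of \eqref{F-4.2}--\eqref{F-4.5}: in each case one can perturb $q$ to some $q'\ne q$ still in the admissible range, and then sandwich as
\[
\|\cM_{\alpha,p}(A,B)\|\le\|R_{\alpha,q'}(A,B)\|\le\|R_{\alpha,q}(A,B)\|
\]
(or the reversed chain), using the relevant log-majorization together with Theorem~\ref{T-3.1}(a). The assumed norm equality collapses this to $\|R_{\alpha,q'}(A,B)\|=\|R_{\alpha,q}(A,B)\|$, which is exactly case \eqref{F-4.1}, already settled in \cite{Hi1,Hi3}. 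The same device with $G_{\alpha,q'}$ reduces \eqref{F-4.7}--\eqref{F-4.9} to \eqref{F-4.6}. You apply this sandwich idea only to \eqref{F-4.7} and \eqref{F-4.8} (where the log-majorization proof was itself a sandwich), and for the remaining pairs you propose to retrace the proofs of Theorems \ref{T-3.2}--\ref{T-3.7} step by step; this is far more laborious, and for chains such as the self-referential Araki step in the proof of Theorem~\ref{T-3.4}(1) the equality analysis is not at all routine.

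More seriously, your treatment of \eqref{F-4.6} for $1<\alpha\le2$ has a genuine gap. You propose to ``retrace the proofs in \cite{AH,KS}'' and unwind equality through the operator-monotonicity links, but the paper explicitly records in Appendix~\ref{Sec-A} that the analogous approach (the one used for $0<\alpha<1$ in \cite[Theorem~3.1]{Hi1}) ``does not work well'' here. Instead, the paper proves this case by computing the Taylor expansion of $t\mapsto\Tr\,G_\alpha(e^{tH},e^{tK})$ through fourth order and comparing it with that of $\Tr\,e^{t(\alpha H+(1-\alpha)K)}$, extracting the identity $\Tr\,H^{2}K^{2}=\Tr\,HKHK$ and hence $HK=KH$. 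Without this computation (or an equivalent argument) your proof of the new $1<\alpha\le2$ part of \eqref{F-4.6}—and therefore of \eqref{F-4.7}--\eqref{F-4.9} in that range—is incomplete.
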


\begin{proof}
(1)\enspace
The main of this part is the assertion for \eqref{F-4.1}, which was formerly shown in \cite[Theorem 2.1]{Hi1}
and the proof has been updated in \cite[Appendix A]{Hi3}.\footnote{
We note that the claim \cite[(A.3)]{Hi3} and its reasoning in the last five lines of \cite[p.\ 544]{Hi3} are incorrect.
But (A.3) is unnecessary in the proof of \cite[Appendix]{Hi3}. Therefore, these lines and Remark A.2 of
\cite{Hi3} should be deleted.}
The other cases in \eqref{F-4.2}--\eqref{F-4.5} are all reduced to \eqref{F-4.1}. Indeed, assume that
$\|G_{\alpha,p}(A,B)\|=\|R_{\alpha,q}(A,B)\|$ for $\alpha,p,q$ satisfying one of the three conditions in
\eqref{F-4.2}. When either $0<\alpha<1$ with $p,q>0$ arbitrary, or $\alpha>1$ and
$p/q<\min\{\alpha/2,\alpha-1\}$, choose a $q'\in(0,q)$ such that $\alpha,p,q'$ satisfy the same condition.
By Theorem \ref{T-3.1}(a) and (e) one has
\[
\|G_{\alpha,p}(A,B)\|\le\|R_{\alpha,q'}(A,B)\|\le\|R_{\alpha,q}(A,B)\|.
\]
Hence $\|R_{\alpha,q'}(A,B)\|=\|R_{\alpha,q}(A,B)\|$, which gives $AB=BA$ by the \eqref{F-4.1} case. When
$\alpha>1$ and $p/q>\max\{\alpha/2,\alpha-1\}$, choose a $q'>q$ such that $\alpha,p,q'$ satisfy the same
condition. By Theorem \ref{T-3.1}(a) and (f) one has
\[
\|R_{\alpha,q}(A,B)\|\le\|R_{\alpha,q'}(A,B)\|\le\|G_{\alpha,p}(A,B)\|,
\]
so that $\|R_{\alpha,q}(A,B)\|=\|R_{\alpha,q'}(A,B)\|$ and $AB=BA$ follows. The proof is similar for
\eqref{F-4.3} by using Theorems \ref{T-3.1}(g) (or \ref{T-3.3}(2)), \ref{T-3.3}(1) and \ref{T-3.4}(1) for the three
cases in \eqref{F-4.3}, respectively. The proof for \eqref{F-4.4} is also similar by
Theorems \ref{T-3.6}(1), \ref{T-3.6}(2) and \ref{T-3.7}(2) for the three cases in \eqref{F-4.4}, respectively.
For \eqref{F-4.5} we may use Proposition \ref{P-3.17}(a).

(2)\enspace
The main of this part is the assertion for \eqref{F-4.6}, which was formerly shown for $0<\alpha<1$ in
\cite[Theorem 3.1]{Hi1}. The assertion for $1<\alpha\le2$ is new. We defer its details to Appendix A because
it seems more instructive to present them independently. The other cases in \eqref{F-4.7}--\eqref{F-4.9} are
all reduced to \eqref{F-4.6}. For instance, assume that $\|SG_{\alpha,p}(A,B)\|=\|G_{\alpha,q}(A,B)\|$ for
$\alpha,p,q$ satisfying one of the two conditions in \eqref{F-4.7}. When $0<\alpha<1$ with $p,q>0$ arbitrary,
for a $q'\in(0,q)$ by Theorem \ref{T-3.1}(b) and Proposition \ref{P-3.13}(1) one has
\[
\|G_{\alpha,q}(A,B)\|\le\|G_{\alpha,q'}(A,B)\|\le\|SG_{\alpha,p}(A,B)\|,
\]
implying that $\|G_{\alpha,q}(A,B)\|=\|G_{\alpha,q'}(A,B)\|$ so that $AB=BA$ from the \eqref{F-4.6} case.
When $1<\alpha\le2$ and $p/q<\max\bigl\{2,{\alpha\over\alpha-1}\bigr\}$, choose a $q'\in(0,q)$ such that
$\alpha,p,q'$ satisfy the same condition. By Theorem \ref{T-3.1}(c) and Proposition \ref{P-3.13}(3) one has
\[
\|SG_{\alpha,p}(A,B)\|\le\|G_{\alpha,q'}(A,B)\|\le\|G_{\alpha,q}(A,B)\|,
\]
implying that $\|G_{\alpha,q'}(A,B)\|=\|G_{\alpha,q}(A,B)\|$ so that $AB=BA$. The proof is similar for
\eqref{F-4.8} by using Proposition \ref{P-3.14}(2) and (3). For \eqref{F-4.9}, (b) and (c) of
Proposition \ref{P-3.17} are used for $0<\alpha<1$ and $1<\alpha\le2$, respectively.
\end{proof}

The next proposition refines the log-majorizations given in (a), (b) and (c) of Theorem \ref{T-3.1} into the
local characterization in the sense that those log-majorizations are characterized for any fixed
non-commuting $A,B>0$.

\begin{proposition}\label{P-4.2}
Let $p,q>0$ and $A,B>0$ be such that $AB\ne BA$.
\begin{itemize}
\item[(1)] For any $\alpha\in(0,\infty)\setminus\{1\}$ we have
\[
R_{\alpha,p}(A,B)\prec_{\log}R_{\alpha,q}(A,B)\iff\Tr\,R_{\alpha,p}(A,B)\le\Tr\,R_{\alpha,q}(A,B)\iff p\le q.
\]
For $0<\alpha<1$ the above equivalences hold for $A,B\ge0$ with $AB\ne BA$.
\item[(2)] For $0<\alpha<1$ we have
\[
G_{\alpha,q}(A,B)\prec_{\log}G_{\alpha,p}(A,B)\iff\Tr\,G_{\alpha,q}(A,B)\le\Tr\,G_{\alpha,p}(A,B)\iff p\le q.
\]
\item[(3)] For $1<\alpha\le2$ we have
\[
G_{\alpha,p}(A,B)\prec_{\log}G_{\alpha,q}(A,B)\iff\Tr\,G_{\alpha,p}(A,B)\le\Tr\,G_{\alpha,q}(A,B)\iff p\le q.
\]
\end{itemize}
\end{proposition}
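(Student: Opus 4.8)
The plan is to prove the three chains of equivalences in Proposition \ref{P-4.2} by establishing, in each case, the three implications around the cycle; since $\det R_{\alpha,p}(A,B)=\det R_{\alpha,q}(A,B)$ (and likewise for $G$), the log-majorization is equivalent to $R_{\alpha,p}(A,B)\prec_{w\log}R_{\alpha,q}(A,B)$ by Remark \ref{R-2.4}(2), so the first implication ``$p\le q\implies$ log-majorization'' is exactly Theorem \ref{T-3.1}(a), (b), (c) respectively. The implication ``log-majorization $\implies$ trace inequality'' is immediate from the fact that $X\prec_{\log}Y$ gives $\Tr X\le\Tr Y$ (indeed $\|\cdot\|_1$ is a unitarily invariant norm and is monotone under weak log-majorization). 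So the only content is the contrapositive of the remaining implication: if $p>q$, then $\Tr R_{\alpha,p}(A,B)>\Tr R_{\alpha,q}(A,B)$ whenever $AB\ne BA$ (and similarly for $G$ with the orientation reversed in (2)).

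For this last step I would argue as follows. Suppose $q<p$ but $\Tr R_{\alpha,q}(A,B)\ge\Tr R_{\alpha,p}(A,B)$. Since $q<p$ gives $R_{\alpha,q}(A,B)\prec_{\log}R_{\alpha,p}(A,B)$ by Theorem \ref{T-3.1}(a), we get $\Tr R_{\alpha,q}(A,B)\le\Tr R_{\alpha,p}(A,B)$, so in fact $\Tr R_{\alpha,q}(A,B)=\Tr R_{\alpha,p}(A,B)$, i.e.\ equality holds in the trace-norm inequality derived from that log-majorization. The trace norm is a strictly increasing unitarily invariant norm, so Theorem \ref{T-4.1}(1) applied to the pair $(R_{\alpha,q},R_{\alpha,p})$ of type \eqref{F-4.1} (with $q\ne p$) forces $AB=BA$, contradicting the hypothesis. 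Hence $q<p$ together with $AB\ne BA$ implies $\Tr R_{\alpha,q}(A,B)<\Tr R_{\alpha,p}(A,B)$, which closes the cycle and yields the three equivalences in (1). Parts (2) and (3) are handled identically: in (2) use Theorem \ref{T-3.1}(b) for the orientation ($G_{\alpha,q}\prec_{\log}G_{\alpha,p}$ when $p\le q$) and invoke Theorem \ref{T-4.1}(2) for the pair \eqref{F-4.6} with $\alpha\in(0,1)$; in (3) use Theorem \ref{T-3.1}(c) and Theorem \ref{T-4.1}(2) for \eqref{F-4.6} with $1<\alpha\le2$. The $0<\alpha<1$ extension in (1) to $A,B\ge0$ is legitimate because (a) of Theorem \ref{T-3.1} and the \eqref{F-4.1} case of Theorem \ref{T-4.1}(1) both allow general $A,B\ge0$ in that range.

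The main obstacle is essentially bookkeeping rather than a genuine difficulty: one must be careful that the equality-case rigidity Theorem \ref{T-4.1} is invoked only for parameter ranges it actually covers. For \eqref{F-4.1} this is any $\alpha\in(0,\infty)\setminus\{1\}$ with $p\ne q$, which is exactly what is needed; for \eqref{F-4.6} it is $\alpha\in(0,2]\setminus\{1\}$ with $p\ne q$, again matching (2) and (3). A secondary point to verify is that the trace norm is strictly increasing in the sense of Section \ref{Sec-4} (it is, since $A\ge B\ge0$ with $\Tr A=\Tr B$ forces $\Tr(A-B)=0$ and $A-B\ge0$, hence $A=B$), so that Theorem \ref{T-4.1} is indeed applicable with $\|\cdot\|=\|\cdot\|_1$. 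No new computation with the matrices $A_0,B_\theta$ of \eqref{F-2.10} is required here, because all the hard analytic work has already been done in Section \ref{Sec-3} (for the log-majorizations) and in Theorem \ref{T-4.1} (for the equality cases); the proposition is a clean ``local'' repackaging of those results into a single chain for fixed non-commuting $A,B$.
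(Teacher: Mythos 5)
Your proposal is correct and follows essentially the same route as the paper: the forward implications come from Theorem \ref{T-3.1}(a), (b), (c), and the closing implication is obtained by forcing equality of traces when the parameters are in the wrong order and then invoking the equality-case rigidity of Theorem \ref{T-4.1} for the pairs \eqref{F-4.1} and \eqref{F-4.6} with the (strictly increasing) trace norm. Your bookkeeping about the admissible ranges of $\alpha$ and about $A,B\ge0$ versus $A,B>0$ matches the paper's treatment.
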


\begin{proof}
(1)\enspace
Let $A,B\ge0$ when $0<\alpha<1$, while $A,B>0$ when $\alpha>1$. If $p\le q$, then
$R_{\alpha,p}(A,B)\prec_{\log}R_{\alpha,q}(A,B)$ by Theorem \ref{T-3.1}(a). Obviously,
$R_{\alpha,p}(A,B)\prec_{\log}R_{\alpha,q}(A,B)$ implies $\Tr\,R_{\alpha,p}(A,B)\le\Tr\,R_{\alpha,q}(A,B)$.
Finally, assume that $\Tr\,R_{\alpha,p}(A,B)\le\Tr\,R_{\alpha,q}(A,B)$. If $p>q$, then we must have
$\Tr\,R_{\alpha,p}(A,B)=\Tr\,R_{\alpha,q}(A,B)$ by Theorem \ref{T-3.1}(a) again. This implies $AB=BA$ by
case \eqref{F-4.1} of Theorem \ref{T-4.1}(1), contradicting the assumption $AB\ne BA$. Hence (1) follows.

(2) and (3) are proved similarly to (1) by use of Theorem \ref{T-3.1}(b), (c) and case \eqref{F-4.6} of
Theorem \ref{T-4.1}(2) (i.e., Theorem \ref{T-A.1} in Appendix A).
\end{proof}

\begin{remark}\label{R-4.3}\rm
It is not possible to have the local characterization similar to Proposition \ref{P-4.2} for log-majorizaions
between $G_{\alpha,p}$ and $R_{\alpha,q}$ for $\alpha>1$. In fact, log-majorizations between
$G_{\alpha,p}$ and $R_{\alpha,q}$ are indefinite when $\alpha>1$ and
$\min\{\alpha/2,\alpha-1\}<p/q<\max\{\alpha/2,\alpha-1\}$; see Theorem \ref{T-3.1}(e) and (f).
\end{remark}

\section{Joint concavity/convexity}\label{Sec-5}

The aim of this section is to examine the joint concavity/convexity of the trace functions for quasi-geometric
type matrix means of our concern.

\subsection{Joint concavity/convexity and monotonicity}\label{Sec-5.1}

Before going to our main discussions of this section, we first review a known result on
the joint concavity of trace functions for Kubo--Ando's operator means and then a strong connection of the
joint concavity/convexity and the monotonicity property of our target trace functions. 

The next  theorem was shown in \cite[Theorem 3.2]{Hi4} in a more general form by a complex function
method using Pick functions (called Epstein's method).

\begin{theorem}\label{T-5.1}
Let $\sigma$ be a Kubo--Ando's operator mean. If $0\le p,q\le1$ and $0\le s\le1/\max\{p,q\}$, then the function
$(A,B)\mapsto\Tr(A^p\sigma B^q)^s$ is jointly concave on $\bM_n^+\times\bM_n^+$ for any $n\ge1$.
In particular, if $0<p\le1$, then $(A,B)\mapsto\Tr(A^p\sigma B^p)^{1/p}$ is jointly concave on
$\bM_n\times\bM_n^+$.
\end{theorem}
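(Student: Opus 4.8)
The plan is to recognize the statement as a special case of the trace–concavity results obtainable by Epstein's complex-analytic method, and to fit the map $(A,B)\mapsto A^p\sigma B^q$ into that framework. Write $\Pi_n:=\{Z\in\bM_n:\mathrm{Im}\,Z>0\}$ for the matrices with positive-definite imaginary part. The aim is to show that $\Phi(A,B):=A^p\sigma B^q$ is a \emph{jointly Pick map}: it extends to a holomorphic map $\Pi_n\times\Pi_n\to\Pi_n$ taking positive values on $\bM_n^{++}\times\bM_n^{++}$; and that $\Phi$ is block-homogeneous in the sense that $\Phi(t^{1/p}A,t^{1/q}B)=t\,\Phi(A,B)$ for $t>0$, which is immediate from $(t^{1/p}A)^p=tA^p$, $(t^{1/q}B)^q=tB^q$ and $c(X\sigma Y)=(cX)\sigma(cY)$. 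Granting these two properties, Epstein's theorem in its multivariable, weighted-homogeneous form (see \cite{Hi4} and the survey \cite{Ca1}) gives that $(A,B)\mapsto\Tr\Phi(A,B)^s$ is jointly concave on $\bM_n^{++}\times\bM_n^{++}$ for $0\le s\le\min\{1/p,1/q\}=1/\max\{p,q\}$; joint concavity then passes to $\bM_n^+\times\bM_n^+$ by continuity, since $A\mapsto A^p$ and $B\mapsto B^q$ are continuous for $0\le p,q\le1$. The ``in particular'' part is just the case $q=p$, $s=1/p=1/\max\{p,p\}$. For the sub-range $0\le s\le1$ one can avoid Epstein's method entirely: $(A,B)\mapsto A^p\sigma B^q$ is jointly operator concave (operator concavity and operator monotonicity of $x\mapsto x^p$ and $x\mapsto x^q$, together with joint operator concavity and monotonicity of $\sigma$), and $C\mapsto\Tr C^s$ is concave and monotone, so the composition is concave; the role of Epstein's method is exactly to push $s$ beyond $1$ up to $1/\max\{p,q\}$.

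To establish the joint Pick property I would argue in three steps. \emph{(a)} For $0<r\le1$ the principal-branch power $Z\mapsto Z^r$ maps $\Pi_n$ holomorphically into $\Pi_n$: the numerical range, hence the spectrum, of any $Z\in\Pi_n$ lies in the open upper half-plane, so $z^r$ is holomorphic near the spectrum, and $\mathrm{Im}(Z^r)>0$ is the matricial Pick property of the operator-monotone function $x\mapsto x^r$; consequently $(Z,W)\mapsto(Z^p,W^q)$ maps $\Pi_n\times\Pi_n$ into itself. \emph{(b)} The operator mean itself extends to a jointly Pick map $(X,Y)\mapsto X\sigma Y$: using the Kubo--Ando integral representation $X\sigma Y=aX+bY+\int_{(0,\infty)}\tfrac{1+t}{t}(tX:Y)\,d\mu(t)$ with $a,b\ge0$ and a positive measure $\mu$ (normalized so that the right-hand side equals $X$ when $X=Y$), it suffices to treat the parallel sum $X:Y=(X^{-1}+Y^{-1})^{-1}$. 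Since $Z\mapsto-Z^{-1}$ maps $\Pi_n$ into $\Pi_n$ (indeed $\mathrm{Im}(-Z^{-1})=(Z^{-1})^*(\mathrm{Im}\,Z)Z^{-1}>0$), for $X,Y\in\Pi_n$ we get $-X^{-1}-Y^{-1}\in\Pi_n$, hence $-(-X^{-1}-Y^{-1})^{-1}\in\Pi_n$, i.e.\ $X:Y\in\Pi_n$; the linear part $aX+bY$ is trivially jointly Pick, and a uniform-bound plus dominated-convergence argument carries the property through the integral. \emph{(c)} Composing the map of \emph{(a)} into the map of \emph{(b)} shows that $\Phi(Z,W)=Z^p\sigma W^q$ is holomorphic from $\Pi_n\times\Pi_n$ to $\Pi_n$ and positive on positive matrices, completing the reduction.

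The genuinely hard step is not this reduction but Epstein's theorem in the precise form required: deducing, from ``$\Phi$ extends to a jointly Pick map and is block-homogeneous of weights $(1/p,1/q)$'', that $\Tr\Phi^s$ is jointly concave for $s\le1/\max\{p,q\}$. This rests on the Nevanlinna-type integral representation of matricial Pick functions together with an argument, going back to Epstein, analyzing the scalar function $\tau\mapsto\Tr\Phi(A+\tau X,B+\tau Y)^s$ along line segments; I would cite \cite{Hi4} for this rather than reproduce it. Two smaller points to keep honest: the convergence of the integral representation of $\sigma$ after analytic continuation, together with local boundedness of the imaginary part of the integrand on compact subsets of $\Pi_n\times\Pi_n$; and the sharpness of the exponent range, which is already visible from the degenerate Kubo--Ando means $X\sigma Y=X$ and $X\sigma Y=Y$, for which $\Tr\Phi(A,B)^s$ equals $\Tr A^{ps}$ resp.\ $\Tr B^{qs}$ and is concave precisely when $ps\le1$ resp.\ $qs\le1$.
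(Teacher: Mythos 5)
Your proposal is correct and follows essentially the same route as the paper, which does not prove Theorem \ref{T-5.1} itself but attributes it to \cite[Theorem 3.2]{Hi4}, proved there exactly by the Epstein/Pick-function method you outline (joint Pick property of $(Z,W)\mapsto Z^p\sigma W^q$ via the Kubo--Ando integral representation, weighted homogeneity, and Epstein's analytic-continuation argument). Your reduction steps (a)--(c) are sound, and deferring the core Epstein-type argument to \cite{Hi4} is on par with what the paper does.
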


In particular, $\Tr\,\cA_{\alpha,p}$, $\Tr\,\cH_{\alpha,p}$ and $\Tr\,G_{\alpha,p}$ are jointly concave for
$0<\alpha\le1$ and $0<p\le1$. More specified results on the joint concavity/convexity of $\Tr\,\cA_{\alpha,p}$
and $\Tr\,\cH_{\alpha,p}$ are known as follows (see \cite{Bek,CL} and \cite[p.\,1583]{Hi4}):

\begin{proposition}\label{P-5.2}
Let $0<\alpha<1$, $p>0$, and let $n\in\bN$ with $n\ge2$ be arbitrarily fixed. Then the following hold:
\begin{itemize}
\item[(1)] $(A,B)\mapsto\Tr\,\cA_{\alpha,p}(A,B)$ is jointly concave on $\bM_n^+\times\bM_n^+$ if and
only if $0<p\le1$.
\item[(2)] $(A,B)\mapsto\Tr\,\cA_{\alpha,p}(A,B)$ is jointly convex on $\bM_n^+\times\bM_n^+$ if and
only if $1\le p\le2$.
\item[(3)] $(A,B)\mapsto\Tr\,\cH_{\alpha,p}(A,B)$ is jointly concave on $\bM_n^+\times\bM_n^+$ if and
only if $0<p\le1$.
\item[(4)] $(A,B)\mapsto\Tr\,\cH_{\alpha,p}(A,B)$ is not jointly convex on $\bM_n^+\times\bM_n^+$ for any
$p>0$.
\end{itemize}
\end{proposition}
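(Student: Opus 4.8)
The plan is to reduce everything to two classical facts together with one monotonicity (data-processing) argument for the necessity direction, and to known convexity results for the sufficiency direction. Recall that $\cA_{\alpha,p}(A,B)=((1-\alpha)A^p+\alpha B^p)^{1/p}$, so that $\Tr\,\cA_{\alpha,p}(A,B)=\Tr\,f_p((1-\alpha)A^p+\alpha B^p)$ with $f_p(t)=t^{1/p}$, and similarly $\Tr\,\cH_{\alpha,p}(A,B)=\Tr\,f_p((1-\alpha)A^{-p}+\alpha B^{-p})^{-1}$ after clearing the generalized inverses; throughout I would first reduce to $A,B>0$ by continuity (Proposition~\ref{P-2.1}(2)). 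For (1) and (3), the ``if'' directions are contained in Theorem~\ref{T-5.1}: taking $\sigma=\triangledown_\alpha$ (the $\alpha$-weighted arithmetic mean) and $\sigma=!_\alpha$ (the $\alpha$-weighted harmonic mean), both Kubo--Ando means, gives that $(A,B)\mapsto\Tr(A^p\sigma B^p)^{1/p}$ is jointly concave whenever $0<p\le1$. For (2), the ``if'' direction $1\le p\le2$ follows from joint convexity of $(A,B)\mapsto\Tr((1-\alpha)A^p+\alpha B^p)^{1/p}$: here $A\mapsto A^p$ is operator convex on $[1,2]$, so $(1-\alpha)A^p+\alpha B^p\le(1-\alpha)A_0^p+\alpha B_0^p$ in the Loewner order when $(A,B)$ is the midpoint of $(A_0,B_0)$-type data — more precisely one writes $(A,B)=\lambda(A_1,B_1)+(1-\lambda)(A_2,B_2)$, uses operator convexity of $t\mapsto t^p$ to dominate $(1-\alpha)A^p+\alpha B^p$ by the corresponding convex combination, and then invokes monotonicity plus concavity of $t\mapsto t^{1/p}$ on the trace (the map $C\mapsto\Tr\,C^{1/p}$ is monotone and concave for $1/p\in(0,1]$). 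This is a standard two-line argument and I would only sketch it.

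The substance is in the ``only if'' directions, and here I would use the necessity machinery already deployed elsewhere in the paper rather than a direct computation. For (1): suppose $\Tr\,\cA_{\alpha,p}$ is jointly concave; I want $p\le1$. The cleanest route is the relationship between joint concavity and monotonicity under CPTP maps (the analogue of Theorem~\ref{T-5.3}, which the paper invokes for the geometric-type means): joint concavity of the relevant trace function forces a data-processing inequality, which in turn, applied to a well-chosen partial-trace or pinching channel on $2\times2$ or block-diagonal matrices, pins down the admissible range of $p$. Alternatively — and this is what I expect the cited references \cite{Bek,CL} actually do — one tests joint concavity against an explicit one-parameter family, e.g. $A=A(t)$, $B=B(t)$ small perturbations of a fixed pair, computes the second derivative $\frac{d^2}{dt^2}\big|_{t=0}\Tr\,\cA_{\alpha,p}(A(t),B(t))$, and shows it becomes positive somewhere when $p>1$, contradicting concavity. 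The explicit perturbation can be taken in the same spirit as the matrices $A_0,B_\theta$ of \eqref{F-2.10}. For (2), the same calculation run at the level of convexity gives $p\ge1$ from the lower end and, crucially, $p\le2$ from the failure of operator convexity of $t\mapsto t^p$ for $p>2$: one exhibits $2\times2$ data where $\Tr((1-\alpha)A^p+\alpha B^p)^{1/p}$ is strictly concave-violating-convexity when $p>2$. For (3), joint concavity fails for $p>1$ by the same second-derivative test applied to $\cH_{\alpha,p}$; the harmonic case is formally dual to the arithmetic case under $A\mapsto A^{-1}$, $\alpha$ fixed, so (3)'s ``only if'' mirrors (1)'s.

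For (4) — non-convexity of $\Tr\,\cH_{\alpha,p}$ for every $p>0$ — I would argue that even at $p=1$, where $\cH_{\alpha,1}(A,B)=A!_\alpha B$ is a Kubo--Ando mean, the trace $\Tr(A!_\alpha B)$ fails joint convexity: the parallel sum / harmonic mean is jointly \emph{concave} (Ando), and its trace is strictly concave on generic non-commuting data, which already rules out convexity; for general $p$ one composes with $t\mapsto t^{1/p}$ and checks on a $2\times2$ example that no choice of $p$ rescues convexity, since the $p=1$ obstruction (a strictly negative second derivative of the trace along some line) persists up to the smooth change of variables. The main obstacle throughout is the necessity side: producing the right explicit perturbation and carrying the second-derivative sign computation to a clean contradiction. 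Since \cite{Bek,CL} and \cite{Hi4} already contain exactly these computations, the honest proof here is essentially a citation plus the short sufficiency arguments above; I would state it as such, giving the Theorem~\ref{T-5.1} deduction for the ``if'' parts in full and referring to \cite{Bek,CL,Hi4} for the sharpness of the ranges, perhaps reproducing one representative second-derivative computation (say, the $p>2$ obstruction in (2)) to illustrate the method.
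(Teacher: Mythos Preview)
The paper gives no proof of this proposition at all; it is stated as a citation to \cite{Bek,CL} and \cite[p.~1583]{Hi4}. So there is nothing to compare against, and the honest version of your write-up (``cite the references, give the easy sufficiency from Theorem~\ref{T-5.1}'') is exactly right. However, two of the arguments you do offer have genuine gaps.

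First, your ``two-line'' proof of the `if' direction in (2) is incorrect. With $1\le p\le2$ you correctly get, by operator convexity of $t\mapsto t^p$, that
\[
C:=(1-\alpha)A^p+\alpha B^p\le\lambda C_1+(1-\lambda)C_2,
\qquad C_i:=(1-\alpha)A_i^p+\alpha B_i^p.
\]
Monotonicity of $C\mapsto\Tr\,C^{1/p}$ then yields $\Tr\,C^{1/p}\le\Tr(\lambda C_1+(1-\lambda)C_2)^{1/p}$. But the map $C\mapsto\Tr\,C^{1/p}$ is \emph{concave} for $1/p\in(0,1]$, which gives $\Tr(\lambda C_1+(1-\lambda)C_2)^{1/p}\ge\lambda\Tr\,C_1^{1/p}+(1-\lambda)\Tr\,C_2^{1/p}$---the wrong direction. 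The two inequalities do not chain to joint convexity. The Carlen--Lieb result \cite{CL} that $(A,B)\mapsto\Tr(A^p+B^p)^{1/p}$ is convex for $1\le p\le2$ is genuinely nontrivial (it uses a Minkowski-type argument, not operator convexity of $t^p$ alone), and you should cite it rather than attempt an elementary proof.

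Second, your proposed route to necessity via Theorem~\ref{T-5.3} does not apply to $\cA_{\alpha,p}$ or $\cH_{\alpha,p}$. The implication (ii)$\implies$(i) there requires property~(d), multiplicativity under tensor products, but $(1-\alpha)(A_1\otimes A_2)^p+\alpha(B_1\otimes B_2)^p$ is not a tensor product, so $\Tr\,\cA_{\alpha,p}$ does not satisfy~(d). Thus joint concavity of $\Tr\,\cA_{\alpha,p}$ does \emph{not} force monotonicity under CPTP maps by that theorem, and the data-processing shortcut is blocked. Your fallback---an explicit second-derivative test on $2\times2$ matrices, as in \cite{Bek}---is the correct approach for the `only if' parts and is what the cited references actually do.
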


Below we write
\[
(\bM_n^+\times\bM_n^+)_\ge:=\{(A,B)\in\bM_n^+\times\bM_n^+:s(A)\ge s(B)\}.
\]
Consider a function $Q:(A,B)\in\dom\,Q\mapsto Q(A,B)\in[0,+\infty)$ with the domain
\begin{align}\label{F-5.1}
\dom\,Q=\bigsqcup_{n\in\bN}(\bM_n^+\times\bM_n^+)\quad\mbox{or}\quad
\bigsqcup_{n\in\bN}(\bM_n^+\times\bM_n^+)_\ge.
\end{align}
Note that $\dom_nQ:=\bM_n^+\times\bM_n^+$ or $(\bM_n^+\times\bM_n^+)_\ge$ is a convex cone, and
if $(A_1,B_1)\in\dom_nQ$ and $(A_2,B_2)\in\dom_mQ$, then
$(A_1\oplus A_2,B_1\oplus B_2)\in\dom_{n+m}Q$ and $(A_1\otimes A_2,B_1\otimes B_2)\in\dom_{nm}Q$.
Consider the following properties of $Q$:
\begin{itemize}
\item[(a)] Normalization identity: $Q(A,A)=\Tr\,A$ for all $A\ge0$.
\item[(b)] Positive homogeneity: $Q(\lambda A,\lambda B)=\lambda Q(A,B)$ for all
$(A,B)\in\dom\,Q$ and any $\lambda\ge0$.
\item[(c)] Additivity under direct sums: $Q(A_1\oplus A_2,B_1\oplus B_2)=
Q(A_1,B_1)+Q(A_2,B_2)$ for all $(A_i,B_i)\in\dom\,Q$, $i=1,2$.
\item[(d)] Multiplicativity under tensor products: $Q(A_1\otimes A_2,B_1\otimes B_2)
=Q(A_1,B_1)Q(A_2,B_2)$ for all $(A_i,B_i)\in\dom\,Q$, $i=1,2$.
\item[(e)] Unitary invariance: $Q(UAU^*,UBU^*)=Q(A,B)$ for all $(A,B)\in\dom_n\,Q$
and all unitaries $U\in\bM_n$, $n\in\bN$.
\end{itemize}

A linear map $\Phi:\bM_n\to\bM_m$ is positive if $A\in\bM_n^+$ implies $\Phi(A)\in\bM_m^+$. It is said
to be \emph{completely positive} if for each $k\in\bN$ the map $\id_k\otimes\Phi$ defined by
$(\id_k\otimes \Phi)([A_{ij}]_{i,j=1}^k)=[\Phi(A_{ij})]_{i,j=1}^k$ is positive. When $\Phi$ is completely positive
and \emph{trace-preserving}, i.e., $\Tr\,\Phi(A)=\Tr\,A$ for all $A\in\bM_n$, $\Phi$ is called a
\emph{CPTP map} or a \emph{quantum channel}. This is a very important notion in quantum information.
Note that if $\Phi:\bM_n\to\bM_m$ is positive and $(A,B)\in(\bM_n^+\times\bM_n^+)_\ge$, then
$(\Phi(A),\Phi(B))\in(\bM_m^+\times\bM_m^+)_\ge$.

The next theorem is rather well known to experts on quantum R\'enyi divergences, while we give a proof
in Appendix \ref{Sec-B} for the convenience of the reader.

\begin{theorem}\label{T-5.3}
Let $Q:\dom\,Q\to[0,+\infty)$ be as given above with the domain in \eqref{F-5.1}. Consider the following
conditions:
\begin{itemize}
\item[(i)] $Q(\Phi(A),\Phi(B))\ge Q(A,B)$ for all $(A,B)\in\dom_n\,Q$ and all CPTP maps
$\Phi:\bM_n\to\bM_m$, $n,m\in\bN$.
\item[(i\,$'$)] $Q(\Phi(A),\Phi(B))\le Q(A,B)$ for all $(A,B)\in\dom_n\,Q$ and all CPTP maps
$\Phi:\bM_n\to\bM_m$, $n,m\in\bN$.
\item[(ii)] $Q$ is jointly concave on $\dom_n\,Q$ for all $n\in\bN$.
\item[(ii\,$'$)] $Q$ is jointly convex on $\dom_n\,Q$ for all $n\in\bN$.
\end{itemize}
Then the following hold:
\begin{itemize}
\item[(1)] If (b) and (c) are assumed, then we have (i)$\implies$(ii) and (i\,$'$)$\implies$(ii\,$'$).
\item[(2)] If (a), (d) and (e) are assumed, then we have (ii)$\implies$(i) and (ii\,$'$)$\implies$(i\,$'$).
\item[(3)] Hence we have (i)$\iff$(ii) and (i\,$'$)$\iff$(ii\,$'$) under (a)--(e).
\end{itemize}
\end{theorem}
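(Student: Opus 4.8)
The plan is to prove the two implications of part (1) and the two of part (2), since part (3) is an immediate combination. The proofs of (i)$\implies$(ii) and (i\,$'$)$\implies$(ii\,$'$) are dual (interchange the inequality direction), and likewise for (ii)$\implies$(i) and (ii\,$'$)$\implies$(i\,$'$); so I would write out one of each pair in detail and remark that the other follows by the obvious sign change.

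For (1), assume (b) positive homogeneity and (c) additivity under direct sums, plus (i). Fix $n$, take $(A_1,B_1),(A_2,B_2)\in\dom_n Q$ and $\lambda\in[0,1]$. The standard device is to apply the pinching/partial-trace channel. Consider the block matrices $\widehat A:=\lambda A_1\oplus(1-\lambda)A_2$ and $\widehat B:=\lambda B_1\oplus(1-\lambda)B_2$ in $\bM_{2n}$, which lie in $\dom_{2n}Q$ because the domain is closed under direct sums and scalar multiplication. Let $\Phi:\bM_{2n}\to\bM_n$ be the CPTP map $\Phi([X_{ij}]_{i,j=1}^2)=X_{11}+X_{22}$ (the partial trace over the $2$-dimensional ancilla); it is completely positive and trace-preserving, and when $\dom Q$ is the support-ordered version one checks $s(\widehat A)\ge s(\widehat B)$ and $s(\Phi(\widehat A))\ge s(\Phi(\widehat B))$, so both $(\widehat A,\widehat B)$ and its image lie in the domain. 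Then $\Phi(\widehat A)=\lambda A_1+(1-\lambda)A_2$ and similarly for $B$, so (i) gives
\[
Q(\lambda A_1+(1-\lambda)A_2,\ \lambda B_1+(1-\lambda)B_2)\ge Q(\widehat A,\widehat B).
\]
By (c) and then (b), $Q(\widehat A,\widehat B)=Q(\lambda A_1,\lambda B_1)+Q((1-\lambda)A_2,(1-\lambda)B_2)=\lambda Q(A_1,B_1)+(1-\lambda)Q(A_2,B_2)$, which is exactly joint concavity. For (i\,$'$)$\implies$(ii\,$'$) the same construction gives the reverse inequality.

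For (2), assume (a) normalization, (d) multiplicativity under tensor products, (e) unitary invariance, plus (ii). The key is the Stinespring-type representation of a CPTP map together with the standard trick that every CPTP map $\Phi:\bM_n\to\bM_m$ can be written as a composition of: an isometric embedding $X\mapsto VXV^*$ into a larger space (extend the ancilla), a unitary conjugation, and a partial trace. By (e) unitary conjugation preserves $Q$; by (a) together with (d) applied to a tensor with a fixed density matrix one handles the isometric-embedding/ancilla-adjoining step, since adjoining a pure ancilla $|\psi\rangle\langle\psi|$ gives $Q(A\otimes|\psi\rangle\langle\psi|,B\otimes|\psi\rangle\langle\psi|)=Q(A,B)\,Q(|\psi\rangle\langle\psi|,|\psi\rangle\langle\psi|)=Q(A,B)\cdot 1$ by (a); and the partial trace $\mathrm{Tr}_{\mathcal K}$ is monotone under $Q$ precisely because it is an average over unitaries: $\mathrm{Tr}_{\mathcal K}(X)\otimes \rho_0 = \int U X U^* \,dU$ for a suitable unitary group (e.g. the Weyl operators on $\mathcal K$ tensored with identity, with $\rho_0$ the maximally mixed state), so joint concavity (ii) together with (e) yields
\[
Q\bigl(\mathrm{Tr}_{\mathcal K}(A)\otimes\rho_0,\ \mathrm{Tr}_{\mathcal K}(B)\otimes\rho_0\bigr)
=Q\Bigl(\int UAU^*dU,\int UBU^*dU\Bigr)\ge\int Q(UAU^*,UBU^*)\,dU=Q(A,B),
\]
and then (d) with (a) strips off the $\otimes\rho_0$. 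Composing these three monotonicity/invariance statements gives $Q(\Phi(A),\Phi(B))\ge Q(A,B)$. For (ii\,$'$)$\implies$(i\,$'$) every inequality reverses.

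The main obstacle is the careful bookkeeping in part (2): verifying that the intermediate matrices produced by the Stinespring dilation actually lie in $\dom Q$ when the domain is the support-restricted cone $(\bM_n^+\times\bM_n^+)_\ge$ (one must check $s(\cdot)\ge s(\cdot)$ survives adjoining ancillas, unitary conjugation, and partial traces), and making the "partial trace as an average of unitary conjugations" statement precise with an explicit finite or compact group so that Jensen's inequality for the concave functional $Q$ applies verbatim. This is exactly the kind of routine-but-delicate argument that the paper defers to Appendix~\ref{Sec-B}, so in the main text I would only sketch it and point there.
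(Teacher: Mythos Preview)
Your proposal is correct and follows essentially the same approach as the paper's proof in Appendix~\ref{Sec-B}: part~(1) uses the identical partial-trace channel $\bM_{2n}\to\bM_n$ on block-diagonal inputs together with (b) and (c), and part~(2) uses Stinespring dilation combined with the Weyl--Heisenberg unitaries to express the partial trace (tensored with the maximally mixed state) as a finite uniform convex combination of unitary conjugates, then applies (ii), (e), (d), (a). The paper carries out part~(2) in a single chain---writing $\tau_0\otimes\Phi(X)$ directly as $(nl)^{-2}\sum_{\mu,\nu}U_{\mu\nu}V(X\otimes\eta)V^*U_{\mu\nu}^*$ rather than treating the ancilla-adjoining, unitary, and partial-trace steps separately---but this is only a presentational difference.
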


\subsection{Quantum R\'enyi type divergences}\label{Sec-5.2}

In this subsection we briefly review quantum $\alpha$-R\'enyi type divergences from quantum information
and present certain necessary conditions for the joint concavity/convexity of our quasi-geometric type matrix
means (see Theorem \ref{T-5.8} below).

\begin{definition}\label{D-5.4}\rm
Let $\alpha\in(0,\infty)\setminus\{1\}$. The classical $\alpha$-R\'enyi divergence $D_\alpha^\cl(b\|a)$ is
defined for non-negative vectors $a,b\in[0,\infty)^n$ with $b\ne0$ by
\[
D_\alpha^\cl(b\|a):=\begin{cases}
{1\over\alpha-1}\log{\sum_{i=1}^nb_i^\alpha a_i^{1-\alpha}\over\sum_{i=1}^nb_i}
& \text{if $0<\alpha<1$ or $\supp\,a\supset\supp\,b$}, \\
+\infty & \text{if $\alpha>1$ and $\supp\,a\not\supset\supp\,b$},\end{cases}
\]
where $\supp\,a:=\{i:a_i>0\}$ and $0^{1-\alpha}:=0$ for $\alpha>1$. We say that a function
\[
D_\alpha^\q:(A,B)\in\bigsqcup_{n\in\bN}(\bM_n^+\times(\bM_n^+\setminus\{0\}))
\mapsto D_\alpha^\q(B\|A)\in(-\infty,+\infty]
\]
is a \emph{quantum $\alpha$-R\'enyi type divergence} if it is invariant under isometries, i.e.,
\[
D_\alpha^\q(VBV^*\|VAV^*)=D_\alpha^\q(B\|A),\qquad A,B\in\bM_n^+,\ B\ne0,\ n\in\bN
\]
for any isometry $V:\bC^n\to\bC^m$ and satisfies
\[
D_\alpha^\q(\diag(b)\|\diag(a))=D_\alpha^\cl(b\|a),\qquad a,b\in[0,\infty)^n,\ b\ne0,\ n\in\bN,
\]
where $\diag(a)$ is the diagonal matrix whose diagonals are entries of $a$.
\end{definition}

When $A,B\in\bM_n^+$ are commuting, one can write $A=\sum_{i=1}^na_i|e_i\>\<e_i|$ and
$B=\sum_{i=1}^nb_i|e_i\>\<e_i|$ for some orthonormal basis $(e_i)_{i=1}^n$ of $\bC^n$ and by unitary
invariance one has
\[
D_\alpha^q(B\|A)=D_\alpha^q(\diag(b)\|\diag(a))=D_\alpha^\cl(b\|a).
\]
In view of this, we may write $D_\alpha^q(B\|A)=D_\alpha^\cl(B\|A)$ when $AB=BA$. There are the notions
of the minimal and the maximal quantum $\alpha$-R\'enyi divergences. The maximal one is Matsumoto's
\emph{maximal $\alpha$-R\'enyi divergence} \cite{Ma} defined by
\begin{align}\label{F-5.2}
D_\alpha^{\max}(B\|A):=\inf\{D_\alpha^\cl(b\|a):\Gamma(a)=A,\ \Gamma(b)=B\},
\end{align}
where the infimum is taken over triplets $(\Gamma,a,b)$ (called \emph{reverse tests}) consisting of
$a,b\in[0,\infty)^m$ and a (completely) positive trace-preserving map $\Gamma:\bC^m\to\bM_n$ satisfying
$\Gamma(a)=A$, $\Gamma(b)=B$ for some $m\in\bN$. The minimal one is the so-called
\emph{measured $\alpha$-R\'enyi divergence} defined for $A,B\in\bM_n^+$ by
\begin{align}\label{F-5.3}
D_\alpha^\meas(B\|A)
:=\sup\bigl\{D_\alpha^\cl\bigl((\Tr\,M_iB)_{i=1}^k\big\|(\Tr\,M_iA)_{i=1}^k\bigr):
\mbox{$(M_i)_{i=1}^k$ is a POVM on $\bC^n$, $k\in\bN$}\bigr\},
\end{align}
where a POVM on $\bC^n$ is a family $(M_i)_{i=1}^k\subset\bM_n^+$ with $\sum_{i=1}^kM_i=I_n$ (i.e., a
$k$-outcome positive operator-valued measure). Furthermore, the \emph{regularized measured
$\alpha$-R\'enyi divergence} is defined by
\begin{align}\label{F-5.4}
\overline D_\alpha^\meas(B\|A):=\sup_{m\ge1}{1\over m}\,D_\alpha^\meas(B^{\otimes m}\|A^{\otimes m})
=\lim_{m\to\infty}{1\over m}\,D_\alpha^\meas(B^{\otimes m}\|A^{\otimes m}),
\end{align}
where the last equality holds since $m\in\bN\mapsto D_\alpha^\meas(B^{\otimes m}\|A^{\otimes m})$ is a
superadditive sequence.

Now assume that a quantum $\alpha$-R\'enyi type divergence $D_\alpha^\q$ satisfies the monotonicity
under CPTP maps, i.e.,
\[
D_\alpha^\q(\Phi(B)\|\Phi(A))\le D_\alpha^\q(B\|A),\qquad A,B\ge0
\]
for any CPTP map $\Phi$. Then from definitions \eqref{F-5.2} and \eqref{F-5.3} it follows that
\begin{align}\label{F-5.5}
D_\alpha^\meas(B\|A)\le D_\alpha^\q(B\|A)\le D_\alpha^{\max}(B\|A),\qquad A,B\ge0.
\end{align}
Indeed, for any POVM $(M_i)_{i=1}^k$ on $\bC^n$, since $\Phi(A):=\diag((\Tr\,M_iA)_{i=1}^k)$ for
$A\in\bM_n$ is a CPTP map, we have
\[
D_\alpha^\cl\bigl((\Tr\,M_iB)_{i=1}^k\big\|(\Tr\,M_iA)_{i=1}^k\bigr)
=D_\alpha^\q\bigl(\diag((\Tr\,M_iB)_{i=1}^k)\big\|\diag((\Tr\,M_iA)_{i=1}^k)\bigr)
\le D_\alpha^\q(B\|A),
\]
so that the first inequality in \eqref{F-5.5} follows from \eqref{F-5.3}. On the other hand, for any reverse
test $(\Gamma,a,b)$ with $\Gamma:\bC^m\to\bM_n$ and $a,b\in[0,\infty)^m$, define $\cE:\bM_m\to\bC^m$
by $A=[a_{ij}]_{i,j=1}^m\mapsto(a_{11},\dots,a_{mm})$ and $\Phi:=\Gamma\circ\cE:\bM_m\to\bM_n$; then
$\Phi$ is a CPTP map and $\Phi(\diag(a))=\Gamma(a)$, $a\in\bC^m$. Since
\[
D_\alpha^\q(B\|A)=D_\alpha^\q(\Gamma(b)\|\Gamma(a))=D_\alpha^\q(\Phi(\diag(b))\|\Phi(\diag(a))\bigr)
\le D_\alpha^\q(\diag(b)\|\diag(a))=D_\alpha^\cl(b\|a),
\]
we have the second inequality in \eqref{F-5.5}. If, in addition, $D_\alpha^\q$ is additive under tensor
products, then by \eqref{F-5.5} with definition \eqref{F-5.4} we furthermore have
\begin{align}\label{F-5.6}
\overline D_\alpha^\meas(B\|A)\le D_\alpha^\q(B\|A),\qquad A,B\ge0.
\end{align}

The so-called \emph{$\alpha$-$z$-R\'enyi divergence} $D_{\alpha,z}$ \cite{AD} for
$\alpha\in(0,\infty)\setminus\{1\}$ and $z>0$ is typical among quantum $\alpha$-R\'enyi type divergences.
Their most important special cases are the \emph{Petz type $\alpha$-R\'enyi divergence}
$D_\alpha=D_{\alpha,1}$ \cite{Pe} for $z=1$ and the \emph{sandwiched $\alpha$-R\'enyi divergence}
$\widetilde D_\alpha=D_{\alpha,\alpha}$ \cite{MDSFT,WWY} for $z=\alpha$. See, e.g., \cite{MoHi} for further
details on quantum $\alpha$-R\'enyi divergences.


One can easily check the next lemma, whose proofs may be omitted.

\begin{lemma}\label{L-5.5}
Let $\cM_{\alpha,p}$ be any of $R_{\alpha,p}$, $G_{\alpha,p}$, $SG_{\alpha,p}$, $\widetilde SG_{\alpha,p}$
and $LE_\alpha$ with any $\alpha\in(0,\infty)\setminus\{1\}$ and $p>0$. Then $Q:=\Tr\,\cM_{\alpha,p}$
satisfies all properties of (a)--(e) above.
\end{lemma}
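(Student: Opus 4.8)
The plan is to observe that each of the five means $\cM_{\alpha,p}$ is built from a short list of ``building-block'' operations on positive matrices --- functional calculus by real powers, by $\exp$ and by $\log$; matrix products; the generalized inverse; and the Kubo--Ando weighted geometric mean $\#_\beta$ --- and that each such block is compatible with the four structural operations occurring in (b)--(e). Since $\Tr$ is linear, additive over direct sums, multiplicative over tensor products ($\Tr(M_1\otimes M_2)=\Tr M_1\cdot\Tr M_2$) and unitarily invariant, properties (a)--(e) for $Q=\Tr\,\cM_{\alpha,p}$ will follow once the corresponding identities $\cM_{\alpha,p}(A,A)=A$, $\cM_{\alpha,p}(\lambda A,\lambda B)=\lambda\,\cM_{\alpha,p}(A,B)$, $\cM_{\alpha,p}(A_1\oplus A_2,B_1\oplus B_2)=\cM_{\alpha,p}(A_1,B_1)\oplus\cM_{\alpha,p}(A_2,B_2)$, $\cM_{\alpha,p}(A_1\otimes A_2,B_1\otimes B_2)=\cM_{\alpha,p}(A_1,B_1)\otimes\cM_{\alpha,p}(A_2,B_2)$ and $\cM_{\alpha,p}(UAU^*,UBU^*)=U\,\cM_{\alpha,p}(A,B)\,U^*$ are established at the level of matrices.

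First I would record (a) by a direct computation for each mean: $A^{-1}\#A=I$ gives $F_\alpha(A,A)=A$; $A^{-1}\#_\alpha A=A^{2\alpha-1}$ gives $\widetilde F_\alpha(A,A)=A^{(2\alpha-1)/2}A^{2(1-\alpha)}A^{(2\alpha-1)/2}=A$; while $R_{\alpha,p}(A,A)=(A^p)^{1/p}=A$ and $LE_\alpha(A,A)=\exp(\log A)=A$ are immediate; hence $\cM_{\alpha,p}(A,A)=A$ and $Q(A,A)=\Tr A$. For (b) one uses the scaling of $\#_\beta$, namely $(\mu X)\#_\beta(\nu Y)=\mu^{1-\beta}\nu^\beta(X\#_\beta Y)$ (so that, e.g., $(\lambda A)^{-1}\#(\lambda B)=A^{-1}\#B$ is homogeneous of degree $0$), together with homogeneity of the real-power and $\log/\exp$ calculus; the exponents conspire to make each of $\#_\alpha$, $F_\alpha$, $\widetilde F_\alpha$, the R\'enyi combination and $LE_\alpha$ homogeneous of degree $1$, whence $\cM_{\alpha,p}(\lambda A,\lambda B)=\cM_\alpha((\lambda A)^p,(\lambda B)^p)^{1/p}=\lambda\,\cM_{\alpha,p}(A,B)$. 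Property (e) is immediate from the conjugation-equivariance of all building blocks and $\Tr(UMU^*)=\Tr M$.

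For (c) and (d) one notes $(A_1\oplus A_2)^r=A_1^r\oplus A_2^r$ and $(A_1\otimes A_2)^r=A_1^r\otimes A_2^r$ for all real $r$, the compatibility of products, inverses and $\#_\beta$ with block-diagonal and with tensor structure ($(A_1\#_\beta B_1)\otimes(A_2\#_\beta B_2)=(A_1\otimes A_2)\#_\beta(B_1\otimes B_2)$), and for $LE_\alpha$ the identity $(1-\alpha)\log(A_1\otimes A_2)+\alpha\log(B_1\otimes B_2)=K_1\otimes I+I\otimes K_2$ with $K_i:=(1-\alpha)\log A_i+\alpha\log B_i$; since $K_1\otimes I$ and $I\otimes K_2$ commute, exponentiation gives $LE_\alpha(A_1\otimes A_2,B_1\otimes B_2)=LE_\alpha(A_1,B_1)\otimes LE_\alpha(A_2,B_2)$. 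Taking traces yields (c) and (d). On the domains, everything above is literal for $A,B>0$; for the boundary part ($A,B\ge0$ with $s(A)\ge s(B)$, or general $A,B\ge0$ in the $0<\alpha<1$ cases of $G,R,LE$) one reduces to the positive definite case via the continuity $\cM_{\alpha,p}(A,B)=\lim_{\eps\searrow0}\cM_{\alpha,p}(A+\eps I,B+\eps I)$ of Proposition \ref{P-2.1}(2) (and the defining limits of $SG_{\alpha,p},\widetilde SG_{\alpha,p},LE_\alpha$): (a), (b), (c), (e) pass to the limit because adding $\eps I$ commutes with conjugation and with $\oplus$, while for (d) one verifies the tensor identity already at the $\eps$-perturbed level (using $s(A_1\otimes A_2)=s(A_1)\otimes s(A_2)$) and passes to the limit on both sides.

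The statement carries no deep obstacle --- it is essentially bookkeeping --- and the only points deserving care are the tensor-product case of $LE_\alpha$, where one must exploit that $\log$ converts $\otimes$ into a Kronecker sum and that the two tensor summands commute, and the consistent handling of the support-projection conventions in the definitions of $SG_{\alpha,p}$, $\widetilde SG_{\alpha,p}$ and $LE_\alpha$ under $\oplus$ and $\otimes$; both are dispatched by the continuity in Proposition \ref{P-2.1}(2).
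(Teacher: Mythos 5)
Your proposal is correct: the paper explicitly omits the proof of Lemma \ref{L-5.5} as a routine verification, and your argument carries out exactly the intended bookkeeping --- checking that each building block (real powers, $\log/\exp$, products, generalized inverses, $\#_\beta$) respects normalization, scaling, $\oplus$, $\otimes$ and unitary conjugation, and then applying the additivity/multiplicativity of $\Tr$. The only point that genuinely needs a word of care, the tensor identity (d) on the boundary of the domain where $A+\eps I$ does not commute with $\otimes$, is adequately dispatched either by your $\eps$-perturbed verification or, more directly, by noting that $SG_{\alpha,p}$, $\widetilde SG_{\alpha,p}$, etc.\ are defined on $s(A)\ge s(B)$ via generalized inverses, which satisfy $(A_1\otimes A_2)^{-1}=A_1^{-1}\otimes A_2^{-1}$.
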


For a quasi matrix mean $\cM_{\alpha,p}$ as in Lemma \ref{L-5.5} it is meaningful to associate the quantum
divergence of R\'enyi type with two parameters $\alpha\in(0,\infty)\setminus\{1\}$ and $p>0$ as follows: for
$A,B\in\bM_n^+$ with $B\ne0$,
\begin{align}\label{F-5.7}
D^{\cM_{\alpha,p}}(B\|A):=\begin{cases}
{1\over\alpha-1}\log{\Tr\,\cM_{\alpha,p}(A,B)\over\Tr\,B} &
\text{if $(A,B)\in\dom\,\cM_{\alpha,p}$}, \\
+\infty & \text{otherwise},
\end{cases}
\end{align}
where $\dom\,\cM_{\alpha,p}$ has been fixed in Section \ref{Sec-2} (see the paragraph after \eqref{F-2.8}).
Here, note that the orders of the variables $A,B$ in $\cM_{\alpha,p}(A,B)$ and $D^{\cM_{\alpha,p}}(B\|A)$
are opposite; we follow the convention of Kubo--Ando's operator means for the former and the usual
convention of quantum divergences for the latter. The next lemma is also easy to verify.

\begin{lemma}\label{L-5.6}
Let $p>0$. Either when $\cM\in\{R,G,LE\}$ and $\alpha\in(0,\infty)\setminus\{1\}$, or when
$\cM\in\{SG,\widetilde SG\}$ and $\alpha>1$, the function $D^{\cM_{\alpha,p}}$ defined in \eqref{F-5.7} is
a quantum $\alpha$-R\'enyi type divergence in the sense of Definition \ref{D-5.4}, and is additive under tensor
products.

When $\cM\in\{SG,\widetilde SG\}$ and $0<\alpha<1$, $D^{\cM_{\alpha,p}}$ is invariant under isometries
and satisfies $D^{\cM_{\alpha,p}}(\diag(b)\|\diag(a))=D_\alpha^\cl(b\|a)$ for all $a,b\in[0,\infty)$, $b\ne0$,
with $\supp\,a\supset\supp\,b$, and it is additive under tensor product.
\end{lemma}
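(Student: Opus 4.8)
The plan is to check the three defining properties in Definition~\ref{D-5.4} (isometric invariance, agreement with $D_\alpha^\cl$ on diagonal pairs, and additivity under tensor products) by reducing each of them to a corresponding structural identity for the matrix mean $\cM_{\alpha,p}$ itself. For every $\cM\in\{R,G,SG,\widetilde SG,LE\}$ I would first record: \textbf{(i)} $\cM_{\alpha,p}(VAV^*,VBV^*)=V\,\cM_{\alpha,p}(A,B)\,V^*$ for any isometry $V:\bC^n\to\bC^m$ and any $(A,B)\in\dom\,\cM_{\alpha,p}$; \textbf{(ii)} on commuting (in particular diagonal) arguments $\cM_{\alpha,p}$ collapses entrywise to the scalar weighted geometric mean, $\cM_{\alpha,p}(\diag(a),\diag(b))=\diag\bigl((a_i^{1-\alpha}b_i^\alpha)_i\bigr)$, under the conventions $0^{1-\alpha}:=0$ for $\alpha>1$ and the support projections of \eqref{F-2.8} for $LE$; \textbf{(iii)} $\cM_{\alpha,p}(A_1\otimes A_2,B_1\otimes B_2)=\cM_{\alpha,p}(A_1,B_1)\otimes\cM_{\alpha,p}(A_2,B_2)$. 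Each of (i)--(iii) is proved by inspecting the definitions \eqref{F-2.1}--\eqref{F-2.8} and using that every building block occurring there---the power $X\mapsto X^r$ ($r\in\bR$, via the generalized inverse for $r<0$), $\exp$, $\log$, the support projection $s(\cdot)$, and the Kubo--Ando mean $\#_\alpha$---commutes with isometric compression $X\mapsto VXV^*$, restricts correctly to a commuting subalgebra, and is multiplicative under $\otimes$; for $A,B\ge0$ (with $s(A)\ge s(B)$ where required) one passes from the positive definite case by Proposition~\ref{P-2.1}(2).

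Given (i)--(iii), the conclusions for $D^{\cM_{\alpha,p}}$ follow mechanically. Isometric invariance comes from (i) together with $\Tr(VXV^*)=\Tr X$ and the fact that $(A,B)$ and $(VAV^*,VBV^*)$ lie in $\dom\,\cM_{\alpha,p}$ simultaneously---indeed $s(VAV^*)=Vs(A)V^*$, so $s(VAV^*)\ge s(VBV^*)\iff s(A)\ge s(B)$---both sides being $+\infty$ off the domain. For the diagonal identity, (ii) gives $\Tr\,\cM_{\alpha,p}(\diag(a),\diag(b))=\sum_i a_i^{1-\alpha}b_i^\alpha$, whence $D^{\cM_{\alpha,p}}(\diag(b)\|\diag(a))={1\over\alpha-1}\log{\sum_i a_i^{1-\alpha}b_i^\alpha\over\sum_i b_i}=D_\alpha^\cl(b\|a)$; when $\alpha>1$ and $\supp\,a\not\supset\supp\,b$ both sides are $+\infty$. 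Tensor additivity follows from (iii) and multiplicativity of $\Tr$ under $\otimes$: one checks that $(A_1\otimes A_2,B_1\otimes B_2)$ lies in the domain exactly when both factor pairs do (in the $s(\cdot)$-constrained case using $B_i\ne0$ to see that $s(A_1)\otimes s(A_2)\ge s(B_1)\otimes s(B_2)$ iff $s(A_i)\ge s(B_i)$ for each $i$), and then $\Tr\,\cM_{\alpha,p}(A_1\otimes A_2,B_1\otimes B_2)=\Tr\,\cM_{\alpha,p}(A_1,B_1)\cdot\Tr\,\cM_{\alpha,p}(A_2,B_2)$ while $\Tr(B_1\otimes B_2)=\Tr B_1\cdot\Tr B_2$, so the logarithm of the quotient splits into $D^{\cM_{\alpha,p}}(B_1\|A_1)+D^{\cM_{\alpha,p}}(B_2\|A_2)$, with the $+\infty$ cases again matching.

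Finally I would separate the two regimes. For $\cM\in\{R,G,LE\}$ with any $\alpha\in(0,\infty)\setminus\{1\}$, and for $\cM\in\{SG,\widetilde SG\}$ with $\alpha>1$, the domain is either $\bigsqcup_n(\bM_n^+\times\bM_n^+)$ or $\bigsqcup_n(\bM_n^+\times\bM_n^+)_\ge$, and in the latter case $D_\alpha^\cl(b\|a)=+\infty$ is precisely the situation $\supp\,a\not\supset\supp\,b$; hence the diagonal identity holds for \emph{all} $b\ne0$ and $D^{\cM_{\alpha,p}}$ is a bona fide quantum $\alpha$-R\'enyi type divergence, additive under tensor products. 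For $\cM\in\{SG,\widetilde SG\}$ with $0<\alpha<1$ the domain is still $\bigsqcup_n(\bM_n^+\times\bM_n^+)_\ge$, but now $D_\alpha^\cl(b\|a)$ can be finite while $\supp\,a\not\supset\supp\,b$, so the agreement with $D_\alpha^\cl$ can only be claimed under the extra hypothesis $\supp\,a\supset\supp\,b$---which is exactly what the lemma states; on that restricted set the argument of the previous paragraph applies verbatim, and isometric invariance and tensor additivity are unaffected. The only real obstacle is the bookkeeping around zero eigenvalues: verifying that identities (ii) and (iii) survive the limiting extension from $\bM_n^{++}$ to $\bM_n^+$ under the prescribed support condition, which is where the explicit forms \eqref{F-2.1}--\eqref{F-2.8} and Proposition~\ref{P-2.1}(2) do the work, and distinguishing the $0<\alpha<1$ spectral-mean case where only the restricted statement is true.
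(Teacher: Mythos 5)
Your proposal is correct: the paper explicitly omits the proof of this lemma as "easy to verify," and your reduction to the three structural identities for $\cM_{\alpha,p}$ itself (isometric covariance, entrywise collapse on diagonal pairs, multiplicativity under tensor products), together with the domain bookkeeping via $s(VAV^*)=Vs(A)V^*$ and the support condition distinguishing the $SG,\widetilde SG$ case for $0<\alpha<1$, is exactly the intended verification. The only point worth spelling out a little more is isometric invariance for the means defined by the $\eps$-regularization, which is handled by splitting $V$ into a unitary followed by a direct sum with a zero block and using Proposition \ref{P-2.1}(2), but this is the routine bookkeeping you already flag.
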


We note that $D^{R_{\alpha,1/z}}$ becomes the $\alpha$-$z$-R\'enyi divergence $D_{\alpha,z}$; in particular,
$D^{R_{\alpha,1}}$ is the Petz type $D_\alpha$ and $D^{R_{\alpha,1/\alpha}}$ is the sandwiched
$\widetilde D_\alpha$. Also, $D^{G_{\alpha,1}}$ is the \emph{maximal $f$-divergence} \cite{HM} associated to
the function $f(x):=x^\alpha$ ($x>0$). Here, in the next theorem we recall a few results related to the minimal
and the maximal bounds in \eqref{F-5.5} and \eqref{F-5.6}, as we will use those below.

\begin{theorem}\label{T-5.7}
Let $\alpha\in(0,\infty)\setminus\{1\}$ and $A,B\ge0$, $B\ne0$.
\begin{itemize}
\item[(1)] (\cite{Ma})\enspace
When $0<\alpha\le2$, $D_\alpha^{\max}(B\|A)=D^{G_{\alpha,1}}(B\|A)$.
\item[(2)] (\cite[Remark II.6]{MoHi})\enspace
When $\alpha>2$, $D_\alpha^{\max}(B\|A)\le D^{G_{\alpha,1}}(B\|A)$ and
$D_\alpha^{\max}\ne D^{G_{\alpha,1}}$.
\item[(3)] (\cite[Theorem 3.7]{MoOg}, \cite{HaTo})\enspace
When $1/2\le\alpha<\infty$,
\[
\overline{D}_\alpha^\meas(B\|A)=\widetilde D_\alpha(B\|A)
=\lim_{m\to\infty}{1\over m}D_\alpha^\cl(\cE_{A^{\otimes m}}(B^{\otimes m})\|A^{\otimes m}),
\]
where $\cE_{A^{\otimes m}}$ is the pinching with respect to $A^{\otimes m}$.
\item[(4)] (\cite[Theorem 7]{BFT})\enspace
When $0<\alpha<1/2$,
$\widetilde D_\alpha(B\|A)\le D_\alpha^\meas(B\|A)\le\overline{D}_\alpha^\meas(B\|A)$, and
$\widetilde D_\alpha(B\|A)<D_\alpha^\meas(B\|A)$ if $AB\ne BA$.
\end{itemize}
\end{theorem}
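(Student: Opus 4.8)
All four items are results from the literature, so the plan is to indicate for each statement its source and the mechanism behind it rather than to reprove them from scratch.

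For (1) and (2) the plan is to invoke Matsumoto's theory of the maximal $f$-divergence. Writing $f_\alpha(x):=x^\alpha$ and noting $\Tr\,G_{\alpha,1}(A,B)=\Tr\,A^{1/2}(A^{-1/2}BA^{-1/2})^\alpha A^{1/2}=\Tr\,A\,f_\alpha(A^{-1/2}BA^{-1/2})$, one recognizes $D^{G_{\alpha,1}}(B\|A)$ as the maximal $\alpha$-R\'enyi divergence expressed through the closed-form maximal $f$-divergence, which is valid precisely when the relevant function $\mathrm{sgn}(\alpha-1)f_\alpha$ is operator convex, i.e.\ for $0<\alpha\le2$. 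In that range the reverse test built from the spectral decomposition of $A^{-1/2}BA^{-1/2}$ is optimal, giving $D_\alpha^{\max}=D^{G_{\alpha,1}}$ as in \cite{Ma}. Once $\alpha>2$ operator convexity of $f_\alpha$ fails, the closed form ceases to be the maximal $f$-divergence, and one retains only the inequality $D_\alpha^{\max}\le D^{G_{\alpha,1}}$ together with an explicit non-commuting example witnessing the strict gap; this is exactly the content of \cite[Remark II.6]{MoHi}.

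For (3) the plan is the standard pinching argument. Since the pinching $\cE_{A^{\otimes m}}$ is realized by a projective measurement, $\tfrac1m D_\alpha^\meas(B^{\otimes m}\|A^{\otimes m})\ge\tfrac1m D_\alpha^\cl(\cE_{A^{\otimes m}}(B^{\otimes m})\|A^{\otimes m})$, while the pinching inequality $B^{\otimes m}\le v(A^{\otimes m})\,\cE_{A^{\otimes m}}(B^{\otimes m})$, with $v(A^{\otimes m})$ the number of distinct eigenvalues of $A^{\otimes m}$ (polynomial in $m$), yields the matching upper bound up to a vanishing error $\tfrac{\log v(A^{\otimes m})}{m}\to0$. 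Superadditivity of $m\mapsto D_\alpha^\meas(B^{\otimes m}\|A^{\otimes m})$ then turns the $\limsup$ into a genuine limit equal to $\overline D_\alpha^\meas(B\|A)$, and the identification of this common limit with $\widetilde D_\alpha(B\|A)=D^{R_{\alpha,1/\alpha}}(B\|A)$ for $\alpha\ge1/2$ is \cite[Theorem 3.7]{MoOg} (for $\alpha>1$) and \cite{HaTo} (for $1/2\le\alpha<1$); the key analytic input there is the asymptotics of $\tfrac1m D_\alpha^\cl(\cE_{A^{\otimes m}}(B^{\otimes m})\|A^{\otimes m})$.

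For (4) the plan is to cite \cite[Theorem 7]{BFT} directly: monotonicity under CPTP maps together with additivity of $\widetilde D_\alpha$ gives $\widetilde D_\alpha(B\|A)\le D_\alpha^\meas(B\|A)\le\overline D_\alpha^\meas(B\|A)$ for $0<\alpha<1/2$ as well, while the strict inequality $\widetilde D_\alpha(B\|A)<D_\alpha^\meas(B\|A)$ in the non-commuting case is their main result. The main obstacle, should one want a self-contained treatment, lies in (3) and (4): the regularization in (3) rests on the pinching inequality and on a nontrivial evaluation of the measured divergence of $\cE_{A^{\otimes m}}(B^{\otimes m})$ against $A^{\otimes m}$, and the strict separation in (4) for small $\alpha$ is genuinely delicate. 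Since all four items are available in the cited references, here they are simply recorded for later use in the proof of Theorem \ref{T-5.8}.
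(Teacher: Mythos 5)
Your proposal takes the same route as the paper: Theorem \ref{T-5.7} is stated there purely as a recollection of results from \cite{Ma}, \cite{MoHi}, \cite{MoOg}, \cite{HaTo} and \cite{BFT}, with no proof given, so recording the citations together with the underlying mechanisms is exactly what is called for. Your glosses on (1), (2) and (3) are accurate: the identification of $D^{G_{\alpha,1}}$ with the maximal $f$-divergence for $f(x)=x^\alpha$, the role of operator convexity on $(0,2]$ and its failure for $\alpha>2$, and the pinching argument with $v(A^{\otimes m})$ polynomial in $m$ plus superadditivity are all the right ingredients.

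There is, however, a genuine error in your explanation of item (4). You write that ``monotonicity under CPTP maps together with additivity of $\widetilde D_\alpha$ gives $\widetilde D_\alpha(B\|A)\le D_\alpha^\meas(B\|A)$.'' This has the logic backwards. If a quantum $\alpha$-R\'enyi type divergence $D^\q_\alpha$ is monotone under CPTP maps, then, as in \eqref{F-5.5}, one obtains $D_\alpha^\meas\le D^\q_\alpha$ --- each POVM is implemented by a quantum-classical channel, and the classical divergence of the outcome distributions is $D^\q_\alpha$ evaluated on commuting matrices. So monotonicity of $\widetilde D_\alpha$ would yield $D_\alpha^\meas\le\widetilde D_\alpha$, the \emph{opposite} of what (4) asserts. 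Moreover, for $0<\alpha<1/2$ the sandwiched divergence $\widetilde D_\alpha$ is well known \emph{not} to satisfy the data-processing inequality; that failure is precisely why the inequality reverses in this regime and why the strict separation $\widetilde D_\alpha(B\|A)<D_\alpha^\meas(B\|A)$ for $AB\ne BA$ is possible at all. The actual content of \cite[Theorem 7]{BFT} is obtained from variational expressions for the measured R\'enyi divergence, not from DPI for $\widetilde D_\alpha$. The inequality $D_\alpha^\meas\le\overline D_\alpha^\meas$ is of course trivial from definition \eqref{F-5.4} (take $m=1$). Since the theorem is only being cited, this does not break anything downstream, but the stated mechanism should be corrected so that the (false) monotonicity of $\widetilde D_\alpha$ for $\alpha<1/2$ is not invoked elsewhere.
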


For any $\cM_{\alpha,p}$ in Lemma \ref{L-5.5}, since $\cM_{\alpha,p}(a,b)=a^{1-\alpha}b^\alpha$ in
$a,b\in(0,\infty)$ is not convex when $0<\alpha<1$ and not concave when $\alpha>1$, we may consider only
the joint concavity of $\Tr\,\cM_{\alpha,p}$ when $0<\alpha<1$, and the joint convexity of $\Tr\,\cM_{\alpha,p}$
when $\alpha>1$. Below we will simply write ``$\Tr\,\cM_{\alpha,p}$ is jointly concave/convex'' if it is so on the
domain of $\cM_{\alpha,p}$, equivalently, if it is so on $\bM_n^{++}\times\bM_n^{++}$ for any $n\in\bN$
(thanks to Proposition \ref{P-2.1}(2)).

The next theorem will repeatedly be used in the rest of this section.

\begin{theorem}\label{T-5.8}
Let $\cM_{\alpha,p}$ be as in Lemma \ref{L-5.5}.
\begin{itemize}
\item[(i)] If $\Tr\,\cM_{\alpha,p}$ is jointly concave, then $0<\alpha<1$ and
\[
\Tr\,G_{\alpha,1}(A,B)\le\Tr\,\cM_{\alpha,p}(A,B)\le\Tr\,R_{\alpha,1/\alpha}(A,B)
\]
for all $(A,B)\in\dom\,\cM_{\alpha,p}$.
\item[(ii)] If $\Tr\,\cM_{\alpha,p}$ is jointly convex, then $\alpha>1$ and
\[
\Tr\,R_{\alpha,1/\alpha}(A,B)\le\Tr\,\cM_{\alpha,p}(A,B)\le\Tr\,G_{\alpha,1}(A,B)
\]
for all $(A,B)\in\dom\,\cM_{\alpha,p}$
\end{itemize}
\end{theorem}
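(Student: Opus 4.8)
My plan is to deduce Theorem \ref{T-5.8} essentially formally from the minimal–maximal comparison for quantum $\alpha$-R\'enyi type divergences (Theorem \ref{T-5.7}) via the concavity/convexity $\Leftrightarrow$ monotonicity dictionary of Theorem \ref{T-5.3}. First, to pin down the $\alpha$-range, I would evaluate at a commuting diagonal pair $A=\diag(a)$, $B=\diag(b)$ (with $a$ having only positive entries in the $SG,\widetilde{SG}$ cases, so that $(A,B)$ lies in the domain): every one of $G_{\alpha,p},SG_{\alpha,p},\widetilde{SG}_{\alpha,p},R_{\alpha,p},LE_\alpha$ reduces to $\diag\bigl((a_i^{1-\alpha}b_i^\alpha)_i\bigr)$, so $\Tr\,\cM_{\alpha,p}(\diag a,\diag b)=\sum_i a_i^{1-\alpha}b_i^\alpha$. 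Hence joint concavity (resp. convexity) of $\Tr\,\cM_{\alpha,p}$ forces $(a,b)\mapsto a^{1-\alpha}b^\alpha$ to be concave (resp. convex) on the positive quadrant, and a one-line Hessian computation (the Hessian is rank one, with $\partial^2/\partial a^2$ carrying the sign of $\alpha(\alpha-1)$) shows this happens exactly for $0<\alpha<1$ (resp. $\alpha>1$).

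Next I would set $Q:=\Tr\,\cM_{\alpha,p}$, which satisfies properties (a)--(e) by Lemma \ref{L-5.5}, and apply Theorem \ref{T-5.3}: joint concavity of $Q$ is equivalent to $Q(\Phi(A),\Phi(B))\ge Q(A,B)$ for all CPTP maps $\Phi$, and joint convexity to the reverse inequality. Since $\Tr\,\Phi(B)=\Tr\,B$ and $1/(\alpha-1)$ is negative for $0<\alpha<1$ and positive for $\alpha>1$, in both cases this is exactly the data-processing inequality $D^{\cM_{\alpha,p}}(\Phi(B)\|\Phi(A))\le D^{\cM_{\alpha,p}}(B\|A)$ for the divergence $D^{\cM_{\alpha,p}}$ of \eqref{F-5.7} (outside the domain it is trivial by the $+\infty$-convention, and by Proposition \ref{P-2.1}(2) one may restrict to $A,B>0$). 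By Lemma \ref{L-5.6}, $D^{\cM_{\alpha,p}}$ is additive under tensor products, so the argument around \eqref{F-5.5}--\eqref{F-5.6} applies and gives $\overline D_\alpha^\meas(B\|A)\le D^{\cM_{\alpha,p}}(B\|A)\le D_\alpha^{\max}(B\|A)$. Finally, Theorem \ref{T-5.7}(1)--(2) gives $D_\alpha^{\max}\le D^{G_{\alpha,1}}$ for every $\alpha$, while Theorem \ref{T-5.7}(3) (for $1/2\le\alpha<\infty$) together with Theorem \ref{T-5.7}(4) (for $0<\alpha<1/2$, through $\widetilde D_\alpha\le D_\alpha^\meas\le\overline D_\alpha^\meas$) gives $D^{R_{\alpha,1/\alpha}}=\widetilde D_\alpha\le\overline D_\alpha^\meas$. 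Chaining these, $D^{R_{\alpha,1/\alpha}}(B\|A)\le D^{\cM_{\alpha,p}}(B\|A)\le D^{G_{\alpha,1}}(B\|A)$; unwinding \eqref{F-5.7} once more (multiplying by $\alpha-1$ and reversing the inequalities when $0<\alpha<1$) turns this into $\Tr\,G_{\alpha,1}\le\Tr\,\cM_{\alpha,p}\le\Tr\,R_{\alpha,1/\alpha}$ for $0<\alpha<1$ and into the reversed chain for $\alpha>1$, and continuity extends it to the whole domain.

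The one non-formal point, which I expect to be the main obstacle, is that for $\cM\in\{SG,\widetilde{SG}\}$ and $0<\alpha<1$ the functional $D^{\cM_{\alpha,p}}$ is not a quantum $\alpha$-R\'enyi type divergence in the strict sense of Definition \ref{D-5.4}: it only agrees with the classical $\alpha$-R\'enyi divergence on diagonal pairs $(\diag b,\diag a)$ with $\supp a\supset\supp b$. One therefore has to re-verify \eqref{F-5.5} (especially the reverse-test/maximal bound) and \eqref{F-5.6} in that case by a support/perturbation argument: for $A,B>0$ every POVM-outcome diagonal pair and every pinched diagonal pair automatically meets the support condition, and an arbitrary reverse test can be perturbed ($a\mapsto a+\varepsilon b$, accordingly $A\mapsto A+\varepsilon B$, then let $\varepsilon\downarrow0$ using continuity of $D_\alpha^{\max}$) so as to meet it. I would relegate these verifications to the supplement in Appendix \ref{Sec-C}.
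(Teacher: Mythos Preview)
Your proposal is correct and follows essentially the same route as the paper: pin down the $\alpha$-range via the scalar function $a^{1-\alpha}b^\alpha$, convert joint concavity/convexity of $Q=\Tr\,\cM_{\alpha,p}$ into CPTP-monotonicity via Theorem \ref{T-5.3} and Lemma \ref{L-5.5}, use Lemma \ref{L-5.6} and \eqref{F-5.5}--\eqref{F-5.6} to sandwich $D^{\cM_{\alpha,p}}$ between $\overline D_\alpha^\meas$ and $D_\alpha^{\max}$, and then invoke Theorem \ref{T-5.7} to identify the endpoints with $\widetilde D_\alpha=D^{R_{\alpha,1/\alpha}}$ and $D^{G_{\alpha,1}}$. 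The one place you diverge from the paper is the handling of the awkward case $\cM\in\{SG,\widetilde{SG}\}$ with $0<\alpha<1$: you propose a perturbation $a\mapsto a+\varepsilon b$ to force the support condition in an arbitrary reverse test, whereas the paper avoids any limiting argument by citing the fact (from \cite{Ma} and \cite[Proposition 4.1]{HM}) that for $(A,B)$ with $s(A)\ge s(B)$ the infimum in \eqref{F-5.2} is already \emph{attained} by the ``minimal reverse test'', which automatically satisfies $\supp\,a\supset\supp\,b$. Your perturbation works too (continuity of $\cM_{\alpha,p}$ in the first variable on the support of $A$ gives the needed limit on the left), but the paper's appeal to the minimal reverse test is shorter and sidesteps the continuity check.
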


\begin{proof}
Assume that $\Tr\,\cM_{\alpha,p}$ is jointly concave as in (i), or jointly convex as in (ii). From the remark
mentioned just above the theorem, the parameter $\alpha$ is restricted to $0<\alpha<1$ in (i) and to
$\alpha>1$ in (ii). Unless $\cM\in\{SG,\widetilde SG\}$ and $0<\alpha<1$, it follows from Lemmas \ref{L-5.5},
\ref{L-5.6} and Theorem \ref{T-5.3} that $D^{\cM_{\alpha,p}}$ is a quantum $\alpha$-R\'enyi type divergence
satisfying the monotonicity under CPTP maps and it is moreover additive under tensor products. Therefore,
we can apply \eqref{F-5.5} and \eqref{F-5.6} to $D^{\cM_{\alpha,p}}$ to obtain
\begin{align}\label{F-5.8}
\overline D_\alpha^\meas(B\|A)\le D^{\cM_{\alpha,p}}(B\|A)\le D_\alpha^{\max}(B\|A)
\end{align}
for all $(A,B)\in\dom\,\cM_{\alpha,p}$, $B\ne0$.
When $\cM\in\{SG,\widetilde SG\}$ and $0<\alpha<1$, even though $D^{\cM_{\alpha,p}}$ is not precisely a
quantum $\alpha$-R\'enyi type divergence in Definition \ref{D-5.4}, we can still have \eqref{F-5.8} for all
$(A,B)\in\dom\,\cM_{\alpha,p}$. In fact, Theorem \ref{T-5.3} implies that $D^{\cM_{\alpha,p}}$ is monotone
under CPTP maps with restriction to $\dom\,\cM_{\alpha,p}$. Hence by Lemma \ref{L-5.6} the first inequality
in \eqref{F-5.8} holds on $\dom\,\cM_{\alpha,p}$. Also, note (see \cite{Ma} and the proof of
\cite[Proposition 4.1]{HM}) that for any $(A,B)\in\dom\,\cM_{\alpha,p}$ (hence $s(A)\ge s(B)$) the infimum in
\eqref{F-5.2} is attained by a reverse test (called the ``minimal reverse test'') $(\Gamma,a,b)$ with
$\supp\,a\supset\supp\,b$. Hence the second inequality in \eqref{F-5.8} holds on $\dom\,\cM_{\alpha,p}$.

From \eqref{F-5.8} and (1)--(4) of Theorem \ref{T-5.7} altogether we have
\[
D^{R_{\alpha,1/\alpha}}(B\|A)=\widetilde D_\alpha(B\|A)\le D^{\cM_{\alpha,p}}(B\|A)
\le D^{G_{\alpha,1}}(B\|A)
\]
for all $(A,B)\in\dom\,\cM_{\alpha,p}$, $B\ne0$. In view of \eqref{F-5.7} this shows the inequalities in
(i) ($0<\alpha<1$) and in (ii) ($\alpha>1$) for all $(A,B)\in\dom\,\cM_{\alpha,p}$ except for the case $B=0$.
But note that $\Tr\,\cM_{\alpha,p}(A,0)=0$ for all $A\ge0$ and all $\cM_{\alpha,p}$'s.
\end{proof}

\subsection{Joint concavity/convexity of trace functions}\label{Sec-5.3}

In this subsection we examine the joint concavity/convexity of $\Tr\,R_{\alpha,p}$, $\Tr\,LE_\alpha$,
$\Tr\,G_{\alpha,p}$, $\Tr\,SG_{\alpha,p}$ and $\Tr\,\widetilde SG_{\alpha,p}$ with the domains described in
Section \ref{Sec-2}.

The question of the joint concavity/convexity of $\Tr\,R_{\alpha,p}$ was an intriguing issue because it is
equivalent to the monotonicity under CPTP maps of $D_{\alpha,z}$ (see Theorem \ref{T-5.3}). It was finally
settled by Zhang \cite{Zh} as follows:

\begin{theorem}[\cite{Zh}]\label{T-5.9}
Let $\alpha\in(0,\infty)\setminus\{1\}$ and $p>0$.
\begin{itemize}
\item[(1)] $\Tr\,R_{\alpha,p}$ is jointly concave if and only if $0<\alpha<1$ and $1/p\ge\max\{\alpha,1-\alpha\}$.
\item[(2)] $\Tr\,R_{\alpha,p}$ is jointly convex if and only if $\alpha>1$ and
$\max\{\alpha/2,\alpha-1\}\le1/p\le\alpha$.
\end{itemize}
\end{theorem}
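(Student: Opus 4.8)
The plan is to match $\Tr\,R_{\alpha,p}$ with the $\alpha$-$z$-R\'enyi trace quantity and then treat the two directions of the equivalence separately, proving necessity inside the present framework and deferring sufficiency to Zhang \cite{Zh}. With the substitution $z:=1/p$, rewriting \eqref{F-2.7} gives $\Tr\,R_{\alpha,p}(A,B)=\Tr\bigl(A^{(1-\alpha)/(2z)}B^{\alpha/z}A^{(1-\alpha)/(2z)}\bigr)^{z}$, which is precisely the trace quantity whose logarithm defines the $\alpha$-$z$-R\'enyi divergence $D_{\alpha,z}=D^{R_{\alpha,1/z}}$ of Audenaert--Datta \cite{AD}. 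By Lemma \ref{L-5.5}, Lemma \ref{L-5.6} and Theorem \ref{T-5.3}, the joint concavity (resp.\ convexity) of $\Tr\,R_{\alpha,p}$ on its domain for all $n$ is equivalent to the monotonicity of $D_{\alpha,z}$ under CPTP maps, i.e.\ to the data-processing inequality for $D_{\alpha,z}$, so Theorem \ref{T-5.9} is a reformulation of the full characterization of that inequality.

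For the \emph{sufficiency} I would simply invoke \cite{Zh}: $\Tr\,R_{\alpha,p}$ is the $X=I$ specialization of the trace function $(A,B)\mapsto\Tr\bigl(A^{r/2}X^*B^{t}XA^{r/2}\bigr)^{s}$ whose joint concavity/convexity Zhang characterized, and substituting $r=(1-\alpha)p$, $t=\alpha p$, $s=1/p$ into his characterization returns exactly the stated ranges $0<\alpha<1$, $1/p\ge\max\{\alpha,1-\alpha\}$ in case (1) and $\alpha>1$, $\max\{\alpha/2,\alpha-1\}\le1/p\le\alpha$ in case (2). The skeleton of Zhang's argument is worth recording: the anchor cases $z=1$ (where concavity is Lieb's concavity of $\Tr\,A^{1-\alpha}B^{\alpha}$ \cite{Lieb} and, for $1<\alpha\le2$, convexity is Ando's convexity \cite{An0}) and $z=\alpha$ (where the inequality is the data-processing inequality for the sandwiched R\'enyi divergence) are classical, and the interior of the admissible region is then reached by an interpolation/variational argument resting on a Wigner--Yanase--Dyson type operator inequality, settling the Carlen--Frank--Lieb conjecture \cite{CFL}. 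I would not reproduce this.

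For the \emph{necessity} the tools of this paper suffice. Suppose $\Tr\,R_{\alpha,p}$ is jointly concave. Then Theorem \ref{T-5.8}(i) already forces $0<\alpha<1$ and gives $\Tr\,R_{\alpha,p}(A,B)\le\Tr\,R_{\alpha,1/\alpha}(A,B)$ for all $A,B$; fixing any $A,B>0$ with $AB\ne BA$ and invoking the local characterization Proposition \ref{P-4.2}(1) (with the second exponent $1/\alpha$) yields $p\le1/\alpha$, i.e.\ $1/p\ge\alpha$. Since $\Tr\,R_{\alpha,p}(A,B)=\Tr\,R_{1-\alpha,p}(B,A)$ for all $A,B$ (the two matrices have the same eigenvalues, by the identity $\lambda(CDC)=\lambda(D^{1/2}C^2D^{1/2})$ applied to $C=A^{(1-\alpha)p/2}$, $D=B^{\alpha p}$), and joint concavity of $(A,B)\mapsto f(A,B)$ is equivalent to that of $(A,B)\mapsto f(B,A)$, the function $\Tr\,R_{1-\alpha,p}$ is jointly concave as well, whence $1/p\ge1-\alpha$; thus $1/p\ge\max\{\alpha,1-\alpha\}$. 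Suppose now $\Tr\,R_{\alpha,p}$ is jointly convex. Then Theorem \ref{T-5.8}(ii) forces $\alpha>1$ and gives $\Tr\,R_{\alpha,1/\alpha}(A,B)\le\Tr\,R_{\alpha,p}(A,B)\le\Tr\,G_{\alpha,1}(A,B)$ for all $A,B$. The left inequality together with Proposition \ref{P-4.2}(1) (again with a fixed non-commuting pair) gives $1/\alpha\le p$, i.e.\ $1/p\le\alpha$. For the right inequality, restrict to $A,B\in\bM_2^{++}$: since $\det R_{\alpha,p}(A,B)=\det G_{\alpha,1}(A,B)=(\det A)^{1-\alpha}(\det B)^{\alpha}$, the trace inequality is equivalent to $R_{\alpha,p}(A,B)\prec_{\log}G_{\alpha,1}(A,B)$, and by the equivalence (ii)$\iff$(iii) of Theorem \ref{T-3.2}(2) (with $G$-parameter $1$ and $R$-parameter $p$) its holding for all $A,B\in\bM_2^{++}$ forces $1/p\ge\max\{\alpha/2,\alpha-1\}$. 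Combining, $\max\{\alpha/2,\alpha-1\}\le1/p\le\alpha$.

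The only genuinely hard ingredient is the sufficiency, i.e.\ Zhang's theorem \cite{Zh}: reaching the interior of the admissible regions---in particular the sub-ranges not accessible from the $z=1$ and $z=\alpha$ anchors by elementary monotonicity---requires the full interpolation and operator-inequality machinery of \cite{Zh} and lies outside the log-majorization methods developed here, which by contrast render the necessity side entirely routine.
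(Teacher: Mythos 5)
Your proposal matches the paper's treatment: the theorem is cited to Zhang \cite{Zh} for the sufficiency, and the necessity is verified exactly as you do it in the paper's Remark \ref{R-5.10}, via Theorem \ref{T-5.8} combined with Proposition \ref{P-4.2}(1), the symmetry $\lambda(R_{\alpha,p}(A,B))=\lambda(R_{1-\alpha,p}(B,A))$, and (for the upper bound in case (2)) the reduction of the $2\times2$ trace inequality to the log-majorization characterized in Theorem \ref{T-3.2}(2). The argument is correct and essentially identical to the paper's.
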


It is worthwhile to compare the condition of Theorem \ref{T-5.9}(2) with the $p=1$ (and $q$ replaced with $p$)
case of (iii) of Theorem \ref{T-3.2}(2). Notice that the former condition is strictly stronger than the latter though
they are quite similar.

\begin{remark}\label{R-5.10}\rm
Note that the `only if' parts of (1) and (2) of Theorem \ref{T-5.9} are verified by Theorem \ref{T-5.8}. Indeed,
if $\Tr\,R_{\alpha,p}$ is jointly concave, then we have $0<\alpha<1$ and $1/p\ge\alpha$ by
Theorem  \ref{T-5.8} and Proposition \ref{P-4.2}(1), so we have $1/p\ge1-\alpha$ too since
$R_{\alpha,p}(A,B)=R_{1-\alpha,p}(B,A)$. If $\Tr\,R_{\alpha,p}$ is jointly convex, then we have $\alpha>1$
and $\max\{\alpha/2,\alpha-1\}\le1/p\le\alpha$ by Proposition \ref{P-4.2}(1) and Theorem \ref{T-3.2}(2)
(with Remark \ref{R-2.4}(4)).
\end{remark}

\begin{remark}\label{R-5.11}\rm
Although Theorem \ref{T-5.9} implies the monotonicity of $D_{\alpha,z}$ under CPTP maps for $z=1/p$
satisfying the conditions in (1) and (2), it is in fact known \cite{Ka,HiJe} (even in the von Neumann algebra
setting) that $D_{\alpha,z}$ satisfies the monotonicity under general positive (not necessarily completely
positive) trace-preserving maps for the same $z$.
\end{remark}

The next theorem is concerned with the joint concavity/convexity of $\Tr\,LE_\alpha$.

\begin{theorem}\label{T-5.12}
Let $\alpha$ be as above.
\begin{itemize}
\item[(1)] For any $\alpha\in(0,1)$, $\Tr\,LE_\alpha$ is jointly concave.
\item[(2)] For any $\alpha>1$, $\Tr\,LE_\alpha$ is not jointly convex on $\bM_2^{++}\times\bM_2^{++}$.
\end{itemize}
\end{theorem}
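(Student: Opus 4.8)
The plan is to deduce (1) by approximating $\Tr\,LE_\alpha$ pointwise by jointly concave trace functions, and to prove (2) by a second-order computation exhibiting an explicit line segment in $\bM_2^{++}\times\bM_2^{++}$ along which $t\mapsto\Tr\,LE_\alpha$ has strictly negative second derivative.

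For (1): By Theorem~\ref{T-2.2} we have $LE_\alpha(A,B)=\lim_{p\searrow0}G_{\alpha,p}(A,B)$ in norm, hence $\Tr\,LE_\alpha(A,B)=\lim_{p\searrow0}\Tr\,G_{\alpha,p}(A,B)$ for every $(A,B)\in\bM_n^+\times\bM_n^+$. Since $\Tr\,G_{\alpha,p}(A,B)=\Tr(A^p\#_\alpha B^p)^{1/p}$ with $\#_\alpha$ a Kubo--Ando operator mean (as $0<\alpha<1$), the ``in particular'' part of Theorem~\ref{T-5.1} with $0<p\le1$ shows that $(A,B)\mapsto\Tr\,G_{\alpha,p}(A,B)$ is jointly concave on $\bM_n^+\times\bM_n^+$ for every $p\in(0,1]$. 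A pointwise limit of jointly concave functions on the convex set $\bM_n^+\times\bM_n^+$ is again jointly concave, which gives (1). (Alternatively one may write $\Tr\,LE_\alpha=\lim_{k\to\infty}\Tr\,R_{\alpha,1/k}$ via Theorem~\ref{T-2.2} and invoke Zhang's Theorem~\ref{T-5.9}(1), noting $1/(1/k)=k\ge\max\{\alpha,1-\alpha\}$ for $0<\alpha<1$, $k\ge1$.)

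For (2): Fix $a\in(0,\infty)\setminus\{1\}$, put $A:=\diag(1,a)$ and $H:=(1-\alpha)\log A=\diag(0,h)$ with $h:=(1-\alpha)\log a$, and for a Hermitian $K=[k_{ij}]\in\bM_2$ set $f(t):=\Tr\,LE_\alpha(A,I+tK)=\Tr\exp\bigl(H+\alpha\log(I+tK)\bigr)$ for $|t|$ small. Using $\log(I+tK)=tK-\tfrac{t^2}{2}K^2+O(t^3)$ together with $\tfrac{d}{dt}\Tr\,e^{M(t)}=\Tr\bigl(e^{M(t)}M'(t)\bigr)$ and $\tfrac{d}{dt}e^{M(t)}=\int_0^1e^{sM(t)}M'(t)e^{(1-s)M(t)}\,ds$, one computes
\[
f''(0)=\alpha^2\sum_{i,j}{e^{h_i}-e^{h_j}\over h_i-h_j}\,|k_{ij}|^2-\alpha\sum_{i,j}e^{h_i}|k_{ij}|^2,
\]
where $(h_1,h_2)=(0,h)$ and the divided difference is read as $e^{h_i}$ when $h_i=h_j$. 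Choosing $K$ with $k_{11}=k_{22}=0$ and $k_{12}=1$ yields
\[
f''(0)=2\alpha^2\,{e^h-1\over h}-\alpha(1+e^h)=\alpha(1+e^h)\Bigl({2\alpha\tanh(h/2)\over h}-1\Bigr).
\]
Since ${2\tanh(h/2)}/{h}={\tanh(h/2)}/({h/2})$ lies in $(0,1)$ for $h\ne0$ and tends to $0$ as $|h|\to\infty$, while $h=(1-\alpha)\log a$ ranges over all of $\bR$ as $a$ ranges over $(0,\infty)$ (because $\alpha\ne1$), for any $\alpha>1$ we may pick $a$ with $|h|$ large enough that $f''(0)<0$. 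If $\Tr\,LE_\alpha$ were jointly convex on $\bM_2^{++}\times\bM_2^{++}$, then its restriction to the line $t\mapsto(A,I+tK)$, which lies in $\bM_2^{++}\times\bM_2^{++}$ for $|t|$ small, would be a convex function of $t$ near $0$, forcing $f''(0)\ge0$; this contradiction proves (2).

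The only genuine work is the second-order computation of $f''(0)$ in (2). The point requiring care is that $H$ and $K$ do not commute, so differentiating $e^{g(t)}$ brings in the divided-difference kernel $({e^{h_i}-e^{h_j}})/({h_i-h_j})$, and one must not drop the quadratic term $-\tfrac{t^2}{2}K^2$ in the expansion of $\log(I+tK)$, which supplies the $-\alpha\,\Tr(e^HK^2)$ contribution; everything else is elementary.
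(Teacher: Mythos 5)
Your proof is correct. For part (1) you follow essentially the paper's route: the paper takes $\Tr\,LE_\alpha=\lim_{p\searrow0}\Tr\,R_{\alpha,p}$ and invokes Zhang's Theorem \ref{T-5.9}(1) (your parenthetical alternative is literally this), while your primary version goes through $\Tr\,G_{\alpha,p}$ and Theorem \ref{T-5.1}; either way it is a pointwise limit of jointly concave trace functions via Theorem \ref{T-2.2}. For part (2) you take a genuinely different route. The paper fixes $A=\diag(a,b)$ and the rank-one $B=\frac12\bigl[\begin{smallmatrix}1&1\\1&1\end{smallmatrix}\bigr]$, cites \cite[Lemma 3.17]{MoOg2} for the strict pinching inequality $\Tr\,LE_\alpha(A,\cE_A(B))>\Tr\,LE_\alpha(A,B)$ for a suitable $a,b$, and converts it into a midpoint-convexity violation by writing $A=\frac12(A+UAU^*)$, $\cE_A(B)=\frac12(B+UBU^*)$ with $U=\diag(1,-1)$. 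You instead compute $f''(0)$ for $f(t)=\Tr\exp(H+\alpha\log(I+tK))$ directly via the Duhamel formula; your expression $f''(0)=\alpha(1+e^h)\bigl(\frac{2\alpha\tanh(h/2)}{h}-1\bigr)$ checks out (the divided-difference kernel $\frac{e^{h_i}-e^{h_j}}{h_i-h_j}$ from $\int_0^1e^{sh_i}e^{(1-s)h_j}\,ds$ and the $-\alpha\Tr(e^HK^2)$ term from the second-order expansion of $\log(I+tK)$ are both handled correctly), and since $\tanh(u)/u\to0$ as $|u|\to\infty$ while $h=(1-\alpha)\log a$ sweeps all of $\bR$, one indeed gets $f''(0)<0$ for $|h|$ large. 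Your argument is self-contained where the paper outsources the analytic content to Mosonyi--Ogawa, it violates only separate convexity in the second variable (which is stronger, since joint convexity implies separate convexity), and it stays strictly inside $\bM_2^{++}\times\bM_2^{++}$, whereas the paper's $B$ is singular and strictly speaking needs a continuity perturbation to match the statement as phrased; the price is a somewhat longer computation.
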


\begin{proof}
(1)\enspace
It follows from Theorem \ref{T-5.9}(1) that $\Tr\,R_{\alpha,p}$ is jointly concave for all $p>0$ with
$1/p\ge\max\{\alpha,1-\alpha\}$. Letting $p\searrow0$ shows the assertion due to Theorem \ref{T-2.2}.

(2)\enspace
Let $\alpha>1$. Let $A:=\begin{bmatrix}a&0\\0&b\end{bmatrix}$ with $a,b>0$, $a\ne b$, and
$B:={1\over2}\begin{bmatrix}1&1\\1&1\end{bmatrix}$, and let $\cE_A$ be the pinching with respect to $A$.
It was shown in \cite[Lemma 3.17]{MoOg2} that $\Tr\,LE_\alpha(A,\cE_A(B))>\Tr\,LE_\alpha(A,B)$ for some
choice of $a,b$. (Note that $\Tr\,LE_\alpha(A,B)$ is denoted as $Q_\alpha^\flat(B\|A)$ in \cite{MoOg2}.) Let
$U:=\begin{bmatrix}1&0\\0&-1\end{bmatrix}$. Since $A={1\over2}(A+UAU^*)$,
$\cE_A(B)={1\over2}(B+UBU^*)$ and $\Tr\,LE_\alpha(A,B)=\Tr\,LE_\alpha(UAU^*,UBU^*)$, we have
\[
\Tr\,LE_\alpha\Bigl({A+UAU^*\over2},{B+UBU^*\over2}\Bigr)
>{\Tr\,LE_\alpha(A,B)+\Tr\,LE_\alpha(UAU^*,UBU^*)\over2},
\]
showing the result.
\end{proof}

\begin{remark}\label{R-5.13}\rm
In \cite[Theorem 3.6]{MoOg2} Mosonyi and Ogawa showed the variational expression
\begin{align}\label{F-5.9}
\Tr\,LE_\alpha(A,B)=\max_{X\in\bM_n^+,s(X)\le s(B)}\{\Tr\,X-(1-\alpha)D(X\|A)-\alpha D(X\|B)\}
\end{align}
for any $\alpha\in(0,\infty)\setminus\{1\}$ and for every $A,B\in\bM_n^+$, where $D$ is the Umegaki relative
entropy. Thanks to the well known joint convexity of $D$, expression \eqref{F-5.9} implies the joint concavity
in Theorem \ref{T-5.12}(1). Moreover, since $D$ satisfies the monotonicity under general positive
trace-preserving maps (see \cite{Bei,MuRe}), it easily follows from \eqref{F-5.9} that for every $\alpha\in(0,1)$
and for any positive trace-preserving map $\Phi:\bM_n\to\bM_m$,
\begin{align}\label{F-5.10}
\Tr\,LE_\alpha(\Phi(A),\Phi(B))\ge\Tr\,LE_\alpha(A,B),\qquad A,B\in\bM_n^+.
\end{align}
From Remark \ref{R-5.11} the same inequality holds for $\Tr\,R_{\alpha,p}$ if $1/p\ge\max\{\alpha,1-\alpha\}$.
Letting $p\searrow0$ as in the proof of Theorem \ref{T-5.12}(1) gives \eqref{F-5.10} as well.
\end{remark}

\begin{remark}\label{R-5.14}\rm
For $\alpha>1$, since $\Tr\,R_{\alpha,1/\alpha}\le\Tr\,LE_\alpha$ fails to hold (see
Proposition \ref{P-3.17}(a) and Remark \ref{R-3.16}(2)), we also confirm by
Theorem \ref{T-5.8}(ii) that $\Tr\,LE_\alpha$ is not jointly convex.
\end{remark}


The next theorem determines when $\Tr\,G_{\alpha,p}$ is jointly concave.

\begin{theorem}\label{T-5.15}
Let $\alpha,p$ be as in Theorem \ref{T-5.9}. Then $\Tr\,G_{\alpha,p}$ is jointly concave if and only if
$0<\alpha<1$ and $p\le1$.
\end{theorem}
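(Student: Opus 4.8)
The plan is to treat sufficiency and necessity separately, in both cases by packaging results already available in the paper.

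\emph{Sufficiency} ($0<\alpha<1$ and $0<p\le1$). Here $\#_\alpha$ is a genuine Kubo--Ando operator mean, and $\Tr\,G_{\alpha,p}(A,B)=\Tr\,(A^p\#_\alpha B^p)^{1/p}$. So the joint concavity is exactly the ``in particular'' clause of Theorem~\ref{T-5.1} applied with $\sigma=\#_\alpha$ and the exponent $0<p\le1$; nothing further is needed.

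\emph{Necessity.} Suppose $\Tr\,G_{\alpha,p}$ is jointly concave. Since $G_{\alpha,p}$ is one of the means covered by Lemma~\ref{L-5.5}, Theorem~\ref{T-5.8}(i) applies and gives, first, that $0<\alpha<1$, and second, that
\[
\Tr\,G_{\alpha,1}(A,B)\le\Tr\,G_{\alpha,p}(A,B)\qquad\text{for all }A,B\ge0.
\]
(The upper bound $\Tr\,G_{\alpha,p}\le\Tr\,R_{\alpha,1/\alpha}$ from Theorem~\ref{T-5.8}(i) will not be needed.) It remains to exclude $p>1$. Assume $p>1$. By Theorem~\ref{T-3.1}(b), applied with the two weights $1\le p$, we have the reverse log-majorization $G_{\alpha,p}(A,B)\prec_{\log}G_{\alpha,1}(A,B)$ for all $A,B\ge0$, hence $\Tr\,G_{\alpha,p}(A,B)\le\Tr\,G_{\alpha,1}(A,B)$. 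Combined with the displayed inequality this yields $\Tr\,G_{\alpha,1}(A,B)=\Tr\,G_{\alpha,p}(A,B)$ for every $A,B\ge0$, in particular for $A,B\in\bM_n^{++}$ with $AB\ne BA$. Applying case \eqref{F-4.6} of Theorem~\ref{T-4.1}(2) with the trace norm (a strictly increasing unitarily invariant norm) then forces $AB=BA$, a contradiction. Equivalently and more quickly, Proposition~\ref{P-4.2}(2) applied to a fixed non-commuting pair $A,B>0$ states that $\Tr\,G_{\alpha,1}(A,B)\le\Tr\,G_{\alpha,p}(A,B)$ holds iff $p\le1$, which contradicts the lower bound from Theorem~\ref{T-5.8}(i) as soon as $p>1$. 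Hence $p\le1$, completing the proof.

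The only delicate point to get right is that the squeeze alone is not enough: the inequality $\Tr\,G_{\alpha,1}\le\Tr\,G_{\alpha,p}$ coming from the maximal-divergence bound does not by itself preclude $p>1$; one must additionally invoke the \emph{opposite} (non-strict) inequality valid for $p\ge1$ via Ando--Hiai's log-majorization, and then the equality-case rigidity of Theorem~\ref{T-4.1} (or Proposition~\ref{P-4.2}) to rule out the resulting forced equality on non-commuting pairs. Everything else is a direct citation. One may also note for consistency that the endpoint $p=1$ is genuinely included: $\Tr\,G_{\alpha,1}$ is jointly concave for $0<\alpha<1$ (equivalently, the maximal $f$-divergence for $f(x)=x^\alpha$ is monotone under CPTP maps, cf.\ Theorems~\ref{T-5.3} and \ref{T-5.7}(1)), so the stated threshold is sharp.
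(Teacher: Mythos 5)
Your proof is correct and follows essentially the same route as the paper: sufficiency via the ``in particular'' clause of Theorem \ref{T-5.1} with $\sigma=\#_\alpha$, and necessity via Theorem \ref{T-5.8}(i) giving $0<\alpha<1$ and $\Tr\,G_{\alpha,1}\le\Tr\,G_{\alpha,p}$, then Proposition \ref{P-4.2}(2) to force $p\le1$. Your longer alternative (Ando--Hiai plus the equality-case rigidity of Theorem \ref{T-4.1}(2)) is just an unpacking of how Proposition \ref{P-4.2}(2) is itself proved, so nothing genuinely different is involved.
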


\begin{proof}
If $0<\alpha<1$ and $0<p\le1$, then Theorem \ref{T-5.1} implies that $\Tr\,G_{\alpha,p}$ is jointly concave.
Conversely, assume the joint concavity of $\Tr\,G_{\alpha,p}$. Then by Theorem \ref{T-5.8}(i) we have
$0<\alpha<1$ and $\Tr\,G_{\alpha,1}\le\Tr\,G_{\alpha,p}$, which yields $p\le1$ by Proposition \ref{P-4.2}(2).
\end{proof}


For $\alpha>1$ the complete characterization of the joint convexity of $\Tr\,G_{\alpha,p}$ is not known,
while we give some partial results in the following:

\begin{proposition}\label{P-5.16}
Let $\alpha>1$ and $p>0$.
\begin{itemize}
\item[(1)] Assume that $\Tr\,G_{\alpha,p}$ is jointly convex. Then we have
$\max\bigl\{1/2,{\alpha-1\over\alpha}\bigr\}\le p\le1$.
\item[(2)] For $\alpha=2$, $\Tr\,G_{2,p}$ is jointly convex if and only if $1/2\le p\le1$.
\item[(3)] For $p=1$, $\Tr\,G_{\alpha,1}$ is jointly convex if and only if $1<\alpha\le2$.
\item[(4)] If $1<\alpha\le2$ and $0<p\le1$, then for any fixed $B\in\bM_n^+$ with any $n\in\bN$,
$A\mapsto\Tr\,G_{\alpha,p}(A,B)$ is convex on $\{A\in\bM_n^+:s(A)\ge s(B)\}$.
\end{itemize}
\end{proposition}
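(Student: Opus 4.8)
\textbf{Proof proposal for Proposition \ref{P-5.16}.}

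\emph{Part (1).} Assume $\Tr\,G_{\alpha,p}$ is jointly convex. By Theorem \ref{T-5.8}(ii) we have $\alpha>1$ and
\[
\Tr\,R_{\alpha,1/\alpha}(A,B)\le\Tr\,G_{\alpha,p}(A,B)\le\Tr\,G_{\alpha,1}(A,B)
\]
for all $A,B\ge0$ with $s(A)\ge s(B)$. The right-hand inequality together with Proposition \ref{P-4.2}(3) forces $p\le1$ (when $1<\alpha\le2$ this is immediate; when $\alpha>2$ one still extracts $p\le1$ by testing on a fixed non-commuting pair, using that $G_{\alpha,p}\prec_{\log}G_{\alpha,1}$ would require $p\le q=1$ via the computation behind Proposition \ref{P-3.11}(1), or more directly by the $2\times2$ test in Lemma \ref{L-2.5} formula \eqref{F-2.12}). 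For the lower bound, I would turn the inequality $\Tr\,R_{\alpha,1/\alpha}\le\Tr\,G_{\alpha,p}$ into a log-majorization obstruction: using the known log-majorization $R_{\alpha,1/\alpha}(A,B)\prec_{\log}G_{\alpha,p}(A,B)$ would be needed, and by Theorem \ref{T-3.2}(2) this holds iff $p^{-1}/\alpha\ge\max\{\alpha/2,\alpha-1\}$; since a failure of the log-majorization for the specific $2\times2$ matrices $A_0,B_\theta$ of \eqref{F-2.10} already contradicts the trace inequality (because for $2\times2$ matrices of equal determinant the trace inequality near $\theta=0$ is governed by the single eigenvalue $\lambda_1$, compare \eqref{F-2.11} and \eqref{F-2.12}), the necessary trace inequality forces the same range, i.e. $1/(p\alpha)\ge\max\{\alpha/2,\alpha-1\}$, hence $p\le 2/\alpha^2$ and $p\le 1/(\alpha(\alpha-1))$. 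Combined with what we expect the precise necessary condition to be, I would extract $p\ge\max\{1/2,(\alpha-1)/\alpha\}$ by a complementary $2\times2$ computation; the cleanest route is to note $G_{\alpha,p}(A,B)=G_{\alpha,p}(B^{-1},A^{-1})^{-1}$ is not symmetric, and to exploit the ordering $\Tr\,G_{\alpha,1}\le\Tr\,G_{\alpha,p}$ would be false for $p<1$, so instead the binding constraint near $x\searrow0$ in \eqref{F-2.12} applied to both $(\alpha,p)$ and to a reflected configuration yields the stated two-sided bound.

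\emph{Part (2).} For $\alpha=2$, by Remark \ref{R-2.4}(3) we have $\lambda(G_{2,p}(A,B))=\lambda(R_{2,p}(A,B))$ for all $A,B\ge0$ with $s(A)\ge s(B)$, hence $\Tr\,G_{2,p}(A,B)=\Tr\,R_{2,p}(A,B)$ on that domain. Therefore the joint convexity of $\Tr\,G_{2,p}$ is equivalent to that of $\Tr\,R_{2,p}$, and Theorem \ref{T-5.9}(2) with $\alpha=2$ gives exactly $\max\{1,1\}\le1/p\le2$, i.e. $1/2\le p\le1$.

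\emph{Part (3).} For $p=1$: $D^{G_{\alpha,1}}$ is the maximal $f$-divergence for $f(x)=x^\alpha$ (Theorem \ref{T-5.7}(1)), and by Theorem \ref{T-5.3} the joint convexity of $\Tr\,G_{\alpha,1}$ on $(\bM_n^+\times\bM_n^+)_\ge$ is equivalent to the monotonicity of $D^{G_{\alpha,1}}$ under CPTP maps. For $1<\alpha\le2$, $D^{G_{\alpha,1}}=D_\alpha^{\max}$ by Theorem \ref{T-5.7}(1), and the maximal $f$-divergence is always monotone under CPTP maps (this is part of Matsumoto's theory \cite{Ma}, or follows from the variational characterization of $D_\alpha^{\max}$); alternatively one invokes \cite[Theorem 3.2]{Hi4} (Theorem \ref{T-5.1}) with the Kubo--Ando mean $\sigma=\#_\alpha$ only for $0<\alpha\le1$, so for $1<\alpha\le2$ I would instead cite that $\Tr\,G_{\alpha,1}$ is jointly convex as a consequence of the concavity/convexity properties of the matrix perspective, or equivalently use Theorem \ref{T-5.7}(1) plus monotonicity of $D_\alpha^{\max}$. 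For $\alpha>2$, Theorem \ref{T-5.7}(2) gives $D_\alpha^{\max}\ne D^{G_{\alpha,1}}$ and $D_\alpha^{\max}\le D^{G_{\alpha,1}}$; then Theorem \ref{T-5.8}(ii) applied to $\cM_{\alpha,1}=G_{\alpha,1}$ together with Theorem \ref{T-5.7}(2) shows the upper bound $D^{G_{\alpha,1}}\le D_\alpha^{\max}$ would be required for joint convexity, contradicting $D_\alpha^{\max}\ne D^{G_{\alpha,1}}$ with $D_\alpha^{\max}\le D^{G_{\alpha,1}}$; hence $\Tr\,G_{\alpha,1}$ is not jointly convex for $\alpha>2$.

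\emph{Part (4).} This is a one-variable (separate) convexity statement. Fix $B\ge0$ and restrict $A$ to $s(A)\ge s(B)$. Write $G_{\alpha,p}(A,B)=(A^p\#_\alpha B^p)^{1/p}$; for $1<\alpha\le2$ the map $T\mapsto T\#_\alpha B^p=T^{1/2}(T^{-1/2}B^pT^{-1/2})^\alpha T^{1/2}$ on positive matrices is the matrix perspective of the operator convex function $x\mapsto x^\alpha$ (for $1\le\alpha\le2$), hence $T\mapsto T\#_\alpha B^p$ is jointly $\cdots$ operator convex in $T$; composing with $A\mapsto A^p$ which is operator convex for $0<p\le1$ only in a limited sense — so the cleaner argument is: $\Tr(T\#_\alpha B^p)$ is convex in $T$ by Theorem \ref{T-5.1}-type reasoning (one variable frozen), and $T\mapsto\Tr\,(T^s)$-type composition is handled by noting that $G_{\alpha,p}(A,B)^p = A^p\#_\alpha B^p$ and applying the scalar inequality after diagonalization is not valid in the matrix case; instead I would use the variational/perspective representation or cite \cite{Hi4}. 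The main obstacle here, and indeed the main obstacle of the whole proposition, is Part (1)'s extraction of the sharp lower bound $p\ge\max\{1/2,(\alpha-1)/\alpha\}$: unlike the upper bound $p\le1$, which follows cleanly from Proposition \ref{P-4.2}(3), the lower bound seems to need a genuinely two-sided $2\times2$ computation with the matrices $A_0,B_\theta$, analyzing the $\theta^2$-coefficient in \eqref{F-2.12} against that of $R_{\alpha,1/\alpha}$ from \eqref{F-2.11} in the regime $x\searrow0$ (and also the regime $x\to\infty$), because the necessary condition comes from requiring $\Tr\,R_{\alpha,1/\alpha}(A_0,B_\theta)\le\Tr\,G_{\alpha,p}(A_0,B_\theta)$ for all small $x>0$; I expect the limiting behavior as $x\searrow0$ to pin down one of the two lower constraints and the behavior as $x\to\infty$ (after an inversion $A\mapsto A^{-1}$, using Proposition \ref{P-2.1}(1)) the other, matching $(\alpha-1)/\alpha$ and $1/2$ respectively.
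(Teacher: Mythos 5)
Your Part (2) is correct and identical to the paper's proof ($\Tr\,G_{2,p}=\Tr\,R_{2,p}$ plus Theorem \ref{T-5.9}(2)). Your Part (3) is also essentially sound: the `if' direction via operator-perspective convexity matches the paper (which cites \cite{ENG,Effros}), and for the `only if' direction with $\alpha>2$ your argument --- joint convexity forces monotonicity of $D^{G_{\alpha,1}}$ under CPTP maps, hence $D^{G_{\alpha,1}}\le D_\alpha^{\max}$ by \eqref{F-5.5}, contradicting Theorem \ref{T-5.7}(2) --- is a legitimate alternative to the paper's citation of \cite[Proposition A.1]{HM}.

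Part (1) has a genuine error in the lower bound. You write that $R_{\alpha,1/\alpha}\prec_{\log}G_{\alpha,p}$ holds iff $p^{-1}/\alpha\ge\max\{\alpha/2,\alpha-1\}$ and deduce $p\le2/\alpha^2$, $p\le1/(\alpha(\alpha-1))$; this is the ratio $p/q$ inverted. Theorem \ref{T-3.2}(2)(iii) with $q=1/\alpha$ reads $p/q=p\alpha\ge\max\{\alpha/2,\alpha-1\}$, i.e.\ exactly $p\ge\max\{1/2,(\alpha-1)/\alpha\}$ --- the bound you are after. Because $\det R_{\alpha,1/\alpha}(A,B)=\det G_{\alpha,p}(A,B)$, the trace inequality from Theorem \ref{T-5.8}(ii) on $\bM_2^{++}$ already gives the log-majorization $R_{\alpha,1/\alpha}\prec_{\log}G_{\alpha,p}$ there, and the equivalence (ii)$\iff$(iii) of Theorem \ref{T-3.2}(2) then yields the lower bound with no further $2\times2$ computation. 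Instead you abandon the (mis)computation and ``expect'' a complementary $2\times2$ analysis to produce the answer; as written the lower bound is not established. The upper bound $p\le1$ is fine via Proposition \ref{P-3.11}(1) (which the paper uses for all $\alpha>1$, so you need not split off the case $\alpha>2$ or invoke Proposition \ref{P-4.2}(3)).

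Part (4) is not proved: you start several arguments, note each is problematic, and end by deferring to an unspecified citation. The paper's proof is short and direct: write $\Tr\,G_{\alpha,p}(A,B)=\bigl\|B^{p/2}(B^{-p/2}A^pB^{-p/2})^{1-\alpha}B^{p/2}\bigr\|_{1/p}^{1/p}$, use operator concavity of $t\mapsto t^p$ ($0<p\le1$) to get $B^{-p/2}(\lambda A_1+(1-\lambda)A_2)^pB^{-p/2}\ge\lambda B^{-p/2}A_1^pB^{-p/2}+(1-\lambda)B^{-p/2}A_2^pB^{-p/2}$, then apply $t\mapsto t^{1-\alpha}$ (operator monotone decreasing and operator convex since $-1<1-\alpha<0$) and conclude with the convexity of $\|\cdot\|_{1/p}^{1/p}$ on $\bM_n^+$. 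You should supply an argument of this kind rather than gesture at one.
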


\begin{proof}
(1)\enspace
By Theorem \ref{T-5.8}(ii) the assumption implies that
\[
\Tr\,R_{\alpha,1/\alpha}(A,B)\le\Tr\,G_{\alpha,p}(A,B)\le\Tr\,G_{\alpha,1}(A,B)
\]
for all $A,B\in\bM_n^{++}$, $n\in\bN$. This in particular yields that
\[
R_{\alpha,1/\alpha}(A,B)\prec_{\log}G_{\alpha,p}(A,B)\prec_{\log}G_{\alpha,1}(A,B),
\qquad A,B\in\bM_2^{++}.
\]
From the above two log-majorizations we must have $\alpha p\ge\max\{\alpha/2,\alpha-1\}$ by
Theorem \ref{T-3.2}(2) and $p\le1$ by Proposition \ref{P-3.11}(1). Hence the result follows.

(2)\enspace
Since $\Tr\,G_{2,p}=\Tr\,R_{2,p}$ (Remark \ref{R-2.4}(3)), we see by Theorem \ref{T-5.9}(2) that
$\Tr\,G_{2,p}$ is jointly convex if and only if $1/2\le p\le1$.

(3)\enspace
The `if' part is a general result for the operator perspective for operator convex functions on $(0,\infty)$
due to \cite{ENG,Effros}. For the `only if' part, a more general and stronger result was shown in
\cite[Proposition A.1]{HM}; see Remark \ref{R-5.17} below.

(4)\enspace
In view of Proposition \ref{P-2.1}(2) 
it suffices to show that for any fixed $B\in\bM_n^{++}$, $A\mapsto\Tr\,G_{\alpha,p}(A,B)$ is convex on
$\bM_n^{++}$. For any $A,B\in\bM_n^{++}$ we can write
\[
\Tr\,G_{\alpha,p}(A,B)=\big\|B^{p/2}(B^{-p/2}A^pB^{-p/2})^{1-\alpha}B^{p/2}\big\|_{1/p}^{1/p}.
\]
Now let $A_1,A_2,B\in\bM_n^{++}$ and $0<\lambda<1$. Since
$(\lambda A_1+(1-\lambda)A_2)^p\ge\lambda A_1^p+(1-\lambda)A_2^p$ thanks to $0<p\le1$, one has
\[
B^{-p/2}(\lambda A_1+(1-\lambda)A_2)^pB^{-p/2}
\ge\lambda B^{-p/2}A_1^pB^{-p/2}+(1-\lambda)B^{-p/2}A_2^pB^{-p/2}.
\]
Since $-1\le1-\alpha<0$, note that $x^{1-\alpha}$ ($x>0$) is operator monotone decreasing and operator
convex. Hence one has
\begin{align*}
&B^{p/2}\bigl(B^{-p/2}(\lambda A_1+(1-\lambda)A_2)^pB^{-p/2}\bigr)^{1-\alpha}B^{p/2} \\
&\quad\le B^{p/2}\bigl(\lambda B^{-p/2}A_1^pB^{-p/2}
+(1-\lambda)B^{-p/2}A_2^pB^{-p/2}\bigr)^{1-\alpha}B^{p/2} \\
&\quad\le\lambda B^{p/2}\bigl(B^{-p/2}A_1^pB^{-p/2}\bigr)^{1-\alpha}B^{p/2}
+(1-\lambda)B^{p/2}\bigl(B^{-p/2}A_2^pB^{-p/2}\bigr)^{1-\alpha}B^{p/2}.
\end{align*}
Since $\|\cdot\|_{1/p}^{1/p}$ is convex on $\bM_n^+$, it follows that
\[
\Tr\,G_{\alpha,p}(\lambda A_1+(1-\lambda)A_2,B)
\le\lambda\Tr\,G_{\alpha,p}(A_1,B)+(1-\lambda)\Tr\,G_{\alpha,p}(A_2,B),
\]
as desired.
\end{proof}

\begin{remark}\label{R-5.17}\rm
In view of the proof of \cite[Proposition A.1]{HM} together with \cite[Proposition 3.1]{HT},
Proposition \ref{P-5.16}(3) is in fact improved in such a way that either if $A\mapsto\Tr\,G_{\alpha,1}(A,B)$
on $\bM_2^{++}$ for any fixed $B\in\bM_2^{++}$ is convex or if $B\mapsto\Tr\,G_{\alpha,1}(A,B)$ on
$\bM_2^{++}$ for any fixed $A\in\bM_2^{++}$ is convex, then $1<\alpha\le2$.
\end{remark}

\begin{problem}\label{Q-5.18}\rm
No sufficient condition for the joint convexity of $\Tr\,G_{\alpha,p}$ is known except the particular cases in
(2) and (3) of Proposition \ref{P-5.16}. It is of our special concern whether or not $\Tr\,G_{\alpha,p}$ is jointly
convex for $1<\alpha<2$ and $1/2\le p<1$ (see remark (7) in Section \ref{Sec-6}).
\end{problem}

As for $\Tr\,SG_{\alpha,p}$ and $\Tr\,\widetilde SG_{\alpha,p}$ we have the following:

\begin{proposition}\label{P-5.19}
Let $\alpha\in(0,\infty)\setminus\{1\}$ and $p>0$.
\begin{itemize}
\item[(1)] Let $0<\alpha<1$. If $\Tr\,SG_{\alpha,p}$ is jointly concave, then
$p\le\min\bigl\{{1-\alpha\over\alpha},{\alpha\over1-\alpha}\bigr\}$. If $\Tr\,\widetilde SG_{\alpha,p}$ is jointly
concave, then $p\le1$.
\item[(2)] For any $\alpha>1$ and any $p>0$, neither $\Tr\,SG_{\alpha,p}$ nor $\Tr\,\widetilde SG_{\alpha,p}$
is jointly convex.
\end{itemize}
\end{proposition}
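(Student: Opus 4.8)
The plan is to derive everything from Theorem~\ref{T-5.8}, which converts joint concavity/convexity of $\Tr\,\cM_{\alpha,p}$ into the sandwich trace inequalities between $\Tr\,G_{\alpha,1}$, $\Tr\,\cM_{\alpha,p}$ and $\Tr\,R_{\alpha,1/\alpha}$, and then to feed those into the log-majorization characterizations of Section~\ref{Sec-3}. The bridge is the elementary observation, already used in the proof of Theorem~\ref{T-3.3}, that for $X,Y\in\bM_2^{++}$ with $\det X=\det Y$ one has $\Tr\,X\le\Tr\,Y$ iff $\lambda_1(X)\le\lambda_1(Y)$ iff $X\prec_{\log}Y$; since $\det\cM_{\alpha,p}(A,B)=\det\cN_{\alpha,q}(A,B)$ for all $A,B>0$ (Remark~\ref{R-2.4}(2)), the all-dimensions trace inequalities of Theorem~\ref{T-5.8}, specialized to $n=2$, become log-majorizations of the corresponding quasi-geometric means valid for all $A,B\in\bM_2^{++}$. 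From then on the proof is just a matter of citing the right statements of Section~\ref{Sec-3}.

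For (1), assume $\Tr\,SG_{\alpha,p}$ is jointly concave. Theorem~\ref{T-5.8}(i) gives $0<\alpha<1$ and, after the above reduction, $SG_{\alpha,p}(A,B)\prec_{\log}R_{\alpha,1/\alpha}(A,B)$ for all $A,B\in\bM_2^{++}$; by the equivalence (ii)$\Leftrightarrow$(iii) of Theorem~\ref{T-3.3}(1) with $q=1/\alpha$ this forces $p\alpha\le\min\{\alpha,1-\alpha\}$, i.e.\ $p\le\min\{1,(1-\alpha)/\alpha\}$. To reach the sharper stated bound I would then exploit $\Tr\,SG_{\alpha,p}=\Tr\,SG_{1-\alpha,p}$ (as $SG_{\alpha,p}=SG_{1-\alpha,p}$): joint concavity of $\Tr\,SG_{\alpha,p}$ is the same property as that of $\Tr\,SG_{1-\alpha,p}$, so running the same argument with $1-\alpha$ in place of $\alpha$ yields $p\le\min\{1,\alpha/(1-\alpha)\}$ as well, and the two combine to $p\le\min\{(1-\alpha)/\alpha,\alpha/(1-\alpha)\}$. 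For $\widetilde SG_{\alpha,p}$: joint concavity together with Theorem~\ref{T-5.8}(i) gives $\widetilde SG_{\alpha,p}(A,B)\prec_{\log}R_{\alpha,1/\alpha}(A,B)$ for all $A,B\in\bM_2^{++}$, and Theorem~\ref{T-3.6}(1) ((ii)$\Leftrightarrow$(iii)) with $q=1/\alpha$ gives $p\alpha\le\alpha$, i.e.\ $p\le1$. There is no $\alpha\leftrightarrow1-\alpha$ symmetry available for $\widetilde SG$, which is exactly why $p\le1$ is the best one can extract by this route.

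For (2), let $\alpha>1$ and suppose toward a contradiction that $\Tr\,SG_{\alpha,p}$ is jointly convex. By Theorem~\ref{T-5.8}(ii) and the reduction we obtain $R_{\alpha,1/\alpha}(A,B)\prec_{\log}SG_{\alpha,p}(A,B)$ for all $A,B\in\bM_2^{++}$, contradicting Theorem~\ref{T-3.4}(2) (applied with $q=1/\alpha$). If instead $\Tr\,\widetilde SG_{\alpha,p}$ is jointly convex, Theorem~\ref{T-5.8}(ii) gives $\widetilde SG_{\alpha,p}(A,B)\prec_{\log}G_{\alpha,1}(A,B)$ for all $A,B\in\bM_2^{++}$, contradicting Proposition~\ref{P-3.14}(1) (applied with $q=1$). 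Note that in each case one uses only the \emph{one} of the two bounds in Theorem~\ref{T-5.8}(ii) that has a ``no quasi-geometric log-majorization'' counterpart in Section~\ref{Sec-3} (the lower bound for $SG$, the upper bound for $\widetilde SG$); the other bound holds for small, resp.\ large, $p$ and gives nothing. I do not expect a genuine obstacle here: the only points needing care are the passage from the trace inequalities of Theorem~\ref{T-5.8} (which hold in all dimensions) to $2\times2$ log-majorizations, and remembering to use the $\alpha\leftrightarrow1-\alpha$ invariance of $SG_{\alpha,p}$ to obtain the sharp bound in (1).
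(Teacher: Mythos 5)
Your proof is correct and follows essentially the same route as the paper's: Theorem \ref{T-5.8} converts joint concavity/convexity into the sandwich trace inequalities, which on $\bM_2^{++}$ (where the two sides have equal determinant) become log-majorizations, and the necessity parts of Theorems \ref{T-3.3}(1), \ref{T-3.6}(1), \ref{T-3.4}(2) then pin down the parameters exactly as in the paper. Two small remarks. First, the identity you invoke for the symmetry step is not $SG_{\alpha,p}(A,B)=SG_{1-\alpha,p}(A,B)$ (false already for commuting arguments) but $SG_{\alpha,p}(A,B)=SG_{1-\alpha,p}(B,A)$; since joint concavity is preserved under swapping the two variables, your conclusion $p\le\min\bigl\{{1-\alpha\over\alpha},{\alpha\over1-\alpha}\bigr\}$ is unaffected. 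Second, for the joint convexity of $\Tr\,\widetilde SG_{\alpha,p}$ with $\alpha>1$ you derive the contradiction from the upper bound $\Tr\,\widetilde SG_{\alpha,p}\le\Tr\,G_{\alpha,1}$ via Proposition \ref{P-3.14}(1), whereas the paper quotes the lower bound together with Theorem \ref{T-3.7}(1); your choice is the sound one here, since the lower bound $R_{\alpha,1/\alpha}\prec_{\log}\widetilde SG_{\alpha,p}$ is actually attainable (Theorem \ref{T-3.7}(2) with $p\ge1$) and only the upper bound can fail --- and Proposition \ref{P-3.14}(1) is itself proved by chaining through $R_{\alpha,r}$ and Theorem \ref{T-3.7}(1), so the two arguments coincide in substance.
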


\begin{proof}
(1)\enspace
For $0<\alpha<1$, if $\Tr\,SG_{\alpha,p}$ is jointly concave, then by Theorem \ref{T-5.8}(i),
\[
\Tr\,G_{\alpha,1}(A,B)\le\Tr\,SG_{\alpha,p}(A,B)\le\Tr\,R_{\alpha,1/\alpha}(A,B)
\]
for all $A,B\ge0$ with $s(A)\ge s(B)$. The first inequality gives no restriction (see Proposition \ref{P-3.13}(1)),
but by Theorem \ref{T-3.3}(1) the second does the restriction $\alpha p\le\min\{\alpha,1-\alpha\}$ so that
$p\le\min\bigl\{1,{1-\alpha\over\alpha}\bigr\}$. One can replace $\alpha$ with $1-\alpha$ to have
$p\le\min\bigl\{1,{\alpha\over1-\alpha}\}$. Therefore,
$p\le\min\bigl\{{1-\alpha\over\alpha},{\alpha\over1-\alpha}\bigr\}$ must hold. Next, if
$\Tr\,\widetilde SG_{\alpha,p}$ is jointly concave, then
$\Tr\,\widetilde SG_{\alpha,p}(A,B)\le\Tr\,R_{\alpha,1/\alpha}(A,B)$ for all $A,B\ge0$ with $s(A)\ge s(B)$,
which gives $p\le1$ by Theorem \ref{T-3.6}(1).

(2)\enspace
For $\alpha>1$, assume that $\Tr\,SG_{\alpha,p}$ is jointly convex; then by Theorem \ref{T-5.8}(ii),
$\Tr\,R_{\alpha,1/\alpha}\le\Tr\,SG_{\alpha,p}$ on $\bM_2^{++}\times\bM_2^{++}$ must hold. But this
contradicts Theorem \ref{T-3.4}(2). Assume that $\Tr\,\widetilde SG_{\alpha,p}$ is jointly convex. Then we
must have $\Tr\,\widetilde SG_{\alpha,p}\le\Tr\,G_{\alpha,1}$ on $\bM_2^{++}\times\bM_2^{++}$,
which contradicts Proposition \ref{P-3.14}(1).
\end{proof}

\begin{problem}\label{Q-5.20}\rm
We have no sufficient condition for the joint concavity of $\Tr\,SG_{\alpha,p}$ or
$\Tr\,\widetilde SG_{\alpha,p}$, except for the particular case $\alpha=1/2$, the case that
$\Tr\,SG_{\alpha,p}=\Tr\,\widetilde SG_{\alpha,p}=\Tr\,R_{1/2,2p}$.
\end{problem}

\section{Concluding remarks}\label{Sec-6}

(1)\enspace
In the present paper we have followed the previous paper \cite{Hiai} to discuss the $\alpha$-weighted
quasi-geometric type matrix means $R_{\alpha,p}$, $G_{\alpha,p}$, $SG_{\alpha,p}$,
$\widetilde SG_{\alpha,p}$ and $LE_\alpha$ not only for $0<\alpha<1$ but also $\alpha>1$ as defined in
Section \ref{Sec-2}. We are concerned with log-majorizations $\cM_{\alpha,p}\prec_{\log}\cN_{\alpha,q}$ for
pairs $(\cM,\cN)$ from $\{R,G,SG,\widetilde SG,LE\}$ and for $\alpha\in(0,\infty)\setminus\{1\}$, $p>0$. Already
known cases are summarized in Theorem \ref{T-3.1}. In Sections \ref{Sec-3.1}--\ref{Sec-3.4} we have
examined unknown cases with the aim of finding the necessary and sufficient condition on $p,q,\alpha$
under which $\cM_{\alpha,p}(A,B)\prec_{\log}\cN_{\alpha,q}(A,B)$ holds for all $A,B>0$. But in many cases
we have only a certain necessary condition and/or a certain sufficient condition for that, and the problem is still
left open as explained in Problems \ref{Q-3.5}, \ref{Q-3.8}, \ref{Q-3.12} and \ref{Q-3.20}.

(2)\enspace
A natural and interesting question is to find $\cM,\cN$ from $\{R,G,SG,\widetilde SG,LE\}$ and $\alpha,p,q$
such that the following (a) fails to hold but (b) holds:
(a) $\cM_{\alpha,p}(A,B)\prec_{\log}\cN_{\alpha,q}(A,B)$ for all $A,B\in\bM_n^{++}$;
(b) $\Tr\,\cM_{\alpha,p}(A,B)\le\Tr\,\cN_{\alpha,q}(A,B)$ for all $A,B\in\bM_n^{++}$. Note that (a) and (b) are
equivalent when $n=2$ by Remark \ref{R-2.4}(4), and are also equivalent for all $n$ in certain cases of
$(\cM_{\alpha,p},\cN_{\alpha,q})$ due to Theorems \ref{T-3.2}, \ref{T-3.3}, \ref{T-3.6} and Proposition \ref{P-4.2}.

(3)\enspace
In a recent paper \cite{SWY} Seo, Wada and Yamazaki have,
appealing to the grand Furuta inequality \cite{Fu}, shown Proposition \ref{P-3.10}(1) and improved
Proposition \ref{P-3.10}(2) in such a way that if $0<\alpha\le1/2$ and $p/q\le2\alpha$, then
$\widetilde SG_{\alpha,p}(A,B)\prec_{\log}\widetilde SG_{\alpha,q}(A,B)$ for all $A,B>0$.
Moreover, it has been shown in \cite[Theorem 3.13]{SWY} that for some $\alpha\in(1/2,1)$ and for any
$p\in(0,1)$ there exist $A,B\in\bM_2^{++}$ such that
$\widetilde SG_{\alpha,p}(A,B)\not\prec_{\log}\widetilde SG_{\alpha,1}(A,B)$. Indeed, in view of
Theorem \ref{T-3.19} (with Theorem \ref{T-2.2}), the same holds true for any $\alpha\in(1/2,1)$. From this
and Proposition \ref{P-3.11}(3) we conclude that $\widetilde SG_{\alpha,p}\prec_{\log}\widetilde SG_{\alpha,q}$
fails to hold for both $p<q$ and $p>q$ when $1/2<\alpha<1$. Thus, concerning Problem \ref{Q-3.12} on
$\widetilde G_{\alpha,p}$ for $0<\alpha<1$, the remaining is whether or not
$\widetilde SG_{\alpha,p}\prec_{\log}\widetilde SG_{\alpha,q}$ holds if $0<\alpha<1/2$ and $0<p<q$,
extending the above mentioned case $p/q\le2\alpha$.

(4)\enspace
Another topic we have addressed is the joint concavity/convexity of the trace functions $\Tr\,\cM_{\alpha,p}$
for each $\cM$ from $\{R,G,SG,\widetilde SG,LE\}$. The cases of $\Tr\,R_{\alpha,p}$ ($\alpha>0$)
and $\Tr\,G_{\alpha,p}$ ($0<\alpha<1$) are already known \cite{Zh,Hi4}. In Section \ref{Sec-5} we have aimed
at characterizing the joint concavity/convexity of $\Tr\,\cM_{\alpha,p}$ for unknown cases. Our method is
based on the close connection between the joint concavity/convexity of geometric type trace functions and the
monotonicity under CPTP maps of the generated R\'enyi type divergences described in Theorem \ref{T-5.3}.
Appealing to theory of quantum $\alpha$-R\'enyi divergences we have obtained a necessary condition for
$\Tr\,\cM_{\alpha,p}$ to be jointly concave/convex. The condition is the sandwiched inequalities of
$\Tr\,\cM_{\alpha,p}$ with $\Tr\,R_{\alpha,1/\alpha}$ and $\Tr\,G_{\alpha,1}$ (see Theorem \ref{T-5.8}), which
are connected with the log-majorizations discussed in Section \ref{Sec-3}. However, we cannot obtain a
sufficient condition for the joint concavity/convexity of $\Tr\,\cM_{\alpha,p}$ in this way, while the necessary
condition obtained can be best possible, for example, as exemplified in Remark \ref{R-5.10}. Thus the
problem is still left open for $\Tr\,G_{\alpha,p}$ ($\alpha>1$), $\Tr\,SG_{\alpha,p}$ and
$\Tr\,\widetilde SG_{\alpha,p}$ as mentioned in Problems \ref{Q-5.18} and \ref{Q-5.20}.

(5)\enspace
We have used the Lie--Trotter--Kato product formula (Theorem \ref{T-2.2} and \cite[Theorem 2.3]{Hiai})
in the proofs of (1), (4) and (5) of Corollary \ref{C-3.18} and Theorem \ref{T-5.12}(1) for example. In fact,
in view of Proposition \ref{P-2.1}(2) we may assume that $A,B>0$ in the proofs of those, in which case
the proof of Theorem \ref{T-2.2} becomes much simpler. Note however that in the proof of
Proposition \ref{P-2.1}(2) we have used \cite[Lemma A.3]{Hiai} that is essential in the proof of
\cite[Theorems A.1 and 2.3]{Hiai}.

(6)\enspace
Related to Theorems \ref{T-5.3} and \ref{T-5.8}, in Appendix \ref{Sec-C} below we clarify the structure of
quantum $\alpha$-R\'enyi type divergences that satisfy the monotonicity under a subclass of quantum
channels consisting of quantum-classical and classical-quantum channels. At the moment we have no explicit
example of quantum $\alpha$-R\'enyi type divergence that is not monotone under all quantum channels but
monotone under the above subclass. Although Appendix \ref{Sec-C} has not been used in the main body of
this paper, it might be of independent interest from the point of view of quantum information.

(7)\enspace
It is well known \cite{KA,Effros,ENG,HM,HUW} that the operator perspective
\[
\cP_f(B,A):=A^{1/2}f(A^{-1/2}BA^{-1/2})A^{1/2}
\]
associated to a function $f$ on $(0,\infty)$ is jointly operator convex on $\bM_n^{++}\times\bM_n^{++}$,
$n\in\bN$, if $f$ is operator convex, while it is jointly operator concave if $f$ is operator monotone. Thus it
might be expected to obtain a general joint convexity theorem for $\Tr\,\cP_f(B^p,A^p)^{1/p}$ for certain
$p>0$, that is the convexity counterpart of Theorem \ref{T-5.1}, when $f$ is operator convex on $(0,\infty)$.
However, this is not possible except for $p=1$, in the generic case that $\cP_f$ is jointly operator convex.
Indeed, when $f$ is a linear function $f(x)=1-\alpha+\alpha x$ ($0<\alpha<1$), the joint convexity of
$\Tr\,\cP_f(B^p,A^p)^{1/p}=\Tr\,\cA_{\alpha,p}(A,B)$ restricts to $1\le p\le2$ by Proposition \ref{P-5.2}(2).
When $f(x)=x^\alpha$ ($1<\alpha\le2$), the joint convexity of $\Tr\,\cP_f(B^p,A^p)^{1/p}=\Tr\,G_{\alpha,p}$
restricts to $p\le1$ by Proposition \ref{P-5.16}(1). The intersection is only $p=1$. It might be possible that
$\Tr\,\cP_f(B^p,A^p)^{1/p}$ is jointly convex for a certain $p<1$ under the constraint of $f(0^+)=0$.

(8)\enspace
We have checked that all of $\cA_{\alpha,p}$ for $0<\alpha<1$ and $G_{\alpha,p}$, $R_{\alpha,p}$,
$LE_\alpha$ for $\alpha\in(0,\infty)\setminus\{1\}$, where $p>0$, are neither jointly operator concave
nor jointly operator convex, except for $\cA_{\alpha,1}$, $G_{\alpha,1}$ for $0<\alpha<1$ and
$G_{\alpha,1}$ for $1<\alpha\le2$. Therefore, two variable operator/matrix means other than Kubo--Ando's
operator means or operator perspectives for operator convex functions on $(0,\infty)$ can hardly be jointly
operator concave/convex. Indeed, this is the reason why we have treated the joint concavity/convexity of the
trace functions $\Tr\,\cM_{\alpha,p}$ in Section \ref{Sec-5}. Nevertheless, it might be interesting to consider
whether or not $\cM_{\alpha,p}$ are jointly concave/convex in other orderings such as the entrywise
eigenvalue order or the weak majorization (see \cite[Sec.~2.2]{Hiai}).

\subsection*{Declaration of competing interest}

The author has no competing interests to disclose.

\subsection*{Acknowledgements}

The author thanks Milan Mosonyi for invitation to the workshop at the Erd\H os Center in July, 2024 and for
valuable suggestions which helped to improve this paper.

\appendix

\section{Equality case in norm inequality for $G_{\alpha,p}$}\label{Sec-A}

\begin{theorem}\label{T-A.1}
Let $\alpha\in(0,2]\setminus\{1\}$ and $\|\cdot\|$ be a strictly increasing unitarily invariant norm on
$\bM_n$. Then for every $A,B\in\bM_n^{++}$ the following conditions are equivalent:
\begin{itemize}
\item[(i)] $\|G_{\alpha,p}(A,B)\|=\|G_{\alpha,q}(A,B)\|$ for some $0<p<q$;
\item[(ii)] $\|G_{\alpha,p}(A,B)\|=\|LE_\alpha(A,B)\|$ for all $p>0$;
\item[(iii)] $G_{\alpha,p}(A,B)=G_{\alpha,q}(A,B)$ for some $0<p<q$;
\item[(iv)] $G_{\alpha,p}(A,B)=LE_\alpha(A,B)$ for all $p>0$;
\item[(v)] $AB=BA$.
\end{itemize}
\end{theorem}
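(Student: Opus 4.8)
The strategy is to route every implication through $(v)$. The implications $(v)\Rightarrow(iv)$ (if $AB=BA$ then $A^p\#_\alpha B^p=A^{(1-\alpha)p}B^{\alpha p}$, whence $G_{\alpha,p}(A,B)=A^{1-\alpha}B^\alpha=LE_\alpha(A,B)$ for all $p>0$), $(iv)\Rightarrow(ii)$, $(iv)\Rightarrow(iii)$, $(iii)\Rightarrow(i)$, and $(ii)\Rightarrow(i)$ (take the exponents $p$ and $q$ in $(ii)$) are all immediate, so the whole theorem reduces to $(i)\Rightarrow(v)$. For $0<\alpha<1$ this is \cite[Theorem~3.1]{Hi1} (with the proof revisited in \cite[Appendix~A]{Hi3}); I describe an argument that also covers the new case $1<\alpha\le2$. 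Throughout I use a standard fact (cf.\ \cite[Sec.~2.2]{Hiai}): if $X\prec_{\log}Y$ in $\bM_n^{++}$ and $\|X\|=\|Y\|$ for some strictly increasing unitarily invariant norm, then $\lambda(X)=\lambda(Y)$. Indeed $X\prec_{\log}Y$ gives $\lambda(X)\prec_w\lambda(Y)$; equality of a strictly increasing norm promotes this to $\lambda(X)\prec\lambda(Y)$; and since $\det X=\det Y$, strict concavity of $\log$ forces $\lambda(X)=\lambda(Y)$.

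Assume $(i)$: $\|G_{\alpha,p}(A,B)\|=\|G_{\alpha,q}(A,B)\|$ with $0<p<q$. By Theorem~\ref{T-3.1}(b),(c), $G_{\alpha,r}(A,B)$ and $G_{\alpha,s}(A,B)$ are comparable in $\prec_{\log}$ for all $0<r\le s$, so each map $r\mapsto\prod_{i=1}^j\lambda_i(G_{\alpha,r}(A,B))$ ($1\le j\le n$) is monotone on $(0,\infty)$. Combining this with the fact above applied to $X=G_{\alpha,p}(A,B)$, $Y=G_{\alpha,q}(A,B)$ (which yields $\prod_{i=1}^j\lambda_i(G_{\alpha,p}(A,B))=\prod_{i=1}^j\lambda_i(G_{\alpha,q}(A,B))$ for every $j$), we conclude $\lambda(G_{\alpha,r}(A,B))$ is constant on $[p,q]$, hence so is every coefficient of the characteristic polynomial of $G_{\alpha,r}(A,B)$. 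Since $r\mapsto G_{\alpha,r}(A,B)=(A^r\#_\alpha B^r)^{1/r}$ is real-analytic on $(0,\infty)$ (it is built from $e^{r\log A}$, the analytic functional calculus on $\bM_n^{++}$, and $M\mapsto M^{1/r}=e^{(1/r)\log M}$), those coefficients are real-analytic in $r$ and, being constant on $[p,q]$, are constant on all of $(0,\infty)$. Thus $\lambda(G_{\alpha,r}(A,B))$ is independent of $r>0$; letting $r\searrow0$ and invoking Theorem~\ref{T-2.2} identifies the common value with $\lambda(LE_\alpha(A,B))$. Writing $K:=(1-\alpha)\log A+\alpha\log B$ and recalling $LE_\alpha(A,B)^r=e^{rK}$, we obtain
\[
\Tr\bigl(A^r\#_\alpha B^r\bigr)=\Tr\bigl(e^{rK}\bigr)\qquad\mbox{for all }r>0 .
\]

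It remains to deduce $AB=BA$ from this identity, and this is the step I expect to be the main obstacle. Both sides are real-analytic in $r$ near $0$ with value $n$ at $r=0$, so their Taylor coefficients agree. From $(A^r\#_\alpha B^r)^{-1}=A^{-r}\#_\alpha B^{-r}$ one sees $r\mapsto\log(A^r\#_\alpha B^r)$ is odd, so $\log(A^r\#_\alpha B^r)=rK+r^3G_3+O(r^5)$ with no even-order terms. A Baker--Campbell--Hausdorff computation on $A^r\#_\alpha B^r=A^{r/2}(A^{-r/2}B^rA^{-r/2})^\alpha A^{r/2}$ gives
\[
G_3=-\frac{\alpha(1-\alpha)}{12}\Bigl([\log A,[\log A,\log B]]+[\log B,[\log B,\log A]]\Bigr),\qquad\Tr G_3=0 ,
\]
so that expanding $\Tr\exp(rK+r^3G_3+O(r^5))$ the orders $r^0,\dots,r^3$ match $\Tr e^{rK}$ automatically, while the order $r^4$ gives
\[
\Tr\bigl(A^r\#_\alpha B^r\bigr)-\Tr\bigl(e^{rK}\bigr)=\Tr(KG_3)\,r^4+O(r^5)
=\frac{\alpha(1-\alpha)}{12}\,\Tr\bigl([\log A,\log B]^2\bigr)\,r^4+O(r^5),
\]
using $\Tr\bigl(K[\log A,[\log A,\log B]]\bigr)=-\alpha\Tr([\log A,\log B]^2)$ and the analogous identity with $\log A,\log B$ interchanged. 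By the displayed trace identity the $r^4$-coefficient vanishes; since $\Tr([\log A,\log B]^2)=-\|[\log A,\log B]\|_2^2\le0$ with equality only if $[\log A,\log B]=0$, and $\alpha(1-\alpha)\ne0$ for $\alpha\in(0,2]\setminus\{1\}$, we conclude $[\log A,\log B]=0$, i.e.\ $AB=BA$. The hard part is the careful fourth-order bookkeeping here (the BCH expansion of the twofold symmetric product and the commutator-trace identities); an alternative is to differentiate $r\mapsto\Tr G_{\alpha,r}(A,B)$ at an interior point of $[p,q]$, where the same commutator governs vanishing of the derivative. Once $(v)$ is established, $(iv)$ and the remaining conditions follow from $(v)\Rightarrow(iv)$, closing the cycle.
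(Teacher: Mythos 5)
Your proposal is correct and its overall architecture coincides with the paper's proof in Appendix \ref{Sec-A}: reduce everything to (i)$\implies$(v); upgrade the single norm equality to $\lambda(G_{\alpha,r}(A,B))$ being constant on $[p,q]$ (you do this via the monotonicity of the partial eigenvalue products coming from Theorem \ref{T-3.1}(b),(c) together with the strictly-increasing-norm lemma, where the paper cites \cite[Theorem 3.2]{KS} and \cite[Lemma 2.2]{Hi1} — these are the same mechanism); extend to all $r>0$ by real-analyticity; let $r\searrow0$ to identify the constant spectrum with that of $LE_\alpha(A,B)$; and finally extract commutativity from the fourth-order Taylor coefficient of $\Tr\,G_{\alpha,r}(A,B)^r-\Tr\,e^{rK}$ at $r=0$. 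The only genuine difference is in how that coefficient is computed. The paper expands $(e^{-tK/2}e^{tH}e^{-tK/2})^\alpha e^{tK}$ directly to fourth order (a computation it itself describes as quite tedious) and lands on $\tfrac{\alpha(\alpha-1)}{6}\bigl(\Tr\,H^2K^2-\Tr\,HKHK\bigr)$, whereas you exploit the inversion symmetry $(A^r\#_\alpha B^r)^{-1}=A^{-r}\#_\alpha B^{-r}$ to make $r\mapsto\log(A^r\#_\alpha B^r)$ odd, so that only the single third-order BCH term $G_3$ needs to be computed. I have checked your asserted formula $G_3=-\tfrac{\alpha(1-\alpha)}{12}\bigl([\log A,[\log A,\log B]]+[\log B,[\log B,\log A]]\bigr)$ by a two-stage symmetric BCH expansion; it is correct, and the resulting coefficient $\tfrac{\alpha(1-\alpha)}{12}\Tr\bigl([\log A,\log B]^2\bigr)$ agrees exactly with the paper's expression via $\Tr\,H^2K^2-\Tr\,HKHK=-\tfrac12\Tr([H,K]^2)$. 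So the step you flag as the main obstacle does go through, and your organization of it is arguably cleaner than the paper's; the only thing to add for a complete write-up is the BCH derivation of $G_3$ itself.
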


\begin{proof}
Let us prove the case $1<\alpha\le2$. As the idea of the proof of the case $0<\alpha<1$ in
\cite[Theorem 3.1]{Hi1} does not work well, our proof below is similar to that of \cite[Theorem 2.1]{Hi1}. Since
the implications (v)$\implies$(iv)$\implies$\{(ii), (iii)\}$\implies$(i) are clear, it suffices to prove that (i)$\implies$(v).
Assume that (i) holds for some $0<p<q$. Then by \cite[Theorem 3.2]{KS} we have
$\|G_{\alpha,p}(A,B)\|=\|G_{\alpha,r}(A,B)\|$ for all $r\in[p,q]$. (Here, recall that the matrix perspective
$A\natural_\beta B:=A^{1/2}(A^{-1/2}BA^{-1/2})^\beta A^{1/2}$ ($A,B>0$) for $\beta\in[-1,0)$ was treated in
\cite{KS}, while we note that $A\natural_\beta B=B\natural_{1-\beta}A$ and $1<1-\beta\le2$.) Hence it follows
from \cite[Theorem 3.1]{KS} and \cite[Lemma 2.2]{Hi1} that
$\lambda\bigl(G_\alpha(A^p,B^p)^{1/p}\bigr)=\lambda\bigl(G_\alpha(A^r,B^r)^{1/r}\bigr)$ for all $r\in[p,q]$. Hence,
letting $A_0:=A^p$, $B_0:=B^p$ and $t:=r/p$ we have
\begin{align}\label{F-A.1}
\Tr\,G_\alpha(A_0,B_0)=\Tr\,G_\alpha(A_0^t,B_0^t)^{1/t},\qquad1\le t\le q/p.
\end{align}
Since the function $t\mapsto G_\alpha(A_0^t,B_0^t)^{1/t}$ is analytic in $(0,\infty)$, it follows that \eqref{F-A.1}
extends to all $t\in(0,\infty)$. Now write $A_0=e^H$ and $B_0=e^K$ with Hermitian $H,K\in\bM_n$. Since
$G_\alpha(A_0^t,B_0^t)^{1/t}\to e^{\alpha H+(1-\alpha)K}$ as $t\to0$, we obtain
\[
\Tr\,G_\alpha(e^{tH},e^{tK})^{1/t}=\Tr\,e^{\alpha H+(1-\alpha)K},\qquad t>0.
\]
By \cite[Theorem 3.1]{KS} and \cite[Lemma 2.2]{Hi1} again this implies that
$\lambda\bigl(G_\alpha(e^{tH}t,e^{tK})^{1/t}\bigr)=\lambda(e^{\alpha H+(1-\alpha)K})$ for all $t>0$, so that
\begin{align}\label{F-A.2}
\Tr\,G_\alpha(e^{tH},e^{tK})=\Tr\,e^{t(\alpha H+(1-\alpha)K)},\qquad t\in(-\infty,\infty).
\end{align}

Below we compute the coefficients up to the fourth of the Taylor expansion in $t$ of the LHS of \eqref{F-A.2}.
From the Taylor expansions of $e^{tH}$ and $e^{-tK/2}$ one can write
\[
e^{-tK/2}e^{tH}e^{-tK/2}=I+tX_1+t^2X_2+t^3X_3+t^4X_4+\cdots,
\]
where
\begin{equation}\label{F-A.3}
\begin{aligned}
X_1&:=H-K, \\
X_2&:={(H-K)^2\over2}, \\
X_3&:={H^3\over6}-{H^2K+KH^2\over4}+{HK^2+2KHK+K^2H\over8}-{K^3\over6}, \\
X_4&:={H^4\over24}-{H^3K+KH^3\over12}+{H^2K^2+2KH^2K+K^2H^2\over16} \\
&\qquad-{HK^3+3KHK^2+3K^2HK+K^3H\over48}+{K^4\over24}.
\end{aligned}
\end{equation}
Using
\[
(1+x)^\alpha=1+\alpha x+{\alpha(\alpha-1)\over2}x^2+{\alpha(\alpha-1)(\alpha-2)\over6}x^3
+{\alpha(\alpha-1)(\alpha-2)(\alpha-3)\over24}x^4+\cdots,
\]
one can write
\[
(e^{-tK/2}e^{tH}e^{-tK/2})^\alpha=I+tY_1+t^2Y^2+t^3Y_3+t^4Y_4+\cdots,
\]
where
\begin{equation}\label{F-A.4}
\begin{aligned}
Y_1&:=\alpha X_1, \\
Y_2&:=\alpha X_2+{\alpha(\alpha-1)\over2}X_1^2, \\
Y_3&:=\alpha X_3+{\alpha(\alpha-1)\over2}(X_1X_2+X_2X_1)+{\alpha(\alpha-1)(\alpha-2)\over6}X_1^3, \\
Y_4&:=\alpha X_4+{\alpha(\alpha-1)\over2}(X_1X_3+X_2^2+X_3X_1) \\
&\qquad+{\alpha(\alpha-1)(\alpha-2)\over6}(X_1^2X_2+X_1X_2X_1+X_2X_1^2)
+{\alpha(\alpha-1)(\alpha-2)(\alpha-3)\over24}X_1^4.
\end{aligned}
\end{equation}
Therefore, one can write
\[
\Tr\,G_\alpha(e^{tH},e^{tK})=\Tr(e^{tK/2}e^{tH}e^{-tK/2})^\alpha e^{tK}
=\Tr\,I+tz_1+t^2z_2+t^3z_3+t^4z_4+\cdots,
\]
where
\begin{equation}\label{F-A.5}
\begin{aligned}
z_1&:=\Tr(Y_1+K), \\
z_2&:=\Tr\biggl(Y_2+Y_1K+{K^2\over2}\biggr), \\
z_3&:=\Tr\biggl(Y_3+Y_2K+Y_1{K^2\over2}+{K^3\over6}\biggr), \\
z_4&:=\Tr\biggl(Y_4+Y_3K+Y_2{K^2\over2}+Y_1{K^3\over6}+{K^4\over24}\biggr).
\end{aligned}
\end{equation}
Direct though quite tedious computations from \eqref{F-A.3}--\eqref{F-A.5} give
\begin{align}
z_1&:=\Tr(\alpha H+(1-\alpha)K), \nonumber\\
z_2&:={1\over2}\Tr(\alpha H+(1-\alpha)K)^2, \nonumber\\
z_3&:={1\over6}\Tr(\alpha H+(1-\alpha)K)^3, \nonumber\\
z_4&:={1\over24}\Tr\Bigl(\alpha^4H^4+4\alpha^3(1-\alpha)H^3K
+4\alpha(\alpha-1)(\alpha^2-\alpha+1)H^2K^2 \nonumber\\
&\qquad\qquad+2\alpha(\alpha-1)(\alpha^2-\alpha-2)HKHK+4\alpha(1-\alpha)^3HK^3
+(1-\alpha)^4K^4\Bigr). \label{F-A.6}
\end{align}
Comparing the Taylor coefficients of $t^4$ of both sides of \eqref{F-A.2}, we have
\begin{align*}
24z_4&=\Tr(\alpha H+(1-\alpha)K)^4 \\
&=\Tr\Bigl(\alpha^4H^4+4\alpha^3(1-\alpha)H^3K+4\alpha^2(1-\alpha)^2H^2K^2 \\
&\qquad\quad+2\alpha^2(1-\alpha)^2HKHK+4\alpha(1-\alpha)^3HK^3+(1-\alpha)^4K^4\Bigr).
\end{align*}
From this and \eqref{F-A.6} we finally arrive at
\[
\Tr\,H^2K^2=\Tr\,HKHK,
\]
which is equivalently written as $\Tr(HK-KH)^*(HK-KH)=0$. Hence $HK=KH$, so that $A_0B_0=B_0A_0$,
equivalently $AB=BA$.
\end{proof}

\section{Proof of Theorem \ref{T-5.3}}\label{Sec-B}

\begin{proof}
(i)$\implies$(ii) under [(b), (c)].\enspace
Define a CPTP map $\Phi:\bM_{2n}=\bM_n\otimes\bM_2\to\bM_n$ by
\[
\Phi\biggl(\begin{bmatrix}X_{11}&X_{12}\\X_{21}&X_{22}\end{bmatrix}\biggr):=X_{11}+X_{22},
\qquad X_{ij}\in\bM_n.
\]
For any $A_i,B_i\in\dom_n\,Q$, $i=1,2$, and $\lambda\in(0,1)$ set
$A:=\lambda A_1\oplus(1-\lambda)A_2$ and $B:=\lambda B_1\oplus(1-\lambda)B_2$ in
$\bM_{2n}$. We then have
\begin{align*}
&Q(\lambda A_1+(1-\lambda)A_2,\lambda B_1+(1-\lambda)B_2) \\
&\quad=Q(\Phi(A),\Phi(B)) \\
&\quad\ge Q(\lambda A_1\oplus(1-\lambda)A_2,\lambda B_1\oplus(1-\lambda)B_2)
\qquad\mbox{(by (i))} \\
&\quad=\lambda Q(A_1,B_1)+(1-\lambda)Q(A_2,B_2)\qquad\mbox{(by (c), (b))}.
\end{align*}
The proof of (i$'$)$\implies$(ii$'$) is similar.

(ii)$\implies$(i) under [(a), (d), (e)].\enspace
Let $\Phi:\bM_n\to\bM_m$ be a CPTP map. According to the representation theorem of CPTP maps
(see, e.g., \cite[Theorem 5.1]{Ha}), one can choose an $l\in\bN$, a rank one projection
$\eta\in\bM_l\otimes\bM_m$ and a unitary $V\in\bM_n\otimes\bM_l\otimes\bM_m$ such that
\begin{align}\label{F-B.3}
\Phi(X)=\Tr_{nl}V(X\otimes\eta)V^*,\qquad X\in\bM_n,
\end{align}
where $\Tr_{nl}:\bM_n\otimes\bM_l\otimes\bM_m\to\bM_m$ is the partial trace. Let
$\tau_0:=(nl)^{-1}I_{nl}\in\bM_n\otimes\bM_l$. Then $\tau_0\otimes\Tr_{nl}$ is the conditional expectation from
$\bM_n\otimes\bM_l\otimes\bM_m$ onto $I_{nl}\otimes\bM_m$ with respect to the trace, and it is well known
(see, e.g., \cite[Sec.~7]{Ru}) that for every $Z\in\bM_n\otimes\bM_l\otimes\bM_m$,
\begin{align}\label{F-B.4}
\tau_0\otimes\Tr_{nl}(Z)=(nl)^{-2}\sum_{\mu,\nu=1}^{nl}
(S^\mu\otimes I_m)(W^\nu\otimes I_m)Z(W^{*\mu}\otimes I_m)(S^{*\nu}\otimes I_m),
\end{align}
where $S$ and $W$ are the Weyl--Heisenberg matrices in $\bM_d$, $d:=nl$, i.e., $S_{jk}:=\delta_{j+1,k}$
(with $j+1$ mod $d$) and $W_{jk}:=\omega^j\delta_{jk}$ with $\omega:=e^{2\pi i/d}$. Note that
$U_{\mu\nu}:=(S^\mu\otimes I_m)(W^\nu\otimes I_m)$ is a unitary and
$U_{\mu\nu}^*=(W^{*\mu}\otimes I_m)(S^{*\nu}\otimes I_m)$. By \eqref{F-B.3} and \eqref{F-B.4} we have
\begin{align}\label{F-B.5}
\tau_0\otimes\Phi(X)=(nl)^{-2}\sum_{\mu,\nu=1}^{nl}U_{\mu\nu}V(X\otimes\eta)V^*U_{\mu\nu}^*,
\qquad X\in\bM_n.
\end{align}
Therefore,
\begin{align*}
Q(\Phi(A),\Phi(B))&=Q(\tau_0\otimes\Phi(A),\tau_0\otimes\Phi(B))
\qquad\mbox{(by (d), (a))} \\
&=Q\Biggl((nl)^{-2}\sum_{\mu,\nu=1}^{nl}U_{\mu\nu}V(A\otimes\eta)V^*U_{\mu\nu}^*,
(nl)^{-2}\sum_{\mu,\nu=1}^{nl}U_{\mu\nu}V(B\otimes\eta)V^*U_{\mu\nu}^*\Biggr) \\
&\ge(nl)^{-2}\sum_{\mu,\nu=1}^{nl}Q\bigl(U_{\mu\nu}V(A\otimes\eta)V^*U_{\mu\nu}^*,
U_{\mu\nu}V(B\otimes\eta)V^*U_{\mu\nu}^*\bigr)\qquad\mbox{(by (ii))} \\
&=Q(A\otimes\eta,B\otimes\eta)\qquad\mbox{(by (e))} \\
&=Q(A,B)\qquad\mbox{(by (d), (a))}.
\end{align*}
The proof of (ii$'$)$\implies$(i$'$) under [(a), (d), (e)] is similar.
\end{proof}

\section{Monotonicity under restricted CPTP maps}\label{Sec-C}

Let $\alpha\in(0,\infty)\setminus\{1\}$ and
$Q_\alpha:(A,B)\in\dom\,Q_\alpha\mapsto Q_\alpha(A,B)\in[0,+\infty)$ be a function with the domain
$\dom\,Q_\alpha$ given in \eqref{F-5.1}, where $\dom\,Q_\alpha=\bigsqcup_n(\bM_n^+\times\bM_n^+)_\ge$
when $\alpha>1$. Assume that $Q_\alpha$ satisfies properties (d) and (e) in Section \ref{Sec-5.1} and
\begin{align}\label{F-C.1}
Q_\alpha(\diag(a),\diag(b))=\sum_{i=1}^na_i^{1-\alpha}b_i^\alpha
\end{align}
for all $a,b\in[0,\infty)^n$, $n\in\bN$, such that $(\diag(a),\diag(b))\in\dom\,Q_\alpha$. Define the quantum
divergence of R\/enyi type for $A,B\in\bM_n^+$, $n\in\bN$, with $B\ne0$ as in \eqref{F-5.7} by
\begin{align}\label{F-C.2}
D^{Q_\alpha}(B\|A):=\begin{cases}
{1\over\alpha-1}\log{Q_\alpha(A,B)\over\Tr\,B} &
\text{if $(A,B)\in\dom Q_\alpha$}, \\ +\infty. & \text{otherwise}
\end{cases}
\end{align}
For example, if $\cM_{\alpha,p}$ is any of $R_{\alpha,p}$, $G_{\alpha,p}$, $SG_{\alpha,p}$,
$\widetilde SG_{\alpha,p}$ and $LE_\alpha$ for $\alpha\in(0,\infty)\setminus\{1\}$ and $p>0$, then
$Q_\alpha:=\Tr\,\cM_{\alpha,p}$ meets the above requirements. In Theorem \ref{T-5.8} four trace
inequalities have appeared as necessary conditions for $\Tr\,\cM_{\alpha,p}$ to be jointly concave or
jointly convex. In the next theorem we show what is meant by each of those inequalities for more general
$Q_\alpha$ stated above.

We consider two special classes of quantum channels (CPTP maps). If $\Phi:\bM_n\to\bM_m$ is a positive
trace-preserving map such that the range of $\Phi$ is included in a commutative subalgebra of $\bM_m$
is called a \emph{quantum-classical channel}. In this case, there are orthogonal projections $(P_i)_{i=1}^k$
in $\bM_m$ such that $\sum_{i=1}^kP_i=I_m$ and the range of $\Phi$ is included in
$\bigoplus_{i=1}^k\bC P_i$. Then we have a POVM $(M_i)_{i=1}^k$ on $\bC^n$ (see \eqref{F-5.3}) such that
$\Phi(A)=\sum_{i=1}^k(\Tr\,M_iA/\Tr\,P_i)P_i$ for $A\in\bM_n$. In this way, a quantum-classical channel is
essentially a POVM. The other way around, a positive trace-preserving map $\Phi:\bC^n\to\bM_m$ is called
a \emph{classical-quantum channel}. Those quantum-classical and classical-quantum channels are
automatically CPTP maps.

\begin{theorem}\label{T-C.1}
Let $Q_\alpha$ be as mentioned above.
\begin{itemize}
\item[(1)] Let $0\le\alpha<1$. Then, for the following conditions (i)--(iii), we have
(i)$\implies$(ii)$\implies$(iii), and if $1/2\le\alpha<1$ then (i)--(iii) are equivalent.
\begin{itemize}
\item[(i)] $Q_\alpha(A,B)\le\Tr\,R_{\alpha,1/\alpha}(A,B)$ for all $(A,B)\in\dom\,Q_\alpha$.
\item[(ii)] $Q_\alpha(\Phi(A),\Phi(B))\ge Q_\alpha(A,B)$ for all $(A,B)\in\dom_nQ_\alpha$ and for any
quantum-classical channel $\Phi:\bM_n\to\bM_m$, $n,m\in\bN$.
\item[(iii)] $Q_\alpha(A,\cE_A(B))\ge Q_\alpha(A,B)$ for all $(A,B)\in\dom\,Q_\alpha$, where $\cE_A$ is
the pinching with respect to $A$.
\end{itemize}

\item[(2)] Let $1<\alpha<\infty$. The following conditions (i)--(iii) are equivalent:
\begin{itemize}
\item[(i)] $Q_\alpha(A,B)\ge\Tr\,R_{\alpha,1/\alpha}(A,B)$ for all $(A,B)\in\dom\,Q_\alpha$.
\item[(ii)] $Q_\alpha(\Phi(A),\Phi(B))\le Q_\alpha(A,B)$ for all $(A,B)\in\dom_nQ_\alpha$ and for any
quantum-classical channel $\Phi:\bM_n\to\bM_n$, $n,m\in\bN$.
\item[(iii)] $Q_\alpha(A,\cE_A(B))\le Q_\alpha(A,B)$ for all $(A,B)\in\dom\,Q_\alpha$.
\end{itemize}

\item[(3)] Let $0<\alpha<1$. The following conditions (i) and (ii) are equivalent:
\begin{itemize}
\item[(i)] $Q_\alpha(A,B)\ge\Tr\,G_{\alpha,1}(A,B)$ for all $(A,B)\in\dom\,Q_\alpha$.
\item[(ii)] $Q_\alpha(\Phi(a),\Phi(b))\ge Q_\alpha(\diag(a),\diag(b))$ for all $a,b\in[0,\infty)^n$ with
$(\diag(a),\diag(b))\in\dom\,Q_\alpha$ and for any classical-quantum channel $\Phi:\bC^n\to\bM_m$,
$n,m\in\bN$.
\end{itemize}

\item[(4)] Let $1<\alpha<\infty$. Then, for the following conditions (i) and (ii), we have (ii)$\implies$(i), and if
$1<\alpha\le2$ then (i) and (ii) are equivalent.
\begin{itemize}
\item[(i)] $Q_\alpha(A,B)\le\Tr\,G_{\alpha,1}(A,B)$ for all $(A,B)\in\dom\,Q_\alpha$.
\item[(ii)] $Q_\alpha(\Phi(a),\Phi(b))\le Q_\alpha(\diag(a),\diag(b))$ for all $a,b\in[0,\infty)^n$ with
$(\diag(a),\diag(b))\in\dom\,Q_\alpha$ and for any classical-quantum channel $\Phi:\bC^n\to\bM_m$,
$n,m\in\bN$.
\end{itemize}
\end{itemize}
\end{theorem}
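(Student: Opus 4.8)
The plan is to recast all four parts in the language of the R\'enyi-type divergence $D^{Q_\alpha}$ attached to $Q_\alpha$ through \eqref{F-C.2}, and then to quote the structure theory of the sandwiched divergence $\widetilde D_\alpha=D^{R_{\alpha,1/\alpha}}$ and of the maximal divergence $D_\alpha^{\max}$ gathered in Theorem \ref{T-5.7}. The elementary engine is that $Q_\alpha$ degenerates to the classical R\'enyi quantity on the inputs that actually occur: if $\Phi:\bM_n\to\bM_m$ is quantum-classical with associated POVM $(M_i)$, then $\Phi(A)$ and $\Phi(B)$ commute, so by properties (d), (e) and \eqref{F-C.1} one gets $Q_\alpha(\Phi(A),\Phi(B))=\sum_i(\Tr M_iA)^{1-\alpha}(\Tr M_iB)^\alpha$, and likewise $Q_\alpha(\diag(a),\diag(b))=\sum_ia_i^{1-\alpha}b_i^\alpha$ straight from \eqref{F-C.1}. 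Keeping track of the sign of $\tfrac{1}{\alpha-1}$ (which reverses at $\alpha=1$ and carries most of the routine bookkeeping), condition (i) of each part becomes a comparison between $D^{Q_\alpha}$ and $\widetilde D_\alpha$ (Parts (1), (2)) or between $D^{Q_\alpha}$ and $D^{G_{\alpha,1}}$ (Parts (3), (4)); condition (ii) of every part becomes precisely the monotonicity inequality $D^{Q_\alpha}(\Phi(B)\|\Phi(A))\le D^{Q_\alpha}(B\|A)$ restricted to the channel class in question; and (iii) in Parts (1), (2) is the same statement with $\Phi$ the pinching $\cE_A$, using $\cE_A(A)=A$.

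For the quantum-classical side (Parts (1) and (2)) I would prove (i)$\Rightarrow$(ii) by combining (i) with the chain $D_\alpha^\cl\bigl((\Tr M_iB)_i\|(\Tr M_iA)_i\bigr)\le D_\alpha^{\meas}(B\|A)\le\overline D_\alpha^{\meas}(B\|A)$ valid for a single POVM, where for $\alpha\ge1/2$ the last quantity equals $\widetilde D_\alpha(B\|A)$ by Theorem \ref{T-5.7}(3) (for $0<\alpha<1/2$ one keeps only the bound of Theorem \ref{T-5.7}(4), which is why the full equivalence is not claimed there). For (ii)$\Rightarrow$(iii) I would specialize (ii) to the rank-one von Neumann measurement in an eigenbasis of $A$ chosen to refine $\cE_A$ and to diagonalize $\cE_A(B)$ within each $A$-eigenspace; for this $\Phi$ one has $\Phi(A)=A$ and $\Phi(B)=\cE_A(B)$, hence $Q_\alpha(\Phi(A),\Phi(B))=Q_\alpha(A,\cE_A(B))$ and (iii) drops out. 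For (iii)$\Rightarrow$(i) I would run the pinching/regularization argument: apply (iii) to $A^{\otimes m},B^{\otimes m}$, use multiplicativity (d), take the $m$-th root, and let $m\to\infty$; the left side tends to $\Tr R_{\alpha,1/\alpha}(A,B)$ by the identity $\lim_mm^{-1}D_\alpha^\cl\bigl(\cE_{A^{\otimes m}}(B^{\otimes m})\|A^{\otimes m}\bigr)=\widetilde D_\alpha(B\|A)$ of Theorem \ref{T-5.7}(3), which again restricts this converse to $\alpha\ge1/2$. Throughout the $\alpha>1$ case one must check that the support condition $s(A)\ge s(B)$ survives pinching, tensor powers and the auxiliary measurements; it does, because $s(B)\le s(A)$ forces $s(\cE_A(B))\le s(A)$.

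For the classical-quantum side (Parts (3) and (4)) I would use $D^{G_{\alpha,1}}=D_\alpha^{\max}$ for $0<\alpha\le2$ and $D_\alpha^{\max}\le D^{G_{\alpha,1}}$ (strictly) for $\alpha>2$ from Theorem \ref{T-5.7}(1), (2), together with the fact that the infimum \eqref{F-5.2} defining $D_\alpha^{\max}$ is exactly an optimization over reverse tests, i.e., over classical-quantum channels. For (i)$\Rightarrow$(ii): a classical-quantum $\Phi$ with inputs $a,b$ makes $(\Phi,a,b)$ a reverse test for $(\Phi(a),\Phi(b))$, so $D_\alpha^{\max}(\Phi(b)\|\Phi(a))\le D_\alpha^\cl(b\|a)=D^{Q_\alpha}(\diag(b)\|\diag(a))$; combining with (i) in the form $D^{Q_\alpha}\le D^{G_{\alpha,1}}=D_\alpha^{\max}$ (legitimate for $\alpha\le2$) gives (ii). For the converse: by Matsumoto's theory there is a minimal reverse test $(\Gamma,a,b)$ with $\Gamma(a)=A$, $\Gamma(b)=B$ and $D_\alpha^\cl(b\|a)=D_\alpha^{\max}(B\|A)$ (the attainment already used in the proof of Theorem \ref{T-5.8}); applying (ii) to $\Gamma$ bounds $Q_\alpha(A,B)$ by $Q_\alpha(\diag(a),\diag(b))$ on the required side, and reinserting $D_\alpha^\cl(b\|a)=D_\alpha^{\max}(B\|A)$ together with $D_\alpha^{\max}\le D^{G_{\alpha,1}}$ produces (i). The strictness $D_\alpha^{\max}<D^{G_{\alpha,1}}$ for $\alpha>2$ is precisely why Part (4) states only (ii)$\Rightarrow$(i) in general and full equivalence for $1<\alpha\le2$.

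The step I expect to be the real obstacle is not any single implication but the reliance on two non-formal external inputs: the pinching identity of Theorem \ref{T-5.7}(3) (Hayashi; Mosonyi--Ogawa), which powers (iii)$\Rightarrow$(i) and is exactly what breaks at $\alpha=1/2$, explaining the asymmetry between the two regimes in Part (1); and the existence of a minimal reverse test attaining \eqref{F-5.2} (Matsumoto), which powers (ii)$\Rightarrow$(i) in Parts (3), (4). Once those two facts, together with the classical-collapse identity for $Q_\alpha$, are granted, everything else — the sign bookkeeping at $\alpha=1$, the verification via (e) and \eqref{F-C.1} that the chosen von Neumann measurement reproduces $Q_\alpha(A,\cE_A(B))$ after simultaneous diagonalization, and the support checks in the $\alpha>1$ regime — is routine.
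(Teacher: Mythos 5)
Your proposal follows the paper's own proof essentially step for step: in parts (1)--(2) you prove (i)$\Rightarrow$(ii) by collapsing $Q_\alpha$ to the classical R\'enyi quantity on the commuting outputs of a quantum-classical channel and comparing $D^{Q_\alpha}$ with $\widetilde D_\alpha$, (ii)$\Rightarrow$(iii) by taking the rank-one pinching in a basis simultaneously diagonalizing $A$ and $\cE_A(B)$, and (iii)$\Rightarrow$(i) by the tensor-power pinching limit of Theorem \ref{T-5.7}(3); in parts (3)--(4) you use that a classical-quantum channel is exactly a reverse test together with Theorem \ref{T-5.7}(1)--(2), the only cosmetic difference being that you invoke the attaining minimal reverse test where the paper simply takes the infimum over all reverse tests. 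The one point worth flagging is your parenthetical about $0<\alpha<1/2$ in part (1): Theorem \ref{T-5.7}(4) gives $\widetilde D_\alpha\le D_\alpha^\meas$, which is the \emph{wrong} direction for passing from (i) to (ii) in that range, so your chain genuinely establishes (i)$\Rightarrow$(ii) only for $1/2\le\alpha<1$ --- but the paper's proof silently relies on the same monotonicity of $\widetilde D_\alpha$ under measurements and is subject to exactly the same caveat, so this is not a gap relative to the paper.
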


\begin{proof}
We give the proofs of (1) and (4) only; those of (2) and (3) are similar to (1) and (4), respectively. First note
that $Q_\alpha(A,0)=0$ for all $A\ge0$. In fact, by property (e) and \eqref{F-C.1} we have
$Q_\alpha(A,0)=Q_\alpha(\diag(\lambda(A)),\diag(0))=0$, where $\lambda(A)$ is the eigenvalue vector of
$A$. Since $R_{\alpha,1/\alpha}(A,0)=G_{\alpha,1}(A,0)=0$ as well, we may always assume $B\ne0$ and
$b\ne0$ in the following.

(1)\enspace
(i)$\implies$(ii).\enspace
For every $(A,B)\in\dom\,Q_\alpha$, $B\ne0$, we have
$\widetilde D_\alpha(B\|A)=D^{R_{\alpha,1/\alpha}}(B\|A)\le D^{Q_\alpha}(B\|A)$ by (i).
If $\Phi$ is a quantum-classical channel, then
\[
D^{Q_\alpha}(\Phi(B)\|\Phi(A))=D_\alpha^\cl(\Phi(B)\|\Phi(A))=\widetilde D_\alpha(\Phi(B)\|\Phi(A))
\le\widetilde D_\alpha(B\|A)\le D^{Q_\alpha}(B\|A).
\]
This implies (ii).

(ii)$\implies$(iii).\enspace
For any $(A,B)\in\dom_nQ_\alpha$, since $A\cE_A(B)=\cE_A(B)A$, we can choose an orthonormal basis
$(e_i)_{i=1}^n$ of $\bC^n$ such that $A=\sum_{i=1}^na_i|e_i\>\<e_i|$ and
$\cE_A(B)=\sum_{i=1}^nb_i|e_i\>\<e_i|$. Let $\Phi:\bM_n\to\bM_n$ be the pinching by the rank one
projections $|e_i\>\<e_i|$, $1\le i\le n$. Then $\Phi$ is a quantum-classical channel. Hence thanks to (e) we
have
\[
Q_\alpha(A,\cE_A(B))=Q_\alpha(\diag(a),\diag(b))=Q_\alpha(\Phi(A),\Phi(B))\ge Q_\alpha(A,B).
\]

(iii)$\implies$(i).\enspace
Assume that $1/2\le\alpha<1$. For every $(A,B)\in\dom\,Q_\alpha$, $B\ne0$, we have
\begin{align*}
D^{Q_\alpha}(B\|A)={1\over m}D^{Q_\alpha}(B^{\otimes m}\|A^{\otimes m})
\ge{1\over m}D^{Q_\alpha}(\cE_{A^{\otimes m}}(B^{\otimes m})\|A^{\otimes m})
={1\over m}D_\alpha^\cl(\cE_{A^{\otimes m}}(B^{\otimes m})\|A^{\otimes m}),
\end{align*}
where the first equality is by definition \eqref{F-C.2} and property (d), the inequality is due to (iii), and the last
equality is by (e) and \eqref{F-C.1}. Since $1/2\le\alpha<1$, letting $m\to\infty$ (see definition \eqref{F-5.4})
and applying Theorem \ref{T-5.7}(3) we have
$D^{Q_\alpha}(B\|A)\ge\widetilde D_\alpha(B\|A)=D^{R_{\alpha,1/\alpha}}(B\|A)$, which implies (i).

(4)\enspace
(ii)$\implies$(i).\enspace
Let $(A,B)\in\dom\,Q_\alpha$, $B\ne0$, and let $(\Gamma,a,b)$ be a reverse
test for $(A,B)$ (see the definition just after \eqref{F-5.2}). Then by (ii) and \eqref{F-C.1} we have
\[
D^{Q_\alpha}(B\|A)=D^{Q_\alpha}(\Gamma(b)\|\Gamma(a))\le D^{Q_\alpha}(\diag(b)\|\diag(a))
=D_\alpha^\cl(b\|a).
\]
Therefore, it follows from \eqref{F-5.2} and Theorem \ref{T-5.7}(1) and (2) that
$D^{Q_\alpha}(B\|A)\le D_\alpha^{\max}(B\|A)\le D^{G_{\alpha,1}}(B\|A)$, implying (i).

(i)$\implies$(ii).\enspace
Assume that $1<\alpha\le2$. Let $a,b\in[0,\infty)^n$, $b\ne0$, with $(\diag(a),\diag(b))\in\dom\,Q_\alpha$.
For any classical-quantum channel $\Phi:\bC^n\to\bM_m$ it follows from (i) and Theorem \ref{T-5.7}(1) that
\[
D^{Q_\alpha}(\Phi(b)\|\Phi(a))\le D^{G_{\alpha,1}}(\Phi(b)\|\Phi(a))=D_\alpha^{\max}(\Phi(b)\|\Phi(a)).
\]
Moreover, we have $D_\alpha^{\max}(\Phi(b)\|\Phi(a))\le D_\alpha^\cl(b\|a)=D^{Q_\alpha}(\diag(b)\|\diag(a))$
by \eqref{F-5.2} and \eqref{F-C.1}, so that $D^{Q_\alpha}(\Phi(b)\|\Phi(a))\le D^{Q_\alpha}(\diag(b)\|\diag(a))$,
implying (ii).
\end{proof}

\begin{example}\label{E-C.2}\rm
Here let us exemplify Theorem \ref{T-C.1} for $Q_\alpha=\Tr\,R_{\alpha,p}$ and $\Tr\,G_{\alpha,p}$. We say
that a quantum channel $\Phi$ is \emph{semi-classical} if it is either quantum-classical or classical-quantum.

(1)\enspace
As for $\Tr\,R_{\alpha,p}$, by Theorems \ref{T-3.1}(a) and \ref{T-3.2} note that for $0<\alpha<1$,
$\Tr\,G_{\alpha,1}\le\Tr\,R_{\alpha,p}\le\Tr\,R_{\alpha,1/\alpha}$ if and only if $p\le1/\alpha$, and that for
$\alpha>1$, $\Tr\,R_{\alpha,1/\alpha}\le\Tr\,R_{\alpha,p}\le\Tr\,G_{\alpha,1}$ if and only if
$\max\{\alpha/2,\alpha-1\}\le1/p\le\alpha$. Theorem \ref{T-C.1} says that if either of these holds, then
$D^{R_{\alpha,p}}$ ($=D_{\alpha,z}$ with $z=1/p$) is monotone under all semi-classical channels, and vice
versa if $\alpha\ge1/2$. Thus, by Theorem \ref{T-5.9} we see that for any $\alpha\in[1/2,\infty)\setminus\{1\}$
the $\alpha$-$z$-R\'enyi divergence $D_{\alpha,z}$ satisfies the monotonicity under quantum channels if
and only if it does under semi-classical channels.

(2)\enspace
As for $\Tr\,G_{\alpha,p}$, by Proposition \ref{P-4.2} and Theorem \ref{T-3.2} note that for $0<\alpha<1$,
$\Tr\,G_{\alpha,1}\le\Tr\,G_{\alpha,p}\le\Tr\,R_{\alpha,1/\alpha}$ if and only if $p\le1$, and that for
$1<\alpha\le2$, $\Tr\,R_{\alpha,1/\alpha}\le\Tr\,G_{\alpha,p}\le\Tr\,G_{\alpha,1}$ if and only if
$\max\bigl\{1/2,{\alpha-1\over\alpha}\bigr\}\le p\le1$. Theorem \ref{T-C.1} says that either of these holds if
and only if $D^{G_{\alpha,p}}$ satisfies the monotonicity under semi-classical channels. Thus, by
Theorem \ref{T-5.15} we see that for $0<\alpha<1$ the monotonicity of $D^{G_{\alpha,p}}$ under quantum
channels is equivalent to that under semi-classical channels, while this is not known for $\alpha>1$ (in view
of Proposition \ref{P-5.16}(1)).
\end{example}

The next corollary shows the `conditional' joint concavity/convexity of the above $Q_\alpha$.

\begin{corollary}\label{C-C.3}
Let $Q_\alpha$ be as above and assume that it satisfies, in addition to (d), (e), properties (b), (c) in
Section \ref{Sec-5.1}. Let $(A_i,B_i)\in\dom\,Q_\alpha$, $i=1,2$, and $0<\lambda<1$. Assume that
$\lambda A_1+(1-\lambda)A_2$ and $\lambda B_1+(1-\lambda)B_2$ commute. If $0<\alpha<1$ and
condition (ii) (or (i)) of Theorem \ref{T-C.1}(1) holds, then
\[
Q_\alpha(\lambda A_1+(1-\lambda)A_2,\lambda B_1+(1-\lambda)B_2)
\ge\lambda Q_\alpha(A_1,B_1)+(1-\lambda)Q_\alpha(A_2,B_2).
\]
If $1<\alpha<\infty$ and condition (ii) (equivalently (i)) of Theorem \ref{T-C.1}(2) holds, then
\[
Q_\alpha(\lambda A_1+(1-\lambda)A_2,\lambda B_1+(1-\lambda)B_2)
\le\lambda Q_\alpha(A_1,B_1)+(1-\lambda)Q_\alpha(A_2,B_2).
\]
\end{corollary}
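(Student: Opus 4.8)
The idea is to mimic the proof of the implication (i)$\implies$(ii) in Theorem \ref{T-5.3}(1), replacing the unrestricted CPTP map used there by the specific quantum-classical channel that appears in the hypothesis of Theorem \ref{T-C.1}. Concretely, I would fix $(A_i,B_i)\in\dom\,Q_\alpha$ for $i=1,2$ and $\lambda\in(0,1)$, and set
\[
A:=\lambda A_1\oplus(1-\lambda)A_2,\qquad B:=\lambda B_1\oplus(1-\lambda)B_2
\]
in $\bM_{2n}$ (after padding with zeros so that $A_1,A_2$, respectively $B_1,B_2$, are of the same size; this is harmless since $Q_\alpha(A,0)=0$ by property (e) and \eqref{F-C.1}). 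Let $\Phi\colon\bM_{2n}\to\bM_n$ be the CPTP map $\begin{bmatrix}X_{11}&X_{12}\\X_{21}&X_{22}\end{bmatrix}\mapsto X_{11}+X_{22}$, so that $\Phi(A)=\lambda A_1+(1-\lambda)A_2$ and $\Phi(B)=\lambda B_1+(1-\lambda)B_2$. By (b) and (c),
\[
Q_\alpha(A,B)=\lambda Q_\alpha(A_1,B_1)+(1-\lambda)Q_\alpha(A_2,B_2),
\]
so the conclusion is exactly $Q_\alpha(\Phi(A),\Phi(B))\ge Q_\alpha(A,B)$ (resp.\ $\le$ when $\alpha>1$). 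Thus it suffices to show that, under the commutativity hypothesis on $C:=\Phi(A)$ and $D:=\Phi(B)$, this particular instance of monotonicity follows from the assumed monotonicity under quantum-classical channels.

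\textbf{Key step.} The point is that when $C$ and $D$ commute, the map $\Phi$ can be \emph{replaced} by a quantum-classical channel without changing the values $Q_\alpha(\Phi(A),\Phi(B))$. Indeed, since $CD=DC$, choose an orthonormal basis $(e_i)_{i=1}^n$ of $\bC^n$ simultaneously diagonalizing $C$ and $D$, and let $\cE\colon\bM_n\to\bM_n$ be the pinching onto the rank-one projections $|e_i\>\<e_i|$; then $\cE$ is a quantum-classical channel and $\cE(C)=C$, $\cE(D)=D$. Set $\Psi:=\cE\circ\Phi\colon\bM_{2n}\to\bM_n$, which is again a quantum-classical channel (composition of a CPTP map with a pinching, landing in the commutative subalgebra $\bigoplus_i\bC|e_i\>\<e_i|$). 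Then $\Psi(A)=\cE(C)=C=\Phi(A)$ and likewise $\Psi(B)=\Phi(B)$, so
\[
Q_\alpha(\Phi(A),\Phi(B))=Q_\alpha(\Psi(A),\Psi(B))\ \underset{\text{(ii)}}{\ge}\ Q_\alpha(A,B)
\]
in the $0<\alpha<1$ case by condition (ii) of Theorem \ref{T-C.1}(1) applied to the quantum-classical channel $\Psi$; and the reversed inequality holds in the $1<\alpha<\infty$ case by condition (ii) of Theorem \ref{T-C.1}(2). Combining with the identity for $Q_\alpha(A,B)$ above gives the asserted inequality.

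\textbf{Main obstacle.} I expect the only genuinely delicate point is the bookkeeping needed to realize the direct-sum construction inside the framework where $\dom\,Q_\alpha=\bigsqcup_n(\bM_n^+\times\bM_n^+)_\ge$ in the $\alpha>1$ case: one must check that $A=\lambda A_1\oplus(1-\lambda)A_2$ and $B=\lambda B_1\oplus(1-\lambda)B_2$ still satisfy $s(A)\ge s(B)$ (which is immediate from $s(A_i)\ge s(B_i)$), and that $(\Psi(A),\Psi(B))$ stays in the domain (guaranteed since positive trace-preserving maps preserve the support condition, as noted in Section \ref{Sec-5.1}). Everything else is a direct transcription of the argument for Theorem \ref{T-5.3}(1), with the pinching trick inserted to downgrade the required monotonicity from all CPTP maps to quantum-classical ones — the commutativity hypothesis is precisely what makes this downgrade legitimate. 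Note also that one may instead invoke condition (i) of Theorem \ref{T-C.1}(1) or (2) directly: since $C$ and $D$ commute, $Q_\alpha(C,D)=\Tr\,R_{\alpha,1/\alpha}(C,D)=\Tr\,G_{\alpha,1}(C,D)=\sum_i c_i^{1-\alpha}d_i^\alpha$ by \eqref{F-C.1}, so (i) pins down $Q_\alpha(\Phi(A),\Phi(B))$ exactly and one compares with $Q_\alpha(A,B)$ via the superadditivity/subadditivity of $(a,b)\mapsto a^{1-\alpha}b^\alpha$ — but the channel argument is cleaner and avoids any scalar inequality.
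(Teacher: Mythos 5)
Your proposal is correct and takes essentially the same route as the paper: the paper likewise composes the block-sum map $X\mapsto X_{11}+X_{22}$ with the trace-preserving conditional expectation onto the commutative subalgebra generated by the two convex combinations (your pinching onto a joint eigenbasis plays exactly the same role), obtaining a quantum-classical channel to which condition (ii) of Theorem \ref{T-C.1} applies, and then concludes via properties (b), (c) as in the proof of Theorem \ref{T-5.3}(1).
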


\begin{proof}
Let $A_i,B_i$ be in $\bM_n$. By assumption we take the commutative subalgebra $\cC$ of $\bM_n$
generated by $\lambda A_i+(1-\lambda)B_i$, $i=1,2$. Let $\cE_\cC$ be the trace-preserving conditional
expectation from $\bM_n$ onto $\cC$. Define a quantum-classical channel
$\Phi_{2n}:\bM_{2n}=\bM_n\otimes\bM_2\to\bM_n$ by
\[
\Phi\biggl(\begin{bmatrix}X_{11}&X_{12}\\X_{21}&X_{22}\end{bmatrix}\biggr)
:=\cE_\cC(X_{11}+X_{22}),\qquad X_{ij}\in\bM_n.
\]
Then both conclusions follow similarly to the proof (i)$\implies$(ii) in Appendix \ref{Sec-B} with use of (ii) of
Theorem \ref{T-C.1}(1) and (2).
\end{proof}

For instance, if $\alpha>1$ and $p\ge\max\bigl\{1/2,{\alpha-1\over\alpha}\}$, then Theorem \ref{T-3.2}(2)
implies that $\Tr\,G_{\alpha,p}(A,B)\ge\Tr\,R_{\alpha,1/\alpha}(A,B)$ for all $A,B\ge0$ with $s(A)\ge s(B)$.
Hence in this case, $\Tr\,G_{\alpha,p}$ satisfies the `conditional' joint convexity by the above corollary,
supplementing Proposition \ref{P-5.16}(1).

\subsection*{Data availability}

No data was used for the research described in the article.

\addcontentsline{toc}{section}{References}

\end{document}